\title{Quasi-Frobenius algebras in finite tensor categories}
\author{Kenichi Shimizu}
\address[K. Shimizu]{Department of Mathematical Sciences,
  Shibaura Institute of Technology \\
  307 Fukasaku, Minuma-ku, Saitama-shi, Saitama 337-8570, Japan.}
\email{kshimizu@shibaura-it.ac.jp}
\numberwithin{equation}{section}
\theoremstyle{plain}
\newtheorem{C}{}[section] 
\newtheorem{lemma}[C]{Lemma}
\newtheorem{theorem}[C]{Theorem}
\newtheorem{proposition}[C]{Proposition}
\newtheorem{corollary}[C]{Corollary}
\newtheorem{conjecture}[C]{Conjecture}
\theoremstyle{definition}
\newtheorem{definition}[C]{Definition}
\theoremstyle{remark}
\newtheorem{remark}[C]{Remark}
\newtheorem{example}[C]{Example}
\newcommand{\id}{\mathrm{id}}
\newcommand{\op}{\mathrm{op}}
\newcommand{\radj}{\mathtt{r}}
\newcommand{\rradj}{\mathtt{rr}}
\newcommand{\bfk}{\Bbbk}
\newcommand{\Hom}{\mathrm{Hom}}
\newcommand{\End}{\mathrm{End}}
\newcommand{\Ker}{\mathrm{Ker}}
\newcommand{\Nat}{\mathrm{Nat}}
\newcommand{\Rex}{\mathrm{Rex}}
\newcommand{\Lex}{\mathrm{Lex}}
\newcommand{\unitobj}{\mathbbm{1}}
\newcommand{\rev}{\mathrm{rev}}
\newcommand{\eval}{\mathrm{ev}}
\newcommand{\coev}{\mathrm{coev}}
\newcommand{\Vect}{\mathbf{Vec}}
\newcommand{\Nak}{\mathbb{N}}
\newcommand{\Ser}{\mathbb{S}}
\newcommand{\iHom}{\underline{\Hom}}
\newcommand{\iEnd}{\underline{\End}}
\newcommand{\icoev}{\underline{\coev}}
\newcommand{\ieval}{\underline{\eval}}
\newcommand{\icomp}{\underline{\smash{\mathrm{comp}}}}
\newcommand{\itrace}{\underline{\mathrm{tr}}}
\newcommand{\relprj}[2]{\mathscr{P}_{#1}(#2)}
\newcommand{\relinj}[2]{\mathscr{I}_{#1}(#2)}
\newcommand{\catactl}{\mathbin{\triangleright}}
\newcommand{\catactr}{\mathbin{\triangleleft}}
\newcommand{\T}{\mathbb{T}}
\newcommand{\act}{\rho}
\newcommand{\lmod}[1]{\text{{$#1$}-mod}}
\newcommand{\rmod}[1]{\text{mod-{$#1$}}}
\newcommand{\bimod}[2]{\text{{$#1$}-mod-{$#2$}}}
\begin{document}

\begin{abstract}
  We introduce the notion of a quasi-Frobenius algebra in a finite tensor category $\mathcal{C}$ and give equivalent conditions for an algebra in $\mathcal{C}$ to be quasi-Frobenius. A quasi-Frobenius algebra in $\mathcal{C}$ is not necessarily Frobenius, however, we show that an algebra $A$ in $\mathcal{C}$ is quasi-Frobenius if and only if $A$ is Morita equivalent to a Frobenius algebra in $\mathcal{C}$. We also show that the class of symmetric Frobenius algebras in $\mathcal{C}$ is closed under the Morita equivalence provided that $\mathcal{C}$ is pivotal so that the symmetricity makes sense.
\end{abstract}

\maketitle


\section{Introduction}
\label{sec:introduction}

Frobenius algebras are an important class of algebras studied and applied in various areas of mathematics and mathematical physics.
Since a Frobenius algebra can be defined as a vector space coming with some linear maps subject to equations written by compositions and tensor products of linear maps, one can define a Frobenius algebra in any monoidal category; see \cite{MR2095680,MR2500035} for the precise definition and equivalent conditions.

An ordinary Frobenius algebra over a field $\bfk$ is nothing but a Frobenius algebra in the category $\Vect$ of finite-dimensional vector spaces over $\bfk$.
The category $\Vect$ is the simplest example of fusion categories \cite{MR2183279} or, more generally, finite tensor categories \cite{MR2119143,MR3242743}.
Frobenius algebras in finite tensor categories have been studied actively.
For example, M\"uger \cite{MR1966524,MR1966525} used special Frobenius algebras in fusion categories to capture some algebraic aspects of subfactor theory.
Commutative special Frobenius algebras appear in constructions of modular tensor categories by means of local modules \cite{MR1936496,2022arXiv220208644L}.
It was shown that a full rational conformal field theory (CFT) is given by a special symmetric Frobenius algebra in a modular tensor category \cite{MR1940282}; see also \cite{MR2342830,MR2342831} for expositions and related results.
Recently, non-special symmetric Frobenius algebras are also of interest from the viewpoint of CFTs and topological field theories. With regard to this, some constructions of Frobenius algebras in finite tensor categories are established in \cite{MR4586249,MR4681293,2019arXiv190400376S,MR4683832}.

Given an algebra $A$ in a finite tensor category $\mathcal{C}$, we denote by $\mathcal{C}_A$ the category of right $A$-modules in $\mathcal{C}$. There is a natural functor $\mathcal{C} \times \mathcal{C}_A \to \mathcal{C}_A$, which makes $\mathcal{C}_A$ a left $\mathcal{C}$-module category \cite{MR3242743}.
Two algebras $A$ and $B$ are said to be {\em Morita equivalent} if $\mathcal{C}_A$ and $\mathcal{C}_B$ are equivalent as left $\mathcal{C}$-module categories.
The Morita class of an algebra is sometimes essential rather than the algebra itself.
For instance, in the framework of \cite{MR1940282}, the full CFT arising from a special Frobenius algebra $A$ actually depends on the Morita class of $A$.

As is already in the case where $\mathcal{C} = \Vect$, the class of Frobenius algebras in a finite tensor category $\mathcal{C}$ is not closed under Morita equivalence.
Thus it is natural to ask what is the class of algebras in $\mathcal{C}$ that are Morita equivalent to Frobenius algebras in $\mathcal{C}$.
In this paper, we introduce the notion of {\em quasi-Frobenius algebra} in $\mathcal{C}$ by mimicking the definition of quasi-Frobenius rings. Our answer to the above question is as follows: An algebra in $\mathcal{C}$ is Morita equivalent to a Frobenius algebra if and only if it is quasi-Frobenius.

To explain our results in a bit more detail, we recall that the internal Hom functor $\iHom$ is defined for a finite left $\mathcal{C}$-module category $\mathcal{M}$ by
\begin{equation*}
  \Hom_{\mathcal{C}}(X, \iHom(M, N)) \cong \Hom_{\mathcal{M}}(X \catactl M, N)
  \quad (X \in \mathcal{C}, M, N \in \mathcal{M}).
\end{equation*}
An object $M \in \mathcal{M}$ is said to be $\mathcal{C}$-projective if the functor $\iHom(M, -)$ is exact \cite{MR3934626}. Similarly, $M$ is said to be $\mathcal{C}$-injective if $\iHom(-,M)$ is exact.
We recall that a quasi-Frobenius ring is a ring $R$ that is Noetherian and injective as a right $R$-module \cite[Chapter 6]{MR1653294}.
Due to the finiteness of $\mathcal{C}$, every algebra in $\mathcal{C}$ is Noetherian. We therefore propose to define:

\begin{definition}[$=$ Definition \ref{def:QF-algebras}]
  A {\em quasi-Frobenius algebra} in $\mathcal{C}$ is an algebra $A$ in $\mathcal{C}$ such that the right $A$-module $A$ is $\mathcal{C}$-injective.
\end{definition}

In this paper, we establish fundamental results on quasi-Frobenius algebras in $\mathcal{C}$.
One of main tools in this paper is a generalization of the relative Serre functor for an exact module category \cite{MR3435098,MR4042867,2019arXiv190400376S}.
For a left $\mathcal{C}$-module category which is not necessarily exact, we define the {\em relative Serre functor} of $\mathcal{M}$ to be the right exact endofunctor $\Ser$ on $\mathcal{M}$ such that there is a natural isomorphism
\begin{equation}
  \label{eq:intro-rel-Serre-def}
  \iHom(P, M)^* \cong \iHom(M, \Ser(P))
  \quad (P, M \in \mathcal{M}; \text{$P$ is $\mathcal{C}$-projective}),
\end{equation}
where $(-)^*$ is the left duality functor of $\mathcal{C}$.
The relative Serre functor is shown to exist and to be unique up to isomorphism.
Now let $A$ be an algebra in $\mathcal{C}$, and let $\Ser_A$ be the relative Serre functor of $\mathcal{C}_A$.
We note that $A^*$ has a natural structure of a right $A$-module in $\mathcal{C}$. The main result of this paper is as follows:

\begin{theorem}[$=$ Theorem~\ref{thm:QF-algebras}]
  The following conditions are equivalent:
  \begin{enumerate}
  \item The right $A$-module $A$ is $\mathcal{C}$-injective, that is, $A$ is quasi-Frobenius.
  \item The right $A$-module $A^*$ is $\mathcal{C}$-projective.
  \item Every $\mathcal{C}$-projective object of $\mathcal{C}_A$ is $\mathcal{C}$-injective.
  \item Every $\mathcal{C}$-injective object of $\mathcal{C}_A$ is $\mathcal{C}$-projective.
  \item The functor $\Ser_A$ is an equivalence.
  \item The functor $\Ser_A$ is exact.
  \item A right adjoint of $\Ser_A$ is exact.
  \item The Nakayama functor \cite{MR4042867} of $\mathcal{C}_A$ is an equivalence.
  \item The algebra $A$ is Morita equivalent to a Frobenius algebra in $\mathcal{C}$.
  \end{enumerate}
\end{theorem}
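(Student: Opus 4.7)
The strategy hinges on two identifications for the relative Serre functor $\Ser_A$. First, since $A$ itself is $\mathcal{C}$-projective in $\mathcal{C}_A$ with $\iHom_{\mathcal{C}_A}(A,-)$ identified with the forgetful functor, specialising \eqref{eq:intro-rel-Serre-def} at $P = A$ gives a natural isomorphism $M^{*} \cong \iHom_{\mathcal{C}_A}(M, \Ser_A(A))$, which I match against the canonical right $A$-module structure of $A^*$ to conclude $\Ser_A(A) \cong A^*$. Second, if $P$ is $\mathcal{C}$-projective then $\iHom(P,-)$ is exact, and composing with the exact duality $(-)^{*}$ of $\mathcal{C}$ gives exactness of $\iHom(-, \Ser_A(P)) \cong \iHom(P,-)^{*}$, so $\Ser_A(P)$ is always $\mathcal{C}$-injective. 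In particular, $A^{*}$ is always $\mathcal{C}$-injective, which makes (1) and (2) self-duality-type statements parallel to (3) and (4).

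\textbf{Equivalences (1)--(8).} For (1) $\Rightarrow$ (3) I use that $\iHom(M,-)$ is a $\mathcal{C}$-module functor, giving $\iHom(-, X \catactl A) \cong X \otimes \iHom(-, A)$; this is exact once $A$ is $\mathcal{C}$-injective, so every direct summand of an $X \catactl A$ is $\mathcal{C}$-injective, and the summands of such objects exhaust the $\mathcal{C}$-projective objects of $\mathcal{C}_A$. The converse (3) $\Rightarrow$ (1) is trivial since $A$ is $\mathcal{C}$-projective. A symmetric argument with $A^{*}$ in place of $A$ yields (2) $\Leftrightarrow$ (4). Assuming (3), the injective map $\Ser_A$ on isomorphism classes of indecomposable $\mathcal{C}$-projectives lands in indecomposable $\mathcal{C}$-injectives; since both classes are in finite bijection with the simples of $\mathcal{C}_A$, the map must be a permutation, which bootstraps to the equivalence (5). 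The implications between (5), (6), (7) are formal consequences of standard facts about equivalences and their adjoints in the finite setting. Finally, (8) is compared to (5) through the identification of the Nakayama functor of $\mathcal{C}_A$ with $\Ser_A$ up to an invertible correction, as established in \cite{MR4042867} and developed further in the present paper.

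\textbf{Morita equivalence with a Frobenius algebra and main obstacle.} The substantive direction is (5) $\Rightarrow$ (9). Take $G = \bigoplus_{[P]} P$ summed over representatives of isomorphism classes of indecomposable $\mathcal{C}$-projective objects of $\mathcal{C}_A$, so that $\Ser_A(G) \cong G$ by the permutation argument above, and set $B = \iEnd_{\mathcal{C}_A}(G)$. Since $G$ is a $\mathcal{C}$-projective generator, Morita theory for finite $\mathcal{C}$-module categories yields a $\mathcal{C}$-module equivalence $\mathcal{C}_A \simeq \mathcal{C}_B$, and \eqref{eq:intro-rel-Serre-def} gives
\begin{equation*}
  B^{*} \;\cong\; \iHom_{\mathcal{C}_A}(G, \Ser_A(G)) \;\cong\; \iEnd_{\mathcal{C}_A}(G) \;=\; B
\end{equation*}
as right $B$-modules, which is the Frobenius condition. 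Conversely, a Frobenius algebra $B$ satisfies $B \cong B^{*}$ as right $B$-modules, so $B_B$ is $\mathcal{C}$-injective, and since (1) is equivalent to the intrinsically categorical condition (5), Morita invariance transports it to $A$. The most delicate step will be (5) $\Rightarrow$ (9): the isomorphism $B^{*} \cong B$ of right modules coming from $\Ser_A(G) \cong G$ must be upgraded to a genuine Frobenius structure, requiring careful tracking of the relative Serre data through the Morita equivalence so as to produce a pairing $B \otimes B \to \unitobj$ that is associative and nondegenerate in the sense demanded by the definition of a Frobenius algebra in $\mathcal{C}$.
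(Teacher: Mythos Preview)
Your overall architecture is close to the paper's, but there are two genuine gaps.

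\textbf{The step (1) $\Rightarrow$ (3).} You assert that direct summands of objects of the form $X \catactl A$ exhaust the $\mathcal{C}$-projective objects of $\mathcal{C}_A$. The paper explicitly flags this as unknown (see the paragraph after Lemma~\ref{lem:C-proj-and-free}). What \emph{is} true (Lemma~\ref{lem:C-proj-and-free}) is the weaker statement that for a $\mathcal{C}$-projective $M$ there exist nonzero $X,Y\in\mathcal{C}$ with $X\otimes M$ a summand of $Y\otimes A$. One then concludes that $X\otimes M$ is $\mathcal{C}$-injective and descends to $M$ via Lemma~\ref{lem:C-inj}, which says $M$ is $\mathcal{C}$-injective as soon as $X\catactl M$ is for some nonzero $X$. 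Your argument is repairable, but not as written.

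\textbf{The step (3) $\Rightarrow$ (5), and the construction of $G$.} You claim that the indecomposable $\mathcal{C}$-projective objects and the indecomposable $\mathcal{C}$-injective objects are each in finite bijection with the simples of $\mathcal{C}_A$. This is false: when $\mathcal{C}_A$ is an exact module category, \emph{every} object is $\mathcal{C}$-projective (Lemma~\ref{lem:C-proj}), so there are in general infinitely many indecomposable $\mathcal{C}$-projectives and no such bijection. Consequently your permutation/counting argument for (3) $\Rightarrow$ (5) does not go through, and your object $G=\bigoplus_{[P]}P$ in the proof of (5) $\Rightarrow$ (9) need not be a finite sum. The paper avoids this by routing (3) $\Rightarrow$ (8) instead: since ordinary projectives are $\mathcal{C}$-projective (Lemma~\ref{lem:relative-proj-1}), condition (3) makes every projective $\mathcal{C}$-injective, and Lemma~\ref{lem:proj-C-inj-is-inj} then shows it is genuinely injective; hence $\Nak_{\mathcal{C}_A}$ is an equivalence. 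For (5) $\Rightarrow$ (9) the paper takes $G$ to be the sum of indecomposable \emph{projectives} (finitely many, permuted by any autoequivalence), which is automatically a $\mathcal{C}$-progenerator; the Frobenius form on $\iEnd(G)$ is then produced by Lemma~\ref{lem:Fb-alg-construction}, addressing exactly the ``main obstacle'' you anticipated. Finally, your treatment of (6) and (7) as ``formal'' is too quick: exactness of a functor does not formally imply it is an equivalence. The paper uses the explicit formulas $\Ser_A=\iHom_A(-,A)^*$ and $\overline{\Ser}_A={}^{**}\iHom_A(A^*,-)$ to read off (6) $\Rightarrow$ (1) and (7) $\Rightarrow$ (2) directly.
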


It is obvious that a Frobenius algebra in $\mathcal{C}$ is quasi-Frobenius.
A Hopf algebra in a braided finite tensor category is shown to be quasi-Frobenius by the integral theory for Hopf algebras.
In this paper, we also provide some results on $\mathcal{C}$-projective objects, $\mathcal{C}$-injective objects and the relative Serre functor.
As an application, we show that the class of symmetric Frobenius algebras in a pivotal finite tensor category is closed under the Morita equivalence.

\subsection*{Organization of this paper}

This paper is organized as follows:
In Section~\ref{sec:preliminaries}, we recall basic notions related to monoidal categories and module categories. We also provide some technical lemmas for natural isomorphisms coming with the internal Hom functor.

In Section~\ref{sec:ftc-and-modules}, we review mainly from \cite{2014arXiv1406.4204D,MR3242743,MR4042867} fundamentals of finite abelian categories, finite tensor categories, and finite module categories required in this paper, such as Morita theoretic methods, the Eilenberg-Watts equivalence, and the Nakayama functor.

Let $\mathcal{C}$ be a finite tensor category, and let $\mathcal{M}$ be a finite left $\mathcal{C}$-module category with action $\catactl : \mathcal{C} \times \mathcal{M} \to \mathcal{M}$.
All the results in Section~\ref{sec:rel-Serre} are generalizations of some results given in \cite{MR3435098,MR4042867,2019arXiv190400376S}, where $\mathcal{M}$ is assumed to be an exact module category.
In this section, without assuming the exactness of $\mathcal{M}$, we define the relative Serre functor $\Ser$ of $\mathcal{M}$ to be a right exact linear endofunctor on $\mathcal{M}$ such that there is a natural isomorphism \eqref{eq:intro-rel-Serre-def}. 
There is a natural isomorphism
\begin{equation*}
  X^{**} \catactl \Ser(M) \to \Ser(X \catactl M)
  \quad (X \in \mathcal{C}, M \in \mathcal{M})
\end{equation*}
making $\Ser$ a `twisted' left $\mathcal{C}$-module functor.
We show that the relative Serre functor of $\mathcal{M}$ is unique up to isomorphism of twisted left $\mathcal{C}$-module functors. Moreover, there is a natural isomorphisms $\Nak_{\mathcal{M}}(M) \cong \Nak_{\mathcal{C}}(\unitobj) \catactl \Ser(M)$ ($M \in \mathcal{M}$) of twisted left $\mathcal{C}$-module functors, where $\Nak_{\mathcal{A}}$ for a finite abelian category $\mathcal{A}$ is the Nakayama functor \cite{MR4042867} of $\mathcal{A}$.
The case where $\mathcal{M} = \mathcal{C}_A$ for some algebra $A$ in $\mathcal{C}$ is especially important. We define endofunctors $\Ser_A$ and $\overline{\Ser}_A$ on $\mathcal{C}_A$ by
\begin{equation}
  \label{eq:intro-rel-Serre-of-CA}
  \Ser_A(M) = \iHom_A(M, A)^*
  \quad \text{and} \quad
  \overline{\Ser}_A(M) = {}^{**}\iHom_A(A^*, M)
  \quad (M \in \mathcal{C}_A),
\end{equation}
respectively, where $\iHom_A$ is the internal Hom functor of $\mathcal{C}_A$. The functors $\Ser_A$ and $\overline{\Ser}_A$ are the relative Serre functor of $\mathcal{C}_A$ and its right adjoint, respectively.

In Section~\ref{sec:qf-algebras}, we introduce quasi-Frobenius algebras in $\mathcal{C}$ and give several equivalent conditions for an algebra to be quasi-Frobenius. We first provide a bunch of lemmas on $\mathcal{C}$-projective and $\mathcal{C}$-injective objects with reference to well-known results on projective and injective objects. By \eqref{eq:intro-rel-Serre-of-CA}, we show that a finite left $\mathcal{C}$-module category $\mathcal{M}$ is equivalent to $\mathcal{C}_A$ for some Frobenius algebra $A$ in $\mathcal{C}$ if and only if $\mathcal{M}$ has a $\mathcal{C}$-projective $\mathcal{C}$-generator $G$ such that $\Ser(G) \cong G$.
Based on these results, we give the characterization of quasi-Frobenius algebras described in the above. After giving some useful corollaries, we show that a Hopf algebra in a braided finite tensor category is quasi-Frobenius. We also raise a conjecture that a simple algebra in $\mathcal{C}$ is quasi-Frobenius, which is essentially equivalent to a conjecture given by Etingof and Ostrik in \cite{MR4237968}.

In Section~\ref{sec:symm-frobenius}, we assume that $\mathcal{C}$ is a pivotal finite tensor category and discuss symmetric Frobenius algebras in $\mathcal{C}$. In a similar way as \cite{MR3435098,2019arXiv190400376S}, we define a {\em pivotal structure} of a finite left $\mathcal{C}$-module category $\mathcal{M}$.
We then show that $\mathcal{M}$ is equivalent to $\mathcal{C}_A$ for some symmetric Frobenius algebra if and only if $\mathcal{M}$ admits a pivotal structure.
Hence we have a consequence that the class of symmetric Frobenius algebras in $\mathcal{C}$ is closed under Morita equivalence.

\subsection*{Acknowledgment}

The author thanks C. Schweigert, T. Shibata and H. Yadav for comments.
The author is supported by JSPS KAKENHI Grant Number JP20K03520.

\section{Preliminaries}
\label{sec:preliminaries}

\subsection{Monoidal categories and their modules}

Given a category $\mathcal{C}$, we write $X \in \mathcal{C}$ to mean that $X$ is an object of $\mathcal{C}$. An object $X \in \mathcal{C}$ is often denoted by $X^{\op}$ when it is viewed as an object of the opposite category $\mathcal{C}^{\op}$. Similar notation will be used for morphisms and functors.

Our main reference on monoidal categories is \cite{MR3242743}.
In view of Mac Lane's coherence theorem, all monoidal categories are assumed to be strict.
Let $\mathcal{C}$ be a monoidal category with monoidal product $\otimes$ and unit object $\unitobj$.
We note that $\mathcal{C}^{\op}$ has a natural structure of a monoidal category.
We denote by $\mathcal{C}^{\rev}$ the monoidal category whose underlying category is $\mathcal{C}$ but the order of the monoidal product is reversed.

According to \cite[Section 2.10]{MR3242743}, a {\em left dual object} of $X \in \mathcal{C}$ is an object $X^* \in \mathcal{C}$ equipped with morphisms $\eval_X : X^* \otimes X \to \unitobj$ and $\coev_X : \unitobj \to X \otimes X^*$ satisfying certain equations. A {\em right dual object} of $X \in \mathcal{C}$ is a left dual object of $X \in \mathcal{C}^{\rev}$. A rigid monoidal category is a monoidal category of which every object has a left dual object and a right dual object. We suppose that $\mathcal{C}$ is rigid. Then the assignment $X \mapsto X^*$ extends to a monoidal equivalence from $\mathcal{C}^{\op}$ to $\mathcal{C}^{\rev}$, which we call the left duality functor. We denote by ${}^*(-)$ a quasi-inverse of $(-)^*$. We may, and do, assume that $(-)^*$ and ${}^*(-)$ are strict monoidal and mutually inverse. Thus we have $(X \otimes Y)^* = Y^* \otimes X^*$, ${}^*(X^*) = X = ({}^*X)^*$, $\unitobj^* = \unitobj$, etc.

Given a left module category $\mathcal{M}$ over a monoidal category $\mathcal{C}$ (which is not necessarily rigid), we usually denote the action of $\mathcal{C}$ on $\mathcal{M}$ by $\catactl : \mathcal{C} \times \mathcal{M} \to \mathcal{M}$.
By an analogue of Mac Lane's coherence theorem, we may assume that all module categories over $\mathcal{C}$ are strict \cite[Remark 7.2.4]{MR3242743}.
Thus we have $\unitobj \catactl M = M$ and $(X \otimes Y) \catactl M = X \catactl (Y \catactl M)$ for $X, Y \in \mathcal{C}$ and $M \in \mathcal{M}$.

Let $\mathcal{M}$ and $\mathcal{N}$ be left $\mathcal{C}$-module categories.
A {\em lax left $\mathcal{C}$-module functor} \cite{2014arXiv1406.4204D}\footnote{This preprint has been published as \cite{MR3934626}, however, the definitions of a lax and an oplax $\mathcal{C}$-module functor are not included in the published version. We also recall some results on (op)lax module functors from the preprint version.} from $\mathcal{M}$ to $\mathcal{N}$ is a pair $(F, \xi)$ consisting of a functor $F : \mathcal{M} \to \mathcal{N}$ and a (not necessarily invertible) natural transformation
\begin{equation*}
  \xi_{X,M} : X \catactl F(M) \to F(X \catactl M)
  \quad (X \in \mathcal{C}, M \in \mathcal{M})
\end{equation*}
such that the equations $\xi_{\unitobj, M} = \id_M$ and $\xi_{X \otimes Y, M} = \xi_{X, Y \catactl M} \circ (\id_X \catactl \xi_{Y,M})$ hold for all objects $X, Y \in \mathcal{C}$ and $M \in \mathcal{M}$ ({\it cf}. \cite[Definition 7.2.1]{MR3242743}).
An {\em oplax left $\mathcal{C}$-module functor} from $\mathcal{M}$ to $\mathcal{N}$ is a pair $(F, \zeta)$ consisting of a functor $F: \mathcal{M} \to \mathcal{N}$ and a natural transformation
\begin{equation*}
  \zeta_{X,M} : F(X \catactl M) \to X \catactl F(M)
  \quad (X \in \mathcal{C}, M \in \mathcal{M})
\end{equation*}
such that, in a word, the pair $(F^{\op}, \zeta^{\op})$ is a lax $\mathcal{C}^{\op}$-module functor from $\mathcal{M}^{\op}$ to $\mathcal{N}^{\op}$.
A {\em strong left $\mathcal{C}$-module functor} is a lax (or oplax) left $\mathcal{C}$-module functor whose structure morphism is invertible.
We omit the definitions of morphisms lax, oplax and strong module functors; see \cite[Definition 2.7]{2014arXiv1406.4204D}.

Suppose that $\mathcal{C}$ is rigid. Then all lax $\mathcal{C}$-module functors and all oplax $\mathcal{C}$-module functors are strong \cite[Lemma 2.10]{2014arXiv1406.4204D}.
Thus we may simply call them $\mathcal{C}$-module functors.
Despite that we mainly consider the rigid case in this paper, the notions of lax and oplax $\mathcal{C}$-module functors are helpful when we deal with adjoints of module functors.

\subsection{Adjoints of module functors}

Given a functor $F$ admitting a right adjoint, we denote by $F^{\radj}$ a (fixed) right adjoint of $F$. If $F : \mathcal{L} \to \mathcal{M}$ and $G: \mathcal{M} \to \mathcal{N}$ are functors admitting right adjoints, then $F \circ G$ also has a right adjoint. Furthermore, there is a canonical isomorphism
\begin{equation}
  \label{eq:FG-r-adj}
  \Upsilon_{F,G} : (F \circ G)^{\radj} \to G^{\radj} \circ F^{\radj},
\end{equation}
which will be used frequently in this paper.
The canonical isomorphism \eqref{eq:FG-r-adj} is coherent in the following sense:
For three composable functors $F$, $G$ and $H$ admitting right adjoints, the diagram
\begin{equation*}
  \begin{tikzcd}[column sep = 64pt]
    (F \circ G \circ H)^{\radj}
    \arrow[r, "{\Upsilon_{F \circ G, H}}"]
    \arrow[d, "{\Upsilon_{F, G \circ H}}"']
    & H^{\radj} \circ (F \circ G)^{\radj}
    \arrow[d, "{H^{\radj} \circ \Upsilon_{F, G}}"] \\
    (G \circ H)^{\radj} \circ F^{\radj}
    \arrow[r, "{\Upsilon_{G, H} \circ F^{\radj}}"']
    & H^{\radj} \circ G^{\radj} \circ F^{\radj}
  \end{tikzcd}
\end{equation*}
is commutative. Here, we have used the symbol $\circ$ not only for the composition of functors but also for the horizontal composition of natural transformations.

We recall basic facts on adjoints of module functors from \cite{2014arXiv1406.4204D}.
We refer to a natural transformation making a functor $F$ an (op)lax left $\mathcal{C}$-module functor as an (op)lax left $\mathcal{C}$-module structure on $F$.
Let $\mathcal{M}$ and $\mathcal{N}$ be a left module categories over a monoidal category $\mathcal{C}$, and let $F: \mathcal{M} \to \mathcal{N}$ be a functor admitting a right adjoint. If $\zeta$ is an oplax left $\mathcal{C}$-module structure of $F$, then the natural transformation
\begin{align*}
  F^{\radj}((\id_X \catactl \varepsilon_N) \zeta_{X, F^{\radj}(N)}) \eta_{X \catactl F^{\radj}(N)}: X \catactl F^{\radj}(N) \to F^{\radj}(X \catactl N)
  \quad (X \in \mathcal{C}, N \in \mathcal{N})
\end{align*}
is a lax left $\mathcal{C}$-module structure of $F^{\radj}$, where $\eta$ and $\varepsilon$ are the unit and the counit of the adjunction $F \dashv F^{\radj}$, respectively.
This construction establishes a one-to-one correspondence between:
\begin{enumerate}
\item the class of oplax left $\mathcal{C}$-module structures of $F : \mathcal{M} \to \mathcal{N}$, and
\item the class of lax left $\mathcal{C}$-module structures of $F^{\radj} : \mathcal{N} \to \mathcal{M}$.
\end{enumerate}

We suppose that $F : \mathcal{M} \to \mathcal{N}$ and $G : \mathcal{L} \to \mathcal{M}$ are oplax $\mathcal{C}$-module functors between left $\mathcal{C}$-module categories and that both $F$ and $G$ have right adjoints. Then the functors $F^{\radj}$, $G^{\radj}$ and $(F \circ G)^{\radj}$ have lax left $\mathcal{C}$-module structures. The canonical isomorphism \eqref{eq:FG-r-adj} is in fact an isomorphism of lax $\mathcal{C}$-module functors.

A strong left $\mathcal{C}$-module functor can be thought of as either a lax or an oplax left $\mathcal{C}$-module functor.
If $F : \mathcal{M} \to \mathcal{N}$ is a strong left $\mathcal{C}$-module functor, then $F^{\radj} \circ F$ and $F \circ F^{\radj}$ are lax left $\mathcal{C}$-module functors. The unit and the counit of the adjunction $F \dashv F^{\radj}$ are morphisms of lax left $\mathcal{C}$-module functors.

\subsection{The internal Hom functor}
\label{subsec:closed-module-cat}

Let $\mathcal{C}$ be a monoidal category, and let $\mathcal{M}$ be a left $\mathcal{C}$-module category. The following notation will be used throughout this paper:

\begin{definition}
  For $M \in \mathcal{M}$, we define the functor $\T_M$ by
  \begin{equation*}
    \T_M : \mathcal{C} \to \mathcal{M},
    \quad \T_M(X) = X \catactl M \quad (X \in \mathcal{C}).
  \end{equation*}
\end{definition}

We say that $\mathcal{M}$ is {\em closed} if the functor $\T_M$ has a right adjoint for all $M \in \mathcal{M}$ ({\it cf}. the definition of a closed monoidal category).
Let $\mathcal{M}$ be a closed left $\mathcal{C}$-module category. Given an object $M \in \mathcal{M}$, we denote by $\iHom_{\mathcal{M}}(M, -)$ a (fixed) right adjoint of the functor $\T_M$. If $\mathcal{M}$ is clear from the context, then $\iHom_{\mathcal{M}}$ is also written as $\iHom$.
By definition, there is a natural isomorphism
\begin{equation}
  \label{eq:iHom-definition}
  \Hom_{\mathcal{M}}(X \catactl M, N)
  \cong \Hom_{\mathcal{C}}(X, \iHom_{\mathcal{M}}(M, N))
\end{equation}
for $X \in \mathcal{C}$ and $N \in \mathcal{M}$.
The assignment $(M, N) \mapsto \iHom(M, N)$ extends to a functor from $\mathcal{M}^{\op} \times \mathcal{M}$ to $\mathcal{C}$, called {\em the internal Hom functor}, in such a way that the isomorphism \eqref{eq:iHom-definition} is also natural in the variable $M$.
We denote by
\begin{equation*}
  \icoev_{M,X} : X \to \iHom(M, X \catactl M)
  \quad \text{and} \quad
  \ieval_{M,N} : \iHom(M, N) \catactl M \to N
\end{equation*}
the unit and the counit of the adjunction.
The morphisms $\icoev_{M,X}$ and $\ieval_{M,N}$ are dinatural in the variable $M \in \mathcal{M}$.

The class of objects of $\mathcal{M}$ becomes a $\mathcal{C}$-enriched category.
Indeed, for three objects $L, M, N \in \mathcal{M}$, the composition law
\begin{equation*}
  \icomp_{L, M, N} : \iHom(M, N) \otimes \iHom(L, M) \to \iHom(L, N)
\end{equation*}
is defined to be the morphism in $\mathcal{M}$ corresponding to
\begin{equation*}
  \ieval_{L, M, N}^{(2)} := \ieval_{M, N} \circ (\id_{\iHom(M, N)} \catactl \ieval_{L, M}):
  \iHom(M, N) \catactl \iHom(L, M) \catactl L \to N
\end{equation*}
under the adjunction isomorphism \eqref{eq:iHom-definition}.
The morphism $\icoev_{M, \unitobj}$ plays the role of the identity element with respect to the composition law.

According to \cite[Lemma 7.9.4]{MR3242743}, there are natural isomorphisms
\begin{equation*}
  X \otimes \iHom(M, N) \cong \iHom(M, X \catactl N),
  \quad \iHom(M, N) \otimes X^* \cong \iHom(X \catactl M, N)
\end{equation*}
for $M, N \in \mathcal{M}$ and $X \in \mathcal{C}$ when $\mathcal{C}$ is rigid. Below we prove some equations involving these natural isomorphisms.
For this purpose, we shall give constructions of these natural isomorphisms in a slightly more general setting than \cite{MR3242743}.

Let $\mathcal{M}$ be a left module category over a monoidal category $\mathcal{C}$, which is not necessarily rigid.
Since the functor $\T_M : \mathcal{C} \to \mathcal{M}$ for $M \in \mathcal{M}$ is a strong left $\mathcal{C}$-module functor, its right adjoint is a lax left $\mathcal{C}$-module functor. We denote the structure morphism of $\T_M^{\radj} = \iHom(M, -)$ by
\begin{equation}
  \label{eq:iHom-left-C-mod-str}
  \mathfrak{a}_{X,M,N} : X \otimes \iHom(M, N) \to \iHom(M, X \catactl N)
  \quad (X \in \mathcal{C}, N \in \mathcal{M}).
\end{equation}
The morphism $\mathfrak{a}_{X,M,N}$ is not invertible in general.
However, when $\mathcal{C}$ is rigid, it is invertible by the fact that every lax left $\mathcal{C}$-module functor is strong.

\begin{lemma}
  For $L, M, N \in \mathcal{M}$ and $X \in \mathcal{C}$, we have
  \begin{gather}
    \label{eq:iHom-left-C-mod-str-2}
    \mathfrak{a}_{X,M,N} = \iHom(M, \id_X \catactl \ieval_{M,N}) \circ \icoev_{M, X \otimes \iHom(M, N)}, \\
    \label{eq:iHom-left-C-mod-str-3}
    \ieval_{M, X \catactl N} \circ (\mathfrak{a}_{X,M,N} \catactl \id_M) = \id_X \catactl \ieval_{M,N}, \\
    \label{eq:iHom-composition-a-1}
    \icomp_{L,M,N} = \iHom(L, \ieval_{M,N}) \circ \mathfrak{a}_{\iHom(M, N), L, M}, \\
    \label{eq:iHom-composition-a-2}
    \icomp_{L,M, X \catactl N} \circ (\mathfrak{a}_{X,M,N} \otimes \id_{\iHom(L,M)})
    = \mathfrak{a}_{X, L, N} \circ (\id_X \otimes \icomp_{L,M,N}).
  \end{gather}
\end{lemma}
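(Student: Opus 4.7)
The plan is to prove the four equations in order, using in each case the universal property of the adjunction $\T_M \dashv \iHom(M,-)$ (respectively $\T_L \dashv \iHom(L,-)$) to reduce the identity to a matching pair of morphisms out of $X \catactl \iHom(M,N) \catactl M$ (respectively an analogous source). Throughout the proof, I will rely on the naturality and dinaturality of $\icoev$ and $\ieval$, and on the general recipe from the preceding subsection for turning an oplax module structure on a left adjoint into a lax module structure on the right adjoint.

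For equation \eqref{eq:iHom-left-C-mod-str-2}, I would simply observe that $\T_M : \mathcal{C} \to \mathcal{M}$ is a strong left $\mathcal{C}$-module functor whose structure morphism is the identity, since $X \catactl \T_M(Y) = X \catactl (Y \catactl M) = (X \otimes Y) \catactl M = \T_M(X \otimes Y)$ strictly. The general formula recalled in the subsection on adjoints, applied with $\zeta = \id$, $\eta = \icoev_{M,-}$ and $\varepsilon = \ieval_{M,-}$, then yields exactly the claimed expression for $\mathfrak{a}_{X,M,N}$. Equation \eqref{eq:iHom-left-C-mod-str-3} follows immediately: it is literally the statement that $\ieval_{M, X \catactl N} \circ (\mathfrak{a}_{X,M,N} \catactl \id_M)$ is the image of $\mathfrak{a}_{X,M,N}$ under the counit of the adjunction, which by \eqref{eq:iHom-left-C-mod-str-2} and a standard triangle identity is $\id_X \catactl \ieval_{M,N}$.

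For equation \eqref{eq:iHom-composition-a-1}, I would apply the adjunction $\T_L \dashv \iHom(L,-)$, whose universal property characterizes $\icomp_{L,M,N}$ as the unique morphism $f$ with $\ieval_{L,N} \circ (f \catactl \id_L) = \ieval_{L,M,N}^{(2)}$. I will therefore compute the corresponding adjunct of the right-hand side of \eqref{eq:iHom-composition-a-1}, namely
\begin{equation*}
  \ieval_{L,N} \circ \bigl( (\iHom(L, \ieval_{M,N}) \circ \mathfrak{a}_{\iHom(M,N), L, M}) \catactl \id_L \bigr).
\end{equation*}
Naturality of $\ieval$ in its second variable rewrites this as $\ieval_{M,N} \circ \ieval_{L, \iHom(M,N) \catactl M} \circ (\mathfrak{a}_{\iHom(M,N), L, M} \catactl \id_L)$, and then \eqref{eq:iHom-left-C-mod-str-3} (with $X = \iHom(M,N)$, $M \to L$, $N \to M$) collapses the last two factors to $\id_{\iHom(M,N)} \catactl \ieval_{L,M}$. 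What remains is $\ieval_{M,N} \circ (\id_{\iHom(M,N)} \catactl \ieval_{L,M}) = \ieval_{L,M,N}^{(2)}$, so the universal property forces the desired equality.

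For equation \eqref{eq:iHom-composition-a-2}, the same idea applies: both sides are morphisms into $\iHom(L, X \catactl N)$, so I will take their adjuncts under $\T_L \dashv \iHom(L,-)$ and show they coincide. Using the defining equation of $\icomp$ and one application of \eqref{eq:iHom-left-C-mod-str-3}, the adjunct of the left-hand side reduces to $\id_X \catactl \ieval_{M,N} \circ (\id_{\iHom(M,N)} \catactl \ieval_{L,M}) = \id_X \catactl \ieval_{L,M,N}^{(2)}$; a single application of \eqref{eq:iHom-left-C-mod-str-3} followed by the defining equation of $\icomp_{L,M,N}$ puts the adjunct of the right-hand side in the same form. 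The universal property of the adjunction then completes the proof.

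None of the four steps is genuinely difficult; the only real obstacle is bookkeeping, in particular keeping track of which adjunction is being used (for \eqref{eq:iHom-left-C-mod-str-3} it is $\T_M \dashv \iHom(M,-)$, whereas for \eqref{eq:iHom-composition-a-1} and \eqref{eq:iHom-composition-a-2} it is $\T_L \dashv \iHom(L,-)$) and carefully applying \eqref{eq:iHom-left-C-mod-str-3} with the correct reindexing of variables.
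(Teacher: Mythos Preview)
Your proposal is correct and follows essentially the same approach as the paper: equations \eqref{eq:iHom-left-C-mod-str-2} and \eqref{eq:iHom-left-C-mod-str-3} come from the construction of $\mathfrak{a}$ (the counit $\ieval_{M,-}$ being a morphism of lax module functors), and \eqref{eq:iHom-composition-a-1} is obtained by checking that the right-hand side has adjunct $\ieval^{(2)}_{L,M,N}$. For \eqref{eq:iHom-composition-a-2} there is a minor cosmetic difference: you compare adjuncts of both sides (reducing each to $\id_X \catactl \ieval^{(2)}_{L,M,N}$), whereas the paper substitutes \eqref{eq:iHom-composition-a-1} into both sides and reduces each to the common expression $\iHom(L, \id_X \catactl \ieval_{M,N}) \circ \mathfrak{a}_{X \otimes \iHom(M,N), L, M}$ using naturality of $\mathfrak{a}$ and the module-functor axiom; the two computations are equivalent.
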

\begin{proof}
  Equation \eqref{eq:iHom-left-C-mod-str-2} follows from the construction of $\mathfrak{a}$.
  Also by the construction, the natural transformation $\ieval_{M,-}$ is a morphism of left $\mathcal{C}$-module functors. This implies \eqref{eq:iHom-left-C-mod-str-3}.
  
  Equation \eqref{eq:iHom-left-C-mod-str-2} implies that the right-hand side of \eqref{eq:iHom-composition-a-1} corresponds to the morphism $\ieval^{(2)}_{L,M,N}$ under the adjunction isomorphism \eqref{eq:iHom-definition}. Thus \eqref{eq:iHom-composition-a-1} follows from the definition of $\icomp_{L,M,N}$.
  Finally, we verify \eqref{eq:iHom-composition-a-2}. The left-hand side of \eqref{eq:iHom-composition-a-2} is computed as follows:
  \begin{align*}
    & \icomp_{L,M, X \catactl N} \circ (\mathfrak{a}_{X,M,N} \otimes \id_{\iHom(L,M)}) \\
    & = \iHom(L, \ieval_{M, X \catactl N}) \circ \mathfrak{a}_{\iHom(M, X \catactl N), L, M}
      \circ (\mathfrak{a}_{X,M,N} \otimes \id_{\iHom(L,M)}) \\
    & = \iHom(L, \ieval_{M, X \catactl N})
      \circ \iHom(L, \mathfrak{a}_{X, M, N} \catactl \id_M)
      \circ \mathfrak{a}_{X \otimes \iHom(M, N), L, M} \\
    & = \iHom(L, \id_X \catactl \ieval_{M,N}) \circ \mathfrak{a}_{X \otimes \iHom(M, N), L, M},
  \end{align*}
  where the first equation follows from \eqref{eq:iHom-composition-a-1}, the second from the naturality of $\mathfrak{a}$, and the last from \eqref{eq:iHom-left-C-mod-str-3}.
  On the other hand, the right-hand side of \eqref{eq:iHom-composition-a-2} is computed as follows:
  \begin{align*}
    & \mathfrak{a}_{X, L, N} \circ (\id_X \otimes \icomp_{L,M,N}) \\
    & = \mathfrak{a}_{X,L,N} \circ (\id_X \otimes \iHom(L, \ieval_{M,N})) \circ (\id_X \otimes \mathfrak{a}_{\iHom(M, N), L, M}) \\
    & = \iHom(L, \id_X \catactl \ieval_{M,N}) \circ \mathfrak{a}_{X,L, \iHom(M,N) \catactl M} \circ (\id_X \otimes \mathfrak{a}_{\iHom(M, N), L, M}) \\
    & = \iHom(L, \id_X \catactl \ieval_{M,N}) \circ \mathfrak{a}_{X \otimes \iHom(M, N), L, M},
  \end{align*}
  where the first equality follows from \eqref{eq:iHom-composition-a-1}, the second from the naturality of $\mathfrak{a}$, and the third from the axiom of a left $\mathcal{C}$-module functor. Hence \eqref{eq:iHom-composition-a-2} holds.
\end{proof}

Since $\mathcal{C}$ is a left $\mathcal{C}$-module category by the action given by the monoidal product, the functor $\T_X : \mathcal{C} \to \mathcal{C}$ is defined for each $X \in \mathcal{C}$. If $X \in \mathcal{C}$ is an object such that $\T_X$ has a right adjoint $[X, -] : \mathcal{C} \to \mathcal{C}$, then we have an isomorphism
\begin{equation}
  \label{eq:C-bimod-struc-of-iHom-0}
  [X, \iHom(M, -)]
  = (\T_X)^{\radj} \circ (\T_M)^{\radj}
  \mathop{\cong}^{\eqref{eq:FG-r-adj}}
  (\T_M \circ \T_X)^{\radj}
  = \iHom(X \catactl M, -)
\end{equation}
of lax left $\mathcal{C}$-module functors.

From now on, we assume that $\mathcal{C}$ is rigid.
For $X \in \mathcal{C}$, we can, and do, choose $\T_{X^*}$ as a right adjoint of the functor $\T_X$. We denote by
\begin{equation*}
  \mathfrak{b}_{M,-,X} : \iHom(M, -) \otimes X^*
  \to \iHom(X \catactl M, -)
\end{equation*}
the isomorphism \eqref{eq:C-bimod-struc-of-iHom-0}. By the construction of \eqref{eq:FG-r-adj}, we have
\begin{gather}
  \label{eq:C-bimod-struc-of-iHom-b}
  \mathfrak{b}_{M,N,X}
  = \iHom(X \catactl M, \ieval_{M,N} \circ (\id_{H} \catactl \eval_X \catactl \id_M)) \circ \icoev_{X \catactl M, H \otimes X^*}
\end{gather}
for $M, N \in \mathcal{M}$ and $X \in \mathcal{C}$, where $H = \iHom(M, N)$.
Since \eqref{eq:C-bimod-struc-of-iHom-0} is actually an isomorphism of left $\mathcal{C}$-module functors, we have
\begin{equation}
  \label{eq:C-bimod-struc-of-iHom-2}
  \mathfrak{a}_{X \catactl M, N} \circ (\id_X \otimes \mathfrak{b}_{M, N, Y})
  = \mathfrak{b}_{M, X \catactl N, Y} \circ (\mathfrak{a}_{X,M,N} \otimes \id_{Y^*})
\end{equation}
for $M, N \in \mathcal{M}$ and $X, Y \in \mathcal{C}$.

\begin{lemma}
  \label{lem:iHom-composition}
  For $L, M, N \in \mathcal{M}$ and $X \in \mathcal{C}$, we have
  \begin{gather}
    \label{eq:iHom-composition-b-0}
    \ieval_{X \catactl M, N} \circ (\mathfrak{b}_{M,N,X^*} \catactl \id_{X \catactl M})
    = \ieval_{M,N} \circ (\id_{\iHom(M,N)} \catactl \eval_X \catactl \id_M), \\
    \label{eq:iHom-composition-b-1}
    \mathfrak{b}_{M, N, \iHom(M, N)} \circ \coev_{\iHom(M, N)}
    = \iHom(\ieval_{M,N}, N) \circ \icoev_{N, \unitobj}, \\
    \label{eq:iHom-composition-b-2}
    \icomp_{X \catactl L,M,N} \circ (\id_{\iHom(M, N)} \otimes \mathfrak{b}_{L, M, X})
    = \mathfrak{b}_{L,N,X} \circ (\icomp_{L,M,N} \otimes \id_{X^*}).
  \end{gather}
\end{lemma}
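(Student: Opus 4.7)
The strategy for all three equations is to reduce each to an equality after applying $\ieval_{-, N}$, then exploit the defining formula \eqref{eq:C-bimod-struc-of-iHom-b} and standard triangle identities. Throughout I write $H = \iHom(M, N)$. Equation \eqref{eq:iHom-composition-b-0} is essentially a reformulation of \eqref{eq:C-bimod-struc-of-iHom-b}: the latter exhibits $\mathfrak{b}_{M, N, X}$ as the morphism corresponding to $\ieval_{M, N} \circ (\id_H \catactl \eval_X \catactl \id_M)$ under the adjunction $\T_{X \catactl M} \dashv \iHom(X \catactl M, -)$, and \eqref{eq:iHom-composition-b-0} is the recovery of this adjoint via $\ieval_{X \catactl M, N} \circ (- \catactl \id_{X \catactl M})$. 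Thus it follows immediately from the triangle identity of this adjunction.

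For \eqref{eq:iHom-composition-b-1}, both sides are morphisms $\unitobj \to \iHom(H \catactl M, N)$, and by the adjunction $\T_{H \catactl M} \dashv \iHom(H \catactl M, -)$ it suffices to compare their images under $\ieval_{H \catactl M, N} \circ (- \catactl \id_{H \catactl M})$. Applied to the left side, \eqref{eq:iHom-composition-b-0} with $X = H$ produces $\ieval_{M, N} \circ (\id_H \catactl \eval_H \catactl \id_M) \circ (\coev_H \catactl \id_{H \catactl M})$, which collapses to $\ieval_{M, N}$ by the rigidity triangle identity $(\id_H \otimes \eval_H) \circ (\coev_H \otimes \id_H) = \id_H$. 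Applied to the right side, the dinaturality of $\ieval_{(-), N}$ specialized at $\ieval_{M, N}: H \catactl M \to N$ rewrites the expression as $\ieval_{N, N} \circ (\icoev_{N, \unitobj} \catactl \ieval_{M, N})$, which equals $\ieval_{M, N}$ by the triangle identity $\ieval_{N, N} \circ (\icoev_{N, \unitobj} \catactl \id_N) = \id_N$ of the adjunction $\T_N \dashv \iHom(N, -)$.

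For \eqref{eq:iHom-composition-b-2}, both sides are morphisms $H \otimes \iHom(L, M) \otimes X^* \to \iHom(X \catactl L, N)$, and by the adjunction $\T_{X \catactl L} \dashv \iHom(X \catactl L, -)$ the equation reduces to comparing their images under $\ieval_{X \catactl L, N} \circ (- \catactl \id_{X \catactl L})$. On the left side one first uses the defining property $\ieval_{L, N} \circ (\icomp_{L, M, N} \catactl \id_L) = \ieval_{M, N} \circ (\id_H \catactl \ieval_{L, M})$ of $\icomp$ and then applies \eqref{eq:iHom-composition-b-0}; on the right side these two identities are applied in the opposite order. Both computations collapse to the common expression $\ieval_{M, N} \circ (\id_H \catactl \ieval_{L, M}) \circ (\id_H \catactl \id_{\iHom(L, M)} \catactl \eval_X \catactl \id_L)$ after routine use of the interchange law for $\catactl$.

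The only conceptual content is recognizing \eqref{eq:iHom-composition-b-0} as the adjoint of \eqref{eq:C-bimod-struc-of-iHom-b}; once that is in hand, both \eqref{eq:iHom-composition-b-1} and \eqref{eq:iHom-composition-b-2} are exercises in carefully applying triangle identities and the interchange law, and this bookkeeping is where minor care is needed.
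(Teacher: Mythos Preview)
Your argument is correct. For \eqref{eq:iHom-composition-b-0} you do exactly what the paper does. For \eqref{eq:iHom-composition-b-1} and \eqref{eq:iHom-composition-b-2} you follow a uniform ``pass to the adjoint'' strategy: apply $\ieval_{-,N}\circ(-\catactl\id)$ to both sides and reduce everything to \eqref{eq:iHom-composition-b-0}, the defining equation $\ieval_{L,N}\circ(\icomp\catactl\id)=\ieval^{(2)}$, and triangle identities. The paper instead works directly inside $\iHom$: for \eqref{eq:iHom-composition-b-1} it expands via \eqref{eq:C-bimod-struc-of-iHom-b}, uses naturality of $\icoev$ and the zig-zag, then dinaturality of $\icoev_{-,\unitobj}$; for \eqref{eq:iHom-composition-b-2} it rewrites $\icomp$ through \eqref{eq:iHom-composition-a-1} and then invokes the $\mathfrak{a}$--$\mathfrak{b}$ compatibility \eqref{eq:C-bimod-struc-of-iHom-2}. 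Your route for \eqref{eq:iHom-composition-b-2} is arguably cleaner in that it avoids \eqref{eq:C-bimod-struc-of-iHom-2} entirely, relying only on \eqref{eq:iHom-composition-b-0} and the definition of $\icomp$; the paper's route has the advantage of making the bimodule-functor viewpoint on $\iHom$ explicit, which is what the later sections actually use.
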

\begin{proof}
  Equation \eqref{eq:C-bimod-struc-of-iHom-b} means that the morphism $\mathfrak{b}_{M,N,X}$ corresponds to the right hand side of \eqref{eq:iHom-composition-b-0} under the adjunction isomorphism \eqref{eq:iHom-definition}. Therefore \eqref{eq:iHom-composition-b-0} holds.
  Writing $H = \iHom(M, N)$ to save space, we verify \eqref{eq:iHom-composition-b-1} as follows:
  \begin{align*}
    & \mathfrak{b}_{M, N, H} \circ \coev_{H} \\
    & = \iHom(H \catactl M, \ieval_{M,N} \circ (\id_{H} \catactl \eval_H \catactl \id_M)) \circ \icoev_{H \catactl M, H \otimes H^*} \circ \coev_H \\
    & = \iHom(H \catactl M, \ieval_{M,N}) \circ \icoev_{H \catactl M, \unitobj}
      = \iHom(\ieval_{M,N}, N) \circ \icoev_{N, \unitobj},
  \end{align*}
  where the first equality follows from \eqref{eq:C-bimod-struc-of-iHom-b}, the second from the naturality of $\icoev_{H \catactl M, -}$ and the zig-zag equation for $\eval_H$ and $\coev_H$, and the last from the dinaturality of $\coev_{-,\unitobj}$.
  Finally, we prove \eqref{eq:iHom-composition-b-2} as follows:
  \begin{align*}
    & \icomp_{X \catactl L, M, N} \circ (\id_{H} \otimes \mathfrak{b}_{L, M, X}) \\
    & = \iHom(X \catactl L, \ieval_{M,N}) \circ \mathfrak{a}_{H, X \catactl L, M} \circ (\id_{H} \otimes \mathfrak{b}_{L, M, X}) \\
    & = \iHom({}^*\!X \catactl L, \ieval_{M,N})
      \circ \mathfrak{b}_{L, H \catactl M, X} \circ (\mathfrak{a}_{H, L, M} \otimes \id_{X^*}) \\
    & = \mathfrak{b}_{L, N, X}
      \circ (\iHom(L, \ieval_{M,N}) \otimes \id_{X^*})
      \circ (\mathfrak{a}_{H, L, M} \otimes \id_{X^*}) \\
    & = \mathfrak{b}_{L,N,X} \circ (\icomp_{L,M,N} \otimes \id_{X^*}),
  \end{align*}
  where the first and the last equality follow from \eqref{eq:iHom-composition-a-1}, the second from \eqref{eq:C-bimod-struc-of-iHom-2}, and the third from the naturality of $\mathfrak{b}$.
\end{proof}

We make the category $\mathcal{M}^{\op}$ a right $\mathcal{C}$-module category by the action
\begin{equation}
  \label{eq:M-op-action}
  M^{\op} \catactr X = ({}^* \! X \catactl M)^{\op}
  \quad (M \in \mathcal{M}, X \in \mathcal{C}).
\end{equation}
By the coherence property of \eqref{eq:FG-r-adj}, we see that the natural isomorphism $\mathfrak{b}_{-,N,-}$ makes $\iHom(-,N)$ a right $\mathcal{C}$-module functor from $\mathcal{M}^{\op}$ to $\mathcal{C}$.
Equation \eqref{eq:C-bimod-struc-of-iHom-2} means that $\mathfrak{a}$ and $\mathfrak{b}$ make $\iHom$ a $\mathcal{C}$-bimodule functor from $\mathcal{M}^{\op} \times \mathcal{M}$ to $\mathcal{C}$, where the left and the right action of $\mathcal{C}$ on the source are given by
\begin{equation}
  \label{eq:C-bimodule-M-op-x-M}
  X \catactl (M^{\op}, N) \catactr Y
  = (({}^* Y \catactl M)^{\op}, X \catactl N)
  \quad (X, Y \in \mathcal{C}, M, N \in \mathcal{M}).
\end{equation}

Finally, we note the following relation between $\icomp$, $\mathfrak{a}$ and $\mathfrak{b}$.

\begin{lemma}
  \label{lem:iHom-composition-2}
  For $L, M, N \in \mathcal{M}$ and $X \in \mathcal{C}$, we have
  \begin{gather}
    \label{eq:iHom-composition-ab}
    \begin{gathered}
      \icomp_{L, X \catactl M, N} \circ (\mathfrak{b}_{M,N,X} \otimes \mathfrak{a}_{X,L,M}) \\
      = \icomp_{L,M,N} \circ (\id_{\iHom(M,N)} \otimes \eval_X \otimes \id_{\iHom(L,M)}).
    \end{gathered}
  \end{gather}
\end{lemma}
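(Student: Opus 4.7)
The plan is to invoke the universal property of $\iHom(L,-)$: two morphisms with codomain $\iHom(L,N)$ coincide if and only if they become equal after being acted on $L$ and postcomposed with $\ieval_{L,N}$. Writing $f_1$ and $f_2$ for the left- and right-hand sides of \eqref{eq:iHom-composition-ab}, I thus reduce the claim to the equality $\ieval_{L,N} \circ (f_1 \catactl \id_L) = \ieval_{L,N} \circ (f_2 \catactl \id_L)$ of morphisms in $\mathcal{M}$.

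For the right-hand side, I would use the defining relation $\ieval_{L,N} \circ (\icomp_{L,M,N} \catactl \id_L) = \ieval_{M,N} \circ (\id_{\iHom(M,N)} \catactl \ieval_{L,M})$ and separate the $\eval_X$ appearing on the disjoint $X^* \otimes X$ factor from the inner $\ieval_{L,M}$. This rewrites $\ieval_{L,N} \circ (f_2 \catactl \id_L)$ as
\[
  \ieval_{M,N} \circ (\id_{\iHom(M,N)} \catactl \eval_X \catactl \ieval_{L,M}).
\]

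For the left-hand side I would proceed similarly: the definition of $\icomp_{L, X \catactl M, N}$ turns $\ieval_{L,N} \circ (f_1 \catactl \id_L)$ into
$
  \ieval_{X \catactl M, N} \circ (\id \catactl \ieval_{L, X \catactl M}) \circ ((\mathfrak{b}_{M,N,X} \otimes \mathfrak{a}_{X,L,M}) \catactl \id_L).
$
Equation \eqref{eq:iHom-left-C-mod-str-3} replaces $\ieval_{L, X \catactl M} \circ (\mathfrak{a}_{X,L,M} \catactl \id_L)$ by $\id_X \catactl \ieval_{L,M}$, and pushing this past the unrelated $\mathfrak{b}_{M,N,X}$ produces $\ieval_{X \catactl M, N} \circ (\mathfrak{b}_{M,N,X} \catactl \id_{X \catactl M})$ precomposed with $\ieval_{L,M}$ acting on the last $\iHom(L,M) \catactl L$ factors. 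An application of \eqref{eq:iHom-composition-b-0} then converts $\ieval_{X \catactl M, N} \circ (\mathfrak{b}_{M,N,X} \catactl \id_{X \catactl M})$ into $\ieval_{M,N} \circ (\id_{\iHom(M,N)} \catactl \eval_X \catactl \id_M)$, yielding exactly the expression obtained for the right-hand side.

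The argument is essentially pure diagram chasing, and the only real obstacle will be bookkeeping: one must carefully track which identity morphisms act on which tensor factors and consistently use the strictness conventions that conflate $\otimes$ and $\catactl$. Beyond the definition of $\icomp$ via $\ieval_{L,M,N}^{(2)}$ and the two identities \eqref{eq:iHom-left-C-mod-str-3} and \eqref{eq:iHom-composition-b-0}, no additional categorical input is needed.
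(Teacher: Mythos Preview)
Your proposal is correct and follows essentially the same route as the paper's proof: both reduce the equality to one in $\mathcal{M}$ via the adjunction (i.e., by postcomposing with $\ieval_{L,N}$ after acting on $L$), unfold $\icomp$ via $\ieval^{(2)}$, apply \eqref{eq:iHom-left-C-mod-str-3} to eliminate $\mathfrak{a}$, and apply \eqref{eq:iHom-composition-b-0} (equivalently \eqref{eq:C-bimod-struc-of-iHom-b}) to eliminate $\mathfrak{b}$. The only cosmetic difference is that the paper computes only the adjoint of the left-hand side and then recognizes the result as the adjoint of the right-hand side, whereas you compute both separately.
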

\begin{proof}
  We compute the morphism corresponding to the left-hand side of \eqref{eq:iHom-composition-ab} under the adjunction isomorphism \eqref{eq:iHom-definition} as follows:
  \begin{align*}
    & \ieval_{L, N} ((\icomp_{L, X \catactl M, N} \circ (\mathfrak{b}_{M,N,X} \otimes \mathfrak{a}_{X,L,M})) \catactl \id_L) \\
    & = \ieval_{X \catactl M, N} \circ (\id_{\iHom(X \catactl M, N)} \catactl \ieval_{L, X \catactl N})
      \circ (\mathfrak{b}_{M,N,X} \catactl \mathfrak{a}_{X,L,M} \catactl \id_L) \\
    & = \ieval_{X \catactl M, N} \circ (\mathfrak{b}_{M,N,X} \catactl \id_X \catactl \ieval_{L,M}) \\
    & = \ieval_{M,N} \circ (\id_{\iHom(M,N)} \catactl \eval_X \catactl \ieval_{L,M}) \\
    & = \ieval_{L,M,N}^{(2)} \circ (\id_{\iHom(M,N)} \catactl \eval_X \catactl \id_{\iHom(L,M)} \catactl \id_L),
  \end{align*}
  where the first equality follows from the definition of $\icomp$,
  the second from \eqref{eq:iHom-left-C-mod-str-3},
  and the third from \eqref{eq:C-bimod-struc-of-iHom-b}.
  The result corresponds to the right-hand side of \eqref{eq:iHom-composition-ab} under \eqref{eq:iHom-definition}. Therefore \eqref{eq:iHom-composition-ab} holds. The proof is done.
\end{proof}

\subsection{Yoneda lemmas}

Throughout this subsection, we fix a category $\mathcal{L}$ and a closed left module category $\mathcal{M}$ over a monoidal category $\mathcal{C}$.
Given a functor $F: \mathcal{L} \to \mathcal{M}$, we define the functor $h_F$ by
\begin{equation*}
  h_F : \mathcal{M}^{\op} \times \mathcal{L} \to \mathcal{C},
  \quad (M^{\op}, L) \mapsto \iHom(M, F(L)).
\end{equation*}
In this subsection, we give Yoneda type lemmas for the internal Hom functor.

\begin{lemma}
  \label{lem:internal-Yoneda-0}
  Two functors $F$ and $G$ from $\mathcal{L}$ to $\mathcal{M}$ are isomorphic if and only if the functors $h_F$ and $h_G$ are isomorphic.
\end{lemma}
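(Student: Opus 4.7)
For the forward direction I would simply note that a natural isomorphism $\alpha : F \Rightarrow G$ gives rise, via functoriality of $\iHom$ in its second argument, to a natural isomorphism $h_F \cong h_G$ with components $\iHom(M, \alpha_L) : \iHom(M, F(L)) \to \iHom(M, G(L))$.

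For the converse, given $\phi : h_F \cong h_G$, my plan is to reduce the claim to the ordinary (unenriched) Yoneda lemma for the category $\mathcal{M}$ by testing against the unit object of $\mathcal{C}$. Applying $\Hom_{\mathcal{C}}(\unitobj, -)$ entry-wise to $\phi$ and invoking the defining adjunction \eqref{eq:iHom-definition} with $X = \unitobj$ will produce a natural isomorphism
\begin{equation*}
  \tilde\phi_{M, L} : \Hom_{\mathcal{M}}(M, F(L)) \xrightarrow{\ \cong\ } \Hom_{\mathcal{M}}(M, G(L))
\end{equation*}
of functors $\mathcal{M}^{\op} \times \mathcal{L} \to \mathbf{Set}$. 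For each fixed $L \in \mathcal{L}$, I would then apply the classical Yoneda lemma to $\tilde\phi_{-, L}$ to extract a unique isomorphism $\alpha_L : F(L) \to G(L)$ in $\mathcal{M}$ such that $\tilde\phi_{M, L} = \Hom_{\mathcal{M}}(M, \alpha_L)$ for every $M$.

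The last step will be to promote the pointwise isomorphisms $\alpha_L$ to a natural transformation in $L$. For $f : L \to L'$ in $\mathcal{L}$, naturality of $\tilde\phi$ in the $\mathcal{L}$-variable will give $\Hom_{\mathcal{M}}(M, G(f) \circ \alpha_L) = \Hom_{\mathcal{M}}(M, \alpha_{L'} \circ F(f))$ for every $M$, so a second appeal to Yoneda will yield $G(f) \circ \alpha_L = \alpha_{L'} \circ F(f)$, proving $\alpha : F \cong G$. No single step is a serious obstacle; the real content is the initial reduction, which leverages \eqref{eq:iHom-definition} to trade the enriched statement for two independent applications of classical Yoneda and so avoids any direct manipulation of $\icoev$ or $\ieval$.
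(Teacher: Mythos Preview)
Your proposal is correct and follows essentially the same approach as the paper: apply $\Hom_{\mathcal{C}}(\unitobj,-)$ together with the adjunction \eqref{eq:iHom-definition} to reduce to the ordinary Yoneda lemma. The paper is simply terser, absorbing your explicit naturality-in-$L$ check into the single phrase ``by the ordinary Yoneda lemma, $F \cong G$ as functors.''
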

\begin{proof}
  The `only if' part is trivial. We prove the `if' part. Suppose that $h_F$ and $h_G$ are isomorphic. Then we have natural isomorphisms
  \begin{gather*}
    \Hom_{\mathcal{M}}(M, F(L))
    \cong \Hom_{\mathcal{C}}(\unitobj, \iHom(M, F(L))) \\
    \cong \Hom_{\mathcal{C}}(\unitobj, \iHom(M, G(L)))
    \cong \Hom_{\mathcal{M}}(M, G(L))
  \end{gather*}
  for $M \in \mathcal{M}$ and $L \in \mathcal{L}$. Thus, by the ordinary Yoneda lemma, we conclude that $F \cong G$ as functors.
\end{proof}

Given two functors $P$ and $Q$ with the same source and target, we denote by $\Nat(P, Q)$ the class of natural transformations from $P$ to $Q$.
Unlike the ordinary Yoneda lemma, Lemma~\ref{lem:internal-Yoneda-0} does not give a bijection between $\Nat(F, G)$ and $\Nat(h_F, h_G)$.

Now we assume that $\mathcal{C}$ is rigid.
Then $\mathcal{M}^{\op} \times \mathcal{L}$ is a right $\mathcal{C}$-module category by the action given in a similar way as \eqref{eq:C-bimodule-M-op-x-M}, and the functor $h_F$ is a right $\mathcal{C}$-module functor. If $\theta \in \Nat(F, G)$, then $\iHom(\id, \theta)$ is a morphism of right $\mathcal{C}$-module functors from $h_F$ to $h_G$.

\begin{lemma}
  \label{lem:internal-Yoneda-1}
  For functors $F, G : \mathcal{L} \to \mathcal{M}$, the map
  \begin{equation}
    \label{eq:internal-Yoneda-1-bijection}
    \Nat(F, G) \to \Nat_{\mathcal{C}}(h_F, h_G), \quad \theta \mapsto \iHom(\id, \theta)
  \end{equation}
  is bijective.
  Here, the target of the map \eqref{eq:internal-Yoneda-1-bijection} is the class of morphisms of right $\mathcal{C}$-module functors from $h_F$ to $h_G$.
\end{lemma}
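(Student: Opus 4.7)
The plan is to construct an explicit inverse to the map in \eqref{eq:internal-Yoneda-1-bijection}. Given a morphism $\alpha : h_F \to h_G$ of right $\mathcal{C}$-module functors, for each $L \in \mathcal{L}$ let $\theta_L : F(L) \to G(L)$ be the morphism that corresponds under the adjunction \eqref{eq:iHom-definition} to the composite $\alpha_{F(L), L} \circ \icoev_{F(L), \unitobj} : \unitobj \to \iHom(F(L), G(L))$. Naturality of $\theta$ in $L$ will follow from a short computation applying the naturality of $\alpha$ in each of its two arguments (to $F(f) : F(L) \to F(L')$ on the $\mathcal{M}^{\op}$-side and to $f : L \to L'$ on the $\mathcal{L}$-side), after which both resulting composites reduce to $\alpha_{F(L), L'}$ composed with the morphism $\unitobj \to \iHom(F(L), F(L'))$ that represents $F(f)$ via \eqref{eq:iHom-definition}. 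Injectivity of $\theta \mapsto \iHom(\id, \theta)$ is then immediate: specializing at $M = F(L)$ and pre-composing with $\icoev_{F(L), \unitobj}$ recovers, through \eqref{eq:iHom-definition}, the morphism $\theta_L$ itself.

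The main step, and the principal obstacle, is to show that $\iHom(\id_M, \theta_L) = \alpha_{M, L}$ for \emph{every} $M \in \mathcal{M}$, not merely the special case $M = F(L)$ used to define $\theta_L$. Here the compatibility of $\alpha$ with the right $\mathcal{C}$-module structure $\mathfrak{b}$ becomes essential. Setting $H := \iHom(M, F(L))$ and specializing that compatibility to $X = H^*$ (so that ${}^*X = H$) yields
\[
  \mathfrak{b}_{M, G(L), H} \circ (\alpha_{M, L} \otimes \id_{H^*}) = \alpha_{H \catactl M, L} \circ \mathfrak{b}_{M, F(L), H}.
\]
After pre-composing with $\coev_H$, the right-hand side simplifies via \eqref{eq:iHom-composition-b-1} together with the naturality of $\alpha$ applied to $\ieval_{M, F(L)} : H \catactl M \to F(L)$ to the expression $\iHom(\ieval_{M, F(L)}, G(L)) \circ (\alpha_{F(L), L} \circ \icoev_{F(L), \unitobj})$. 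An identical sequence of manipulations applied with $\iHom(\id_M, \theta_L)$ in place of $\alpha_{M, L}$---using the naturality of $\mathfrak{b}$ in its middle argument in place of the module compatibility---produces exactly the same expression. Since $\mathfrak{b}_{M, G(L), H}$ is invertible and the assignment $\varphi \mapsto (\varphi \otimes \id_{H^*}) \circ \coev_H$ furnishes a bijection $\Hom_{\mathcal{C}}(H, K) \to \Hom_{\mathcal{C}}(\unitobj, K \otimes H^*)$ by rigidity, the desired equality $\alpha_{M, L} = \iHom(\id_M, \theta_L)$ follows.

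It remains to check that the map in \eqref{eq:internal-Yoneda-1-bijection} is well-defined, i.e., that $\iHom(\id, \theta)$ is a morphism of right $\mathcal{C}$-module functors for every $\theta \in \Nat(F, G)$; this is a direct consequence of the naturality of $\mathfrak{b}_{M, -, X}$ in its middle argument. The heart of the argument is the surjectivity step: without the module-functor compatibility the values $\alpha_{M, L}$ for varying $M$ are essentially independent data, and it is precisely \eqref{eq:iHom-composition-b-1}---identifying $\mathfrak{b}_{M, N, \iHom(M, N)} \circ \coev$ with $\iHom(\ieval_{M,N}, N) \circ \icoev_{N, \unitobj}$---that supplies the bridge letting the diagonal value at $M = F(L)$ determine all the others.
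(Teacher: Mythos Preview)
Your proof is correct and follows essentially the same strategy as the paper's: construct the inverse by specializing to $M=F(L)$, then use the right $\mathcal{C}$-module compatibility of $\alpha$ with respect to $\mathfrak{b}$ to show that the diagonal value determines $\alpha_{M,L}$ for all $M$. The paper packages the surjectivity step as a commutative diagram of Hom-sets (its Figure~\ref{fig:proof-internal-Yoneda-1}), observing that the columns compose to the adjunction isomorphism and then invoking the ordinary Yoneda lemma; you instead work internally in $\mathcal{C}$, specializing to $X=H^*$ and invoking \eqref{eq:iHom-composition-b-1} directly, but the content is the same. One small point: when you say the two computations ``produce exactly the same expression,'' the expressions are literally $\iHom(\ieval_{M,F(L)},G(L))\circ\alpha_{F(L),L}\circ\icoev_{F(L),\unitobj}$ versus $\iHom(\ieval_{M,F(L)},G(L))\circ\iHom(\id_{F(L)},\theta_L)\circ\icoev_{F(L),\unitobj}$, and their equality is precisely the defining property of $\theta_L$; it would be worth saying this explicitly.
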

\begin{proof}
  We denote the map \eqref{eq:internal-Yoneda-1-bijection} by $\Phi$.
  For $\xi \in \Nat_{\mathcal{C}}(h_F, h_G)$, we define $\overline{\Phi}(\xi)$ to be the unique natural transformation $\theta : F \to G$ such that the diagram
  \begin{equation*}
    \begin{tikzcd}[column sep = 32pt]
      \Hom_{\mathcal{M}}(M, F(L)) \arrow[r, "{\eqref{eq:iHom-definition}}"]
      \arrow[d, "{\Hom_{\mathcal{M}}(M, \theta_L)}"']
      & \Hom_{\mathcal{C}}(\unitobj, \iHom(M, F(L)))
      \arrow[d, "{\Hom_{\mathcal{C}}(\unitobj, \xi_{M, L})}"] \\
      \Hom_{\mathcal{M}}(M, G(L)) \arrow[r, "{\eqref{eq:iHom-definition}}"]
      & \Hom_{\mathcal{C}}(\unitobj, \iHom(M, G(L))) 
    \end{tikzcd}
  \end{equation*}
  is commutative for all $M \in \mathcal{M}$ and $L \in \mathcal{L}$. Explicitly,
  \begin{equation}
    \label{eq:internal-Yoneda-1-bijection-inverse}
    \overline{\Phi}(\xi)_L = \ieval_{F(L), G(L)} \circ (\xi_{F(L), L} \catactl \id_{F(L)}) \circ (\icoev_{F(L), \unitobj} \catactl \id_{F(L)}).
  \end{equation}
  It is obvious from the Yoneda lemma that $\overline{\Phi} \Phi$ is the identity map.
  We show that $\overline{\Phi} \Phi$ is also the identity map. We pick an element $\xi$ of the source of $\overline{\Phi}$ and set $\theta = \overline{\Phi}(\xi)$.
  We consider the diagram given as Figure~\ref{fig:proof-internal-Yoneda-1}.
  It is obvious that the top square of the diagram commutes.
  The middle one commutes by the assumption that $\xi$ is a morphism of right $\mathcal{C}$-module functors.
  The bottom one also commutes by the definition of $\theta$.
  Hence the diagram of Figure~\ref{fig:proof-internal-Yoneda-1} is commutative.
  By the definition of $\mathfrak{b}$, the composition along the left column is the adjunction isomorphism
  \begin{equation*}
    \Hom_{\mathcal{M}}(X \catactl M, F(L))
    \cong \Hom_{\mathcal{C}}(X, \iHom(M, F(L)),
  \end{equation*}
  and the same is said to the right column. By the Yoneda lemma, we obtain
  \begin{equation*}
    \xi_{M,L} = \iHom(\id_M, \theta_L) = \Phi \overline{\Phi}(\xi)_{M, L}
  \end{equation*}
  for all $M \in \mathcal{M}$ and $L \in \mathcal{L}$. The proof is done.
\end{proof}

\begin{figure}
  \centering
  \begin{equation*}
    \begin{tikzcd}[column sep = 60pt, row sep = 24pt]
      \Hom_{\mathcal{C}}(X, \iHom(M, F(L)))
      \arrow[r, "{\Hom_{\mathcal{C}}(X, \xi_{M,L})}"]
      \arrow[d, "{\text{adjunction}}"']
      & \Hom_{\mathcal{C}}(X, \iHom(M, G(L)))
      \arrow[d, "{\text{adjunction}}"] \\
      \Hom_{\mathcal{C}}(\unitobj, \iHom(M, F(L)) \otimes X^*)
      \arrow[r, "{\Hom_{\mathcal{C}}(\unitobj, \xi_{M,L} \otimes \id_{X^*})}" {yshift = 5pt}]
      \arrow[d, "{\Hom_{\mathcal{C}}(\unitobj, \mathfrak{b})}"']
      & \Hom_{\mathcal{C}}(\unitobj, \iHom(M, G(L)) \otimes X^*)
      \arrow[d, "{\Hom_{\mathcal{C}}(\unitobj, \mathfrak{b})}"] \\
      \Hom_{\mathcal{C}}(\unitobj, \iHom(X \catactl M, F(L)))
      \arrow[r, "{\Hom_{\mathcal{C}}(\unitobj, \xi_{X \catactl M, N})}"]
      \arrow[d, "{\text{adjunction}}"']
      & \Hom_{\mathcal{C}}(\unitobj, \iHom(X \catactl M, G(L)))
      \arrow[d, "{\text{adjunction}}"] \\
      \Hom_{\mathcal{C}}(X \catactl M, F(L))
      \arrow[r, "{\Hom_{\mathcal{C}}(X \catactl M, \theta_L)}"]
      & \Hom_{\mathcal{C}}(X \catactl M, G(L))
    \end{tikzcd}
  \end{equation*}
  \caption{Proof of Lemma~\ref{lem:internal-Yoneda-1}}
  \label{fig:proof-internal-Yoneda-1}
\end{figure}

We assume that $\mathcal{L}$ is a left $\mathcal{C}$-module category.
If $F: \mathcal{L} \to \mathcal{M}$ is a left $\mathcal{C}$-module functor, then the functor $h_F$ is a $\mathcal{C}$-bimodule functor from $\mathcal{M}^{\op} \times \mathcal{L}$ to $\mathcal{C}$, where the action of $\mathcal{C}$ on the source is given in a similar way as \eqref{eq:C-bimodule-M-op-x-M}.

\begin{lemma}[{\it cf}. {\cite[Lemma 4.11]{MR3435098}}]
  \label{lem:internal-Yoneda-2}
  The map
  \begin{equation}
    \label{eq:internal-Yoneda-2}
    {}_{\mathcal{C}}\Nat(F, G) \to {}_{\mathcal{C}}\Nat_{\mathcal{C}}(h_F, h_G),
    \quad \theta \mapsto \iHom(\id, \theta)
  \end{equation}
  is bijective for all left $\mathcal{C}$-module functors $F$ and $G$ from $\mathcal{L}$ to $\mathcal{M}$.
  Here, the source and the target of the map \eqref{eq:internal-Yoneda-2} are the class of morphisms of left $\mathcal{C}$-module functors from $F$ to $G$ and the class of morphisms of $\mathcal{C}$-bimodule functors from $h_F$ to $h_G$, respectively.
\end{lemma}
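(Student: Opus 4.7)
My plan is to bootstrap from Lemma~\ref{lem:internal-Yoneda-1}, which already establishes that the assignment $\theta \mapsto \iHom(\id, \theta)$ is a bijection $\Nat(F, G) \to \Nat_{\mathcal{C}}(h_F, h_G)$ between ordinary natural transformations and morphisms of right $\mathcal{C}$-module functors. Since both sides of the map \eqref{eq:internal-Yoneda-2} are subclasses of these sets, it suffices to verify that, under this bijection, $\theta$ is a morphism of left $\mathcal{C}$-module functors if and only if $\iHom(\id, \theta)$ is additionally compatible with the left $\mathcal{C}$-module structures of $h_F$ and $h_G$.

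To carry this out, I would first write the left $\mathcal{C}$-module structure of $h_F$ explicitly. Denoting the structure morphism of $F$ by $F_2 : X \catactl F(-) \to F(X \catactl -)$ and recalling that $\iHom(M, -)$ carries the lax left $\mathcal{C}$-module structure $\mathfrak{a}$ of \eqref{eq:iHom-left-C-mod-str}, the left structure of $h_F$ at $(X, M^{\op}, L)$ is the composite
\[
\iHom(\id_M, F_2) \circ \mathfrak{a}_{X, M, F(L)} : X \otimes \iHom(M, F(L)) \to \iHom(M, F(X \catactl L)),
\]
and analogously for $h_G$. The compatibility square for $\iHom(\id, \theta)$ with these left structures, after using the naturality of $\mathfrak{a}_{X, M, -}$ to commute $\iHom(\id_M, \theta_L)$ through $\mathfrak{a}$ and cancelling the invertible morphism $\mathfrak{a}_{X, M, F(L)}$ (invertible because $\mathcal{C}$ is rigid), reduces to the equality
\[
\iHom\bigl(\id_M, \theta_{X \catactl L} \circ F_2\bigr) = \iHom\bigl(\id_M, G_2 \circ (\id_X \catactl \theta_L)\bigr)
\]
of morphisms $\iHom(M, X \catactl F(L)) \to \iHom(M, G(X \catactl L))$, required for all $M \in \mathcal{M}$. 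Letting $M$ range over $\mathcal{M}$ and invoking the Yoneda-type faithfulness used in the proof of Lemma~\ref{lem:internal-Yoneda-0} (or, concretely, post-composing with $\ieval$ and pre-composing with $\icoev$ as in \eqref{eq:internal-Yoneda-1-bijection-inverse}), this is equivalent to $\theta_{X \catactl L} \circ F_2 = G_2 \circ (\id_X \catactl \theta_L)$, which is exactly the definition of $\theta$ being a morphism of left $\mathcal{C}$-module functors.

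The only delicate aspect I anticipate is bookkeeping: keeping the already-verified right $\mathcal{C}$-module compatibility (built from $\mathfrak{b}$) separate from the new left $\mathcal{C}$-module compatibility (built from $\mathfrak{a}$ and $F_2$, $G_2$), and using the bimodule coherence \eqref{eq:C-bimod-struc-of-iHom-2} to confirm that these two conditions decouple so that verifying left-compatibility adds no unexpected constraint. No substantially new idea beyond careful accounting is needed.
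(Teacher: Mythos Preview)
Your proposal is correct and follows essentially the same strategy as the paper: both bootstrap from the bijection $\Phi$ of Lemma~\ref{lem:internal-Yoneda-1} and reduce the statement to checking that $\theta$ is a morphism of left $\mathcal{C}$-module functors if and only if $\Phi(\theta)$ is additionally compatible with the left $\mathcal{C}$-module structures. The only cosmetic difference is in the ``if'' direction: you argue by cancelling $\mathfrak{a}$ and invoking the faithfulness of $M \mapsto \iHom(M,-)$, whereas the paper plugs the explicit inverse formula~\eqref{eq:internal-Yoneda-1-bijection-inverse} for $\theta$ in terms of $\xi$ and verifies the left-module compatibility directly using~\eqref{eq:iHom-left-C-mod-str-3}; these are two phrasings of the same computation.
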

\begin{proof}
  Let $\Phi : \Nat(F, G) \to \Nat_{\mathcal{C}}(h_F, h_G)$ be the bijection of Lemma \ref{lem:internal-Yoneda-1}.
  It suffices to show that an element $\theta \in \Nat(F, G)$ is a morphism of right $\mathcal{C}$-module functor if and only if $\xi := \Phi(\theta)$ is a morphism of $\mathcal{C}$-bimodule functors.
  The `only if' part is obvious. We show the `if' part.
  We recall from the proof of Lemma \ref{lem:internal-Yoneda-1} that $\theta$ is expressed by $\xi$ as the right hand side of \eqref{eq:internal-Yoneda-1-bijection-inverse}. By using \eqref{eq:iHom-left-C-mod-str-3} and \eqref{eq:internal-Yoneda-1-bijection-inverse}, one can directly check that $\theta$ is a morphism of left $\mathcal{C}$-module functors. The proof is done.
\end{proof}

\section{Finite tensor categories and their modules}
\label{sec:ftc-and-modules}

\subsection{Finite module categories}

From now on till the end of this paper, we work over a field $\bfk$.
Given algebras $A$ and $B$ over $\bfk$, we denote by $\lmod{A}$, $\rmod{B}$ and $\bimod{A}{B}$ the category of finite-dimensional left $A$-modules, right $B$-modules and $A$-$B$-bimodules, respectively. We set $\Vect := \lmod{\bfk}$.
In this section, we summarize fundamental results on finite tensor categories and their modules that will be needed later. We first recall basic terminology:
\begin{enumerate}
\item A {\em finite abelian category} is a $\bfk$-linear category that is equivalent to $\rmod{A}$ as a $\bfk$-linear category for some finite-dimensional algebra $A$ over $\bfk$.
\item A {\em finite tensor category} is a finite abelian category $\mathcal{C}$ equipped with a structure of a rigid monoidal category such that the monoidal product $\otimes : \mathcal{C} \times \mathcal{C} \to \mathcal{C}$ is bilinear and the unit object $\unitobj \in \mathcal{C}$ is simple.
\item Let $\mathcal{C}$ be a finite tensor category. A {\em finite left $\mathcal{C}$-module category} is a finite abelian category $\mathcal{M}$ equipped with a structure of a left $\mathcal{C}$-module category such that the action $\catactl : \mathcal{C} \times \mathcal{M} \to \mathcal{M}$ is bilinear and right exact in each variable. A {\em finite right $\mathcal{C}$-module category} is defined analogously.
\item An {\em exact left $\mathcal{C}$-module category} is a finite left $\mathcal{C}$-module category $\mathcal{M}$ such that $P \catactl M$ is projective for all $M \in \mathcal{M}$ and all projective $P \in \mathcal{C}$.
\end{enumerate}
Unless otherwise noted, functors between finite abelian categories are implicitly assumed to be $\bfk$-linear.

\begin{example}
  \label{ex:Vect-action}
  Every finite abelian category $\mathcal{M}$ is a finite left $\Vect$-module category by the action $\catactl$ defined so that there is a natural isomorphism
  \begin{equation*}
    \Hom_{\mathcal{M}}(V \catactl M, N) \cong \Hom_{\bfk}(V, \Hom_{\mathcal{M}}(M, N))
    \quad (V \in \Vect, M, N \in \mathcal{M}).
  \end{equation*}
\end{example}

\begin{example}
  \label{ex:internal-Hom-C-A}
  Let $\mathcal{C}$ be a finite tensor category, and let $A$ be an algebra in $\mathcal{C}$. Then the category $\mathcal{C}_A$ of right $A$-modules in $\mathcal{C}$ is a finite left $\mathcal{C}$-module category.
  Moreover, $\mathcal{C}_A$ is closed. Indeed, the internal Hom functor is given by
  \begin{equation*}
    \iHom_A(M, N) = \Ker(\id_N \otimes \act_M^* - \act_N^{\natural} \otimes \id_{M^*} : N \otimes M^* \to N \otimes A^* \otimes M^*)
  \end{equation*}
  for $M, N \in \mathcal{C}_A$, where $\act_X : X \otimes A \to X$ for $X \in \mathcal{C}_A$ is the action of $A$ and
  \begin{equation*}
    \act_X^{\natural} := (\act_X \otimes \id_{A^*}) \circ (\id_X \otimes \coev_A) : X \to X \otimes A^*.
  \end{equation*}
\end{example}

\subsection{Morita theory in a finite tensor category}

Let $\mathcal{C}$ be a finite tensor category.
We say that two algebras $A$ and $B$ in $\mathcal{C}$ are {\em Morita equivalent} \cite[Definition 7.8.17]{MR3242743} if $\mathcal{C}_A \approx \mathcal{C}_B$ as left $\mathcal{C}$-module categories.
The algebras $A$ and $B$ are Morita equivalent if and only if ${}_A\mathcal{C} \approx {}_B\mathcal{C}$ as right $\mathcal{C}$-module categories, since, for every algebra $R$ in $\mathcal{C}$, there is an anti-equivalence between ${}_R\mathcal{C}$ and $\mathcal{C}_R$ induced by the duality functor of $\mathcal{C}$.

In this subsection, we recall basic results relevant to the Morita equivalence of algebras in $\mathcal{C}$.
Let $\mathcal{M}$ be a finite left $\mathcal{C}$-module category.
We note that $\mathcal{M}$ is closed by the finiteness of $\mathcal{M}$ and the right exactness of the action of $\mathcal{C}$ on $\mathcal{M}$.
Following to \cite[Definition 2.21]{2014arXiv1406.4204D}, we introduce:

\begin{definition}
  Given an object $M \in \mathcal{M}$, we say that:
  \begin{enumerate}
  \item $M$ is {\em $\mathcal{C}$-projective} if the functor $\iHom(M,-)$ is exact.
  \item $M$ is {\em $\mathcal{C}$-injective} if the functor $\iHom(-,M)$ is exact.
  \item $M$ is a {\em $\mathcal{C}$-generator} if the functor $\iHom(M, -)$ is faithful.
  \item $M$ is a {\em $\mathcal{C}$-progenerator} if it is a $\mathcal{C}$-projective $\mathcal{C}$-generator.
  \end{enumerate}
\end{definition}

We denote by $\relprj{\mathcal{C}}{\mathcal{M}}$ and $\relinj{\mathcal{C}}{\mathcal{M}}$ the full subcategory of $\mathcal{M}$ consisting of all $\mathcal{C}$-projective objects and all $\mathcal{C}$-injective objects, respectively.
The category $\mathcal{M}^{\op}$ is a finite left $\mathcal{C}^{\op}$-module category by the action given by $X^{\op} \catactl M^{\op} = (X \catactl M)^{\op}$ for $X \in \mathcal{C}$ and $M \in \mathcal{M}$ (here, unlike Subsection~\ref{subsec:closed-module-cat}, we do not regard $\mathcal{M}^{\op}$ as a right $\mathcal{C}$-module category). Since there are natural isomorphisms
\begin{gather*}
  \Hom_{\mathcal{M}^{\op}}(X^{\op} \catactl M^{\op}, N^{\op})
  = \Hom_{\mathcal{M}}(N, X \catactl M)
  \cong \Hom_{\mathcal{M}}(X^* \catactl N, M) \\
  \cong \Hom_{\mathcal{C}}(X^*, \iHom(N, M))
  \cong \Hom_{\mathcal{C}^{\op}}(X^{\op}, ({}^*\iHom(N, M))^{\op})
\end{gather*}
for $M, N \in \mathcal{M}$ and $X \in \mathcal{C}$, we have
\begin{equation*}
  \iHom_{\mathcal{M}^{\op}}(M^{\op}, N^{\op}) = ({}^*\iHom_{\mathcal{M}}(N, M))^{\op}
  \quad (M, N \in \mathcal{M})
\end{equation*}
in $\mathcal{C}^{\op}$. Hence we may identify
\begin{equation}
  \label{eq:relative-proj-opposite}
  \relinj{\mathcal{C}^{\op}}{\mathcal{M}^{\op}}
  = \relprj{\mathcal{C}}{\mathcal{M}}
  \quad \text{and} \quad
  \relprj{\mathcal{C}^{\op}}{\mathcal{M}^{\op}}
  = \relinj{\mathcal{C}}{\mathcal{M}}.
\end{equation}

Given an abelian category $\mathcal{A}$, we denote by $\relprj{}{\mathcal{A}}$ and $\relinj{}{\mathcal{A}}$ the full subcategory of $\mathcal{A}$ consisting of all projective and injective objects of $\mathcal{A}$, respectively.
If we view a finite abelian category $\mathcal{M}$ as a finite left $\Vect$-module category as in Example~\ref{ex:Vect-action}, then the internal Hom functor of $\mathcal{M}$ coincides with the ordinary Hom functor and therefore we have
$\relprj{}{\mathcal{M}} = \relprj{\Vect}{\mathcal{M}}$ and
$\relinj{}{\mathcal{M}} = \relinj{\Vect}{\mathcal{M}}$.

Now let $\mathcal{M}$ be a finite left $\mathcal{C}$-module category. Then we have:

\begin{lemma}
  \label{lem:relative-proj-1}
  $\relprj{}{\mathcal{M}} \subset \relprj{\mathcal{C}}{\mathcal{M}}$,
  $\relinj{}{\mathcal{M}} \subset \relinj{\mathcal{C}}{\mathcal{M}}$.
\end{lemma}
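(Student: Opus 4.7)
The plan is to prove both inclusions by a single adjunction argument, with the second following from the first via the opposite-category identification \eqref{eq:relative-proj-opposite}. For $\relprj{}{\mathcal{M}} \subset \relprj{\mathcal{C}}{\mathcal{M}}$, fix a projective $P \in \mathcal{M}$; I want to show that $\iHom(P, -) = \T_P^{\radj}$ is exact. Being a right adjoint, it is already left exact, so it suffices to establish right exactness, and for this I would invoke the standard fact that a right adjoint is exact if and only if its left adjoint preserves projectives. The task thus reduces to showing that $\T_P(Q) = Q \catactl P$ is projective in $\mathcal{M}$ for every projective $Q \in \mathcal{C}$.

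To verify this reduction, I would use the rigidity of $\mathcal{C}$: the functor $Q \catactl (-): \mathcal{M} \to \mathcal{M}$ admits both a left adjoint $Q^* \catactl (-)$ and a right adjoint ${}^*Q \catactl (-)$, and is therefore exact. The adjunction isomorphism $\Hom_{\mathcal{M}}(Q \catactl P, N) \cong \Hom_{\mathcal{M}}(P, {}^*Q \catactl N)$ then exhibits $\Hom_{\mathcal{M}}(Q \catactl P, -)$ as the composite $\Hom_{\mathcal{M}}(P, -) \circ ({}^*Q \catactl -)$ of two exact functors, so $Q \catactl P$ is indeed projective. Notice that this argument works for arbitrary $Q \in \mathcal{C}$, not merely projective ones, which is more than enough.

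For the second inclusion $\relinj{}{\mathcal{M}} \subset \relinj{\mathcal{C}}{\mathcal{M}}$, I would dualize. The category $\mathcal{M}^{\op}$ is a finite left $\mathcal{C}^{\op}$-module category, $\mathcal{C}^{\op}$ is again a finite tensor category, and passage to the opposite exchanges injective objects of $\mathcal{M}$ with projective objects of $\mathcal{M}^{\op}$. Applying the first inclusion to $\mathcal{M}^{\op}$ over $\mathcal{C}^{\op}$ gives $\relprj{}{\mathcal{M}^{\op}} \subset \relprj{\mathcal{C}^{\op}}{\mathcal{M}^{\op}}$, which combined with the identification $\relprj{\mathcal{C}^{\op}}{\mathcal{M}^{\op}} = \relinj{\mathcal{C}}{\mathcal{M}}$ recorded in \eqref{eq:relative-proj-opposite} yields precisely the desired statement.

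I do not expect any genuine obstacle; the only point requiring a bit of care is a duality-convention issue. Under the convention of the paper that the left dual $X^*$ comes with $\eval_X : X^* \otimes X \to \unitobj$, the right adjoint of $Q \catactl (-)$ must be ${}^*Q \catactl (-)$, so that its counit has the correct form $Q \otimes {}^*Q \to \unitobj$, rather than $Q^* \catactl (-)$. Once this is pinned down, the proof amounts to a two-line chain of natural isomorphisms plus the opposite-category translation.
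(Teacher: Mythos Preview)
Your proposal is correct and follows essentially the same approach as the paper: the paper cites \cite[Lemma~2.25]{2014arXiv1406.4204D} for the first inclusion (whose argument is precisely the adjunction $\Hom_{\mathcal{M}}(Q \catactl P, -) \cong \Hom_{\mathcal{M}}(P, {}^*Q \catactl -)$ you spell out) and then obtains the second inclusion by passing to $\mathcal{M}^{\op}$ via \eqref{eq:relative-proj-opposite}, exactly as you do. Your convention check on the right adjoint of $Q \catactl(-)$ is also correct.
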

\begin{proof}
  The first inclusion is found in the proof of \cite[Lemma 2.25]{2014arXiv1406.4204D}.
  The second one is proved in a similar way. One may also obtain the second one by applying the first one to $\mathcal{M}^{\op}$ in view of \eqref{eq:relative-proj-opposite}.
\end{proof}

\begin{lemma}
  \label{lem:relative-proj-2}
  $\relprj{\mathcal{C}}{\mathcal{M}}$ and $\relinj{\mathcal{C}}{\mathcal{M}}$ are closed under the action of $\mathcal{C}$.
\end{lemma}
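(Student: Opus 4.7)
The plan is to reduce both closure properties to the exactness of the tensor product functors $X \otimes (-)$ and $(-) \otimes X$ on $\mathcal{C}$, which holds for every $X \in \mathcal{C}$ because $\mathcal{C}$ is rigid: each such functor admits both a left and a right adjoint, given by tensoring with ${}^*X$ and $X^*$ on the appropriate side.

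For the $\mathcal{C}$-projectivity part, suppose $M \in \relprj{\mathcal{C}}{\mathcal{M}}$ and $X \in \mathcal{C}$. I would invoke the natural isomorphism
\begin{equation*}
  \mathfrak{b}_{M, -, X} : \iHom(M, -) \otimes X^* \xrightarrow{\ \sim\ } \iHom(X \catactl M, -)
\end{equation*}
from \eqref{eq:C-bimod-struc-of-iHom-0}, which identifies the functor $\iHom(X \catactl M, -)$ with the composition of $\iHom(M, -)$ (exact by the assumption on $M$) and $(-) \otimes X^*$ (exact by rigidity of $\mathcal{C}$). A composition of exact functors is exact, so $X \catactl M$ is $\mathcal{C}$-projective.

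For the $\mathcal{C}$-injectivity part, suppose $M \in \relinj{\mathcal{C}}{\mathcal{M}}$ and $X \in \mathcal{C}$. I would use the natural isomorphism
\begin{equation*}
  \mathfrak{a}_{X, -, M} : X \otimes \iHom(-, M) \xrightarrow{\ \sim\ } \iHom(-, X \catactl M)
\end{equation*}
obtained from \eqref{eq:iHom-left-C-mod-str} (invertible by rigidity), which exhibits the functor $\iHom(-, X \catactl M)$ as the composition of $\iHom(-, M)$ and $X \otimes (-)$, both exact. Hence $X \catactl M$ is $\mathcal{C}$-injective.

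There is essentially no obstacle: the needed natural isomorphisms $\mathfrak{a}$ and $\mathfrak{b}$ have already been established, and rigidity immediately supplies the exactness of tensoring. Alternatively, one could deduce the $\mathcal{C}$-injectivity case from the $\mathcal{C}$-projectivity case by passing to $\mathcal{M}^{\op}$ and invoking \eqref{eq:relative-proj-opposite}, but the direct argument via $\mathfrak{a}$ is already a one-line reduction.
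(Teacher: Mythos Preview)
Your proof is correct and follows essentially the same approach as the paper: the paper uses the isomorphism $\iHom(X \catactl P, -) \cong \iHom(P, -) \otimes X^*$ (i.e., $\mathfrak{b}$) for the $\mathcal{C}$-projective case and says the $\mathcal{C}$-injective case is ``proved in a similar way,'' which is precisely your argument via $\mathfrak{a}$.
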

\begin{proof}
  If $X \in \mathcal{C}$ and $P \in \relprj{\mathcal{C}}{\mathcal{M}}$, then the functor $\iHom(X \catactl P, -)$ is exact since it is isomorphic to $\iHom(P, -) \otimes X^*$, and therefore $X \catactl P$ belongs to $\relprj{\mathcal{C}}{\mathcal{M}}$.
  The case of $\relinj{\mathcal{C}}{\mathcal{M}}$ is proved in a similar way.
\end{proof}

We further discuss $\mathcal{C}$-projective and $\mathcal{C}$-injective objects in Subsection~\ref{subsec:C-proj-C-inj}.

Regarding $\mathcal{C}$-generators, the following characterization is known:

\begin{lemma}[{\cite[Lemma 2.22]{2014arXiv1406.4204D}}]
  An object $G \in \mathcal{M}$ is a $\mathcal{C}$-generator if and only if every object of $\mathcal{M}$ is a quotient of $X \catactl G$ for some $X \in \mathcal{C}$.
  In particular, a generator of $\mathcal{M}$ is a $\mathcal{C}$-generator.
\end{lemma}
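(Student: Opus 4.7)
The plan is to translate the quotient condition via the counit $\ieval_{G,-}$ of the adjunction $\T_G \dashv \iHom(G,-)$. Under the defining isomorphism \eqref{eq:iHom-definition}, a morphism $\phi : X \to \iHom(G, N)$ transposes to $\ieval_{G,N} \circ (\phi \catactl \id_G) : X \catactl G \to N$, so the condition ``every $N \in \mathcal{M}$ is a quotient of $X \catactl G$ for some $X$'' becomes equivalent to ``$\ieval_{G,N}$ is an epimorphism for every $N$''. Once this translation is in place, both directions reduce to short diagram chases using naturality of $\ieval$ in its second argument.

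For the ``only if'' direction, given $N \in \mathcal{M}$, I would let $q : N \to Q$ be the cokernel of $\ieval_{G,N}$, so that $q \circ \ieval_{G,N} = 0$. The naturality square of $\ieval$ applied to $q$ yields $\ieval_{G,Q} \circ (\iHom(G,q) \catactl \id_G) = 0$, and the left-hand side is precisely the transpose of $\iHom(G, q)$ under \eqref{eq:iHom-definition}; hence $\iHom(G, q) = 0$. Faithfulness of $\iHom(G, -)$ then forces $q = 0$, so $Q = 0$ and $\ieval_{G,N}$ is an epimorphism. This exhibits $N$ as a quotient of $X \catactl G$ with $X = \iHom(G, N)$.

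For the ``if'' direction, I would take a nonzero morphism $f : N \to N'$ and, using the hypothesis, an epimorphism $g : X \catactl G \twoheadrightarrow N$; then $f \circ g \neq 0$. Letting $\widetilde{g} : X \to \iHom(G, N)$ be the transpose of $g$, naturality of the adjunction identifies the transpose of $f \circ g$ with $\iHom(G, f) \circ \widetilde{g}$, which is therefore nonzero. This rules out $\iHom(G, f) = 0$, so $\iHom(G, -)$ is faithful.

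For the ``in particular'' claim, biadditivity of the action in its first argument gives $\unitobj^{\oplus n} \catactl G \cong G^{\oplus n}$ in $\mathcal{M}$, so any generator of the abelian category $\mathcal{M}$ satisfies the quotient condition with $X = \unitobj^{\oplus n}$. I do not anticipate any genuine obstacle here; the only mild care needed is to track the ``transpose'' under \eqref{eq:iHom-definition} and the naturality of $\ieval$ in the second variable correctly.
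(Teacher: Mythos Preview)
The paper does not supply its own proof of this lemma; it merely quotes the statement from \cite[Lemma 2.22]{2014arXiv1406.4204D}. Your argument is a correct and standard reconstruction: the key point, that faithfulness of $\iHom(G,-)$ is equivalent to $\ieval_{G,N}$ being epic for every $N$, is exactly what one extracts from the adjunction $\T_G \dashv \iHom(G,-)$, and your cokernel trick in the ``only if'' direction together with the transpose computation in the ``if'' direction are the expected moves. The ``in particular'' clause via $\unitobj^{\oplus n} \catactl G \cong G^{\oplus n}$ is also fine in this finite abelian setting. There is nothing to compare against here beyond the citation.
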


For each object $M \in \mathcal{M}$, there is a left $\mathcal{C}$-module functor
\begin{equation*}
  F_M : \mathcal{M} \to \mathcal{C}_{\iEnd(M)},
  \quad X \mapsto \iHom(M, X),
\end{equation*}
where the algebra $\iEnd(M) := \iHom(M, M)$ acts on $\iHom(M, X)$ by the composition law for the internal Hom functor.
The functor $F_M$ is an equivalence if and only if $M$ is a $\mathcal{C}$-progenerator \cite[Theorem 2.24]{2014arXiv1406.4204D}.
An important consequence is that, since a projective generator of $\mathcal{M}$ is a $\mathcal{C}$-progenerator, $\mathcal{M}$ is equivalent to $\mathcal{C}_A$ for some algebra $A$ in $\mathcal{C}$ \cite[Theorem 2.18]{2014arXiv1406.4204D}.

By considering the case where $\mathcal{M} = \mathcal{C}_B$ for some algebra $B$ in $\mathcal{C}$, we obtain the following characterization of Morita equivalence: Two algebras $A$ and $B$ in $\mathcal{C}$ are Morita equivalent if and only if there is a $\mathcal{C}$-progenerator $P \in \mathcal{C}_B$ such that $A$ is isomorphic to $\iEnd(P)$ as an algebra in $\mathcal{C}$ 

\subsection{Eilenberg-Watts equivalence}

The Eilenberg-Watts theorem states that a functor between categories of modules is given by tensoring a bimodule if and only if that functor has a right adjoint.
An analogous result is known for algebras in finite tensor categories. Given algebras $A$ and $B$ in a finite tensor category $\mathcal{C}$, we denote by ${}_A \mathcal{C}_B$ the category of $A$-$B$-bimodules in $\mathcal{C}$. There is a functor
\begin{equation}
  \label{eq:EW-equivalence-in-C}
  {}_A\mathcal{C}_B \to \Rex_{\mathcal{C}}(\mathcal{C}_A, \mathcal{C}_B),
  \quad M \mapsto (-) \otimes_A M,
\end{equation}
where $\Rex_{\mathcal{C}}(\mathcal{M}, \mathcal{N})$ for finite left $\mathcal{C}$-module categories $\mathcal{M}$ and $\mathcal{N}$ is the category of left $\mathcal{C}$-module right exact functors from $\mathcal{M}$ to $\mathcal{N}$.
The functor \eqref{eq:EW-equivalence-in-C} is an equivalence \cite[Proposition 7.11.1]{MR3242743}. A quasi-inverse of \eqref{eq:EW-equivalence-in-C} is established by sending an object $F$ of the target category to the right $B$-module $F(A)$ made into an $A$-$B$-bimodule in $\mathcal{C}$ by the left action of $A$ given by
\begin{equation*}
  A \otimes F(A)
  \xrightarrow{\quad f_{A,A} \quad} F(A \otimes A) \xrightarrow{\quad F(m) \quad} F(A),
\end{equation*}
where $f$ is the left $\mathcal{C}$-module structure of $F$ and $m : A \otimes A \to A$ is the multiplication of the algebra $A$.
We call \eqref{eq:EW-equivalence-in-C} the {\em Eilenberg-Watts equivalence}.

The left $\mathcal{C}$-module category $\mathcal{C}_B$ has the internal Hom functor, which we denote by $\iHom_B$ (see Example~\ref{ex:internal-Hom-C-A}).
If $M \in {}_A\mathcal{C}_B$ and $X \in \mathcal{C}_B$, then $\iHom_B(M, X)$ is a right $A$-module in $\mathcal{C}$ by the action
\begin{equation*}
  \iHom_B(\act_M^{\natural}, \id_X) \circ \mathfrak{b}_{M,X,{}^*\!A} : \iHom_B(M, X) \otimes A \to \iHom_B(M, X),
\end{equation*}
where $\act_M^{\natural}$ is given in Example~\ref{ex:internal-Hom-C-A}. The functor $\iHom_B(M, -) : \mathcal{C}_B \to \mathcal{C}_A$ is right adjoint to the functor $(-) \otimes_A M : \mathcal{C}_A \to \mathcal{C}_B$.
By composing \eqref{eq:EW-equivalence-in-C} and the contravariant functor $F \mapsto F^{\radj}$, we obtain an anti-equivalence
\begin{equation}
  \label{eq:EW-equivalence-in-C-lex}
  {}_A\mathcal{C}_B \to \Lex_{\mathcal{C}}(\mathcal{C}_B, \mathcal{C}_A)
  \quad M \mapsto \iHom_B(M, -),
\end{equation}
where the target is a left exact variant of $\Rex_{\mathcal{C}}(\mathcal{M}, \mathcal{N})$.

We note some consequences of the equivalences \eqref{eq:EW-equivalence-in-C} and \eqref{eq:EW-equivalence-in-C-lex}. We recall from \cite[Lemma 2.4.13]{MR4254952} that the duality functors induce anti-equivalences
\begin{equation*}
  (-)^* : {}_A\mathcal{C}_B \to {}_{B^{**}}\mathcal{C}_A
  \quad \text{and} \quad
  {}^*(-) : {}_A\mathcal{C}_B \to {}_B\mathcal{C}_{{}^{**}\!A}
\end{equation*}
of categories. If $M \in {}_A \mathcal{C}_B$, then ${}^*\iHom_A(-, M^*) : \mathcal{C}_A \to \mathcal{C}_B$ is a left $\mathcal{C}$-module right exact functor. Since ${}^*\iHom_A(A, M^*) \cong {}^*(M^*) \cong M$ as $A$-$B$-bimodules in $\mathcal{C}$, the equivalence \eqref{eq:EW-equivalence-in-C} yields the following formula \cite[Example 7.9.8]{MR3242743} of the internal Hom functor:

\begin{lemma}
  \label{lem:iHom-dual}
  For $M \in {}_A \mathcal{C}_B$, there is an isomorphism
  \begin{equation}
    {}^*\iHom_A(-, M^*) \cong (-) \otimes_A M
  \end{equation}
  of left $\mathcal{C}$-module functors from $\mathcal{C}_A$ to $\mathcal{C}_B$. In particular, we have
  \begin{equation*}
    \iHom_A(X, Y) \cong (X \otimes_A {}^*Y)^*
    \quad (X, Y \in \mathcal{C}_A).
  \end{equation*}
\end{lemma}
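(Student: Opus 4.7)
The plan is to apply the Eilenberg-Watts equivalence \eqref{eq:EW-equivalence-in-C} after establishing that $F := {}^*\iHom_A(-, M^*)$ is an object of $\Rex_{\mathcal{C}}(\mathcal{C}_A, \mathcal{C}_B)$ whose image under a quasi-inverse of \eqref{eq:EW-equivalence-in-C} is isomorphic to $M$ as an $A$-$B$-bimodule.

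First I would check that $F$ lands in $\mathcal{C}_B$. Since the duality functor induces the anti-equivalence $(-)^* : {}_A\mathcal{C}_B \to {}_{B^{**}}\mathcal{C}_A$, the object $M^*$ carries a left $B^{**}$-action, and for any $X \in \mathcal{C}_A$ this action transports through $\mathfrak{a}$ to a left $B^{**}$-module structure on $\iHom_A(X, M^*) \in \mathcal{C}$. Taking ${}^*(-)$ and recalling ${}^*(B^{**}) = B$ yields the required right $B$-action, naturally in $X$. Next I would verify right exactness: the adjunction $\Hom_{\mathcal{C}}(V, \iHom_A(X, M^*)) \cong \Hom_{\mathcal{C}_A}(V \catactl X, M^*)$ shows that $\iHom_A(-, M^*)$ sends cokernels in $\mathcal{C}_A$ to kernels in $\mathcal{C}$, and since ${}^*(-): \mathcal{C}^{\op} \to \mathcal{C}$ is an equivalence it carries kernels to cokernels, so $F$ is right exact. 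The left $\mathcal{C}$-module structure comes by combining $\mathfrak{b}_{X, M^*, Y}$ with the strict monoidality of ${}^*(-)$: we have
\begin{equation*}
  F(Y \catactl X) = {}^*\iHom_A(Y \catactl X, M^*)
  \cong {}^*\bigl(\iHom_A(X, M^*) \otimes Y^*\bigr)
  = Y \catactl F(X),
\end{equation*}
and coherence of \eqref{eq:FG-r-adj} guarantees the required axioms.

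Now I would apply \eqref{eq:EW-equivalence-in-C}. A quasi-inverse sends a functor $G \in \Rex_{\mathcal{C}}(\mathcal{C}_A, \mathcal{C}_B)$ to the bimodule $G(A)$ whose $A$-action is built from its left $\mathcal{C}$-module structure. For $G = (-) \otimes_A M$ one clearly has $G(A) \cong M$ as $A$-$B$-bimodules. For $G = F$ one computes $F(A) = {}^*\iHom_A(A, M^*) \cong {}^*(M^*) \cong M$, using the canonical isomorphism $\iHom_A(A, N) \cong N$ together with the inverse equivalence ${}^*(-) : {}_{B^{**}}\mathcal{C}_A \to {}_A\mathcal{C}_B$. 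Hence $F$ and $(-) \otimes_A M$ have isomorphic images under the quasi-inverse of \eqref{eq:EW-equivalence-in-C}, and therefore they are isomorphic as left $\mathcal{C}$-module functors, proving the displayed isomorphism.

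The main obstacle is the verification that the composite isomorphism $F(A) \cong M$ respects the $A$-$B$-bimodule structures — the right $B$-action on $F(A)$ is induced by the left $B^{**}$-action on $M^*$ going through $\mathfrak{a}$ and then through ${}^*(-)$, while the left $A$-action is extracted from the $\mathcal{C}$-module structure of $F$, and one must trace these through the definitions of $\mathfrak{a}$ and $\mathfrak{b}$ and the coherence of \eqref{eq:FG-r-adj} to see that they match the bimodule structure transported from $M$ via the anti-equivalences ${}^*(-)$ and $(-)^*$. This is essentially a diagram chase using \eqref{eq:iHom-left-C-mod-str-3} and \eqref{eq:C-bimod-struc-of-iHom-b}. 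Finally, for the second statement I would specialize to $B = \unitobj$ with $M = {}^*Y \in {}_A\mathcal{C}$, so that $M^* = Y$, which yields ${}^*\iHom_A(X, Y) \cong X \otimes_A {}^*Y$; applying $(-)^*$ gives $\iHom_A(X, Y) \cong (X \otimes_A {}^*Y)^*$.
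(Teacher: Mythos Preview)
Your proposal is correct and follows essentially the same approach as the paper: verify that ${}^*\iHom_A(-, M^*)$ is a right exact left $\mathcal{C}$-module functor, compute its value at $A$ as an $A$-$B$-bimodule to be $M$, and invoke the Eilenberg-Watts equivalence \eqref{eq:EW-equivalence-in-C}. The paper's argument is the one-sentence paragraph preceding the lemma, which asserts exactly these facts without the verifications you supply; your version simply fills in the details (and correctly identifies the bimodule compatibility of $F(A)\cong M$ as the only nontrivial check).
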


This lemma implies that $A^* \in \mathcal{C}_A$ is $\mathcal{C}$-injective. Indeed,
\begin{equation*}
  \iHom_A(M, A^*) \cong (M \otimes_A A)^* \cong M^* \quad (M \in \mathcal{C}_A).
\end{equation*}

Let $\mathcal{M}$ be a finite left $\mathcal{C}$-module category.
We say that a functor $F : \mathcal{M} \to \mathcal{C}$ is {\em $\mathcal{C}$-representable} if $F \cong \iHom(M, -)$ for some object $M \in \mathcal{M}$. The $\mathcal{C}$-representability of a contravariant functor is defined analogously.
We note the following Lemmas~\ref{lem:C-representability-1} and \ref{lem:C-representability-2} on the $\mathcal{C}$-representability:

\begin{lemma}
  \label{lem:C-representability-1}
  Let $\mathcal{M}$ be as above.
  A left $\mathcal{C}$-module functor $F : \mathcal{M} \to \mathcal{C}$ is $\mathcal{C}$-representable if and only if it is left exact.
\end{lemma}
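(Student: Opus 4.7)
The \emph{only if} direction is formal: by construction $\iHom(M,-)$ is the right adjoint of $\T_M : \mathcal{C} \to \mathcal{M}$, and right adjoints preserve all limits, hence in particular are left exact.

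For the \emph{if} direction, my plan is to reduce to the representable case $\mathcal{M} = \mathcal{C}_A$ and then invoke the Eilenberg-Watts anti-equivalence \eqref{eq:EW-equivalence-in-C-lex}. Since $\mathcal{M}$ is a finite abelian category, it admits a projective generator $P$; by Lemma~\ref{lem:relative-proj-1} and the characterization of $\mathcal{C}$-generators, $P$ is automatically a $\mathcal{C}$-progenerator. Hence the functor $F_P : \mathcal{M} \to \mathcal{C}_{\iEnd(P)}$ is an equivalence of left $\mathcal{C}$-module categories (this is the cited result of Douglas-Schommer-Pries-Snyder, used explicitly in the paragraph preceding this lemma). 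Transporting $F$ across this equivalence, I may assume $\mathcal{M} = \mathcal{C}_A$ for some algebra $A$ in $\mathcal{C}$ from the outset, and $F : \mathcal{C}_A \to \mathcal{C}$ is then a left $\mathcal{C}$-module left exact functor.

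Now I specialize \eqref{eq:EW-equivalence-in-C-lex} to the case where the outer algebra is taken to be the unit $\unitobj$. Under the canonical identifications ${}_{\unitobj}\mathcal{C}_A \approx \mathcal{C}_A$ and $\mathcal{C}_{\unitobj} \approx \mathcal{C}$, the anti-equivalence \eqref{eq:EW-equivalence-in-C-lex} becomes
\begin{equation*}
  \mathcal{C}_A \to \Lex_{\mathcal{C}}(\mathcal{C}_A, \mathcal{C}), \quad M \mapsto \iHom_A(M,-).
\end{equation*}
Since $F$ lies in the target category by hypothesis, essential surjectivity produces $M \in \mathcal{C}_A$ and an isomorphism $F \cong \iHom_A(M,-)$ of left $\mathcal{C}$-module functors, which is exactly the $\mathcal{C}$-representability asserted.

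There is no substantive obstacle in this argument: the whole content is packaged in the Eilenberg-Watts equivalence \eqref{eq:EW-equivalence-in-C-lex} and the existence of a $\mathcal{C}$-progenerator in $\mathcal{M}$. The only care needed is to verify that, when one of the sides is trivial, the identifications of bimodule and module categories are compatible with the left $\mathcal{C}$-module structures that enter \eqref{eq:EW-equivalence-in-C-lex}; this is routine and tacitly used in the paper's set-up.
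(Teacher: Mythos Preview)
Your proof is correct and follows essentially the same route as the paper: reduce to $\mathcal{M} \approx \mathcal{C}_A$ via a $\mathcal{C}$-progenerator, then read off $\mathcal{C}$-representability from the anti-equivalence \eqref{eq:EW-equivalence-in-C-lex} specialized with the unit algebra on one side. The paper's proof is terser but identical in substance (it writes the module algebra as $B$ and sets $A = \unitobj$ in \eqref{eq:EW-equivalence-in-C-lex}).
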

\begin{proof}
  We may assume that $\mathcal{M} = \mathcal{C}_B$ for some algebra $B$ in $\mathcal{C}$. If $F$ is left exact, then it is $\mathcal{C}$-representable by the anti-equivalence \eqref{eq:EW-equivalence-in-C-lex} with $A = \unitobj$. The `only if' part is obvious from the basic properties of the internal Hom functor.
\end{proof}

\begin{lemma}
  \label{lem:C-representability-2}
  Let $\mathcal{M}$ be as above, and make $\mathcal{M}^{\op}$ a right $\mathcal{C}$-module category by the action \eqref{eq:M-op-action}.
  A right $\mathcal{C}$-module functor $F : \mathcal{M}^{\op} \to \mathcal{C}$ is $\mathcal{C}$-representable if and only if it is left exact.
\end{lemma}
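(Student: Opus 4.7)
The plan is to derive this statement from Lemma~\ref{lem:C-representability-1} by applying the duality functor of $\mathcal{C}$, mirroring the proof of the previous lemma. The ``only if'' direction is essentially formal: for $N \in \mathcal{M}$, the natural isomorphism $\mathfrak{b}_{-,N,-}$ already makes $\iHom(-, N) : \mathcal{M}^{\op} \to \mathcal{C}$ a right $\mathcal{C}$-module functor, and this functor sends colimits in $\mathcal{M}$ to limits in $\mathcal{C}$ via the adjunction \eqref{eq:iHom-definition} (using that the action $X \catactl (-)$ is right exact and that $\Hom_{\mathcal{M}}(-, N)$ is left exact on $\mathcal{M}^{\op}$), so it is left exact as a functor on $\mathcal{M}^{\op}$.

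For the ``if'' direction, I would first reduce to the case $\mathcal{M} = \mathcal{C}_B$ for some algebra $B$ in $\mathcal{C}$, as in the proof of Lemma~\ref{lem:C-representability-1}. Given a left exact right $\mathcal{C}$-module functor $F : (\mathcal{C}_B)^{\op} \to \mathcal{C}$, I would then transport it along the duality functor by setting
\[ H : \mathcal{C}_B \to \mathcal{C}, \qquad H(M) := {}^*\!F(M^{\op}). \]
Since the contravariant monoidal equivalence ${}^*(-) : \mathcal{C} \to \mathcal{C}$ interchanges limits and colimits and satisfies ${}^*(X \otimes Y) = {}^*Y \otimes {}^*X$, left exactness of $F$ becomes right exactness of $H$, while the right $\mathcal{C}$-module structure on $F$ translates into a left $\mathcal{C}$-module structure on $H$ through the chain of natural isomorphisms
\[ H(Y \catactl M) = {}^*\!F\bigl(({}^*({}^*Y) \catactl M)^{\op}\bigr) \cong {}^*(F(M^{\op}) \otimes {}^*Y) \cong Y \otimes H(M). \]

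The Eilenberg-Watts equivalence \eqref{eq:EW-equivalence-in-C} (with $A = \unitobj$) now provides $L \in {}_B\mathcal{C}$ and an isomorphism $H \cong (-) \otimes_B L$ of left $\mathcal{C}$-module functors. Setting $N := L^* \in \mathcal{C}_B$, so that $L \cong {}^*N$, and applying $(-)^*$ to this isomorphism, I obtain
\[ F(M^{\op}) \cong (H(M))^* \cong (M \otimes_B {}^*N)^* \cong \iHom_B(M, N) \]
for $M \in \mathcal{C}_B$, where the last isomorphism is the content of Lemma~\ref{lem:iHom-dual}. This produces the desired representation $F \cong \iHom(-, N)$.

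The main technical obstacle is the careful bookkeeping of the duality conventions and the verification that the resulting isomorphism $F \cong \iHom(-, N)$ is one of right $\mathcal{C}$-module functors, not merely of plain functors. This should follow from the compatibility encoded in Lemma~\ref{lem:iHom-composition} together with the monoidality of ${}^*(-)$ and the coherence of \eqref{eq:FG-r-adj}, but a clean write-up requires patience with the interaction between the action $M^{\op} \catactr X = ({}^*X \catactl M)^{\op}$ and the repeated duality swaps.
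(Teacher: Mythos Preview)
Your proof is correct and follows essentially the same route as the paper: dualize $F$ to obtain a right exact left $\mathcal{C}$-module functor, apply the Eilenberg-Watts equivalence \eqref{eq:EW-equivalence-in-C} with trivial target algebra, and then invoke Lemma~\ref{lem:iHom-dual}. One small remark: the concern you flag at the end, that the isomorphism $F \cong \iHom(-,N)$ be one of right $\mathcal{C}$-module functors, is not needed here, since $\mathcal{C}$-representability is defined only as an isomorphism of plain functors.
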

\begin{proof}
  We may assume that $\mathcal{M} = \mathcal{C}_A$ for some algebra $A$ in $\mathcal{C}$.
  We suppose that the functor $F : \mathcal{M}^{\op} \to \mathcal{C}$ is left exact.
  By our convention \eqref{eq:M-op-action} on the action, the functor $G: \mathcal{M} \to \mathcal{C}$ defined by $G(X) = {}^*F(X)$ for $X \in \mathcal{M}$ is a right exact left $\mathcal{C}$-module functor.
  Hence, by the equivalence \eqref{eq:EW-equivalence-in-C} with $B = \unitobj$, there is a left $A$-module $M$ such that $G \cong (-) \otimes_A M$. By Lemma~\ref{lem:iHom-dual}, we have
  \begin{equation*}
    F(X) \cong G(X)^* \cong (X \otimes_A M)^* \cong \iHom_A(X, M^*)
  \end{equation*}
  for $X \in \mathcal{C}_A$. Namely, $F$ is $\mathcal{C}$-representable.
  The converse is obvious.
\end{proof}

\subsection{Nakayama functor}
\label{subsec:nakayama}

We close this section by briefly reviewing the theory of {\em Nakayama functors} and their applications established in \cite{MR4042867}.
The Nakayama functor for a finite-dimensional algebra $A$ is the endofunctor on $\rmod{A}$ given by $M \mapsto M \otimes_{A} A^*$, where $A^* = \Hom_{\bfk}(A, \bfk)$ is the dual space of $A$ regarded as an $A$-bimodule in a natural way. Fuchs, Schaumann and Schweigert \cite{MR4042867} pointed out that the Nakayama functor is expressed by
\begin{equation}
  \label{eq:Nakayama-coend-formula}
  M \otimes_{A} A^* = \int^{X \in \rmod{A}} \Hom_{A}(M, X)^* \otimes_{\bfk} X
  \quad (M \in \rmod{A}),
\end{equation}
where the integral means a coend \cite{MR1712872}. Thus we can define the Nakayama functor $\Nak_{\mathcal{M}}: \mathcal{M} \to \mathcal{M}$ for any finite abelian category $\mathcal{M}$ as an endofunctor on $\mathcal{M}$ expressed by the same coend \cite[Definition 3.14]{MR4042867}.
By rephrasing known results in the representation theory of finite-dimensional algebras, we see that the following assertions for $\mathcal{M}$ are equivalent:
\begin{enumerate}
\item The Nakayama functor $\Nak_{\mathcal{M}}$ is an equivalence.
\item All projective objects of $\mathcal{M}$ are injective.
\item All injective objects of $\mathcal{M}$ are projective.
\item $\mathcal{M} \approx \rmod{A}$ for some Frobenius algebra $A$ over $\bfk$.
\end{enumerate}

Just for describing an important feature of the Nakayama functor, we introduce the following terminology: A functor $F$ between finite abelian categories is {\em admissible} if $F^{\rradj} := (F^{\radj})^{\radj}$ exists and has a right adjoint.
Let $\mathcal{M}$ and $\mathcal{N}$ be finite abelian categories.
If $F: \mathcal{M} \to \mathcal{N}$ is an admissible functor, then the universal property of the Nakayama functor yields an isomorphism
\begin{equation}
  \label{eq:Nakayama-cano-iso}
  \mathfrak{n}_F : F^{\rradj} \circ \Nak_{\mathcal{M}}
  \to \Nak_{\mathcal{N}} \circ F
\end{equation}
of functors. The isomorphism $\mathfrak{n}_F$ is natural in $F$ and coherent in a certain sense; see \cite[Theorem 3.18]{MR4042867}.

Given an even integer $n$ and a left module category $\mathcal{M}$ over a rigid monoidal category $\mathcal{C}$, we denote by ${}_{(n)}\mathcal{M}$ the category $\mathcal{M}$ viewed as a left $\mathcal{C}$-module category by the new action $\tilde{\catactl}$ defined by $X \mathbin{\tilde{\catactl}} M = D^{n}(X) \catactl M$ for $X \in \mathcal{C}$ and $M \in \mathcal{M}$, where $\catactl$ is the original action and $D = (-)^*$. We often say that $F: \mathcal{M} \to \mathcal{M}'$ is a {\em twisted left $\mathcal{C}$-module functor} if $F$ is a left $\mathcal{C}$-module functor from ${}_{(n)}\mathcal{M}$ to ${}_{(n')}\mathcal{M}'$ for some even integers $n$ and $n'$.

The canonical isomorphism \eqref{eq:Nakayama-cano-iso} makes the Nakayama functor a twisted module functor. More precisely, if $\mathcal{M}$ is a finite left module category over a finite tensor category $\mathcal{C}$, then the Nakayama functor $\Nak_{\mathcal{M}}$ is a left $\mathcal{C}$-module functor from $\mathcal{M}$ to ${}_{(-2)}\mathcal{M}$ with the structure morphism
\begin{equation}
  \label{eq:Nakayama-tw-module-structure}
  {}^{**} \! X \catactl \Nak_{\mathcal{M}}(M) \to \Nak_{\mathcal{M}}(X \catactl M)
  \quad (X \in \mathcal{C}, M \in \mathcal{M})
\end{equation}
given by \eqref{eq:Nakayama-cano-iso} with $F = X \catactl (-)$.
We also note that \eqref{eq:Nakayama-cano-iso} is an isomorphism of twisted left $\mathcal{C}$-module functors if $F : \mathcal{M} \to \mathcal{N}$ is an admissible left $\mathcal{C}$-module functor between finite left $\mathcal{C}$-module categories.

Let $\mathcal{C}$ be a finite tensor category. By \eqref{eq:Nakayama-tw-module-structure} with $\mathcal{M} = \mathcal{C}$ and $M = \unitobj$, we obtain a natural isomorphism $\Nak_{\mathcal{C}}(X) \cong {}^{**}\!X \otimes \Nak_{\mathcal{C}}(\unitobj)$ for $X \in \mathcal{C}$.
Since all projective objects of $\mathcal{C}$ are injective \cite[Proposition 6.1.3]{MR3242743}, $\Nak_{\mathcal{C}}$ is an equivalence.
Hence the object $\Nak_{\mathcal{C}}(\unitobj)$ is invertible with respect to the tensor product.

\begin{definition}
  \label{def:Radford-isomorphism}
  For $X \in \mathcal{C}$, we define the {\em Radford isomorphism} by
  \begin{equation*}
    \newcommand{\xarr}[1]{\xrightarrow{\makebox[10em]{$\scriptstyle #1$}}}
    \begin{aligned}
      \mathfrak{r}_X := \Big(
      {}^{**}X \otimes \Nak_{\mathcal{C}}(\unitobj)
      & \xarr{\text{$\mathfrak{n}_F$ with $F = X \otimes \id_{\mathcal{C}}$}}
      \Nak_{\mathcal{C}}(X) \\
      & \xarr{\text{$\mathfrak{n}_F^{-1}$ with $F = \id_{\mathcal{C}} \otimes X$}}
      \Nak_{\mathcal{C}}(\unitobj) \otimes X^{**} \Big).
    \end{aligned}
  \end{equation*}
\end{definition}

Remarkably, as explained in \cite{MR4042867}, the Radford $S^4$-formula for finite tensor categories \cite[Theorem 7.19.1]{MR3242743} is obtained as an application of the Nakayama functor.
Indeed, since $\Nak_{\mathcal{C}}(\unitobj)$ is invertible, we have a natural isomorphism
\begin{equation*}
  X^{****} \cong \Nak_{\mathcal{C}}(\unitobj)^* \otimes X \otimes \Nak_{\mathcal{C}}(\unitobj)
  \quad (X \in \mathcal{C})
\end{equation*}
induced by the Radford isomorphism.

\section{Relative Serre functor}
\label{sec:rel-Serre}

\subsection{Definition and basic properties}

Let $\mathcal{C}$ be a finite tensor category, and let $\mathcal{M}$ be a finite left $\mathcal{C}$-module category.
We suppose that $\mathcal{M}$ is exact.
It is known that the internal Hom functor of $\mathcal{M}$ is exact in each variable in this case \cite[Corollary 7.9.6]{MR3242743}.
The relative Serre functor \cite{MR3435098,MR4042867,2019arXiv190400376S} of $\mathcal{M}$ is defined to be an endofunctor $\Ser$ on $\mathcal{M}$ such that there is a natural isomorphism
\begin{equation}
  \label{eq:rel-Serre-def-iso-exact}
  \iHom(M, N)^* \cong \iHom(N, \Ser(M))
  \quad (M, N \in \mathcal{M}).
\end{equation}

This definition cannot be used in the case where $\mathcal{M}$ is not exact. Indeed, since the right-hand side of \eqref{eq:rel-Serre-def-iso-exact} is left exact in the variable $N \in \mathcal{M}$, the existence of a natural isomorphism \eqref{eq:rel-Serre-def-iso-exact} forces all objects $M \in \mathcal{M}$ to be $\mathcal{C}$-projective. If every object of $\mathcal{M}$ is $\mathcal{C}$-projective, then $\mathcal{M}$ is exact (see Lemma~\ref{lem:C-proj}).
For the case where $\mathcal{M}$ is not necessarily exact, we modify the above definition as follows:

\begin{definition}
  Let $\mathcal{C}$ be a finite tensor category, and let $\mathcal{M}$ be a finite left $\mathcal{C}$-module category.
  A {\em relative Serre functor} of $\mathcal{M}$ is a right exact endofunctor $\Ser$ on $\mathcal{M}$ such that there is a natural isomorphism
  \begin{equation}
    \label{eq:rel-Serre-def-iso}
    \iHom(P, M)^* \cong \iHom(M, \Ser(P))
    \quad (P \in \relprj{\mathcal{C}}{\mathcal{M}}, M \in \mathcal{M}).
  \end{equation}
\end{definition}

If $\mathcal{M}$ is exact, then the above definition agrees with that used in \cite{MR3435098,MR4042867,2019arXiv190400376S}.
For a finite abelian category $\mathcal{M}$, there is a natural isomorphism
\begin{equation}
  \label{eq:Nakayama-Hom-dual}
  \Hom_{\mathcal{M}}(P, M)^* \cong \Hom_{\mathcal{M}}(M, \Nak_{\mathcal{M}}(P))
\end{equation}
for $M \in \mathcal{M}$ and $P \in \relprj{}{\mathcal{M}}$; see, {\it e.g.}, \cite{2021arXiv210313702S}. This means that $\Nak_{\mathcal{M}}$ is a relative Serre functor of the $\Vect$-module category $\mathcal{M}$.

The aim of this section is to extend some of known results of the relative Serre functor to a finite module category which is not necessarily exact.
Let $\mathcal{C}$ be a finite tensor category, and let $\mathcal{M}$ be a finite left $\mathcal{C}$-module category. Since $\mathcal{C}$ is rigid and $\mathcal{M}$ is closed, there is a natural isomorphism
\begin{equation*}
  \mathfrak{c}_{X,M,N,Y} : X \otimes \iHom(M, N) \otimes Y^* \to \iHom(Y \catactl M, X \catactl N)
\end{equation*}
defined by the both sides of \eqref{eq:C-bimod-struc-of-iHom-2}.
The natural isomorphism
\begin{equation*}
  (\mathfrak{c}_{{}^{**}\!X,M,N,{}^{*}Y}^{-1})^*:
  X \otimes \iHom(M, N)^* \otimes Y
  \to \iHom({}^{**}\!X \catactl M, {}^{*}Y \catactl N)^*
\end{equation*}
makes the functor $\iHom(-, -)^*$ a $\mathcal{C}$-bimodule functor from $\mathcal{M}^{\op} \times {}_{(-2)}\mathcal{M}$ to $\mathcal{C}$ (see Subsection \ref{subsec:closed-module-cat} for our convention on the right action of $\mathcal{C}$ on $\mathcal{M}^{\op}$).
Our results in this section are summarized as follows:

\begin{theorem}
  \label{thm:rel-Serre-summary}
  Let $\mathcal{C}$ be a finite tensor category, and let  $\mathcal{M}$ be a finite left $\mathcal{C}$-module category.
  A relative Serre functor of $\mathcal{M}$ exists and is unique up to isomorphism as a functor.
  Let $\Ser$ be a relative Serre functor of $\mathcal{M}$. Then we have:
  \begin{enumerate}
  \item \label{item:rel-Serre-main-1}
    The functor $\Ser : \mathcal{M} \to \mathcal{M}$ has a twisted left $\mathcal{C}$-module structure
    \begin{equation*}
      \mathfrak{s}_{X,M} : X^{**} \catactl \Ser(M) \to \Ser(X \catactl M)
      \quad (X \in \mathcal{C}, M \in \mathcal{M})
    \end{equation*}
    such that there is an isomorphism
    \begin{equation*}
      \iHom(P, M)^* \cong \iHom(M, \Ser(P))
      \quad (M \in \mathcal{M}, P \in \relprj{\mathcal{C}}{\mathcal{M}})
    \end{equation*}
    of $\mathcal{C}$-bimodule functors from $\mathcal{M}^{\op} \times {}_{(-2)}\relprj{\mathcal{C}}{\mathcal{M}}$ to $\mathcal{C}$.
  \item \label{item:rel-Serre-main-2}
    There is an isomorphism $\Nak_{\mathcal{C}}(\unitobj) \catactl \Ser \cong \Nak_{\mathcal{M}}$ of twisted left $\mathcal{C}$-module functors, where the twisted left $\mathcal{C}$-module structure of the left-hand side is given by composing the Radford isomorphism of Definition~\ref{def:Radford-isomorphism} and the twisted left $\mathcal{C}$-module structure $\mathfrak{s}$ of Part (\ref{item:rel-Serre-main-1}).
  \item \label{item:rel-Serre-main-3}
    The restriction of $\Ser$ induces an equivalence from $\relprj{\mathcal{C}}{\mathcal{M}}$ to $\relinj{\mathcal{C}}{\mathcal{M}}$ with quasi-inverse given by the restriction of a right adjoint of $\Ser$.
  \end{enumerate}
\end{theorem}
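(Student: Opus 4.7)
The plan is to construct $\Ser$ first on the full subcategory $\relprj{\mathcal{C}}{\mathcal{M}}$ by representability, then extend right-exactly to all of $\mathcal{M}$. For each $P \in \relprj{\mathcal{C}}{\mathcal{M}}$, the contravariant functor $M \mapsto \iHom(P, M)^*$ from $\mathcal{M}$ to $\mathcal{C}$ is left exact (since $\iHom(P, -)$ is exact by $\mathcal{C}$-projectivity of $P$ and the left duality functor is an equivalence) and carries a right $\mathcal{C}$-module structure built from $\mathfrak{b}$ and dualization, so by Lemma~\ref{lem:C-representability-2} it is represented by some object $\Ser(P) \in \mathcal{M}$, yielding the defining iso \eqref{eq:rel-Serre-def-iso}. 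Naturality in $P$ is provided by Lemma~\ref{lem:internal-Yoneda-1}, making $P \mapsto \Ser(P)$ a functor on $\relprj{\mathcal{C}}{\mathcal{M}}$. Since $\relprj{}{\mathcal{M}} \subset \relprj{\mathcal{C}}{\mathcal{M}}$ (Lemma~\ref{lem:relative-proj-1}), one extends $\Ser$ to a right exact endofunctor on $\mathcal{M}$ by taking cokernels of $\Ser(P_1) \to \Ser(P_0)$ for projective presentations $P_1 \to P_0 \to M \to 0$. Uniqueness up to isomorphism follows from Lemma~\ref{lem:internal-Yoneda-0}, since a right exact endofunctor of $\mathcal{M}$ is determined by its restriction to projective objects.

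For Part~(\ref{item:rel-Serre-main-1}), the key observation is that $(P^{\op}, M) \mapsto \iHom(P, M)^*$ carries a natural $\mathcal{C}$-bimodule structure from $\mathcal{M}^{\op} \times {}_{(-2)}\relprj{\mathcal{C}}{\mathcal{M}}$ to $\mathcal{C}$, assembled from $\mathfrak{a}$, $\mathfrak{b}$, and the left duality. Transporting the left $\mathcal{C}$-action in $P$ across the defining isomorphism produces an iso of right $\mathcal{C}$-module functors in $M$ between $\iHom(M, \Ser(X \catactl P))$ and $\iHom(M, X^{**} \catactl \Ser(P))$, which by Lemma~\ref{lem:internal-Yoneda-2} comes from a unique natural iso $\mathfrak{s}_{X,P} : X^{**} \catactl \Ser(P) \to \Ser(X \catactl P)$ of left $\mathcal{C}$-module functors. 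The axioms of a twisted left $\mathcal{C}$-module structure follow by tracing through the coherence identities \eqref{eq:iHom-composition-a-2}, \eqref{eq:iHom-composition-b-2}, and \eqref{eq:iHom-composition-ab}. The twisted module structure then extends to all of $\mathcal{M}$ via the same cokernel construction.

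For Part~(\ref{item:rel-Serre-main-2}), both $\Nak_{\mathcal{C}}(\unitobj) \catactl \Ser$ and $\Nak_{\mathcal{M}}$ are right exact twisted left $\mathcal{C}$-module functors, so it suffices to build the iso on $\relprj{}{\mathcal{M}}$. For $P$ projective, the defining iso \eqref{eq:rel-Serre-def-iso} combines with the Nakayama universal property \eqref{eq:Nakayama-Hom-dual} and the fact that $\Nak_{\mathcal{C}}(\unitobj)$ corepresents the functor $V \mapsto \Hom_{\mathcal{C}}(\unitobj, V)^*$ on $\mathcal{C}$ (itself a consequence of \eqref{eq:Nakayama-Hom-dual} applied to $\mathcal{C}$) to yield an iso $\Nak_{\mathcal{C}}(\unitobj) \catactl \Ser(P) \cong \Nak_{\mathcal{M}}(P)$. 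Compatibility with the twisted module structures reduces, via Lemma~\ref{lem:internal-Yoneda-2} and the coherence of the canonical iso $\mathfrak{n}_F$, to the very definition of the Radford isomorphism (Definition~\ref{def:Radford-isomorphism}).

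For Part~(\ref{item:rel-Serre-main-3}), the defining iso immediately gives that $\iHom(-, \Ser(P)) \cong \iHom(P, -)^*$ is exact for $P \in \relprj{\mathcal{C}}{\mathcal{M}}$, hence $\Ser(P) \in \relinj{\mathcal{C}}{\mathcal{M}}$. Dually, by applying the existence result to $\mathcal{M}^{\op}$ in view of \eqref{eq:relative-proj-opposite}, one obtains a left exact functor $\overline{\Ser}$ on $\mathcal{M}$ with $\iHom(M, I)^* \cong \iHom(\overline{\Ser}(I), M)$ for $I \in \relinj{\mathcal{C}}{\mathcal{M}}$, sending $\mathcal{C}$-injectives to $\mathcal{C}$-projectives. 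Combining the two defining isos yields a natural bijection $\Hom_{\mathcal{M}}(\Ser(P), I) \cong \Hom_{\mathcal{M}}(P, \overline{\Ser}(I))$ for $P \in \relprj{\mathcal{C}}{\mathcal{M}}$ and $I \in \relinj{\mathcal{C}}{\mathcal{M}}$; Yoneda then forces the unit and counit to be invertible on these subcategories, so $\Ser$ and $\overline{\Ser}$ restrict to mutually quasi-inverse equivalences $\relprj{\mathcal{C}}{\mathcal{M}} \simeq \relinj{\mathcal{C}}{\mathcal{M}}$, with the left exact extension of $\overline{\Ser}$ providing the desired right adjoint of $\Ser$. The hardest step is expected to be Part~(\ref{item:rel-Serre-main-2}): matching the twisted module structures on $\Nak_{\mathcal{C}}(\unitobj) \catactl \Ser$ and $\Nak_{\mathcal{M}}$ demands tracing the Radford isomorphism and the canonical iso $\mathfrak{n}_F$ through several layers of coherence diagrams.
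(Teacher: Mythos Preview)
Your overall shape is close to the paper's, but there are two genuine gaps.

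\textbf{Existence.} You build $\Ser$ on $\relprj{\mathcal{C}}{\mathcal{M}}$ by representability and then ``extend right-exactly'' via projective presentations. The well-definedness of that extension (using ordinary projectives) is fine; what is \emph{not} automatic is that the extended functor $\tilde{\Ser}$ still satisfies the defining isomorphism on \emph{all} of $\relprj{\mathcal{C}}{\mathcal{M}}$, not merely on $\relprj{}{\mathcal{M}}$. Concretely, for $Q\in\relprj{\mathcal{C}}{\mathcal{M}}$ with a projective presentation $P_1\to P_0\to Q\to 0$, you need $\Ser(P_1)\to\Ser(P_0)\to\Ser(Q)\to 0$ exact; since $\iHom(M,-)$ is only left exact, knowing that $\iHom(M,\Ser(P_1))\to\iHom(M,\Ser(P_0))\to\iHom(M,\Ser(Q))\to 0$ is exact for all $M$ does not immediately give this. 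The paper flags exactly this difficulty and sidesteps it: rather than extending, it passes to $\mathcal{M}\approx\mathcal{C}_A$ and writes down the global formula $\Ser_A=\iHom_A(-,A)^*$ (Lemma~\ref{lem:rel-Serre-mod-A}), then checks directly via Eilenberg--Watts that this right exact functor has the required isomorphism on $\relprj{\mathcal{C}}{\mathcal{C}_A}$. Your route can be repaired by the same reduction (evaluate at $M=A$ and use $\iHom_A(A,-)\cong\id$), but as written it is incomplete.

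\textbf{Part~(\ref{item:rel-Serre-main-2}).} The step ``$\Nak_{\mathcal{C}}(\unitobj)$ corepresents $V\mapsto\Hom_{\mathcal{C}}(\unitobj,V)^*$, by \eqref{eq:Nakayama-Hom-dual} applied to $\mathcal{C}$'' fails: formula~\eqref{eq:Nakayama-Hom-dual} requires the first argument to be \emph{projective}, and $\unitobj$ is not projective in a general finite tensor category. So the isomorphism $\Nak_{\mathcal{C}}(\unitobj)\catactl\Ser(P)\cong\Nak_{\mathcal{M}}(P)$ cannot be produced by the Yoneda-style argument you sketch. The paper instead builds this isomorphism structurally: for $P\in\relprj{\mathcal{C}}{\mathcal{M}}$ the functor $\T_P$ is admissible, and the canonical isomorphism $\mathfrak{n}_{\T_P}:\T_P^{\rradj}\Nak_{\mathcal{C}}\to\Nak_{\mathcal{M}}\T_P$ from~\eqref{eq:Nakayama-cano-iso}, evaluated at $\unitobj$ and precomposed with the left $\mathcal{C}$-module structure of $\T_P^{\rradj}$, gives $\theta_P:\Nak_{\mathcal{C}}(\unitobj)\catactl\Ser(P)\to\Nak_{\mathcal{M}}(P)$ directly (equation~\eqref{eq:rel-Serre-main-thm-proof-part-3}). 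Compatibility with the twisted module structures is then a diagram chase using the coherence of $\mathfrak{n}$ and of~\eqref{eq:FG-r-adj}. You correctly anticipate this is the hardest part, but the key input is the admissibility isomorphism $\mathfrak{n}_{\T_P}$, not a representability property of $\Nak_{\mathcal{C}}(\unitobj)$.

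For Part~(\ref{item:rel-Serre-main-3}) your idea is essentially the paper's, though the paper again works concretely: it identifies $\overline{\Ser}_A={}^{**}\iHom_A(A^*,-)$ as a genuine right adjoint of $\Ser_A$ (Lemma~\ref{lem:rel-Serre-mod-A-ra}) rather than defining $\overline{\Ser}$ abstractly on $\mathcal{M}^{\op}$ and then having to argue separately that its left exact extension is the right adjoint.
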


The existence and the uniqueness part and Parts (\ref{item:rel-Serre-main-1}), (\ref{item:rel-Serre-main-2}) and (\ref{item:rel-Serre-main-3}) of Theorem~\ref{thm:rel-Serre-summary} are separately proved in Subsections \ref{subsec:rel-Serre-existence}, \ref{subsec:rel-Serre-twisted-module}, \ref{subsec:rel-Serre-Nakayama} and \ref{subsec:rel-Serre-equiv-PCM-ICM}, respectively.
In view of Parts (1) and (2), one might wonder if a relative Serre functor of $\mathcal{M}$ is unique up to isomorphism as a twisted left $\mathcal{C}$-module functor. It is true. A precise formulation of this claim and related results are given in Subsection \ref{subsec:rel-Serre-uniqueness}.

For our applications of relative Serre functors to quasi-Frobenius algebras discussed in the next section, the case where $\mathcal{M} = \mathcal{C}_A$ for some algebra $A$ in $\mathcal{C}$ is especially important. A formula of a relative Serre functor for this case will be given by Lemma \ref{lem:rel-Serre-mod-A} as a part of the proof of the existence of a relative Serre functor of a finite left module category. In Subsection~\ref{subsec:rel-Serre-A-mod}, we will have a further discussion on a relative Serre functor of $\mathcal{C}_A$ and its structure morphisms.

The Nakayama functor has the universal property \eqref{eq:Nakayama-coend-formula}. In view of similarity between \eqref{eq:rel-Serre-def-iso} and \eqref{eq:Nakayama-Hom-dual}, one might expect that a relative Serre functor $\Ser$ of a finite left $\mathcal{C}$-module category $\mathcal{M}$ is also characterized by a universal property.
In a forthcoming work, we will show that $\Ser$ is expressed as
\begin{equation*}
  \Ser(M) = \oint^{X} \iHom(M, X)^* \catactl X
  \quad (M \in \mathcal{M}),
\end{equation*}
where the contour integral symbol is used to mean the {\em module coend} introduced by Bortolussi and Mombelli in \cite{MR4226562}. This is a common generalization of the universal property \eqref{eq:Nakayama-coend-formula} of the Nakayama functor and the formula \cite[Theorem 4.14]{MR4226562} of the relative Serre functor of an exact module category.

\begin{remark}
  A large part of Theorem~\ref{thm:rel-Serre-summary} will be proved in an abstract categorical setting in the next subsection.
  Thus it is natural to expect a generalization of our results to a broader class of monoidal categories and their modules. Grothendieck-Verdier (GV) categories \cite{MR3134025} are a class of `non-rigid' monoidal categories studied in connection with CFTs.
  Module categories over GV categories are discussed in \cite{2023arXiv230617668F}.
  Schweigert announced that a theory of relative Serre functors can be extended to module categories over GV categories \cite{Schweigert2023Banff}.
\end{remark}

\subsection{The dual of the internal Hom functor}
\label{subsec:dual-internal-hom}

Throughout this subsection, we fix a rigid monoidal category $\mathcal{C}$ and a closed left module category $\mathcal{M}$ over $\mathcal{C}$. We denote by $\mathcal{P}$ the full subcategory of $\mathcal{M}$ consisting of all objects $P \in \mathcal{M}$ such that the functor $\iHom(P, -)$ has a right adjoint. In the setting of Theorem \ref{thm:rel-Serre-summary}, the full subcategory $\mathcal{P}$ coincides with $\relprj{\mathcal{C}}{\mathcal{M}}$. By a similar argument as Lemma~\ref{lem:relative-proj-2}, one can prove that $\mathcal{P}$ is closed under the action of $\mathcal{C}$.

We set $S(P) = \T_P^{\rradj}(\unitobj)$ for $P \in \mathcal{P}$ and extend the assignment $P \mapsto S(P)$ to a functor from $\mathcal{P}$ to $\mathcal{M}$. The purpose of this subsection is to show that $S$ has some properties that
relative Serre functors are said to have in Theorem \ref{thm:rel-Serre-summary}.
We first prove:

\begin{lemma}
  \label{lem:rel-Serre-1}
  There is a natural isomorphism
  \begin{equation*}
    \iHom(P, M)^* \cong \iHom(M, S(P)) \quad (P \in \mathcal{P}, M \in \mathcal{M}).
  \end{equation*}
\end{lemma}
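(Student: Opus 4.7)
The plan is to realize the isomorphism through a short chain of adjunction-based natural bijections, then apply the Yoneda lemma in $\mathcal{C}$. For any test object $X \in \mathcal{C}$, the defining adjunction \eqref{eq:iHom-definition} of the internal Hom gives
\begin{equation*}
  \Hom_{\mathcal{C}}(X, \iHom(M, S(P))) \cong \Hom_{\mathcal{M}}(X \catactl M, S(P)).
\end{equation*}
Unwinding the definition $S(P) = \T_P^{\rradj}(\unitobj)$ and using the adjunction $\iHom(P, -) \dashv \T_P^{\rradj}$, which exists precisely because $P \in \mathcal{P}$, the right-hand side is naturally isomorphic to $\Hom_{\mathcal{C}}(\iHom(P, X \catactl M), \unitobj)$. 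Since $\mathcal{C}$ is rigid, the lax $\mathcal{C}$-module structure morphism $\mathfrak{a}_{X, P, M}$ of $\iHom(P, -)$ is invertible (as noted above \eqref{eq:iHom-left-C-mod-str}), giving $\iHom(P, X \catactl M) \cong X \otimes \iHom(P, M)$, and the left duality of $\mathcal{C}$ then identifies the resulting Hom set with $\Hom_{\mathcal{C}}(X, \iHom(P, M)^*)$. Composing these four natural bijections and appealing to the Yoneda lemma yields the desired isomorphism.

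Naturality in the variable $M \in \mathcal{M}$ is automatic, since every link in the chain is manifestly functorial in $M$. For naturality in $P \in \mathcal{P}$ one checks that $S$ is covariant in $P$ via the double flip $\T_P \leadsto \iHom(P,-) \leadsto \T_P^{\rradj}$, matching the covariance of $\iHom(P, M)^*$ in $P$, and that each piece of the chain is (di)natural in $P$ for the evident reasons (the defining adjunction of $\iHom$, the adjunction $\iHom(P,-) \dashv \T_P^{\rradj}$, the structure morphism $\mathfrak{a}$, and the rigid duality).

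The proof is essentially routine once the correct chain is identified; there is no serious obstacle. The only point that genuinely uses a hypothesis is the invertibility of $\mathfrak{a}_{X, P, M}$, which would fail without the rigidity of $\mathcal{C}$ and is precisely what the preceding discussion guarantees. It is worth emphasizing that no exactness or $\mathcal{C}$-projectivity hypothesis on $P$ is used in this lemma beyond the bare existence of the second right adjoint $\T_P^{\rradj}$; the identification of $\mathcal{P}$ with $\relprj{\mathcal{C}}{\mathcal{M}}$ in the setting of finite module categories is a separate matter to be addressed when $S$ is shown to deserve the name of relative Serre functor.
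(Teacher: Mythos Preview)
Your proof is correct and uses essentially the same ingredients as the paper: the invertibility of $\mathfrak{a}$ from rigidity, the adjunction $\T_P^{\radj} \dashv \T_P^{\rradj}$, and left duality. The only difference is cosmetic---the paper packages the argument at the level of functors (observing $\T_{\iHom(P,M)} \cong \T_P^{\radj}\T_M$ via $\mathfrak{a}$ and then taking right adjoints to obtain $\T_M^{\radj}\T_P^{\rradj} \cong (-)\otimes\iHom(P,M)^*$, evaluated at $\unitobj$), whereas you unwind the same chain through representable Hom-sets and invoke Yoneda; the functor-level version has the minor advantage that naturality and the left $\mathcal{C}$-module structure come for free, which the paper exploits in the subsequent lemmas.
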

\begin{proof}
  For $P \in \mathcal{P}$ and $M \in \mathcal{M}$, there is an isomorphism
  \begin{equation}
    \label{eq:rel-Serre-sub-1-proof-1}
    \T_{\iHom(P, M)}
    = (-) \otimes \iHom(P, M)
    \xrightarrow{\quad \mathfrak{a}_{-,P,M} \quad}
    \iHom(P, (-) \catactl M)
    = \T_P^{\radj} \T_M^{}
  \end{equation}
  of left $\mathcal{C}$-module functors. By taking right adjoints, we have an isomorphism
  \begin{equation}
    \label{eq:rel-Serre-sub-1-proof-2}
    \T_M^{\radj} \T_P^{\rradj}
    \xrightarrow{\ \eqref{eq:FG-r-adj} \ } (\T_P^{\radj} \T_M^{})^{\radj}
    \xrightarrow{\ \eqref{eq:rel-Serre-sub-1-proof-1} \ }
    (\T_{\iHom(P, M)})^{\radj} = (-) \otimes \iHom(P, M)^*
  \end{equation}
  of left $\mathcal{C}$-module functors. Evaluating \eqref{eq:rel-Serre-sub-1-proof-2} at $\unitobj$, we have a natural isomorphism as stated.
\end{proof}

For $M \in \mathcal{M}$, $P \in \mathcal{P}$ and $X \in \mathcal{C}$, we denote by
\begin{equation*}
  \tilde{\phi}_{M,P}(X):
  \iHom(M, \T_P^{\rradj}(X))
  \to X \otimes \iHom(P, M)^*
\end{equation*}
the $X$-component of the isomorphism \eqref{eq:rel-Serre-sub-1-proof-2} constructed in the proof of Lemma \ref{lem:rel-Serre-1} and set $\phi_{M,P} = \tilde{\phi}_{M,P}(\unitobj)$. We define the natural isomorphism
\begin{equation*}
  \mathfrak{s}_{X,P} : X^{**} \catactl S(P) \to S(X \catactl P)
  \quad (X \in \mathcal{C}, P \in \mathcal{P})
\end{equation*}
to be the following composition:
\begin{equation*}
  \newcommand{\xarr}[1]{\xrightarrow{\makebox[4em]{$\scriptstyle #1$}}}
  \begin{aligned}
    X \catactl S(P) = X^{**} \catactl \T_P^{\rradj}(\unitobj)
    & \xarr{\omega^{(P)}_{X^{**},\unitobj}}
    \T_P^{\rradj}(X^{**})
    = \T_P^{\rradj} \T_X^{\rradj}(\unitobj) \\
    & \xarr{\eqref{eq:FG-r-adj}}
    (\T_P^{} \T_X^{})^{\rradj}(\unitobj)
    = (\T_{X \catactl P})^{\rradj}(\unitobj) = S(X \catactl P),
  \end{aligned}
\end{equation*}
where $\omega^{(P)}_{X,Y} : X \catactl \T_P^{\rradj}(Y) \to \T_P^{\rradj}(X \otimes Y)$ is the left $\mathcal{C}$-module structure of $\T_P^{\rradj}$.
By the coherence property of \eqref{eq:FG-r-adj}, one can show that $\mathfrak{s}$ makes $S$ a left $\mathcal{C}$-module functor from $\mathcal{P}$ to ${}_{(2)}\mathcal{M}$ or, equivalently, from ${}_{(-2)}\mathcal{P}$ to $\mathcal{M}$. Now we state:

\begin{lemma}
  \label{lem:rel-Serre-phi-M-P}
  If we view $S$ as a twisted left $\mathcal{C}$-module functor by $\mathfrak{s}$, then
  \begin{equation*}
    \phi_{M,P} : \iHom(M, S(P)) \to \iHom(P, M)^* \quad (M \in \mathcal{M}, P \in \mathcal{P})
  \end{equation*}
  is an isomorphism of $\mathcal{C}$-bimodule functors from $\mathcal{M}^{\op} \times {}_{(-2)}\mathcal{P}$ to $\mathcal{C}$.
\end{lemma}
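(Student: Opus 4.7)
The plan is to verify two $\mathcal{C}$-module compatibility squares for $\phi_{M,P}$: one for the right $\mathcal{C}$-action on $M \in \mathcal{M}^{\op}$, with structure morphisms $\mathfrak{b}_{M, S(P), {}^{*}Y}$ on the source and $(\mathfrak{a}_{{}^{*}Y, P, M}^{-1})^{*}$ on the target; and one for the twisted left $\mathcal{C}$-action on $P \in {}_{(-2)}\mathcal{P}$, with structure morphisms $\iHom(M, \mathfrak{s}_{{}^{**}X, P}) \circ \mathfrak{a}_{X, M, S(P)}$ on the source and $(\mathfrak{b}_{P, M, {}^{**}X}^{-1})^{*}$ on the target. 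The naturality of $\phi_{M,P}$ in $M$ and $P$ as an isomorphism of ordinary functors is already contained in Lemma~\ref{lem:rel-Serre-1}; only the compatibility with these two structure morphisms remains.

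The key observation is that every structure morphism in sight, as well as $\phi_{M,P}$ itself, is a canonical isomorphism built from instances of the coherence isomorphism $\Upsilon$ of \eqref{eq:FG-r-adj} applied to factorisations of the functors $\T_{(-)}$. Indeed, $\phi_{M,P}$ was constructed in the proof of Lemma~\ref{lem:rel-Serre-1} from the factorisation $\T_{\iHom(P, M)} \cong \T_P^{\radj} \circ \T_M$ (which encodes $\mathfrak{a}$) together with one application of $\Upsilon$; the morphism $\mathfrak{b}$ corresponds to the factorisation $\T_{{}^{*}Y \catactl M} = \T_M \circ \T_{{}^{*}Y}^{\mathcal{C}}$, where $\T_Z^{\mathcal{C}}: \mathcal{C} \to \mathcal{C}$ is $X \mapsto X \otimes Z$; and $\mathfrak{s}$ corresponds to $\T_{X \catactl P} = \T_P \circ \T_X^{\mathcal{C}}$ combined with the lax left $\mathcal{C}$-module structure $\omega^{(P)}$ of $\T_P^{\rradj}$.

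For the $M$-variable square, I would view both sides as arising from the triple factorisation
\[
  \T_{\iHom(P, {}^{*}Y \catactl M)} \;\cong\; \T_P^{\radj} \circ \T_M \circ \T_{{}^{*}Y}^{\mathcal{C}},
\]
read in the two orders $(\T_P^{\radj} \circ \T_M) \circ \T_{{}^{*}Y}^{\mathcal{C}}$ and $\T_P^{\radj} \circ (\T_M \circ \T_{{}^{*}Y}^{\mathcal{C}})$. Applying $\Upsilon$ to each grouping and using its coherence (recalled after \eqref{eq:FG-r-adj}) then forces the square to commute. The $P$-variable square is handled analogously: the relevant triple factorisation involves $\T_X^{\mathcal{C}}$ in place of $\T_{{}^{*}Y}^{\mathcal{C}}$, but $\mathfrak{s}$ additionally inserts $\omega^{(P)}$, so one must invoke both the coherence of $\Upsilon$ and the naturality of $\omega^{(P)}$. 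The identities $({}^{**}X)^{*} = {}^{*}X$ and $({}^{*}X)^{*} = X$, available from the strict rigid setup, ensure that the duality shifts line up correctly on both sides.

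The main obstacle is the careful bookkeeping of the various duality shifts (${}^{*}(-)$, $(-)^{*}$, ${}^{**}(-)$, $(-)^{**}$) produced by successive right adjoints and the accurate identification of the relevant coherence instance for each square. No new conceptual ingredient beyond the coherence of $\Upsilon$ and the module-functor structure $\omega^{(P)}$ is required, so the argument is ultimately formal.
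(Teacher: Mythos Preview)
Your plan is a genuinely different route from the paper's. The paper does \emph{not} argue directly by coherence of $\Upsilon$. Instead it introduces the trace $\mathfrak{t}_P = \varepsilon_P(\unitobj)$ (the counit of $\T_P^{\radj}\dashv\T_P^{\rradj}$ at $\unitobj$), proves the pairing identity $\mathfrak{t}_P\circ\icomp_{P,M,S(P)}=\eval\circ(\phi_{M,P}\otimes\id)$ (Lemma~\ref{lem:rel-Serre-trace-1}) and the twist identity \eqref{eq:rel-Serre-trace-2} for $\mathfrak{t}$ under $P\mapsto X\catactl P$ (Lemma~\ref{lem:rel-Serre-trace-2}), and then checks each bimodule square by composing with an evaluation and reducing to the $\icomp$-identities \eqref{eq:iHom-composition-a-2}, \eqref{eq:iHom-composition-b-2}, \eqref{eq:iHom-composition-ab}. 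This is computational rather than formal, and it has the side benefit that $\mathfrak{t}$ is reused later (Definition~\ref{def:rel-Serre-structured}, Lemma~\ref{lem:Fb-alg-construction}, Section~\ref{sec:symm-frobenius}).

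Your coherence strategy can be made to work, but as written it understates the ingredients. For the $M$-variable square, ``coherence of $\Upsilon$'' alone is not enough: the isomorphism $\mathfrak{a}_{-,P,M}:\T_{\iHom(P,M)}\to\T_P^{\radj}\T_M$ in \eqref{eq:rel-Serre-sub-1-proof-1} is \emph{not} an instance of $\Upsilon$ but the lax $\mathcal{C}$-module structure of $\T_P^{\radj}$, so you must also invoke the module-functor axiom $\mathfrak{a}_{X\otimes Y,P,M}=\mathfrak{a}_{X,P,Y\catactl M}\circ(\id_X\otimes\mathfrak{a}_{Y,P,M})$ (equivalently, the bimodule compatibility \eqref{eq:C-bimod-struc-of-iHom-2}) before passing to right adjoints. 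For the $P$-variable square, ``naturality of $\omega^{(P)}$'' is not the relevant fact; what you actually need is that the counit $\varepsilon_P:\T_P^{\radj}\T_P^{\rradj}\to\id_{\mathcal{C}}$ is a morphism of $\mathcal{C}$-module functors \emph{and} that it is compatible with the $\Upsilon$-identification $(\T_P\T_X)^{\radj}(\T_P\T_X)^{\rradj}\cong\T_X^{\radj}\T_P^{\radj}\T_P^{\rradj}\T_X^{\rradj}$. These two facts are exactly the cells of the diagram in Figure~\ref{fig:itrace-X-P}, which is the heart of the paper's Lemma~\ref{lem:rel-Serre-trace-2}. Once you name these ingredients precisely, your argument and the paper's are really the same verification organised differently: the paper unpacks it through the trace and $\icomp$, you would unpack it through adjunction diagrams.
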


We prepare some lemmas to prove Lemma~\ref{lem:rel-Serre-phi-M-P}.
For each $P \in \mathcal{P}$, we define
\begin{equation}
  \label{eq:rel-Serre-trace}
  \mathfrak{t}_{P} := \varepsilon_{P}(\unitobj) : \iHom(P, S(P)) = \T_P^{\radj} \T_P^{\rradj}(\unitobj) \to \unitobj,
\end{equation}
where $\varepsilon_P : \T_P^{\radj} \circ \T_P^{\rradj} \to \id_{\mathcal{C}}$ is the counit of the adjunction $\T_P^{\radj} \dashv \T_P^{\rradj}$.
There is the following relation between $\mathfrak{t}_P$ and $\phi_{M,P}$.

\begin{lemma}
  \label{lem:rel-Serre-trace-1}
  For $P \in \mathcal{P}$ and $M \in \mathcal{M}$, we have
  \begin{gather}
    \label{eq:rel-Serre-trace-1}
    \mathfrak{t}_{P} \circ \icomp_{P,M,S(P)}
    = \eval_{\iHom(P, M)} \circ (\phi_{P,M} \otimes \id_{\iHom(P,M)}), \\
    \label{eq:rel-Serre-trace-1'}
    \mathfrak{t}_{P} = (\icoev_{P,\unitobj})^* \circ \phi_{P,P}.
  \end{gather}
\end{lemma}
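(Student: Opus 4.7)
The plan is to prove \eqref{eq:rel-Serre-trace-1} by transporting both sides through the rigidity adjunction $\Hom_{\mathcal{C}}(K \otimes H, \unitobj) \cong \Hom_{\mathcal{C}}(K, H^*)$, where $H := \iHom(P, M)$ and $K := \iHom(M, S(P))$, and identifying both sides with the morphism $\phi_{M,P} : K \to H^*$ from Lemma~\ref{lem:rel-Serre-1}. The right-hand side of \eqref{eq:rel-Serre-trace-1}, being of the form $\eval_H \circ (\phi_{M,P} \otimes \id_H)$, corresponds to $\phi_{M,P}$ tautologically under this adjunction, so the task reduces to verifying that the left-hand side corresponds to $\phi_{M,P}$ as well.

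The key step is to rewrite the left-hand side. Applying formula \eqref{eq:iHom-composition-a-1} to expand $\icomp_{P,M,S(P)}$ yields
\[
\mathfrak{t}_P \circ \icomp_{P,M,S(P)} = \bigl( \mathfrak{t}_P \circ \T_P^{\radj}(\ieval_{M,S(P)}) \bigr) \circ \mathfrak{a}_{K,P,M}.
\]
Since $\mathfrak{t}_P$ is the counit at $\unitobj$ of $\T_P^{\radj} \dashv \T_P^{\rradj}$ and $\ieval_{M,S(P)}$ is the counit at $S(P) = \T_P^{\rradj}(\unitobj)$ of $\T_M \dashv \T_M^{\radj}$, the bracketed composite is the counit at $\unitobj$ of the composite adjunction $\T_P^{\radj} \T_M \dashv \T_M^{\radj} \T_P^{\rradj}$. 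Using the canonical isomorphism $\Upsilon_{\T_P^{\radj},\T_M}$ of \eqref{eq:FG-r-adj}, this counit factorizes through the counit of $\T_P^{\radj} \T_M \dashv (\T_P^{\radj} \T_M)^{\radj}$ at $\unitobj$ after applying $\T_P^{\radj}\T_M$ to $\Upsilon_{\T_P^{\radj},\T_M}(\unitobj)$. Then tracing the latter counit through the right-adjoint construction of the $\mathcal{C}$-module isomorphism $\mathfrak{a}_{-,P,M}$, whose defining property connects the counit of $\T_P^{\radj}\T_M \dashv (\T_P^{\radj}\T_M)^{\radj}$ with $\eval_H$ precomposed with $\mathfrak{a}$, and recalling that $\phi_{M,P}$ is by construction the composition of $\Upsilon_{\T_P^{\radj},\T_M}(\unitobj)$ with $(\mathfrak{a}_{-,P,M})^{\radj}(\unitobj)$, one arrives at $\eval_H \circ (\phi_{M,P} \otimes \id_H)$ as desired.

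For \eqref{eq:rel-Serre-trace-1'}, specialize \eqref{eq:rel-Serre-trace-1} to $M = P$ and precompose both sides with $\id_{\iHom(P,S(P))} \otimes \icoev_{P, \unitobj}$. The left-hand side collapses to $\mathfrak{t}_P$ because $\icoev_{P, \unitobj}$ is the identity element at $P$ in the $\mathcal{C}$-enriched structure, giving $\icomp_{P,P,S(P)} \circ (\id \otimes \icoev_{P, \unitobj}) = \id_{\iHom(P, S(P))}$. The right-hand side simplifies to $(\icoev_{P, \unitobj})^* \circ \phi_{P,P}$ by the standard identity $\eval_B \circ (\id_{B^*} \otimes f) = f^*$ for any $f : \unitobj \to B$, which is how the dual of a morphism out of the unit is computed.

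The main obstacle is the middle paragraph: carefully aligning the mate construction with the canonical isomorphism $\Upsilon_{\T_P^{\radj},\T_M}$ and the adjoint $(\mathfrak{a}_{-,P,M})^{\radj}$. This is conceptually a clear exercise in the coherence of canonical isomorphisms of adjoints, but it requires diligent bookkeeping of which unit or counit is being applied at each stage and of the directions of the adjoint morphisms.
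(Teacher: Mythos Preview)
Your proposal is correct and follows essentially the same route as the paper. The only difference is one of direction and concreteness: the paper writes out the four-arrow composite that defines $\tilde{\phi}_{M,P}(X)$ explicitly (unit of $\T_H$, then $\mathfrak{a}$, then $\T_P^{\radj}(\ieval)$, then $\varepsilon_P$), recognizes the middle two arrows as $\icomp_{P,M,S(P)} \otimes \id$ via \eqref{eq:iHom-composition-a-1}, and reads off \eqref{eq:rel-Serre-trace-1} at $X=\unitobj$ by a zig-zag; you instead start from the left-hand side of \eqref{eq:rel-Serre-trace-1}, expand $\icomp$ via the same lemma, and argue abstractly with mates and counits of composite adjunctions to reach $\phi_{M,P}$. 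Your derivation of \eqref{eq:rel-Serre-trace-1'} is identical to the paper's.
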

\begin{proof}
  By the construction, $\tilde{\phi}_{M,P}(X)$ is equal to the following composition:
  \begin{equation*}
    \newcommand{\xarr}[1]{\xrightarrow{\makebox[13em]{$\scriptstyle #1$}}}
    \begin{aligned}
      \T_M^{\radj} \T_P^{\rradj}(X)
      & \xarr{\text{$\id \otimes \coev_W$ with $W = \T_P^{\radj} \T_M^{}(\unitobj)$}}
      \T_M^{\radj} \T_P^{\rradj}(X) \otimes \T_P^{\radj} \T_M^{}(\unitobj) \otimes \T_P^{\radj} \T_M^{}(\unitobj)^* \\
      & \xarr{\text{$\mathfrak{a}_{W,P,M} \otimes \id$ with $W = \T_M^{\radj} \T_P^{\rradj}(X)$}}
      \T_P^{\radj} \T_M^{} (\T_M^{\radj} \T_P^{\rradj}(X)) \otimes \T_P^{\radj} \T_M^{}(\unitobj)^* \\
      & \xarr{\text{$\T_P^{\radj}(\ieval_{M, W}) \otimes \id$ with $W = \T_P^{\rradj}(X)$}}
      \T_P^{\radj} \T_P^{\rradj}(X) \otimes \T_P^{\radj} \T_M^{}(\unitobj)^* \\
      & \xarr{\varepsilon_P(X) \otimes \id}
      X \otimes \T_P^{\radj} \T_M^{}(\unitobj)^*.
    \end{aligned}
  \end{equation*}
  By \eqref{eq:iHom-composition-a-1}, we see that the composition of the second and the third arrow is equal to the morphism $\icomp_{P,M,S(P)} \otimes \id$. Thus, by letting $X = \unitobj$, we have
  \begin{align*}
    \phi_{M,P}
    = (\mathfrak{t}_{P} \otimes \id_{\iHom(M, P)^*})
    & \circ (\icomp_{P,M,S(P)} \otimes \id_{\iHom(M, P)^*}) \\
    & \circ (\id_{\iHom(M, S(P))} \otimes \coev_{\iHom(P, M)}),
  \end{align*}
  which implies equation \eqref{eq:rel-Serre-trace-1}.
  Equation \eqref{eq:rel-Serre-trace-1'} follows from \eqref{eq:rel-Serre-trace-1} with $P = M$ and the fact that $\icoev$ plays the role of unit for the composition law for the internal Hom functor.
\end{proof}

Next, we prove the following equation relating $\mathfrak{s}$ and $\mathfrak{t}$.

\begin{lemma}
  \label{lem:rel-Serre-trace-2}
  For all objects $X \in \mathcal{C}$ and $P \in \mathcal{P}$, we have
  \begin{equation}
    \label{eq:rel-Serre-trace-2}
    \mathfrak{t}_{X \catactl P}
    \circ \iHom(X \catactl P, \mathfrak{s}_{X,P}) \circ \mathfrak{c}_{X,P,S(P),X^*}
    = \eval_{X^*} \circ (\id_{X^{**}} \otimes \mathfrak{t}_{P} \otimes \id_{X^*}).
  \end{equation}
\end{lemma}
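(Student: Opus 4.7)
The strategy is to recognize the left-hand side of \eqref{eq:rel-Serre-trace-2} as the mate of $\mathfrak{s}_{X,P}$ under the adjunction $\T_{X \catactl P}^{\radj} \dashv \T_{X \catactl P}^{\rradj}$, precomposed with $\mathfrak{c}$. Indeed, since $\mathfrak{t}_{X \catactl P} = \varepsilon_{X \catactl P}(\unitobj)$ is the counit of this adjunction evaluated at $\unitobj$, the composition $\mathfrak{t}_{X \catactl P} \circ \iHom(X \catactl P, \mathfrak{s}_{X,P})$ is by construction the image of $\mathfrak{s}_{X,P}$ under the natural bijection
\[
  \Hom_{\mathcal{M}}(X^{**} \catactl S(P), S(X \catactl P)) \cong \Hom_{\mathcal{C}}(\iHom(X \catactl P, X^{**} \catactl S(P)), \unitobj).
\]

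Next I would unfold this mate using the coherence of $\Upsilon$. Since $\T_{X \catactl P} = \T_P \circ \T_X$, the canonical isomorphism \eqref{eq:FG-r-adj} yields identifications $\T_{X \catactl P}^{\radj} \cong \T_X^{\radj} \T_P^{\radj}$ and $\T_{X \catactl P}^{\rradj} \cong \T_P^{\rradj} \T_X^{\rradj}$, under which the counit $\varepsilon_{X \catactl P}$ decomposes as $\varepsilon_X \circ \T_X^{\radj} \varepsilon_P \T_X^{\rradj}$. By the very definition of $\mathfrak{s}_{X,P}$ as $\omega^{(P)}_{X^{**}, \unitobj}$ followed by the canonical isomorphism $\T_P^{\rradj} \T_X^{\rradj}(\unitobj) \to \T_{X \catactl P}^{\rradj}(\unitobj)$, the morphism $\mathfrak{s}_{X,P}$ transfers to $\omega^{(P)}_{X^{**}, \unitobj}$ under these isomorphisms. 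Hence, taking mates, the mate of $\mathfrak{s}_{X,P}$ equals the mate of $\omega^{(P)}_{X^{**}, \unitobj}$ under the simpler adjunction $\T_P^{\radj} \dashv \T_P^{\rradj}$, tensored with $\id_{X^*}$ on the right and followed by $\eval_{X^*}$, where we use that $\varepsilon_X(\unitobj): X^{**} \otimes X^* \to \unitobj$ is precisely $\eval_{X^*}$.

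Now, $\omega^{(P)}$ is by construction the lax left $\mathcal{C}$-module structure on $\T_P^{\rradj}$ induced from the oplax structure $\mathfrak{a}^{-1}$ on $\T_P^{\radj}$ via the unit--counit correspondence for $\T_P^{\radj} \dashv \T_P^{\rradj}$. A direct application of the triangle identities then gives that the mate of $\omega^{(P)}_{X^{**}, \unitobj}$ under $\T_P^{\radj} \dashv \T_P^{\rradj}$ is equal to $(\id_{X^{**}} \otimes \varepsilon_P(\unitobj)) \circ \mathfrak{a}^{-1}_{X^{**}, P, S(P)} = (\id_{X^{**}} \otimes \mathfrak{t}_P) \circ \mathfrak{a}^{-1}_{X^{**}, P, S(P)}$. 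Assembling the pieces, the mate of $\mathfrak{s}_{X,P}$ becomes
\[
  \eval_{X^*} \circ (\id_{X^{**}} \otimes \mathfrak{t}_P \otimes \id_{X^*}) \circ (\mathfrak{a}^{-1}_{X^{**}, P, S(P)} \otimes \id_{X^*}) \circ \mathfrak{b}^{-1}_{P, X^{**} \catactl S(P), X},
\]
and the factorization $\mathfrak{c} = \mathfrak{b} \circ (\mathfrak{a} \otimes \id_{X^*})$ from \eqref{eq:C-bimod-struc-of-iHom-2} implies that precomposing with $\mathfrak{c}_{X^{**}, P, S(P), X}$ cancels the two rightmost factors, yielding exactly $\eval_{X^*} \circ (\id_{X^{**}} \otimes \mathfrak{t}_P \otimes \id_{X^*})$.

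The main obstacle is the careful bookkeeping of how the coherence of $\Upsilon$ with respect to adjunction counits interacts with the definition of $\omega^{(P)}$ and with the $\mathcal{C}$-bimodule structure morphisms $\mathfrak{a}$ and $\mathfrak{b}$ of $\iHom$. Once the relevant large diagram has been drawn, however, each of its cells is either a naturality square, a triangle identity for one of the two adjunctions $\T_P^{\radj} \dashv \T_P^{\rradj}$ and $\T_X^{\radj} \dashv \T_X^{\rradj}$, or the coherence square for $\Upsilon$ displayed below \eqref{eq:FG-r-adj}, and no further ingredients are needed.
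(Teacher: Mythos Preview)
Your proposal is correct and follows essentially the same approach as the paper. The paper organizes the argument as a single commutative diagram (Figure~\ref{fig:itrace-X-P}) whose two cells are precisely the two facts you invoke: cell~(2) is the decomposition of the counit $\varepsilon_{X \catactl P}$ via the coherence of $\Upsilon$, and cell~(1) is the statement that $\varepsilon_P$ is a morphism of left $\mathcal{C}$-module functors, which is equivalent to your computation of the mate of $\omega^{(P)}_{X^{**},\unitobj}$ from the definition of $\omega^{(P)}$ in terms of $\mathfrak{a}^{-1}$ and the triangle identities. Your final cancellation of $\mathfrak{a}^{-1}$ and $\mathfrak{b}^{-1}$ against $\mathfrak{c}$ matches the paper's identification of the left column of the diagram with $\iHom(X \catactl P,\mathfrak{s}_{X,P}) \circ \mathfrak{c}$.
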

\begin{proof}
  This lemma can be proved by the same argument as in the proof of \cite[Theorem 3.9]{2019arXiv190400376S}, where $\mathcal{M}$ is assumed to be an exact left module category over a finite tensor category. For the sake of completeness, we include a proof. We consider the diagram
  \begin{equation}
    \label{eq:itrace-P-X-proof-1}
    \begin{tikzcd}[column sep = 16pt]
      \T_X^{\radj} \circ \T_P^{\radj} \circ \T_P^{\rradj} \circ \T_X^{\rradj}
      \arrow[r, "{\eqref{eq:FG-r-adj}}"]
      \arrow[d, "{\id \circ \varepsilon_P \circ \id}"']
      & (\T_P \circ \T_X)^{\radj} \circ (\T_P \circ \T_X)^{\rradj}
      \arrow[r, equal]
      & \T_{P \catactl X} \circ \T_{P \catactl X}^{\radj}
      \arrow[d, "{\varepsilon_{P \catactl X}}"] \\
      \T_X^{\radj} \circ \id \circ \T_X^{\rradj}
      \arrow[r, equal]
      & \id_{\mathcal{C}} \otimes X^{**} \otimes X^*
      \arrow[r, "{\id_{\mathcal{C}} \otimes \eval_{X^*}}"]
      & \id_{\mathcal{C}},
    \end{tikzcd}
  \end{equation}
  where the symbol $\circ$ is used to mean the composition of functors as well as the horizontal composition of natural transformations.
  We note that $\id_{\mathcal{C}} \otimes \eval_{X^*}$ in the diagram is the counit of the adjunction $\T_{X^*} \dashv \T_{X^{**}}$.
  By the construction of the isomorphism \eqref{eq:FG-r-adj}, the diagram \eqref{eq:itrace-P-X-proof-1} is commutative.

  We consider the diagram given as Figure \ref{fig:itrace-X-P}.
  The cell (1) of the diagram commutes since $\varepsilon_P : \T_P^{\radj} \T_P^{\rradj} \to \id_{\mathcal{C}}$ is a morphism of left $\mathcal{C}$-module functor.
  The cell (2) also commutes since it is actually the commutative diagram \eqref{eq:itrace-P-X-proof-1} evaluated at $\unitobj$.
  Thus the diagram of Figure \ref{fig:itrace-X-P} commutes.
  By the definition of the isomorphisms $\mathfrak{s}$ and $\mathfrak{b}$, we see that the left column of the diagram is equal to the morphism
  \begin{equation*}
    \mathfrak{b}_{P, S(X \catactl P), X} \circ (\iHom(P, \mathfrak{s}_{X,P}) \otimes \id_{X^*}) \circ (\mathfrak{a}_{X^{**}, P, S(P)} \otimes \id_{X^*}),
  \end{equation*}
  which is equal to $\iHom(P, \mathfrak{s}_{X,P}) \circ \mathfrak{c}_{X, P, S(P), X^*}$ by the naturality of $\mathfrak{b}_{P, -, X}$.
  Hence we obtain \eqref{eq:rel-Serre-trace-2}. The proof is done.
\end{proof}

\begin{figure}
  \centering
  \begin{equation*}
    \begin{tikzcd}[column sep = 100pt, row sep = 24pt]
      X^{**} \otimes \T_P^{\radj} \T_P^{\rradj}(\unitobj) \otimes X^*
      \arrow[d, "{\mathfrak{a}_{X^{**}, P, S(P)} \otimes \id_{X^*}}"']
      \arrow[r, "{\id_{X^{**}} \otimes \varepsilon_P(\unitobj) \otimes \id_{X^*}}"]
      \arrow[rdd, phantom, "{(1)}"]
      & X^{**} \otimes X^* \arrow[dd, equal] \\
      \T_P^{\radj}(X^{**} \catactl \T_P^{\rradj}(\unitobj)) \otimes X^*
      \arrow[d, "{\T_P^{\radj}(\omega^{(P)}_{X^{**}, \unitobj}) \otimes \id_{X^{*}}}"'] \\
      \T_P^{\radj} \T_P^{\rradj}(X^{**}) \otimes X^*
      \arrow[d, equal]
      \arrow[r, "{\varepsilon_P(X^{**}) \otimes \id_{X^*}}"]
      \arrow[rdd, phantom, "{(2)}"]
      & X^{**} \otimes X^* \arrow[d, "{\eval_{X^*}}"] \\
      (\T_X^{\radj} \T_P^{\radj} \T_P^{\rradj} \T_X^{\rradj})(\unitobj)
      \arrow[d, "{\eqref{eq:FG-r-adj}}"'] & \unitobj \\
      ((\T_P \T_X)^{\radj} (\T_P \T_X)^{\rradj})(\unitobj)
      \arrow[r, equal]
      & \T_{X \catactl P}^{\radj} \T_{X \catactl P}^{\rradj}(\unitobj)
      \arrow[u, "{\varepsilon_{X \catactl P}(\unitobj)}"']
    \end{tikzcd}
  \end{equation*}
  \caption{Proof of Lemma \ref{lem:rel-Serre-trace-2}}
  \label{fig:itrace-X-P}
\end{figure}

\begin{proof}[Proof of Lemma~\ref{lem:rel-Serre-phi-M-P}]
  The proof presented below is essentially same as that of \cite[Theorem 3.9]{2019arXiv190400376S} but slightly improved. We need to show that the equations
  \begin{gather}
    \label{eq:rel-Serre-sub-1-proof-4}
    \phi_{X \catactl M, P} \circ \mathfrak{b}_{M, S(P), X}
    = (\mathfrak{a}_{X, P, M}^{-1})^* \circ (\phi_{M,P} \otimes \id_{X^*}), \\
    \phi_{M, X \catactl P} \circ \iHom(M, \mathfrak{s}_{X, P}) \circ \mathfrak{a}_{X^{**}, M, S(P)}
    = (\mathfrak{b}_{P, M, X}^{-1})^* \circ (\id_{X^{**}} \otimes \phi_{M,P})
    \label{eq:rel-Serre-sub-1-proof-5}
  \end{gather}
  hold for all $X \in \mathcal{C}$, $P \in \mathcal{P}$ and $M \in \mathcal{M}$.
  Equation \eqref{eq:rel-Serre-sub-1-proof-4} is verified as follows:
  \begin{align*}
    & \eval_{X \otimes \iHom(P, M)} (((\mathfrak{a}_{X, P, M})^{*} \circ \phi_{X \catactl M, P} \circ \mathfrak{b}_{M, S(P), X})
      \otimes \id_{X \otimes \iHom(P, M)}) \\
    & = \eval_{\iHom(P, X \catactl M)} ((\phi_{X \catactl M, P} \circ \mathfrak{b}_{M, S(P), X})
      \otimes \mathfrak{a}_{X, P, M}) \\
    & = \mathfrak{t}_P \circ \icomp_{P, X \catactl M, S(P)} \circ (\mathfrak{b}_{M, S(P), X}
      \otimes \mathfrak{a}_{X, P, M}) \\
    & = \mathfrak{t}_P \circ \icomp_{P, M, S(P)} \circ (\id_{\iHom(S(P), M)} \otimes \eval_X \otimes \id_{\iHom(P, M)}) \\
    & = \eval_{X \otimes \iHom(P, M)} \circ (\phi_{M, P} \otimes \id_{X^*} \otimes \id_{X \otimes \iHom(P, M)}),
  \end{align*}
  where the first equality follows from the dinaturality of the evaluation, the second and the last from \eqref{eq:rel-Serre-trace-1}, and the third from \eqref{eq:iHom-composition-ab}.

  Equation \eqref{eq:rel-Serre-sub-1-proof-5} is verified as follows:
  \begin{align*}
    & \eval_{\iHom(M, P) \otimes X^*} \circ ((\mathfrak{b}_{P,M,X})^* \circ (\phi_{M, X \catactl P} \circ \iHom(M, \mathfrak{s}_{X, P}) \circ \mathfrak{a}_{X^{**}, M, S(P)}) \otimes \id) \\
    & = \mathfrak{t}_{X \catactl P} \circ \icomp_{X \catactl P, M, S(X \catactl P)}
      \circ ((\iHom(M, \mathfrak{s}_{X, P}) \circ \mathfrak{a}_{X^{**}, M, S(P)}) \otimes \mathfrak{b}_{P,M,X}) \\
    & = \mathfrak{t}_{X \catactl P} \circ \iHom(X \catactl P, \mathfrak{s}_{X,P})
      \circ \icomp_{X \catactl P, M, X^{**} \catactl S(P)}
      \circ (\mathfrak{a}_{X^{**}, M, S(P)} \otimes \mathfrak{b}_{P,M,X}) \\
    & = \eval_{X^*} \circ (\id_{X^{**}} \otimes \mathfrak{t}_P \otimes \id_{X^*})
      \circ (\id_{X^{**}} \otimes \icomp_{X \catactl P, M, S(P)} \otimes \id_{X^*}) \\
    & = \eval_{\iHom(M, P) \otimes X^*} \circ (\id_{X^{**}} \otimes \phi_{M, P} \otimes \id_{\iHom(M, P) \otimes X^*}),
  \end{align*}
  where the first and the last equality follow from \eqref{eq:rel-Serre-trace-1},
  the second from the naturality of $\icomp$,
  and the third from \eqref{eq:iHom-composition-a-2}, \eqref{eq:iHom-composition-b-2} and \eqref{eq:rel-Serre-trace-2}. The proof is done.
\end{proof}

\subsection{Existence and uniqueness as a functor}
\label{subsec:rel-Serre-existence}

Let $\mathcal{C}$ be a finite tensor category, and let $\mathcal{M}$ be a finite left $\mathcal{C}$-module category.
We set $S(P) = \T_{P}^{\rradj}(\unitobj)$ for $P \in \relprj{\mathcal{C}}{\mathcal{M}}$ and extend the assignment $P \mapsto S(P)$ to a functor from $\relprj{\mathcal{C}}{\mathcal{M}}$ to $\mathcal{M}$ as in Subsection \ref{subsec:dual-internal-hom}.
Lemma~\ref{lem:rel-Serre-1} says that the functor $S$ is a relative Serre functor of $\mathcal{M}$ except that its domain is only the full subcategory $\relprj{\mathcal{C}}{\mathcal{M}}$.
Since $\relprj{\mathcal{C}}{\mathcal{M}}$ contains all projective objects of $\mathcal{M}$, an extension of $S$ to a right exact endofunctor on $\mathcal{M}$ may be obtained by letting $S(M)$ for $M \in \mathcal{M}$ to be the cokernel of $S(f)$, where $f$ is a morphism in $\relprj{\mathcal{C}}{\mathcal{M}}$ whose cokernel considered in $\mathcal{M}$ is isomorphic to $M$. However, since we do not know whether $\mathcal{C}$-projective objects of $\mathcal{M}$ have a lifting property like ordinary projective objects, the well-definedness of the extension matters. To avoid this problem, we give an explicit formula of a relative Serre functor for the case where $\mathcal{M} = \mathcal{C}_A$ for some algebra $A$ in $\mathcal{C}$ as follows:

\begin{lemma}
  \label{lem:rel-Serre-mod-A}
  Let $A$ be an algebra in $\mathcal{C}$. Then the functor
  \begin{equation}
    \label{eq:rel-Serre-mod-A}
    \Ser_A := \iHom_A(-,A)^* : \mathcal{C}_A \to \mathcal{C}_A
  \end{equation}
  is a relative Serre functor of $\mathcal{C}_A$.
\end{lemma}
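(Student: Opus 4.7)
The plan is to verify the two defining properties of a relative Serre functor: right exactness of $\Ser_A$, and the natural isomorphism \eqref{eq:rel-Serre-def-iso} on $\mathcal{C}$-projective objects.

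For right exactness, as a contravariant functor, $\iHom_A(-, A) : \mathcal{C}_A \to \mathcal{C}$ is left exact, so it sends a right exact sequence in $\mathcal{C}_A$ to a left exact sequence in $\mathcal{C}$. Post-composing with the exact contravariant equivalence $(-)^*$ converts this left exact sequence back into a right exact sequence, giving right exactness of $\Ser_A$.

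For the natural isomorphism, my strategy is to use Lemma~\ref{lem:iHom-dual} to rewrite the right-hand side as a dual of a tensor product, and then to recognize this tensor product as the value of $\iHom_A(P, -)$. More precisely, with $Y = \iHom_A(P, A) \in {}_A\mathcal{C}$, Lemma~\ref{lem:iHom-dual} yields
$$\iHom_A(M, \Ser_A(P)) = \iHom_A(M, \iHom_A(P, A)^*) \cong (M \otimes_A \iHom_A(P, A))^*.$$
It therefore suffices to produce a natural isomorphism $\iHom_A(P, M) \cong M \otimes_A \iHom_A(P, A)$ of left $\mathcal{C}$-module functors in $M \in \mathcal{C}_A$; the conclusion then follows by applying $(-)^*$.

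To obtain this last identification I invoke the Eilenberg-Watts equivalence \eqref{eq:EW-equivalence-in-C} with $B = \unitobj$, which parametrizes right exact left $\mathcal{C}$-module functors $\mathcal{C}_A \to \mathcal{C}$ by left $A$-modules in $\mathcal{C}$ via $F \mapsto F(A)$. Since $P$ is $\mathcal{C}$-projective, $\iHom_A(P, -)$ is exact, and it is a left $\mathcal{C}$-module functor with structure morphism $\mathfrak{a}$; the functor $(-) \otimes_A \iHom_A(P, A)$ is clearly right exact and a left $\mathcal{C}$-module functor. Both evaluated at $A$ yield $\iHom_A(P, A)$ with its canonical left $A$-action induced from $A \in {}_A\mathcal{C}_A$, so Eilenberg-Watts identifies them, with naturality in $P$ coming from the functoriality of the equivalence in the bimodule argument. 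The main subtlety is verifying that the left $A$-module structure on $F(A)$ recovered from $F = \iHom_A(P, -)$ via the recipe of Subsection~\ref{subsec:closed-module-cat} matches the canonical one on $\iHom_A(P, A)$; the remainder of the proof is formal.
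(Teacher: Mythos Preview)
Your argument is correct. The right-exactness step is identical to the paper's, and your Eilenberg--Watts identification $\iHom_A(P,M)\cong M\otimes_A\iHom_A(P,A)$ for $\mathcal{C}$-projective $P$ is exactly the paper's first move. The routes diverge only in the final step: the paper computes $\T_P^{\rradj}(X)\cong X\otimes\Ser_A(P)$ from the tensor-Hom adjunction and then invokes the abstract Lemma~\ref{lem:rel-Serre-1}, whereas you apply Lemma~\ref{lem:iHom-dual} directly to rewrite $\iHom_A(M,\Ser_A(P))$ as $(M\otimes_A\iHom_A(P,A))^*$. Your route is a little more self-contained for this lemma and avoids the machinery of Subsection~\ref{subsec:dual-internal-hom}; the paper's route is chosen because identifying $\Ser_A(P)$ with $\T_P^{\rradj}(\unitobj)$ feeds directly into the construction of the twisted $\mathcal{C}$-module structure and the trace in Theorem~\ref{thm:rel-Serre-A-mod-structured}. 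The ``main subtlety'' you flag is harmless: the left $A$-action on $\iHom_A(P,A)$ coming from the Eilenberg--Watts recipe (via $\mathfrak{a}$ and the multiplication of $A$) is by definition the canonical one used in Lemma~\ref{lem:iHom-dual}.
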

\begin{proof}
  It follows from the left exactness of the internal Hom functor that $\Ser_A$ is right exact.
  We fix an object $P \in \relprj{\mathcal{C}}{\mathcal{C}_A}$.
  Then, by definition, the functor $\T_P^{\radj}$ is exact. By the Eilenberg-Watts equivalence, we have a natural isomorphism
  \begin{equation*}
    \T_P^{\radj}(M) \cong M \otimes_A \T_P^{\radj}(A) = M \otimes_A \iHom_A(P, A)
    \quad (M \in \mathcal{C}_A).
  \end{equation*}
  By this expression of $\T_P^{\radj}$ and the tensor-Hom adjunction, we have
  \begin{equation*}
    \T_P^{\rradj}(X)
    \cong \iHom_{\unitobj}(\iHom_A(P, X), X)
    \cong X \otimes \iHom_A(P, A)^* = X \otimes \Ser_A(P)
  \end{equation*}
  for $X \in \mathcal{C}$.
  Hence, by Lemma~\ref{lem:rel-Serre-1}, we have natural isomorphisms
  \begin{equation*}
    \iHom_A(P, M)^* \cong \iHom_A(M, \T_P^{\rradj}(\unitobj)) \cong \iHom_A(M, \Ser_A(P))
  \end{equation*}
  for $M \in \mathcal{C}_A$ and $P \in \relprj{\mathcal{C}}{\mathcal{C}_A}$.
  Thus $\Ser_A$ is a relative Serre functor.  
\end{proof}

The following lemma should be standard in homological algebra.
We include the proof since the argument of the proof will be used frequently. 

\begin{lemma}
  \label{lem:nat-extension}
  Let $\mathcal{A}$ and $\mathcal{B}$ be abelian categories, and let $\mathcal{P}$ be a full subcategory of $\mathcal{A}$ containing all projective objects of $\mathcal{A}$. We suppose that $\mathcal{A}$ has enough projective objects. Then, for any right exact functors $F$ and $G$ from $\mathcal{A}$ to $\mathcal{B}$, the map
  \begin{equation*}
    \Nat(F, G) \to \Nat(F|_{\mathcal{P}}, G|_{\mathcal{P}}),
    \quad \xi = \{ \xi_M \}_{M \in \mathcal{M}} \mapsto \{ \xi_P \}_{P \in \mathcal{P}}
  \end{equation*}
  is a bijection.
\end{lemma}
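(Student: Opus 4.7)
The plan is a standard dimension-shift argument: use projective presentations to extend a natural transformation defined on $\mathcal{P}$ to one defined on all of $\mathcal{A}$, exploiting the right exactness of $F$ and $G$ to produce the components on non-projective objects via a cokernel universal property.

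For each $M \in \mathcal{A}$, choose a projective presentation $P_1 \xrightarrow{d} P_0 \xrightarrow{p} M \to 0$, which exists since $\mathcal{A}$ has enough projectives and $\mathcal{P}$ contains all projective objects. Applying the right exact functors $F$ and $G$ yields exact sequences in $\mathcal{B}$. Given $\xi \in \Nat(F|_{\mathcal{P}}, G|_{\mathcal{P}})$, the components $\xi_{P_0}$ and $\xi_{P_1}$ are defined, and naturality of $\xi$ with respect to $d$ gives $\xi_{P_0} \circ F(d) = G(d) \circ \xi_{P_1}$. Hence $G(p) \circ \xi_{P_0} \circ F(d) = G(p) \circ G(d) \circ \xi_{P_1} = 0$, and the universal property of the cokernel $F(p) : F(P_0) \to F(M)$ produces a unique morphism $\bar{\xi}_M : F(M) \to G(M)$ with $\bar{\xi}_M \circ F(p) = G(p) \circ \xi_{P_0}$.

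Two routine verifications remain. First, $\bar{\xi}_M$ is independent of the chosen presentation: given a second presentation $P'_1 \to P'_0 \to M \to 0$, the projectivity of $P'_0$ and $P'_1$ yields a chain map between the two presentations lifting $\id_M$, and naturality of $\xi$ on $\mathcal{P}$ forces the two candidate morphisms $F(M) \to G(M)$ to agree after precomposition with the epimorphism $F(P'_0) \to F(M)$, hence to coincide. Second, for any morphism $f : M \to N$ in $\mathcal{A}$, chosen projective presentations of $M$ and $N$ admit a lift of $f$ to a morphism of presentations (again by projectivity of the terms over $M$), and a diagram chase using naturality of $\xi$ on $\mathcal{P}$ yields $\bar{\xi}_N \circ F(f) = G(f) \circ \bar{\xi}_M$. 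When $M \in \mathcal{P}$ one may take the trivial presentation $0 \to M \xrightarrow{\id} M \to 0$, from which $\bar{\xi}_M = \xi_M$ is immediate; thus the restriction map is surjective.

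For injectivity, suppose $\xi, \xi' \in \Nat(F, G)$ agree on $\mathcal{P}$. For any $M \in \mathcal{A}$, pick an epimorphism $p : P_0 \twoheadrightarrow M$ with $P_0$ projective; then $F(p)$ is an epimorphism by right exactness of $F$, and naturality gives
\[
\xi_M \circ F(p) = G(p) \circ \xi_{P_0} = G(p) \circ \xi'_{P_0} = \xi'_M \circ F(p),
\]
so $\xi_M = \xi'_M$. The only mildly subtle step is the independence of $\bar{\xi}_M$ from the chosen presentation, since the comparison chain map between two presentations is itself only canonical up to homotopy; but what must be compared are merely the induced morphisms on cokernels, and these coincide without invoking any homotopy equivalence.
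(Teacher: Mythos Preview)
Your approach is the same as the paper's, and the argument is correct except at one point. When you write ``When $M \in \mathcal{P}$ one may take the trivial presentation $0 \to M \xrightarrow{\id} M \to 0$'', this is only a \emph{projective} presentation if $M$ itself is projective. Since $\mathcal{P}$ may contain non-projective objects, your independence-of-presentation argument (which relies on projectivity of the terms $P'_0,P'_1$ to build the comparison chain map) does not apply to the trivial presentation for such $M$, and so you have not yet shown $\bar{\xi}_M = \xi_M$ for all $M \in \mathcal{P}$.

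The repair is exactly what the paper does, and is essentially already contained in your injectivity argument: for $Q \in \mathcal{P}$ take an epimorphism $\pi : P \to Q$ with $P$ projective. Since $P$ is projective, $\bar{\xi}_P = \xi_P$ (here the trivial presentation \emph{is} projective). Now both $\xi$ (on $\mathcal{P}$, since $P,Q \in \mathcal{P}$) and $\bar{\xi}$ (on $\mathcal{A}$, by your naturality check) are natural with respect to $\pi$, so
\[
\xi_Q \circ F(\pi) = G(\pi) \circ \xi_P = G(\pi) \circ \bar{\xi}_P = \bar{\xi}_Q \circ F(\pi),
\]
and $F(\pi)$ epic gives $\xi_Q = \bar{\xi}_Q$. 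With this adjustment your proof matches the paper's.
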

\begin{proof}
  Given a natural transformation $\alpha : F|_{\mathcal{P}} \to G|_{\mathcal{P}}$ and $X \in \mathcal{A}$, we choose a projective resolution $\cdots \to P_1 \to P_0 \to X \to 0$ of $X$ and define $\tilde{\alpha}_X$ to be the unique morphism in $\mathcal{B}$ such that the diagram
  \begin{equation*}
    \begin{tikzcd}
      F(P_1)   \arrow[r] \arrow[d, "{\alpha_{P_1}}"]
      & F(P_0) \arrow[r] \arrow[d, "{\alpha_{P_0}}"]
      & F(X)   \arrow[r] \arrow[d, "{\tilde{\alpha}_X}"] & 0 \\
      G(P_1) \arrow[r] & G(P_0) \arrow[r] & G(X) \arrow[r] & 0
    \end{tikzcd}
  \end{equation*}
  is commutative. By the uniqueness of projective resolutions up to chain homotopy, one can show that the morphism $\tilde{\alpha}_X$ does not depend on the choice of the projective resolution of $X$. The family $\tilde{\alpha} := \{ \tilde{\alpha}_X \}_{X \in \mathcal{A}}$ defines a natural transformation from $F$ to $G$.
  By the construction, $\tilde{\alpha}_P = \alpha_P$ if $P$ is projective.
  For an object $Q \in \mathcal{P}$ which is not necessarily projective, we take an epimorphism $\pi : P \to Q$ in $\mathcal{A}$ with $P$ projective. By the naturality of $\alpha$ and $\tilde{\alpha}$, we have
  \begin{equation*}
    \alpha_Q \circ F(\pi) = F(\pi) \circ \alpha_P
    = F(\pi) \circ \tilde{\alpha}_Q = \tilde{\alpha}_P \circ F(\pi)
  \end{equation*}
  and hence $\alpha_Q = \tilde{\alpha}_Q$.
  Now it is easy to check that the assignment $\alpha \mapsto \tilde{\alpha}$ gives the inverse of the map in the statement.
\end{proof}

\begin{proof}[Proof of the existence and uniqueness part of Theorem~\ref{thm:rel-Serre-summary}]
  Let $\mathcal{M}$ be a finite left $\mathcal{C}$-module category.
  It is obvious that the existence of a relative Serre functor is an invariant of a finite left $\mathcal{C}$-module category.
  Since $\mathcal{M} \approx \mathcal{C}_A$ for some algebra $A$ in $\mathcal{C}$, the existence of a relative Serre functor follows from Lemma~\ref{lem:rel-Serre-mod-A}.

  Now let $\Ser$ and $\Ser'$ be relative Serre functors on $\mathcal{M}$. Then we have natural isomorphisms $\iHom(M, \Ser(P)) \cong \iHom(P, M)^* \cong \iHom(M, \Ser'(P))$ for $M \in \mathcal{M}$ and $P \in \relprj{\mathcal{C}}{\mathcal{M}}$.
  Hence, by the internal Yoneda Lemma~\ref{lem:internal-Yoneda-0}, we see that $\Ser$ and $\Ser'$ become isomorphic when they are restricted to $\relprj{\mathcal{C}}{\mathcal{M}}$. Lemma \ref{lem:nat-extension} concludes that $\Ser$ and $\Ser'$ are isomorphic on the whole category $\mathcal{M}$. The proof is done.
\end{proof}

\subsection{Twisted module structure of $\Ser$}
\label{subsec:rel-Serre-twisted-module}

Here we prove Theorem \ref{thm:rel-Serre-summary} (\ref{item:rel-Serre-main-1}).
Let $\mathcal{C}$ be a finite tensor category, and let $\mathcal{M}$ be a finite left $\mathcal{C}$-module category with relative Serre functor $\Ser$.
By the uniqueness of a relative Serre functor as a functor, we may assume that the restriction of $\Ser$ to $\relprj{\mathcal{C}}{\mathcal{M}}$ is given by $\Ser(P) = \T_P^{\rradj}(\unitobj)$.
Thus, by Lemma \ref{lem:rel-Serre-phi-M-P}, the restriction of $\Ser$ to $\relprj{\mathcal{C}}{\mathcal{M}}$ has a twisted $\mathcal{C}$-module structure as desired. Thus, to prove Theorem \ref{thm:rel-Serre-summary} (\ref{item:rel-Serre-main-1}), it suffices to show that the twisted module structure of the restriction of $\Ser$ can be extended to $\Ser$. We discuss this problem in a general form and prove the following lemma:

\begin{lemma}
  \label{lem:extension-module-structure}
  Let $\mathcal{M}$ and $\mathcal{N}$ be finite left $\mathcal{C}$-module categories, let $\mathcal{P}$ be a full subcategory of $\mathcal{M}$ closed under the action of $\mathcal{C}$ and containing all projective objects of $\mathcal{M}$, and let $F: \mathcal{M} \to \mathcal{N}$ be a right exact functor. Suppose that there is a natural isomorphism
  \begin{equation*}
    \omega_{X,P} : X \catactl F(P) \to F(X \catactl P)
    \quad (X \in \mathcal{C}, P \in \mathcal{P})
  \end{equation*}
  making the restriction of $F$ to $\mathcal{P}$ a left $\mathcal{C}$-module functor from $\mathcal{P}$ to $\mathcal{M}$. Then there is a unique natural isomorphism
  \begin{equation*}
    \tilde{\omega}_{X,M} : X \catactl F(M) \to F(X \catactl M)
    \quad (X \in \mathcal{C}, M \in \mathcal{M})
  \end{equation*}
  that extends $\omega$. Moreover, the natural transformation $\tilde{\omega}$ makes $F$ a left $\mathcal{C}$-module functor from $\mathcal{M}$ to $\mathcal{N}$.
\end{lemma}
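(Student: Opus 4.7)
The plan is to extend $\omega$ one variable at a time: first fix $X \in \mathcal{C}$ and extend $\omega_{X,-}|_{\mathcal{P}}$ along $\mathcal{M}$ by invoking Lemma~\ref{lem:nat-extension}, and then use the uniqueness part of that lemma to verify naturality in $X$ and the module functor axioms.

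For each $X \in \mathcal{C}$, the assignments $G_X(M) = X \catactl F(M)$ and $H_X(M) = F(X \catactl M)$ define right exact functors $\mathcal{M} \to \mathcal{N}$, since both $F$ and the action $X \catactl (-)$ are right exact (the latter holds by definition of a finite left $\mathcal{C}$-module category). The family $\{\omega_{X,P}\}_{P \in \mathcal{P}}$ is a natural transformation from $G_X|_{\mathcal{P}}$ to $H_X|_{\mathcal{P}}$, so Lemma~\ref{lem:nat-extension} produces a unique natural transformation $\tilde{\omega}_{X,-} : G_X \Rightarrow H_X$ extending it. Naturality of $\tilde{\omega}_{X,M}$ in $M$ is thus automatic.

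To verify naturality in $X$, I take $f : X \to Y$ in $\mathcal{C}$ and compare the two natural transformations $G_X \Rightarrow H_Y$ given by $M \mapsto F(f \catactl \id_M) \circ \tilde{\omega}_{X,M}$ and $M \mapsto \tilde{\omega}_{Y,M} \circ (f \catactl \id_{F(M)})$. They coincide on $\mathcal{P}$ by the naturality of $\omega$ in its first argument, hence by the uniqueness part of Lemma~\ref{lem:nat-extension} they coincide on all of $\mathcal{M}$. To see that each $\tilde{\omega}_{X,M}$ is invertible, I apply the same extension procedure to the family $\{\omega_{X,P}^{-1}\}$ to obtain $\eta_{X,-} : H_X \Rightarrow G_X$; then $\tilde{\omega}_{X,-} \circ \eta_{X,-}$ and $\eta_{X,-} \circ \tilde{\omega}_{X,-}$ are natural endomorphisms of $H_X$ and $G_X$ agreeing with the identity on $\mathcal{P}$, so uniqueness forces them to be identities on $\mathcal{M}$.

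Finally, the two module functor axioms $\tilde{\omega}_{\unitobj, M} = \id_{F(M)}$ and $\tilde{\omega}_{X \otimes Y, M} = \tilde{\omega}_{X, Y \catactl M} \circ (\id_X \catactl \tilde{\omega}_{Y,M})$ hold on $\mathcal{P}$ by the hypothesis that $\omega|_{\mathcal{C} \times \mathcal{P}}$ is a module functor structure, where the closure of $\mathcal{P}$ under the action of $\mathcal{C}$ ensures $Y \catactl P \in \mathcal{P}$ so that the right-hand side makes sense. Both sides of each axiom are natural transformations between right exact functors $\mathcal{M} \to \mathcal{N}$ (for the associativity, between $M \mapsto (X \otimes Y) \catactl F(M)$ and $M \mapsto F((X \otimes Y) \catactl M)$), so one last application of Lemma~\ref{lem:nat-extension} extends the equalities to $\mathcal{M}$. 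There is no real obstacle in this argument; everything is a bookkeeping application of the extension lemma once the relevant functors are identified as right exact.
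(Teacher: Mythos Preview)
Your proof is correct and follows essentially the same strategy as the paper: extend $\omega_{X,-}$ for each fixed $X$ via Lemma~\ref{lem:nat-extension}, then use the uniqueness part of that lemma repeatedly to obtain naturality in $X$, invertibility, and the module functor axioms. The only cosmetic difference is that the paper verifies the associativity and unit axioms by a direct chase with an epimorphism $\pi: P \to M$ from a projective, whereas you invoke Lemma~\ref{lem:nat-extension} one more time; your route is marginally cleaner and also makes the invertibility of $\tilde{\omega}$ explicit, which the paper leaves implicit.
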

\begin{proof}
  For $X \in \mathcal{C}$ and $M \in \mathcal{M}$, we define $\tilde{\omega}_{X,M} : X \catactl F(M) \to F(X \catactl M)$ to be the $M$-component of the natural transformation obtained by extending the natural transformation $\{ \omega_{X,P} \}_{P \in \mathcal{P}}$ by Lemma \ref{lem:nat-extension}.
  It is obvious that $\tilde{\omega}_{X,M}$ is natural in $M$.
  The functorial property of the bijection given in Lemma \ref{lem:nat-extension} implies that $\tilde{\omega}_{X,M}$ is also natural in the variable $X$.
  The uniqueness of such a natural transformation $\tilde{\omega}$ also follows from Lemma \ref{lem:nat-extension}.

  It remains to show that $\tilde{\omega}$ makes $F$ a left $\mathcal{C}$-module functor. Let $X, Y \in \mathcal{C}$ and $M \in \mathcal{M}$ be objects. We choose an epimorphism $\pi : P \to M$ in $\mathcal{M}$ from a projective object $P \in \mathcal{M}$. Then we compute
  \begin{align*}
    \tilde{\omega}_{X \otimes Y, M} \circ (\id_X \catactl \id_Y \catactl F(\pi))
    & = F(\id_X \catactl \id_Y \catactl \pi) \circ \omega_{X \otimes Y, P} \\
    & = F(\id_X \catactl \id_Y \catactl \pi) \circ \omega_{X, Y \catactl P} \circ (\id_X \catactl \omega_{Y, P}) \\
    & = \tilde{\omega}_{X, Y \catactl M} \circ (\id_X \catactl \tilde{\omega}_{Y, M})
      \circ (\id_X \catactl \id_Y \catactl F(\pi)),
  \end{align*}
  where the second equality follows from the assumption that the restriction of $F$ to $\mathcal{P}$ is a left $\mathcal{C}$-module functor with structure morphism $\omega$. Since $\id_X \catactl \id_Y \catactl F(\pi)$ is epic, we have $\tilde{\omega}_{X \otimes Y, M} = \tilde{\omega}_{X, Y \catactl M} \circ (\id_X \catactl \tilde{\omega}_{Y, M})$.
  We also have $\tilde{\omega}_{\unitobj, M} \circ F(\pi) = F(\pi) \circ \omega_{\unitobj, P} = F(\pi)$,
  which implies $\tilde{\omega}_{\unitobj, M} = \id_{F(M)}$. The proof is done.
\end{proof}

\begin{proof}[Proof of Theorem \ref{thm:rel-Serre-summary} (\ref{item:rel-Serre-main-1})]
  Lemma~\ref{lem:extension-module-structure} completes the discussion at the beginning of this subsection.
\end{proof}

\subsection{Uniqueness as a twisted module functor}
\label{subsec:rel-Serre-uniqueness}

As we have proved, a relative Serre functor is unique up to isomorphism as a functor.
Here we show that a relative Serre functor is unique up to canonical isomorphism preserving accompanying structure morphisms. To explain what this means, we introduce the following terminology:

\begin{definition}
  \label{def:rel-Serre-structured}
  A {\em structured relative Serre functor} of $\mathcal{M}$ is a triple $(\Ser, \mathfrak{s}, \phi)$ consisting of a right exact functor $\Ser: \mathcal{M} \to \mathcal{M}$, a natural isomorphism
  \begin{equation*}
    \mathfrak{s}_{X,M} : X^{**} \catactl \Ser(M) \to \Ser(X \catactl M)
    \quad (X \in \mathcal{C}, M \in \mathcal{M})
  \end{equation*}
  making $\Ser$ a twisted left $\mathcal{C}$-module functor, and a natural isomorphism
  \begin{equation*}
    \phi_{M,P} : \iHom(M, \Ser(P)) \to \iHom(P, M)^*
    \quad (M \in \mathcal{M}, P \in \relprj{\mathcal{C}}{\mathcal{M}})
  \end{equation*}
  of $\mathcal{C}$-bimodule functors from $\mathcal{M}^{\op} \times {}_{(-2)}\relprj{\mathcal{C}}{\mathcal{M}}$ to $\mathcal{C}$.
  Given a structured relative Serre functor $(\Ser, \mathfrak{s}, \phi)$, we define the {\em trace} by
  \begin{equation*}
    \itrace_P := (\icoev_{P, \unitobj})^* \circ \phi_{P,P} : \iHom(P, S(P)) \to \unitobj \quad (P \in \relprj{\mathcal{C}}{\mathcal{M}}).
  \end{equation*}
\end{definition}

Theorem \ref{thm:rel-Serre-summary} (\ref{item:rel-Serre-main-1}) says that a relative Serre functor of $\mathcal{M}$ can be extended to a structured relative Serre functor of $\mathcal{M}$. We now formulate and prove the uniqueness of a structured relative Serre functor as follows:

\begin{proposition}
  \label{prop:rel-Serre-uniq-structured}
  Let $(\Ser, \mathfrak{s}, \phi)$ and $(\Ser', \mathfrak{s}', \phi')$ be structured relative Serre functors of $\mathcal{M}$, and let $\itrace$ and $\itrace'$ be associated traces.
  Then there is a unique isomorphism $\theta : \Ser \to \Ser'$ of functors such that the equation
  \begin{equation}
    \label{eq:rel-Serre-uniq-structured-1}
    \phi'_{M,P} \circ \iHom(M, \theta_P) = \phi_{M,P}
  \end{equation}
  holds for all objects $M \in \mathcal{M}$ and $P \in \relprj{\mathcal{C}}{\mathcal{M}}$. The isomorphism $\theta$ satisfies
  \begin{gather}
    \label{eq:rel-Serre-uniq-structured-2}
    \mathfrak{s}'_{X,M} \circ (\id_X \catactl \theta_M) = \theta_{X \catactl M} \circ \mathfrak{s}_{X,M}, \\
    \label{eq:rel-Serre-uniq-structured-3}
    \itrace'_P = \itrace_P \circ \iHom(M, \theta_P)
  \end{gather}
  for all objects $X \in \mathcal{C}$, $M \in \mathcal{M}$ and $P \in \relprj{\mathcal{C}}{\mathcal{M}}$.
\end{proposition}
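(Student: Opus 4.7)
The plan is to construct $\theta$ first on the full subcategory $\mathcal{P} := \relprj{\mathcal{C}}{\mathcal{M}}$ via the internal Yoneda lemma, and then to extend it to all of $\mathcal{M}$ using right exactness. For fixed $P \in \mathcal{P}$, the composite
\begin{equation*}
  \Phi_{M,P} := (\phi'_{M,P})^{-1} \circ \phi_{M,P} : \iHom(M,\Ser(P)) \to \iHom(M,\Ser'(P))
\end{equation*}
is natural in $M \in \mathcal{M}^{\op}$ and inherits from $\phi, \phi'$ the structure of a morphism of right $\mathcal{C}$-module functors. Lemma~\ref{lem:internal-Yoneda-1} therefore produces a unique morphism $\theta_P : \Ser(P) \to \Ser'(P)$ with $\iHom(M,\theta_P) = \Phi_{M,P}$ for all $M \in \mathcal{M}$, i.e.\ satisfying \eqref{eq:rel-Serre-uniq-structured-1}. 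A standard Yoneda argument using the naturality of $\phi, \phi'$ in the second variable shows that $\{\theta_P\}_{P \in \mathcal{P}}$ is natural in $P$. Since $\mathcal{P}$ contains all projectives of $\mathcal{M}$ by Lemma~\ref{lem:relative-proj-1} and $\Ser, \Ser'$ are right exact, Lemma~\ref{lem:nat-extension} produces a unique extension to a natural transformation $\theta : \Ser \to \Ser'$. Each $\theta_P$ for $P \in \mathcal{P}$ is an isomorphism (the inverse is obtained by running the construction with $\phi, \phi'$ swapped), and a five-lemma argument applied to a projective presentation of $M$ shows that $\theta_M$ is an isomorphism for every $M \in \mathcal{M}$. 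The uniqueness clause of the proposition is immediate: \eqref{eq:rel-Serre-uniq-structured-1} pins down $\theta$ on $\mathcal{P}$ by the injectivity in Lemma~\ref{lem:internal-Yoneda-1}, and Lemma~\ref{lem:nat-extension} pins it down on $\mathcal{M}$.

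Next, \eqref{eq:rel-Serre-uniq-structured-2} is established in two steps. Note first that $\mathcal{P}$ is closed under the action of $\mathcal{C}$ by Lemma~\ref{lem:relative-proj-2}, so the restrictions $\Ser|_{\mathcal{P}}$ and $\Ser'|_{\mathcal{P}}$ are twisted left $\mathcal{C}$-module functors from ${}_{(-2)}\mathcal{P}$ to $\mathcal{M}$ via $\mathfrak{s}$ and $\mathfrak{s}'$. Since $\phi, \phi'$ are bimodule isomorphisms, so is $\Phi$, and Lemma~\ref{lem:internal-Yoneda-2} upgrades the Yoneda correspondence to yield \eqref{eq:rel-Serre-uniq-structured-2} for all $X \in \mathcal{C}$ and $M \in \mathcal{P}$. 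To extend to arbitrary $M \in \mathcal{M}$, fix $X \in \mathcal{C}$ and choose an epimorphism $\pi : P \to M$ with $P$ projective; then chase the naturality squares of $\mathfrak{s}$, $\mathfrak{s}'$ and $\theta$ against $\pi$. Because $\catactl$ is exact in each variable and $\Ser$ is right exact, $\id_{X^{**}} \catactl \Ser(\pi)$ is an epimorphism, so the equation for $M$ is forced by the equation already known for $P$. This final epi-cancellation, essentially identical to the argument at the end of the proof of Lemma~\ref{lem:extension-module-structure}, is the only step requiring genuine care and is the main obstacle of the proof.

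Finally, the trace identity \eqref{eq:rel-Serre-uniq-structured-3} follows by setting $M = P$ in \eqref{eq:rel-Serre-uniq-structured-1} and composing with $(\icoev_{P,\unitobj})^*$ on the outside, using the definition $\itrace_P = (\icoev_{P,\unitobj})^* \circ \phi_{P,P}$ from Definition~\ref{def:rel-Serre-structured}.
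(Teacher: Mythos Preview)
Your proof is correct and follows essentially the same route as the paper: construct $\theta$ on $\relprj{\mathcal{C}}{\mathcal{M}}$ via Lemma~\ref{lem:internal-Yoneda-1}, extend by Lemma~\ref{lem:nat-extension}, upgrade to a morphism of twisted module functors on $\relprj{\mathcal{C}}{\mathcal{M}}$ via Lemma~\ref{lem:internal-Yoneda-2}, and then extend \eqref{eq:rel-Serre-uniq-structured-2} to all of $\mathcal{M}$. The only cosmetic difference is that for this last extension the paper simply invokes Lemma~\ref{lem:nat-extension} again (applied to the two right exact functors $X^{**}\catactl\Ser(-)$ and $\Ser'(X\catactl -)$), whereas you unpack the same epi-cancellation by hand; also, your characterisation of this step as ``the main obstacle'' overstates its difficulty.
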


This proposition implies that there is an isomorphism of twisted left $\mathcal{C}$-module functors between $(\Ser, \mathfrak{s})$ and $(\Ser', \mathfrak{s}')$. However, such an isomorphism is not unique (there are, at least, non-zero scalar multiples of the isomorphism $\theta$ of this proposition). This proposition says that an isomorphism between $(\Ser, \mathfrak{s})$ and $(\Ser', \mathfrak{s}')$ that is compatible with $\phi$ and $\phi'$ is unique.

\begin{proof}
  By Lemma~\ref{lem:internal-Yoneda-1}, there is a unique natural isomorphism $\theta_P : \Ser(P)\to \Ser'(P)$ ($P \in \relprj{\mathcal{C}}{\mathcal{M}}$) such that the equation
  \begin{equation*}
    \iHom(M, \theta_P) = \phi_{M,P} \circ (\phi'_{M,P})^{-1} : \iHom(M, \Ser(P)) \to \iHom(M, \Ser'(P))
  \end{equation*}
  holds for all objects $M \in \mathcal{M}$. We extend $\theta$ to an isomorphism from $\Ser$ to $\Ser'$ by Lemma~\ref{lem:nat-extension} and obtain an isomorphism $\theta : \Ser \to \Ser'$ satisfying \eqref{eq:rel-Serre-uniq-structured-1}. The uniqueness of such an isomorphism follows from Lemmas~\ref{lem:internal-Yoneda-1} and~\ref{lem:nat-extension}.

  The internal Yoneda Lemma~\ref{lem:internal-Yoneda-2} implies that  $\theta$ is in fact an isomorphism of $\mathcal{C}$-module functors between restrictions of $(\Ser, \mathfrak{s})$ and $(\Ser', \mathfrak{s}')$ to $\relprj{\mathcal{C}}{\mathcal{M}}$. Namely, equation \eqref{eq:rel-Serre-uniq-structured-2} holds for all objects $X \in \mathcal{C}$ and $M \in \relprj{\mathcal{C}}{\mathcal{M}}$.
  By Lemma~\ref{lem:nat-extension}, we conclude that \eqref{eq:rel-Serre-uniq-structured-2} actually holds for all $X \in \mathcal{C}$ and $M \in \mathcal{M}$. Equation \eqref{eq:rel-Serre-uniq-structured-3} is obvious from \eqref{eq:rel-Serre-uniq-structured-1} and the definition of the trace.
\end{proof}

As an application of Proposition \ref{prop:rel-Serre-uniq-structured}, we prove the following equations involving the trace, which will be used to construct Frobenius algebras in a finite tensor category in later sections.

\begin{proposition}
  Let $(\Ser, \mathfrak{s}, \phi)$ be a structured relative Serre functor of $\mathcal{M}$, and let $\itrace$ be the associated trace. Then we have
  \begin{gather}
    \label{eq:itrace-1}
    \itrace_{P} \circ \icomp_{P,M,S(P)}
    = \eval_{\iHom(P, M)} \circ (\phi_{P,M} \otimes \id_{\iHom(P,M)}), \\
    \label{eq:itrace-2}
    \itrace_{X \catactl P}
    \circ \iHom(X \catactl P, \mathfrak{s}_{X,P}) \circ \mathfrak{c}_{X,P,S(P),X^*}
    = \eval_{X^*} \circ (\id_{X^{**}} \otimes \itrace_{P} \otimes \id_{X^*})
  \end{gather}
  for all objects $X \in \mathcal{C}$, $M \in \mathcal{M}$ and $P \in \relprj{\mathcal{C}}{\mathcal{M}}$.
\end{proposition}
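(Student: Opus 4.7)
The plan is to reduce both identities to the analogues already established in Lemmas~\ref{lem:rel-Serre-trace-1} and~\ref{lem:rel-Serre-trace-2}, which are precisely \eqref{eq:itrace-1} and \eqref{eq:itrace-2} for the particular structured relative Serre functor $(\Ser_0, \mathfrak{s}_0, \phi_0)$ arising from the construction of Subsection~\ref{subsec:dual-internal-hom}, where $\Ser_0(P) = \T_P^{\rradj}(\unitobj)$ on $\relprj{\mathcal{C}}{\mathcal{M}}$. For that choice the associated trace coincides with $\mathfrak{t}_P$ of \eqref{eq:rel-Serre-trace} by \eqref{eq:rel-Serre-trace-1'}.

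First I would extend $\Ser_0$ to a right-exact endofunctor on all of $\mathcal{M}$ via the existence part of Theorem~\ref{thm:rel-Serre-summary}, equip it with a twisted $\mathcal{C}$-module structure $\mathfrak{s}_0$ using Lemmas~\ref{lem:rel-Serre-phi-M-P} and~\ref{lem:extension-module-structure}, and keep $\phi_0$ from Lemma~\ref{lem:rel-Serre-phi-M-P}, obtaining a bona fide structured relative Serre functor in the sense of Definition~\ref{def:rel-Serre-structured}. Given the arbitrary $(\Ser, \mathfrak{s}, \phi)$ of the proposition, Proposition~\ref{prop:rel-Serre-uniq-structured} then yields a canonical isomorphism $\theta : \Ser_0 \to \Ser$ satisfying $\phi_{M, P} \circ \iHom(M, \theta_P) = \phi_{0, M, P}$ and $\mathfrak{s}_{X, M} \circ (\id_{X^{**}} \catactl \theta_M) = \theta_{X \catactl M} \circ \mathfrak{s}_{0, X, M}$. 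Specializing the first relation to $M = P$ and pre-composing with $(\icoev_{P, \unitobj})^*$ gives the trace compatibility $\itrace_P \circ \iHom(P, \theta_P) = \itrace_{0, P}$.

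To deduce \eqref{eq:itrace-1}, the key move is to insert $\iHom(M, \theta_P)$: by naturality of $\icomp_{P, M, -}$,
\[
  \iHom(P, \theta_P) \circ \icomp_{P, M, \Ser_0(P)} = \icomp_{P, M, \Ser(P)} \circ (\iHom(M, \theta_P) \otimes \id_{\iHom(P, M)}),
\]
so pre-composing \eqref{eq:rel-Serre-trace-1} (applied to $\Ser_0$) with $\iHom(M, \theta_P) \otimes \id$ rewrites its left hand side as $\itrace_P \circ \icomp_{P, M, \Ser(P)} \circ (\iHom(M, \theta_P) \otimes \id)$ via the trace compatibility and the displayed identity, while its right hand side becomes $\eval \circ ((\phi_{M, P} \circ \iHom(M, \theta_P)) \otimes \id)$ via $\phi_{0, M, P} = \phi_{M, P} \circ \iHom(M, \theta_P)$. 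Cancelling the invertible factor $\iHom(M, \theta_P) \otimes \id$ delivers \eqref{eq:itrace-1}.

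The derivation of \eqref{eq:itrace-2} from \eqref{eq:rel-Serre-trace-2} proceeds in the same spirit: insert $\iHom(X \catactl P, \theta_{X \catactl P})$ and $\id_{X^{**}} \catactl \theta_P$ at the appropriate places using naturality of $\iHom(X \catactl P, -)$ and of $\mathfrak{c}_{X, P, -, X^*}$, then invoke the intertwining $\mathfrak{s}_{X, P} \circ (\id_{X^{**}} \catactl \theta_P) = \theta_{X \catactl P} \circ \mathfrak{s}_{0, X, P}$ and the trace compatibility at $X \catactl P$ to convert every piece of structure from $\Ser_0$ into the corresponding piece of $\Ser$. I expect the main obstacle to be purely mechanical---correctly tracking the $\theta$ insertions in the presence of the twisted module structure and of $\mathfrak{c}$---with no essentially new categorical input required beyond the uniqueness statement of Proposition~\ref{prop:rel-Serre-uniq-structured} and the naturality properties of $\icomp$ and $\mathfrak{c}$ already exploited in Subsection~\ref{subsec:dual-internal-hom}.
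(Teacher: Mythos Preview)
Your proposal is correct and follows essentially the same approach as the paper: establish the identities for the canonical structured relative Serre functor built in Subsection~\ref{subsec:dual-internal-hom} (this is exactly what Lemmas~\ref{lem:rel-Serre-trace-1} and~\ref{lem:rel-Serre-trace-2} do), and then transfer them to an arbitrary structured relative Serre functor via the canonical comparison isomorphism $\theta$ of Proposition~\ref{prop:rel-Serre-uniq-structured}. The paper compresses this into two sentences, while you spell out explicitly how the intertwining properties \eqref{eq:rel-Serre-uniq-structured-1}--\eqref{eq:rel-Serre-uniq-structured-3} of $\theta$ allow one to cancel the invertible $\theta$-factors; your more detailed account is a faithful unpacking of the paper's terse argument.
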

\begin{proof}
  Lemmas \ref{lem:rel-Serre-trace-1} and \ref{lem:rel-Serre-trace-2} show that there is, at least, one structured relative Serre functor fulfilling equations \eqref{eq:itrace-1} and~\eqref{eq:itrace-2}. By Proposition \ref{prop:rel-Serre-uniq-structured}, all structured relative Serre functors satisfy \eqref{eq:itrace-1} and~\eqref{eq:itrace-2}.
\end{proof}

\subsection{Relation to the Nakayama functor}
\label{subsec:rel-Serre-Nakayama}

Let $\mathcal{C}$ be a finite tensor category, and let $\mathcal{M}$ be a finite left $\mathcal{C}$-module category with relative Serre functor $\Ser$.
We define $\alpha = \Nak_{\mathcal{C}}(\unitobj)$.
We prove Theorem \ref{thm:rel-Serre-summary} (\ref{item:rel-Serre-main-2}), which states that $\alpha \catactl \Ser$ is isomorphic to $\Nak_{\mathcal{M}}$ as twisted left $\mathcal{C}$-module functors.
Here, $\alpha \catactl \Ser$ is viewed as a twisted left $\mathcal{C}$-module functor by the structure morphism given by
\begin{equation*}
  {}^{**}\!X \catactl \alpha \catactl \Ser(M)
  \xrightarrow{\quad \mathfrak{r}_X \catactl \id_{\Ser(M)} \quad}
  \alpha \catactl X^{**} \catactl \Ser(M)
  \xrightarrow{\quad \id_{\alpha} \catactl \mathfrak{s}_{X,M} \quad}
  \alpha \catactl \Ser(X \catactl M)
\end{equation*}
for $X \in \mathcal{C}$ and $M \in \mathcal{M}$, where $\mathfrak{r}$ is the Radford isomorphism and $\mathfrak{s}$ is the twisted left $\mathcal{C}$-module structure of $\Ser$ given in Theorem \ref{thm:rel-Serre-summary} (\ref{item:rel-Serre-main-1}).

\begin{proof}[Proof of Theorem \ref{thm:rel-Serre-summary} (\ref{item:rel-Serre-main-2})]
  By Proposition \ref{prop:rel-Serre-uniq-structured}, we may assume that $\Ser$ is given by $\Ser(P) = \T_P^{\rradj}(\unitobj)$ for $P \in \relprj{\mathcal{C}}{\mathcal{M}}$ and the twisted $\mathcal{C}$-module structure of $\Ser$ is same as that discussed in Lemma~\ref{lem:rel-Serre-1}.  
  For $P \in \relprj{\mathcal{C}}{\mathcal{M}}$, we define
  \begin{equation}
    \label{eq:rel-Serre-main-thm-proof-part-3}
    \newcommand{\xarr}[1]{\xrightarrow{\makebox[5em]{$\scriptstyle #1$}}}
    \begin{aligned}
      \theta_P = \Big(\alpha \catactl \Ser(P)
      = \alpha \catactl \T_P^{\rradj}(\unitobj)
      & \xarr{\omega_{\alpha,\unitobj}^{(P)}} \T_P^{\rradj}(\alpha)
      = \T_P^{\rradj} \Nak_{\mathcal{C}}(\unitobj) \\
      & \xarr{\mathfrak{n}_{\T_P}(\unitobj)}
      \Nak_{\mathcal{M}}\T_P(\unitobj)
      = \Nak_{\mathcal{M}}(P)\Big),
    \end{aligned}
  \end{equation}
  where $\omega^{(P)}$ is the left $\mathcal{C}$-module structure of $\T_P^{\rradj}$ and $\mathfrak{n}$ is the isomorphism \eqref{eq:Nakayama-cano-iso} coming with the Nakayama functor.

  Now we consider the diagram given as Figure~\ref{fig:verify-theta}.
  The cell (1) of the diagram commutes by the naturality of $\omega^{(P)}$, and the neighboring triangles also commute since $\omega^{(P)}$ is the structure morphism of a left $\mathcal{C}$-module functor.
  The cell (2) commutes since $\mathfrak{n}_F$ is in fact an isomorphism of twisted $\mathcal{C}$-module functors when $F$ is a $\mathcal{C}$-module functor.
  The cell (3) commutes since the canonical isomorphism \eqref{eq:FG-r-adj} is in fact an isomorphism of $\mathcal{C}$-module functors when $F$ and $G$ are $\mathcal{C}$-module functors.
  Finally, the cell (4) commutes by the coherence property of $\mathfrak{n}_F$.
  Hence the diagram of Figure~\ref{fig:verify-theta} commutes.

  The left and the right column of the diagram are the twisted left $\mathcal{C}$-module structure of $\alpha \catactl \Ser(-)$ and $\Nak_{\mathcal{M}}$, respectively.
  The top and the bottom row of the diagram are $\id_{{}^{**}X} \catactl \theta_P$ and $\theta_{X \catactl P}$, respectively.
  Hence we have proved that $\alpha \catactl \Ser$ and $\Nak_{\mathcal{M}}$ are isomorphic as twisted left $\mathcal{C}$-module functors when they are restricted to $\relprj{\mathcal{C}}{\mathcal{M}}$. Now we use Lemma~\ref{lem:nat-extension} to conclude that the functors $\alpha \catactl \Ser$ and $\Nak_{\mathcal{M}}$ are in fact isomorphic as left $\mathcal{C}$-module functors from $\mathcal{M}$ to ${}_{(-2)}\mathcal{M}$. The proof is done.
\end{proof}

\begin{figure}
  \begin{equation*}
    \begin{tikzcd}[column sep = 48pt, row sep = 24pt]
      {}^{**}X \catactl \alpha \catactl \T_P^{\rradj}(\unitobj)
      \arrow[r, "{\id \catactl \omega^{(P)}}"]
      \arrow[d, "{\mathfrak{r}_X \catactl \id}"']
      \arrow[rd, "{\omega^{(P)}}"]
      \arrow[rdd, phantom, "{(1)}"]
      & {}^{**}X \catactl \T_P^{\rradj} \Nak_{\mathcal{C}}(\unitobj)
      \arrow[r, "{\id \catactl \mathfrak{n}_{\T_P}(\unitobj)}"]
      \arrow[d, "{\omega^{(P)}}"]
      \arrow[rdd, phantom, "{(2)}"]
      & {}^{**}X \catactl \Nak_{\mathcal{M}} \T_P(\unitobj)
      \arrow[d, "{\mathfrak{n}_{X \catactl \id}}"] \\
      \alpha \catactl X^{**} \catactl \T_P^{\rradj}(\unitobj)
      \arrow[d, "{\id \catactl \omega^{(P)}}"']
      \arrow[rd, "{\omega^{(P)}}"]
      & \T_P^{\rradj}({}^{**}X \otimes \Nak_{\mathcal{C}}(\unitobj))
      \arrow[d, "{\T_P^{\rradj}(\mathfrak{r}_X)}"]
      & \Nak_{\mathcal{M}}(X \catactl \T_P(\unitobj))
      \arrow[d, equal] \\
      \alpha \catactl \T_P^{\rradj}(X^{**})
      \arrow[r, "{\omega^{(P)}}"]
      \arrow[d, equal]
      \arrow[rdd, phantom, "{(3)}"]
      & \T_P^{\rradj} \Nak_{\mathcal{C}}(X)
      \arrow[r, "{\mathfrak{n}_{\T_P}}"]
      \arrow[d, equal]
      \arrow[rdd, phantom, "{(4)}"]
      & \Nak_{\mathcal{M}} \T_P(X)
      \arrow[dd, equal] \\
      \alpha \catactl \T_P^{\rradj} \T_X^{\rradj}(\unitobj)
      \arrow[d, "{\eqref{eq:FG-r-adj}}"']
      & \T_P^{\rradj} \T_X^{\rradj}(\alpha)
      \arrow[d, "{\eqref{eq:FG-r-adj}}"] \\
      \alpha \catactl \T_{X \catactl P}^{\rradj}
      \arrow[r, "{\omega^{(X \catactl P)}}"]
      & \T_{X \catactl P}^{\rradj} \Nak_{\mathcal{C}}(\unitobj)
      \arrow[r, "{\mathfrak{n}_{\T_{X \catactl P}}}"]
      & \Nak_{\mathcal{M}} \T_{X \catactl P}(\unitobj)
    \end{tikzcd}
  \end{equation*}
  \caption{Verification that $\theta$ is a morphism of $\mathcal{C}$-module functors}
  \label{fig:verify-theta}
\end{figure}

\subsection{Equivalence between $\relprj{\mathcal{C}}{\mathcal{M}}$ and $\relinj{\mathcal{C}}{\mathcal{M}}$}
\label{subsec:rel-Serre-equiv-PCM-ICM}

Let $\mathcal{C}$ be a finite tensor category, and let $\mathcal{M}$ be a finite left $\mathcal{C}$-module category.
Here we prove Theorem \ref{thm:rel-Serre-summary} (\ref{item:rel-Serre-main-3}), which states that a relative Serre functor of $\mathcal{M}$ induces an equivalence from $\relprj{\mathcal{C}}{\mathcal{M}}$ and $\relinj{\mathcal{C}}{\mathcal{M}}$.
A key observation is:

\begin{lemma}
  \label{lem:rel-Serre-ra-iHom}
  There is a natural isomorphism
  \begin{equation*}
    \Hom(\overline{\Ser}(E), M) \cong {}^*\iHom(M, E)
    \quad (M \in \mathcal{M}, E \in \relinj{\mathcal{C}}{\mathcal{M}}),
  \end{equation*}
  where $\overline{\Ser}$ is a right adjoint of a relative Serre functor of $\mathcal{M}$.
\end{lemma}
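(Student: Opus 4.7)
The plan is to apply the $\mathcal{C}$-representability criterion of Lemma~\ref{lem:C-representability-1} to the functor $K_E \colon \mathcal{M} \to \mathcal{C}$ defined by $K_E(M) = {}^*\iHom(M, E)$. First I would check that $K_E$ is an exact left $\mathcal{C}$-module functor: exactness follows from the $\mathcal{C}$-injectivity of $E$ (which gives exactness of $\iHom(-, E)$) combined with the exactness of the anti-equivalence ${}^*(-) \colon \mathcal{C} \to \mathcal{C}$; the left $\mathcal{C}$-module structure comes from composing the right $\mathcal{C}$-module structure $\mathfrak{b}$ of $\iHom(-, E)$ with the contravariant monoidality of ${}^*(-)$, using the identity ${}^*(X^*) = X$ to produce a natural isomorphism $K_E(X \catactl M) \cong X \otimes K_E(M)$. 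By Lemma~\ref{lem:C-representability-1} there is then an object $N_E \in \mathcal{M}$, unique up to isomorphism, with $\iHom(N_E, M) \cong {}^*\iHom(M, E)$ naturally in $M$.

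The remaining task is to identify $N_E$ with $\overline{\Ser}(E)$. For this I would pass to the opposite module category: $\mathcal{M}^{\op}$ is a finite left $\mathcal{C}^{\op}$-module category with $\relprj{\mathcal{C}^{\op}}{\mathcal{M}^{\op}} = \relinj{\mathcal{C}}{\mathcal{M}}$, and by the existence portion of Theorem~\ref{thm:rel-Serre-summary} (applied to $\mathcal{M}^{\op}$) it admits a relative Serre functor $\Ser^{\op}$. Translating the defining isomorphism of $\Ser^{\op}$ at $E^{\op}$ back to $\mathcal{M}$, using the formula $\iHom_{\mathcal{M}^{\op}}(X^{\op}, Y^{\op}) = ({}^*\iHom(Y, X))^{\op}$ and the fact that the left dual in $\mathcal{C}^{\op}$ is computed by the right dual ${}^*(-)$ in $\mathcal{C}$, one recovers exactly the isomorphism $\iHom(\tilde{S}(E), M) \cong {}^*\iHom(M, E)$, where $\tilde{S}(E) \in \mathcal{M}$ is defined by $\tilde{S}(E)^{\op} := \Ser^{\op}(E^{\op})$. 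Hence $N_E \cong \tilde{S}(E)$, and the lemma will follow once $\tilde{S}$ is matched with $\overline{\Ser}$.

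The main obstacle is this final matching $\tilde{S} \cong \overline{\Ser}$. I would establish it by showing $\tilde{S}$ is itself a right adjoint of $\Ser$, from which the matching follows by uniqueness of right adjoints; this in turn should be obtainable from the compatibility between the twisted $\mathcal{C}$-module structures of $\Ser$ on $\mathcal{M}$ and of $\Ser^{\op}$ on $\mathcal{M}^{\op}$, combined with the universal property made precise in Proposition~\ref{prop:rel-Serre-uniq-structured}. As a fallback, one may reduce to $\mathcal{M} \approx \mathcal{C}_A$ and verify the isomorphism by direct computation, using the formula $\overline{\Ser}_A(E) = {}^{**}\iHom_A(A^*, E)$ together with Lemma~\ref{lem:iHom-dual} to express both sides as duals of relative tensor products over $A$.
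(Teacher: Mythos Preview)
Your first step---checking that $K_E(M)={}^*\iHom(M,E)$ is a left exact left $\mathcal{C}$-module functor and then invoking Lemma~\ref{lem:C-representability-1} to produce an object $N_E$ with $\iHom(N_E,-)\cong K_E$---is exactly what the paper does (the paper cites Lemma~\ref{lem:C-representability-2}, but the argument is the covariant one you describe).

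The divergence is in the identification $N_E\cong\overline{\Ser}(E)$. Your primary route through the relative Serre functor of $\mathcal{M}^{\op}$ correctly produces a functor $\tilde S$ on $\relinj{\mathcal{C}}{\mathcal{M}}$ with $\iHom(\tilde S(E),M)\cong{}^*\iHom(M,E)$, but the step ``show $\tilde S$ is right adjoint to $\Ser$'' is where the gap lies. Proposition~\ref{prop:rel-Serre-uniq-structured} compares two structured relative Serre functors on the \emph{same} module category; it says nothing about how the Serre functor of $\mathcal{M}^{\op}$ relates to adjoints of the Serre functor of $\mathcal{M}$. To establish that adjunction you would need a natural bijection $\Hom_{\mathcal{M}}(\Ser(M),N)\cong\Hom_{\mathcal{M}}(M,\tilde S(N))$ for \emph{all} $M,N$, and the defining isomorphisms of $\Ser$ and $\Ser^{\op}$ only control the internal Hom on $\relprj{\mathcal{C}}{\mathcal{M}}$ and $\relinj{\mathcal{C}}{\mathcal{M}}$ respectively. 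Extending to a global adjunction from this data alone is not automatic, and in practice one ends up reducing to $\mathcal{C}_A$ anyway.

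Your fallback is the right move and is precisely the paper's argument, though the paper's computation is even shorter than what you sketch. After reducing to $\mathcal{M}=\mathcal{C}_A$, simply evaluate the representability isomorphism $\iHom_A(N_E,M)\cong{}^*\iHom_A(M,E)$ at $M=A^*$: since $\iHom_A(X,A^*)\cong X^*$ by Lemma~\ref{lem:iHom-dual}, the left side becomes $N_E^*$ and the right side ${}^*\iHom_A(A^*,E)$, giving $N_E\cong{}^{**}\iHom_A(A^*,E)=\overline{\Ser}_A(E)$ directly. No need to unwind both sides as tensor products.
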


To prove this lemma, it suffices to consider the case where $\mathcal{M} = \mathcal{C}_A$ for some algebra $A$ in $\mathcal{C}$.
For an algebra $A$ in $\mathcal{C}$, we have already constructed a relative Serre functor $\Ser_A : \mathcal{C}_A \to \mathcal{C}_A$ in Lemma \ref{lem:rel-Serre-mod-A}.

\begin{lemma}
  \label{lem:rel-Serre-mod-A-ra}
  The functor $\Ser_A$ has a right adjoint
  \begin{equation}
    \overline{\Ser}_A = {}^{**}\iHom_A(A^*, -) : \mathcal{C}_A \to \mathcal{C}_A.
  \end{equation}
\end{lemma}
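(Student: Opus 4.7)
The plan is to decompose the functor $\Ser_A$ as a composition of two functors whose right adjoints are easy to write down, compute the composite right adjoint, and identify it with $\overline{\Ser}_A$ via the functoriality of the internal Hom under monoidal equivalences.

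First, I would apply Lemma~\ref{lem:iHom-dual} with the bimodule $M$ of that lemma taken to be ${}^*A \in {}_A\mathcal{C}_{{}^{**}A}$ to obtain the natural isomorphism ${}^*\iHom_A(-, A) \cong (-) \otimes_A {}^*A$ of left $\mathcal{C}$-module functors from $\mathcal{C}_A$ to $\mathcal{C}_{{}^{**}A}$. Dualizing this on the left twice gives $\Ser_A \cong F_2 \circ F_1$, where $F_1 = (-) \otimes_A {}^*A : \mathcal{C}_A \to \mathcal{C}_{{}^{**}A}$ and $F_2 = (-)^{**} : \mathcal{C}_{{}^{**}A} \to \mathcal{C}_A$.

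Next, I would compute the right adjoint factor by factor. The tensor-Hom adjunction supplies $F_1^{\radj} = \iHom_{{}^{**}A}({}^*A, -)$. The functor $F_2$ is an equivalence---being a composition of two anti-equivalences induced by the duality functors recalled in the paragraph preceding Lemma~\ref{lem:iHom-dual}---with quasi-inverse ${}^{**}(-) : \mathcal{C}_A \to \mathcal{C}_{{}^{**}A}$, which therefore serves as $F_2^{\radj}$. By the canonical isomorphism \eqref{eq:FG-r-adj}, $\Ser_A^{\radj}(N) \cong F_1^{\radj}(F_2^{\radj}(N)) = \iHom_{{}^{**}A}({}^*A, {}^{**}N)$.

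Finally, I would identify this with $\overline{\Ser}_A(N) = {}^{**}\iHom_A(A^*, N)$. Since ${}^{**}(-) : \mathcal{C} \to \mathcal{C}$ is a monoidal auto-equivalence carrying the algebra $A$ to ${}^{**}A$ and inducing an equivalence $\mathcal{C}_A \to \mathcal{C}_{{}^{**}A}$, it intertwines internal Homs: there is a natural isomorphism $\iHom_{{}^{**}A}({}^{**}X, {}^{**}Y) \cong {}^{**}\iHom_A(X, Y)$ for $X, Y \in \mathcal{C}_A$. Taking $X = A^*$ and $Y = N$, and using ${}^{**}(A^*) = {}^*A$ (which follows from ${}^*(X^*) = X$), yields the desired identification. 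The main obstacle is the careful bookkeeping of module structures across the categories ${}_A\mathcal{C}$, $\mathcal{C}_A$, and $\mathcal{C}_{{}^{**}A}$, since the duality functors $(-)^*$ and ${}^*(-)$ twist the acting algebra in the non-pivotal setting; in particular one must not conflate $(-)^{**}$ with ${}^{**}(-)$ and must track how $A$ is transported at each step.
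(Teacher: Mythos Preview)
Your proof is correct and follows essentially the same strategy as the paper: factor $\Ser_A$ as the composition of a double-dual equivalence and tensoring with a bimodule, then take right adjoints of each factor. The paper factors in the opposite order, through $\mathcal{C}_{A^{**}}$ (obtaining $\Ser_A(M)\cong M^{**}\otimes_{A^{**}}A^*$ via Eilenberg--Watts), which gives ${}^{**}\iHom_A(A^*,-)$ directly and so avoids your final identification $\iHom_{{}^{**}\!A}({}^{**}X,{}^{**}Y)\cong{}^{**}\iHom_A(X,Y)$.
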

\begin{proof}
  Since $\Ser_A$ is a left $\mathcal{C}$-module functor from ${}_{(-2)}\mathcal{C}_A$ to $\mathcal{C}_A$, the composition
  \begin{equation*}
    F : \mathcal{C}_{A^{**}}
    \xrightarrow{\quad {}^{**}(-) \quad}
    {}_{(-2)}\mathcal{C}_A
    \xrightarrow{\quad \Ser_A \quad}
    \mathcal{C}_A
  \end{equation*}
  is a left $\mathcal{C}$-module functor. By the Eilenberg-Watts equivalence, we have
  \begin{equation*}
    F(M)
    \cong M \otimes_{A^{**}} F(A^{**})
    \cong M \otimes_{A^{**}} \iHom(A, A)^*
    \cong M \otimes_{A^{**}} A^*
  \end{equation*}
  for $M \in \mathcal{C}_{A^{**}}$. Thus we have a natural isomorphism
  \begin{equation*}
    \Ser_A(M) = F(M^{**}) \cong M^{**} \otimes_{A^{**}} A^*
  \end{equation*}
  for $M \in \mathcal{C}_A$.
  By the tensor-Hom adjunction, $\overline{\Ser}_A$ is right adjoint to $\Ser_A$, as stated.
\end{proof}

\begin{proof}[Proof of Lemma~\ref{lem:rel-Serre-ra-iHom}]
  We may, and do, assume that $\mathcal{M} = \mathcal{C}_A$ for some algebra $A$ in $\mathcal{C}$.
  We fix an object $E \in \relinj{\mathcal{C}}{\mathcal{C}_A}$. By Lemma \ref{lem:C-representability-2}, the functor
  \begin{equation*}
    {}^*\iHom_A(-,E) : \mathcal{M} \to \mathcal{C}
  \end{equation*}
  is $\mathcal{C}$-representable. Let $\overline{S}(E) \in \mathcal{C}_A$ be an object $\mathcal{C}$-representing this functor. The assignment $E \mapsto \overline{S}(E)$ extends to a functor from $\relinj{\mathcal{C}}{\mathcal{M}}$ to $\mathcal{M}$ and, by definition, there is a natural isomorphism
  \begin{equation}
    \label{eq:proof-rel-Serre-ra-iHom-1}
    {}^*\iHom_A(M, E) \cong \iHom_A(\overline{S}(E), M)
    \quad (M \in \mathcal{C}_A, E \in \relinj{\mathcal{C}}{\mathcal{C}_A}).
  \end{equation}
  By putting $M = A^*$ and using Lemma~\ref{lem:iHom-dual}, we have
  \begin{equation*}
    \overline{S}(E)
    \cong {}^*\iHom_A(\overline{S}(E), A^*)
    \cong {}^{**}\iHom_A(A^*, E) = \overline{\Ser}_A(E).
  \end{equation*}
  This means that the functor $\overline{S}$ is isomorphic to the restriction of $\overline{\Ser}_A$.
  Now the claim follows from the natural isomorphism \eqref{eq:proof-rel-Serre-ra-iHom-1}.
\end{proof}

\begin{proof}[Proof of Theorem \ref{thm:rel-Serre-summary} (\ref{item:rel-Serre-main-3})]
  Let $\Ser$ and $\overline{\Ser}$ be a relative Serre functor of $\mathcal{M}$ and its right adjoint, respectively.
  There are natural isomorphisms
  \begin{equation*}
    \iHom(P, M)^* \cong \iHom(M, \Ser(P)),
    \quad
    {}^*\iHom(M, E) \cong \iHom(\overline{\Ser}(E), M)
  \end{equation*}
  for $M \in \mathcal{M}$, $P \in \relprj{\mathcal{C}}{\mathcal{M}}$ and $E \in \relinj{\mathcal{C}}{\mathcal{M}}$.
  Since $\iHom(P, -)$ is exact, $\Ser(P)$ needs to be $\mathcal{C}$-injective.
  Similarly, $\overline{\Ser}(E)$ is $\mathcal{C}$-projective.
  Now we denote by
  \begin{equation*}
    \Ser_{*}: \relprj{\mathcal{C}}{\mathcal{M}} \to \relinj{\mathcal{C}}{\mathcal{M}} \quad \text{and} \quad
    \overline{\Ser}_{*}: \relprj{\mathcal{C}}{\mathcal{M}} \to \relinj{\mathcal{C}}{\mathcal{M}}
  \end{equation*}
  the functors induced by $\Ser$ and $\overline{\Ser}$, respectively.
  For $M \in \mathcal{M}$ and $P \in \relprj{\mathcal{C}}{\mathcal{M}}$, we have natural isomorphisms
  \begin{align*}
    \iHom(\overline{\Ser} \Ser(P), M)
    \cong {}^*\iHom(M, \Ser(P))
    \cong {}^*\iHom(P, M)^* = \iHom(P, M),
  \end{align*}
  which implies $\overline{\Ser}_{*} \circ \Ser_{*} \cong \id$ by the internal Yoneda Lemma~\ref{lem:internal-Yoneda-0}. In a similar manner, we also have $\Ser_{*} \circ \overline{\Ser}_{*} \cong \id$. Therefore the functor $\Ser_{*}$ is an equivalence with quasi-inverse $\overline{\Ser}_{*}$. The proof is done.
\end{proof}

\section{Quasi-Frobenius algebras}
\label{sec:qf-algebras}

\subsection{Lemmas on $\mathcal{C}$-projective/injective objects}
\label{subsec:C-proj-C-inj}

Throughout this section, $\mathcal{C}$ is a finite tensor category.
We recall that a ring $R$ is said to be quasi-Frobenius if it is Noetherian and injective as a right $R$-module \cite[Chapter 6]{MR1653294}.
Due to the finiteness of $\mathcal{C}$, every algebra in $\mathcal{C}$ is Noetherian.
We therefore propose the following definition:

\begin{definition}
  \label{def:QF-algebras}
  A {\em quasi-Frobenius algebra} in $\mathcal{C}$ is an algebra $A$ in $\mathcal{C}$ such that the right $A$-module $A$ is $\mathcal{C}$-injective.
\end{definition}

The aim of this section is to give equivalent conditions for an algebra $A$ in $\mathcal{C}$ to be quasi-Frobenius (Theorem \ref{thm:QF-algebras}).
For this purpose, in this subsection, we provide some lemmas on $\mathcal{C}$-projective and $\mathcal{C}$-injective objects in a finite left $\mathcal{C}$-module category.
As a preparation, we note:

\begin{lemma}
  \label{lem:ftc-progenerator}
  For any non-zero object $X \in \mathcal{C}$, there is a projective object $P \in \mathcal{C}$ such that $P \otimes X$ is a projective generator of $\mathcal{C}$.
\end{lemma}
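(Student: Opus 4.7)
The plan is to reduce everything to a projective generator of $\mathcal{C}$, using rigidity and the simplicity of $\unitobj$.

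First I would fix a projective generator $G \in \mathcal{C}$, which exists because $\mathcal{C}$ is a finite abelian category with enough projectives (one can take the direct sum of projective covers of the simple objects). The key preliminary observation is that, since $X \ne 0$, the zig-zag identity $(\id_X \otimes \eval_X) \circ (\coev_X \otimes \id_X) = \id_X$ prevents $\eval_X : X^* \otimes X \to \unitobj$ from being zero; because $\unitobj$ is simple, any non-zero morphism to $\unitobj$ is an epimorphism, so $\eval_X$ is epic.

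Next I would set $P := G \otimes X^*$. I need to check that $P$ is projective. This follows from the standard fact that the class of projective objects of a finite tensor category is closed under tensoring with arbitrary objects on either side: the adjunction isomorphism
\begin{equation*}
  \Hom_{\mathcal{C}}(G \otimes X^*, -) \cong \Hom_{\mathcal{C}}(G, (-) \otimes X^{**})
\end{equation*}
exhibits $\Hom_{\mathcal{C}}(P, -)$ as the composition of the exact functor $(-) \otimes X^{**}$ (exact by rigidity, which makes the monoidal product biexact) with the exact functor $\Hom_{\mathcal{C}}(G, -)$.

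Finally I would apply the exact functor $G \otimes (-)$ to the epimorphism $\eval_X$, yielding an epimorphism
\begin{equation*}
  P \otimes X \;=\; G \otimes X^* \otimes X \;\xrightarrow{\ \id_G \otimes \eval_X\ }\; G \otimes \unitobj \;=\; G.
\end{equation*}
Since $G$ is a generator of $\mathcal{C}$, any object admitting $G$ as a quotient is itself a generator; combined with the projectivity of $P \otimes X$ (by the same stability argument as for $P$), this shows that $P \otimes X$ is a projective generator, completing the proof.

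I do not foresee a genuine obstacle here: the two non-trivial ingredients are the non-vanishing of $\eval_X$ (a direct consequence of rigidity and $X \neq 0$) and the tensor-stability of projectives in a finite tensor category (standard, following from rigidity and biexactness of $\otimes$). Both are routine; the only mild subtlety is remembering that simplicity of $\unitobj$ is what promotes the non-zero map $\eval_X$ to an epimorphism.
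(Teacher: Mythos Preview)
Your proof is correct and follows essentially the same approach as the paper: both take $P$ of the form (projective object)$\,\otimes X^*$ and use the evaluation $\eval_X$ to produce an epimorphism from $P \otimes X$ onto a projective generator. The only cosmetic difference is that the paper builds its projective generator as $\bigoplus_i P_{\unitobj} \otimes V_i$ (with $V_i$ the simples and $P_{\unitobj}$ the projective cover of $\unitobj$), whereas you start from an arbitrary projective generator $G$; your version is slightly more streamlined and makes explicit the step that $\eval_X$ is epic because $\unitobj$ is simple, which the paper leaves implicit.
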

\begin{proof}
  Let $\{ V_1, \cdots, V_n \}$ be a set of complete representatives of the isomorphism classes of simple objects of $\mathcal{C}$, and let $P_{\unitobj}$ be the projective cover of the unit object of $\mathcal{C}$ with epimorphism $\pi : P_{\unitobj} \to \unitobj$. We set $P = \bigoplus_{i = 1}^n P_{\unitobj} \otimes V_i \otimes X^*$. Then $P$ is projective and, for each $i$, there is an epimorphism
  \begin{equation*}
    P \otimes X
    \xrightarrow{\quad \text{projection} \quad} P_{\unitobj} \otimes V_i \otimes X^* \otimes X
    \xrightarrow{\quad \pi \otimes \id \otimes \eval_X \quad} V_i.
  \end{equation*}
  Thus $P \otimes X$ is a projective generator. The proof is done.
\end{proof}

Let $\mathcal{M}$ be a finite left $\mathcal{C}$-module category.
In Lemmas~\ref{lem:C-proj} and \ref{lem:C-proj-direct-summands} below, we give conditions for an object of $\mathcal{M}$ to be $\mathcal{C}$-projective.
Lemma~\ref{lem:C-proj} also says that an exact left $\mathcal{C}$-module category is precisely a finite left $\mathcal{C}$-module category of which every object is $\mathcal{C}$-projective. 

\begin{lemma}
  \label{lem:C-proj}
  For an object $M \in \mathcal{M}$, the following are equivalent:
  \begin{enumerate}
  \item $M$ is $\mathcal{C}$-projective.
  \item $P \catactl M$ is projective for all projective $P \in \mathcal{C}$.
  \item There is a non-zero object $X \in \mathcal{C}$ such that $X \catactl M$ is $\mathcal{C}$-projective.
  \end{enumerate}
\end{lemma}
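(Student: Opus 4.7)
I would prove the circular implications $(1) \Rightarrow (2) \Rightarrow (3) \Rightarrow (1)$. The first two steps are quick formalities; the main work is in $(3) \Rightarrow (1)$.

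For $(1) \Rightarrow (2)$, given a projective $P \in \mathcal{C}$, the defining adjunction
\[
\Hom_{\mathcal{M}}(P \catactl M, -) \cong \Hom_{\mathcal{C}}(P, \iHom(M,-))
\]
realises $\Hom_{\mathcal{M}}(P \catactl M, -)$ as the composition of two exact functors by (1) and the projectivity of $P$, so $P \catactl M$ is projective. For $(2) \Rightarrow (3)$, choose any non-zero projective $X \in \mathcal{C}$ (for instance the projective cover of $\unitobj$); then $X \catactl M$ is projective in $\mathcal{M}$ by (2) and hence $\mathcal{C}$-projective by Lemma~\ref{lem:relative-proj-1}.

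For $(3) \Rightarrow (1)$, I would apply Lemma~\ref{lem:ftc-progenerator} to the non-zero object $X$ given by (3) to produce a projective $P \in \mathcal{C}$ such that $Q := P \otimes X$ is a projective generator of $\mathcal{C}$. Lemma~\ref{lem:relative-proj-2} then implies that $Q \catactl M = P \catactl (X \catactl M)$ is $\mathcal{C}$-projective, so $\iHom(Q \catactl M, -)$ is exact. Via the natural isomorphism
\[
\mathfrak{b}_{M,-,Q} : \iHom(M,-) \otimes Q^* \xrightarrow{\ \sim\ } \iHom(Q \catactl M, -),
\]
the functor $\iHom(M,-) \otimes Q^*$ is therefore exact. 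Since $\iHom(M,-)$ is a right adjoint and hence automatically left exact, it remains to establish its right exactness. For an epimorphism $B \twoheadrightarrow C$ in $\mathcal{M}$, set $K := \Coker(\iHom(M,B) \to \iHom(M,C))$; applying the exact functor $(-) \otimes Q^*$ (exact by rigidity of $\mathcal{C}$) to this cokernel sequence and combining with the exactness of $\iHom(M,-) \otimes Q^*$ yields $K \otimes Q^* = 0$.

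The remaining, and main, difficulty is to conclude $K = 0$. Because $Q$ is a generator of $\mathcal{C}$, there is an epimorphism $Q^{\oplus n} \twoheadrightarrow \unitobj$ for some $n \geq 1$; applying the left duality functor (an exact anti-equivalence) gives a monomorphism $\unitobj \hookrightarrow (Q^*)^{\oplus n}$. Tensoring on the left by $K$ is again exact by rigidity, and hence
\[
K \;\cong\; K \otimes \unitobj \;\hookrightarrow\; K \otimes (Q^*)^{\oplus n} \;\cong\; (K \otimes Q^*)^{\oplus n} \;=\; 0,
\]
forcing $K = 0$. This faithfulness trick for $(-) \otimes Q^*$, which crucially uses rigidity together with the generator property of $Q$, is what converts the $Q^*$-twisted exactness of $\iHom(M,-)$ into genuine exactness and closes the loop.
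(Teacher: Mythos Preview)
Your proof is correct and follows the same overall route as the paper: the cycle $(1)\Rightarrow(2)\Rightarrow(3)\Rightarrow(1)$, with the first two steps identical to the paper's, and the third step hinging on Lemma~\ref{lem:ftc-progenerator} to produce a projective generator $G=Q=P\otimes X$ of $\mathcal{C}$.

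The only divergence is in how you extract exactness of $\iHom(M,-)$ from that of $\iHom(Q\catactl M,-)$. The paper uses the isomorphism
\[
\Hom_{\mathcal{C}}(G,\iHom(M,-))\cong\Hom_{\mathcal{C}}(P,\iHom(X\catactl M,-))
\]
together with the standard fact that $\Hom_{\mathcal{C}}(G,-)$ reflects exact sequences when $G$ is a projective generator. You instead pass through $\iHom(M,-)\otimes Q^*\cong\iHom(Q\catactl M,-)$ and then argue that $(-)\otimes Q^*$ kills no non-zero object, via the monomorphism $\unitobj\hookrightarrow (Q^*)^{\oplus n}$ obtained by dualising an epimorphism $Q^{\oplus n}\twoheadrightarrow\unitobj$. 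Both are legitimate faithfulness arguments rooted in the generator property of $Q$; the paper's is marginally more direct since it avoids splitting into left and right exactness, while yours stays internal to $\mathcal{C}$ and makes the role of rigidity more explicit.
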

\begin{proof}
  (1) $\Rightarrow$ (2). Suppose that $M$ is $\mathcal{C}$-projective. There is an isomorphism
  \begin{equation*}
    \Hom_{\mathcal{M}}(P \catactl M, -) \cong \Hom_{\mathcal{M}}(P, \iHom_{\mathcal{M}}(M, -))
  \end{equation*}
  of functors for $P \in \mathcal{C}$. If $P$ is projective, then the right-hand side is exact. This means that $P \catactl M$ is projective.

  \smallskip
  (2) $\Rightarrow$ (3). Let $X$ be a non-zero projective object of $\mathcal{C}$. If (2) holds, then $X \catactl M$ is projective, and hence it is $\mathcal{C}$-projective by Lemma~\ref{lem:relative-proj-1}. Thus (3) holds.

  \smallskip
  (3) $\Rightarrow$ (1). Suppose that (3) holds and take a non-zero object $X \in \mathcal{C}$ such that $X \catactl M$ is $\mathcal{C}$-projective.
  By Lemma~\ref{lem:ftc-progenerator}, there is an object $P \in \mathcal{C}$ such that $G := P \otimes X$ is a projective generator of $\mathcal{C}$. There is an isomorphism
  \begin{equation*}
    \Hom_{\mathcal{M}}(G, \iHom(M, -))
    \cong \Hom_{\mathcal{M}}(P, \iHom(X \catactl M, -))
  \end{equation*}
  of functors. Since $X \catactl M$ is $\mathcal{C}$-projective, the right-hand side is exact. Thus the left-hand side is also exact. Since $\Hom_{\mathcal{M}}(G, -)$ reflects exact sequences, we conclude that $\iHom(M, -)$ is exact. The proof is done.
\end{proof}

\begin{lemma}
  \label{lem:C-proj-direct-summands}
  Let $X$ and $Y$ be objects of $\mathcal{M}$.
  Then $X \oplus Y$ is $\mathcal{C}$-projective if and only if both $X$ and $Y$ are.
\end{lemma}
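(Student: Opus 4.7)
The plan is to reduce the claim to the elementary fact that a biproduct of functors between abelian categories is exact if and only if each summand is exact. The bridge is a natural isomorphism
\begin{equation*}
  \iHom(X \oplus Y, -) \;\cong\; \iHom(X, -) \oplus \iHom(Y, -)
\end{equation*}
of functors $\mathcal{M} \to \mathcal{C}$, which I would establish first.

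To obtain this isomorphism, I would observe that the bilinearity of the action $\catactl$ of $\mathcal{C}$ on $\mathcal{M}$ gives an equality $\T_{X \oplus Y} = \T_X \oplus \T_Y$ of functors $\mathcal{C} \to \mathcal{M}$. Since right adjoints preserve biproducts (the $\bfk$-linear Hom-functors turn the biproduct on the left of the adjunction into a biproduct on the right, and then one applies the Yoneda lemma), we deduce that $\T_{X \oplus Y}^{\radj} \cong \T_X^{\radj} \oplus \T_Y^{\radj}$, which is precisely the desired isomorphism $\iHom(X \oplus Y, -) \cong \iHom(X, -) \oplus \iHom(Y, -)$.

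Having this, the lemma becomes immediate: if both $X$ and $Y$ are $\mathcal{C}$-projective, then $\iHom(X, -)$ and $\iHom(Y, -)$ are exact, and so is their direct sum, i.e., $\iHom(X \oplus Y, -)$; conversely, exactness of the direct sum forces each summand to be exact, since kernels and cokernels in $\mathcal{C}$ decompose along biproducts (equivalently, the inclusions $\iHom(X, -) \hookrightarrow \iHom(X \oplus Y, -)$ are split monomorphisms with exact complement). I do not expect any genuine obstacle here; the only mildly delicate point is justifying that the right adjoint of a biproduct of functors is the biproduct of the right adjoints, but this is a standard consequence of the $\bfk$-linearity of the categories involved together with the uniqueness of adjoints.
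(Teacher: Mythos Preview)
Your argument is correct, and it takes a genuinely different route from the paper. You work directly from the definition of $\mathcal{C}$-projectivity: you split the internal Hom functor as $\iHom(X \oplus Y, -) \cong \iHom(X,-) \oplus \iHom(Y,-)$ via the additivity of the action and the uniqueness of adjoints, and then invoke the elementary fact that a biproduct of additive functors between abelian categories is exact precisely when each summand is. The paper instead appeals to Lemma~\ref{lem:C-proj}, which characterizes $\mathcal{C}$-projectivity of $M$ by the condition that $P \catactl M$ is projective for every projective $P \in \mathcal{C}$; the claim then reduces to the classical statement that direct summands of projective objects are projective, together with the additivity of $\catactl$.

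Your approach is more self-contained: it avoids the nontrivial Lemma~\ref{lem:C-proj} (whose proof uses Lemma~\ref{lem:ftc-progenerator} on the existence of projective generators of the form $P \otimes X$). The paper's approach, on the other hand, is shorter once Lemma~\ref{lem:C-proj} is in hand and illustrates how that characterization streamlines later arguments. Both are perfectly valid; your route has the advantage of working in any closed $\mathcal{C}$-module category where the relevant biproducts exist, without finiteness assumptions on $\mathcal{C}$ or $\mathcal{M}$.
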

\begin{proof}
  This follows from Lemma~\ref{lem:C-proj} and the fact that a direct summand of a projective object in an abelian category is again projective.
\end{proof}

We give conditions for an object of $\mathcal{M}$ to be $\mathcal{C}$-injective.
Lemmas \ref{lem:C-inj} and \ref{lem:C-inj-direct-summands} below are obtained by applying Lemmas \ref{lem:C-proj} and \ref{lem:C-proj-direct-summands}, respectively, to the finite left $\mathcal{C}^{\op}$-module category $\mathcal{M}^{\op}$ in view of \eqref{eq:relative-proj-opposite}.

\begin{lemma}
  \label{lem:C-inj}
  For an object $M \in \mathcal{M}$, the following are equivalent:
  \begin{enumerate}
  \item $M$ is $\mathcal{C}$-injective.
  \item $E \catactl M$ is injective for all injective $E \in \mathcal{C}$.
  \item There is a non-zero object $X \in \mathcal{C}$ such that $X \catactl M$ is $\mathcal{C}$-injective.
  \end{enumerate}
\end{lemma}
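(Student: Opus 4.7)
My plan is to deduce Lemma~\ref{lem:C-inj} from Lemma~\ref{lem:C-proj} by passing to the opposite module category. Recall from the discussion around \eqref{eq:relative-proj-opposite} that $\mathcal{M}^{\op}$ carries the structure of a finite left $\mathcal{C}^{\op}$-module category with action $X^{\op} \catactl M^{\op} = (X \catactl M)^{\op}$, and that the identifications $\relprj{\mathcal{C}^{\op}}{\mathcal{M}^{\op}} = \relinj{\mathcal{C}}{\mathcal{M}}$ and $\relinj{\mathcal{C}^{\op}}{\mathcal{M}^{\op}} = \relprj{\mathcal{C}}{\mathcal{M}}$ hold. Under the canonical identification of underlying objects, the projective objects of $\mathcal{C}^{\op}$ correspond to the injective objects of $\mathcal{C}$, and likewise for $\mathcal{M}$ versus $\mathcal{M}^{\op}$.

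The strategy is then to feed Lemma~\ref{lem:C-proj} with the finite left $\mathcal{C}^{\op}$-module category $\mathcal{M}^{\op}$ and translate each of the three equivalent conditions through this dictionary. Applying the lemma to $M^{\op} \in \mathcal{M}^{\op}$ yields the mutual equivalence of: (i) $M^{\op}$ is $\mathcal{C}^{\op}$-projective; (ii) $P^{\op} \catactl M^{\op} = (P \catactl M)^{\op}$ is projective in $\mathcal{M}^{\op}$ for every projective $P^{\op} \in \mathcal{C}^{\op}$; and (iii) there is a non-zero object $X \in \mathcal{C}$ such that $(X \catactl M)^{\op}$ is $\mathcal{C}^{\op}$-projective in $\mathcal{M}^{\op}$. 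Reading these through the identifications above, (i) becomes ``$M$ is $\mathcal{C}$-injective'', (ii) becomes ``$E \catactl M$ is injective in $\mathcal{M}$ for every injective $E \in \mathcal{C}$'', and (iii) becomes ``there is a non-zero $X \in \mathcal{C}$ such that $X \catactl M$ is $\mathcal{C}$-injective''. These are exactly the three conditions in the statement of Lemma~\ref{lem:C-inj}.

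I do not anticipate any substantive obstacle, since all of the real content has already been packed into Lemma~\ref{lem:C-proj}; what remains is essentially a bookkeeping exercise verifying that the notions of (relative) projectivity and injectivity swap under passage to opposite categories in the expected way. Both of these verifications are already encapsulated in \eqref{eq:relative-proj-opposite} together with the elementary fact that an object is injective in an abelian category if and only if it is projective in the opposite abelian category.
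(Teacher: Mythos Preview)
Your proposal is correct and matches the paper's own approach exactly: the paper states that Lemma~\ref{lem:C-inj} is obtained by applying Lemma~\ref{lem:C-proj} to the finite left $\mathcal{C}^{\op}$-module category $\mathcal{M}^{\op}$ in view of \eqref{eq:relative-proj-opposite}. Your translation of the three conditions through the opposite-category dictionary is precisely what is intended.
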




\begin{lemma}
  \label{lem:C-inj-direct-summands}
  Let $X$ and $Y$ be objects of $\mathcal{M}$.
  Then $X \oplus Y$ is $\mathcal{C}$-injective if and only if both $X$ and $Y$ are.
\end{lemma}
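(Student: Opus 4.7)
The plan is to derive this result directly from Lemma \ref{lem:C-proj-direct-summands} by passing to the opposite module category, as suggested in the text after Lemma~\ref{lem:C-inj}. Recall that $\mathcal{M}^{\op}$ has a natural structure of a finite left $\mathcal{C}^{\op}$-module category via $X^{\op} \catactl M^{\op} = (X \catactl M)^{\op}$, and that the identification \eqref{eq:relative-proj-opposite} gives $\relinj{\mathcal{C}}{\mathcal{M}} = \relprj{\mathcal{C}^{\op}}{\mathcal{M}^{\op}}$.

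So the first step is to observe that an object $Z \in \mathcal{M}$ is $\mathcal{C}$-injective if and only if the object $Z^{\op} \in \mathcal{M}^{\op}$ is $\mathcal{C}^{\op}$-projective. Second, since finite biproducts in $\mathcal{M}^{\op}$ coincide with those in $\mathcal{M}$ (with objects relabelled), we have $(X \oplus Y)^{\op} \cong X^{\op} \oplus Y^{\op}$ in $\mathcal{M}^{\op}$. Applying Lemma~\ref{lem:C-proj-direct-summands} to the $\mathcal{C}^{\op}$-module category $\mathcal{M}^{\op}$ then yields the statement: $X^{\op} \oplus Y^{\op}$ is $\mathcal{C}^{\op}$-projective if and only if both $X^{\op}$ and $Y^{\op}$ are, which by the first observation translates exactly to the claim about $\mathcal{C}$-injectivity.

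There is really no obstacle here; the main content has already been done in Lemma~\ref{lem:C-proj-direct-summands}, and the opposite-category dictionary \eqref{eq:relative-proj-opposite} converts the projective statement into the injective one. If one preferred a direct proof without invoking $\mathcal{M}^{\op}$, one could instead observe that $\iHom(-, X \oplus Y) \cong \iHom(-, X) \oplus \iHom(-, Y)$ as functors $\mathcal{M}^{\op} \to \mathcal{C}$, and use the standard fact that a finite biproduct of functors to an abelian category is exact if and only if each summand is exact; this argument is the `opposite' of the one used in Lemma~\ref{lem:C-proj-direct-summands} and is equally short.
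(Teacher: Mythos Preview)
Your proof is correct and matches the paper's approach exactly: the paper states that this lemma is obtained by applying Lemma~\ref{lem:C-proj-direct-summands} to the finite left $\mathcal{C}^{\op}$-module category $\mathcal{M}^{\op}$ via the identification~\eqref{eq:relative-proj-opposite}. Your alternative direct argument using $\iHom(-,X\oplus Y)\cong\iHom(-,X)\oplus\iHom(-,Y)$ is also valid, though the paper does not spell it out.
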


It is known that a right module $P$ over a ring $R$ is finitely generated projective if and only if the canonical map $M \otimes_R \Hom_R(P, R) \to \Hom_R(P, M)$ is bijective for every right $R$-module $M$. An analogous result is formulated and proved for an algebra $A$ in $\mathcal{C}$ as follows: For two objects $M, P \in \mathcal{C}_A$, we define
\begin{equation}
  \label{eq:C-proj-isomorphism}
  \Theta_{M, P} : M \otimes_A \iHom_A(P, A) \to \iHom_A(P, M)
\end{equation}
to be the morphism in $\mathcal{C}$ induced by
\begin{equation*}
  M \otimes \iHom_A(P, A)
  \xrightarrow{\ \mathfrak{a}_{M,P,A} \ }
  \iHom_A(P, M \otimes A)
  \xrightarrow{\ \iHom_A(P, \act_M) \ } \iHom_A(P, M).
\end{equation*}

\begin{lemma}
  For $P \in \mathcal{C}_A$, the following are equivalent.
  \begin{enumerate}
  \item The morphism $\Theta_{M,P}$ is invertible for all $M \in \mathcal{C}_A$.
  \item $P$ is $\mathcal{C}$-projective.
  \end{enumerate}
\end{lemma}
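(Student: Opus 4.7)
The natural transformation $\Theta_{-,P}$ goes between the functors
$F := (-) \otimes_A \iHom_A(P, A)$ and $G := \iHom_A(P, -)$, both from $\mathcal{C}_A$ to $\mathcal{C}$. The opening observation is that $F$ is right exact (as a tensor product over $A$) while $G$ is left exact (as a right adjoint in the internal Hom adjunction). This already handles the implication $(1) \Rightarrow (2)$: if every component of $\Theta$ is an isomorphism, then $G \cong F$ inherits right exactness from $F$, and combined with its automatic left exactness it is exact, whence $P$ is $\mathcal{C}$-projective.

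For the converse $(2) \Rightarrow (1)$, my plan is first to reduce to free right $A$-modules $M = X \otimes A$ with $X \in \mathcal{C}$, and then to extend by a free presentation. Both $F$ and $G$ carry canonical left $\mathcal{C}$-module structures — on $G$ via the morphism $\mathfrak{a}_{-,P,-}$ of \eqref{eq:iHom-left-C-mod-str}, and on $F$ via the Eilenberg-Watts equivalence~\eqref{eq:EW-equivalence-in-C} — and I would verify directly from the definition~\eqref{eq:C-proj-isomorphism} that $\Theta$ is a morphism of left $\mathcal{C}$-module functors. A direct computation then shows that $\Theta_{A,P}$ is the canonical isomorphism: by \eqref{eq:iHom-left-C-mod-str-3}, the composition $\iHom_A(P, m) \circ \mathfrak{a}_{A, P, A}$ lifting $\Theta_{A,P}$ coincides with the left $A$-action on $\iHom_A(P, A)$, which coequalizes to the canonical identification $A \otimes_A \iHom_A(P, A) \cong \iHom_A(P, A)$. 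The left $\mathcal{C}$-module property of $\Theta$ then forces $\Theta_{X \otimes A, P}$ to be invertible for every $X \in \mathcal{C}$.

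With the free case settled and $G$ now exact, I would conclude by a presentation argument. Every $M \in \mathcal{C}_A$ fits into a right exact sequence $X_1 \otimes A \to X_0 \otimes A \to M \to 0$ in $\mathcal{C}_A$, obtained by iterating the canonical $\mathcal{C}_A$-epimorphism $M \otimes A \to M$ given by the $A$-action. Applying $F$ and $G$ yields a commutative diagram with right exact rows in which the two leftmost vertical arrows are isomorphisms; a standard diagram chase then forces $\Theta_{M, P}$ to be an isomorphism.

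The main obstacle I anticipate is the explicit identification of $\Theta_{A, P}$ with the canonical isomorphism. This requires unpacking the left $A$-module structure on $\iHom_A(P, A)$ inherited from the left $A$-action on $A \in {}_A\mathcal{C}_A$ and matching it with $\iHom_A(P, m) \circ \mathfrak{a}_{A, P, A}$ via \eqref{eq:iHom-left-C-mod-str-3}; alongside this lies the parallel bookkeeping of verifying that $\Theta$ intertwines the left $\mathcal{C}$-module structures of $F$ and $G$. Once these identifications are in place, the remainder of the proof is purely formal.
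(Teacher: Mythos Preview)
Your argument is correct. The implication $(1)\Rightarrow(2)$ is handled exactly as in the paper. For $(2)\Rightarrow(1)$ the paper dispatches the claim in one line by invoking the Eilenberg--Watts equivalence~\eqref{eq:EW-equivalence-in-C}: once $P$ is $\mathcal{C}$-projective, both $(-)\otimes_A\iHom_A(P,A)$ and $\iHom_A(P,-)$ are right exact left $\mathcal{C}$-module functors, and $\Theta_{-,P}$ is a morphism between them; the equivalence then says such a morphism is an isomorphism iff its component at $A$ is, and $\Theta_{A,P}$ is the canonical identification. Your proof unpacks precisely this: you verify $\Theta$ is a $\mathcal{C}$-module transformation, check it at $A$, propagate to free modules via the $\mathcal{C}$-module structure, and conclude by a free presentation and the five lemma. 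This is exactly the content of the fully faithfulness part of the Eilenberg--Watts equivalence, spelled out by hand. So your route is not genuinely different, just more self-contained; the paper's version is terser but relies on the same ingredients.
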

\begin{proof}
  The morphism $\Theta_{M,P}$ is natural in $M \in \mathcal{C}_A$.
  Thus, if (1) holds, then the functor $\iHom_A(P, -)$ is right exact. Hence (1) implies (2). The converse follows from the Eilenberg-Watts equivalence.
\end{proof}

It is well-known that a module over a ring is projective if and only if it is a direct summand of a free module.
Here we give the following characterization of $\mathcal{C}$-projectivity by free modules and its dual statement:

\begin{lemma}
  \label{lem:C-proj-and-free}  
  Let $A$ be an algebra in $\mathcal{C}$. For $M \in \mathcal{C}_A$, we have:
  \begin{enumerate}
  \item [(a)] $M$ is $\mathcal{C}$-projective if and only if there are non-zero objects $X$ and $Y$ of $\mathcal{C}$ such that the right $A$-module $X \otimes M$ is a direct summand of $Y \otimes A$.
  \item [(b)] $M$ is $\mathcal{C}$-injective if and only if there are non-zero objects $X$ and $Y$ of $\mathcal{C}$ such that the right $A$-module $X \otimes M$ is a direct summand of $Y \otimes A^*$.
  \end{enumerate}
\end{lemma}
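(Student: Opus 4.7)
My plan is to treat the two parts in parallel. In each, the ``if'' direction propagates $\mathcal{C}$-projectivity or $\mathcal{C}$-injectivity from $A$ or $A^*$ through the operations of tensoring by an object of $\mathcal{C}$ and of passing to direct summands, while the ``only if'' direction produces the required free or cofree presentation by splitting an appropriate epi or mono coming from the unit of $A$.

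For the ``if'' direction of~(a), the right $A$-module $A$ is $\mathcal{C}$-projective because $\iHom_A(A,-) \cong \id_{\mathcal{C}_A}$ by Lemma~\ref{lem:iHom-dual}; Lemma~\ref{lem:relative-proj-2} then makes $Y \otimes A$ $\mathcal{C}$-projective, Lemma~\ref{lem:C-proj-direct-summands} passes this to its summand $X \otimes M$, and the equivalence (1)~$\Leftrightarrow$~(3) of Lemma~\ref{lem:C-proj} yields the $\mathcal{C}$-projectivity of $M$. The ``if'' direction of~(b) is the verbatim analogue, using that $A^*$ is $\mathcal{C}$-injective (as noted right after Lemma~\ref{lem:iHom-dual}) and replacing Lemmas~\ref{lem:C-proj-direct-summands} and~\ref{lem:C-proj} by Lemmas~\ref{lem:C-inj-direct-summands} and~\ref{lem:C-inj}.

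For the ``only if'' direction of~(a) with $M \neq 0$, I would choose any non-zero projective $P \in \mathcal{C}$ (say the projective cover of $\unitobj$). By Lemma~\ref{lem:C-proj} the object $P \otimes M$ is then ordinary projective in $\mathcal{C}_A$, so the action epimorphism $(P \otimes M) \otimes A \twoheadrightarrow P \otimes M$ (epic in $\mathcal{C}_A$ because it is already split epic in $\mathcal{C}$ by unitality of~$A$) splits in $\mathcal{C}_A$. This realises $P \otimes M$ as a direct summand of $(P \otimes M) \otimes A$, so the pair $X = P$, $Y = P \otimes M$ works. Both are non-zero, since the tensor product of two non-zero objects of a finite tensor category is non-zero: $\eval_P$ is non-zero by the snake identity, hence epic onto the simple $\unitobj$, and tensoring with the non-zero $M$ preserves this. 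The case $M = 0$ is trivial with $X = Y = \unitobj$.

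The crux of the argument is the ``only if'' direction of~(b), where the hard part is constructing a natural monomorphism $N \hookrightarrow N \otimes A^*$ in $\mathcal{C}_A$ for each $N \in \mathcal{C}_A$. Once this is in hand, a non-zero injective $E \in \mathcal{C}$ (which exists because $\mathcal{C}$ is a finite abelian category) makes $E \otimes M$ injective in $\mathcal{C}_A$ by Lemma~\ref{lem:C-inj}, so the monomorphism $E \otimes M \hookrightarrow (E \otimes M) \otimes A^*$ splits, and $X = E$, $Y = E \otimes M$ finish the proof. For the monomorphism itself I would take the map $\act_N^{\natural}$ of Example~\ref{ex:internal-Hom-C-A}; the snake identity for $A$ combined with unitality of the action shows that $\id_N \otimes \eta_A^*$ is a left inverse of $\act_N^{\natural}$ in $\mathcal{C}$, so $\act_N^{\natural}$ is split mono in $\mathcal{C}$ and therefore mono in $\mathcal{C}_A$, provided we verify it is an $A$-module map. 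This last verification is the only non-formal step; I plan to handle it via Lemma~\ref{lem:iHom-dual} and the structure morphism $\mathfrak{a}$ of the internal Hom, which together yield the natural adjunction
\begin{equation*}
  \Hom_{\mathcal{C}_A}(N, X \catactl A^*) \cong \Hom_{\mathcal{C}}(\unitobj, X \otimes N^*) \cong \Hom_{\mathcal{C}}(N, X),
\end{equation*}
identifying $(-) \catactl A^* : \mathcal{C} \to \mathcal{C}_A$ as the right adjoint of the forgetful $\mathcal{C}_A \to \mathcal{C}$ and $\act_N^{\natural}$ as the unit of this adjunction.
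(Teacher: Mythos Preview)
Your proof is correct and essentially identical to the paper's: both use the action epimorphism $(X\otimes M)\otimes A \to X\otimes M$ for~(a) and the coaction $\act_M^{\natural}: M \to M\otimes A^*$ for~(b), after tensoring with a non-zero projective (respectively injective) object of~$\mathcal{C}$, and both deduce the ``if'' directions from the closure of $\mathcal{C}$-projectives/injectives under the $\mathcal{C}$-action and under direct summands. You supply details the paper leaves implicit---that $\act_M^{\natural}$ is a morphism in $\mathcal{C}_A$ (which you recover as the unit of the forgetful/cofree adjunction), and that $Y = X\otimes M$ is non-zero when $M$ is---but the strategy is the same.
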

\begin{proof}
  (a) Suppose that $M \in \mathcal{C}_A$ is $\mathcal{C}$-projective. We take a non-zero projective object $X \in \mathcal{C}$ and set $Y = X \otimes M$ (regarded as an object of $\mathcal{C}$). We consider the epimorphism $q := \id_X \otimes \rho_M : Y \otimes A \to X \otimes M$ in $\mathcal{C}_A$.
  Since $M$ is $\mathcal{C}$-projective, the target of $q$ is projective in $\mathcal{C}_A$, and thus $q$ splits. Therefore $X \otimes M$ is a direct summand of $Y \otimes A$.
  The converse is easily verified by Lemmas~\ref{lem:C-proj} and \ref{lem:C-proj-direct-summands} and the fact that $A \in \mathcal{C}_A$ is $\mathcal{C}$-projective.

  (b) Suppose that $M$ is $\mathcal{C}$-injective.
  We take a non-zero injective object $X \in \mathcal{C}$ and set $Y = X \otimes M$.
  Then there is a monomorphism $\id_X \otimes \rho^{\natural}_M : X \otimes M \to Y \otimes A^*$ in $\mathcal{C}_A$, where $\rho^{\natural}_M : M \to M \otimes A^*$ is the morphism induced by the right action of $A$ on $M$ (see Example~\ref{ex:internal-Hom-C-A}).
  Thus $X \otimes M$ becomes a direct summand of $Y \otimes A^*$.
  The `only if' part has been verified. The converse follows from Lemmas~\ref{lem:C-inj} and \ref{lem:C-inj-direct-summands} and the fact that $A^* \in \mathcal{C}_A$ is $\mathcal{C}$-projective.
\end{proof}

We do not know whether we can take $X$ to be $\unitobj$ in (a) and (b) in Lemma~\ref{lem:C-proj-and-free}, that is, whether every $\mathcal{C}$-projective (respectively, $\mathcal{C}$-injective) object of $\mathcal{C}_A$ is a direct summand of $Y \otimes A$ (respectively, $Y \otimes A^*$) for some $Y \in \mathcal{C}$.
However, we can prove a weaker statement:

\begin{lemma}
  \label{lem:C-proj-and-free-2}
  Let $A$ be an algebra in $\mathcal{C}$.
  \begin{enumerate}
  \item [(a)] Every $\mathcal{C}$-projective object of $\mathcal{C}_A$ is a subobject of $X \otimes A$ for some $X \in \mathcal{C}$.
  \item [(b)] Every $\mathcal{C}$-injective object of $\mathcal{C}_A$ is a quotient of $X \otimes A^*$ for some $X \in \mathcal{C}$.
  \end{enumerate}
\end{lemma}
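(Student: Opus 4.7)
The plan is to deduce both parts directly from Lemma~\ref{lem:C-proj-and-free} by a careful choice of the auxiliary object appearing there. Lemma~\ref{lem:C-proj-and-free}(a) only guarantees an embedding of $X \otimes P$ into some $Y \otimes A$, so to upgrade this to an embedding of $P$ itself I will need $X$ to be simultaneously (i) a non-zero projective object of $\mathcal{C}$, so that Lemma~\ref{lem:C-proj-and-free}(a) applies, and (ii) equipped with a monomorphism $\unitobj \hookrightarrow X$, so that $P \cong \unitobj \otimes P$ embeds into $X \otimes P$. The crucial input is that in a finite tensor category the classes of projective and injective objects coincide \cite[Proposition~6.1.3]{MR3242743}: the injective hull $I$ of $\unitobj$ in $\mathcal{C}$ is then automatically projective and still carries the canonical monomorphism $\unitobj \hookrightarrow I$, so it fulfills both requirements.

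For part~(a), I therefore take the auxiliary object in Lemma~\ref{lem:C-proj-and-free}(a) to be this $I$, with its canonical monomorphism $j : \unitobj \hookrightarrow I$. Applying Lemma~\ref{lem:C-proj-and-free}(a) with this choice yields some $Y \in \mathcal{C}$ together with a monomorphism $I \otimes P \hookrightarrow Y \otimes A$ in $\mathcal{C}_A$. Tensoring $j$ with $\id_P$ gives a monomorphism $j \otimes \id_P : P \hookrightarrow I \otimes P$ in $\mathcal{C}_A$, using exactness of the tensor product of $\mathcal{C}$. Composing the two monomorphisms produces a monomorphism $P \hookrightarrow Y \otimes A$, establishing~(a) with $X := Y$.

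Part~(b) is proved by the dual construction: take the auxiliary object in Lemma~\ref{lem:C-proj-and-free}(b) to be the projective cover $P_0$ of $\unitobj$ in $\mathcal{C}$, which is injective for the same reason and comes with a canonical epimorphism $p : P_0 \twoheadrightarrow \unitobj$. Lemma~\ref{lem:C-proj-and-free}(b) then gives some $Y \in \mathcal{C}$ and an epimorphism $Y \otimes A^* \twoheadrightarrow P_0 \otimes M$ in $\mathcal{C}_A$; composing with the epimorphism $p \otimes \id_M : P_0 \otimes M \twoheadrightarrow M$ yields the required epimorphism $Y \otimes A^* \twoheadrightarrow M$. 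There is no genuine obstacle in either argument; the whole proof pivots on the observation that projective and injective objects coincide in $\mathcal{C}$, which allows a single object to meet both conditions on the auxiliary $X$ in Lemma~\ref{lem:C-proj-and-free}.
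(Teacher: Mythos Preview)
Your argument is correct, but the paper proceeds differently and more cheaply. The paper does not try to control the auxiliary object $V$ supplied by Lemma~\ref{lem:C-proj-and-free}; instead, given \emph{whatever} non-zero $V,W\in\mathcal{C}$ with a split monomorphism $i:V\otimes P\hookrightarrow W\otimes A$, it uses the coevaluation $\coev_{{}^*V}:\unitobj\to{}^*V\otimes V$, which is monic because $\unitobj$ is simple and $V\neq 0$, and then composes
\[
P\xrightarrow{\ \coev\otimes\id\ }{}^*V\otimes V\otimes P\xrightarrow{\ \id\otimes i\ }{}^*V\otimes W\otimes A.
\]
Part~(b) is dual, using an evaluation in place of a coevaluation. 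This avoids invoking the coincidence of projectives and injectives in $\mathcal{C}$, and it uses only the \emph{statement} of Lemma~\ref{lem:C-proj-and-free}. Your route, by contrast, relies on two extra inputs: (i) the freedom to choose the projective $X$ in the \emph{proof} of Lemma~\ref{lem:C-proj-and-free}, which is valid but not part of its statement, and (ii) \cite[Proposition~6.1.3]{MR3242743} to know the injective hull of $\unitobj$ is projective. Both proofs are short; the coevaluation trick is just a bit more self-contained here.
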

\begin{proof}
  We prove (a).
  Let $P$ be a $\mathcal{C}$-projective object of $\mathcal{C}_A$.
  By Lemma~\ref{lem:C-inj-direct-summands}, there are non-zero objects $V, W \in \mathcal{C}$ and a split monomorphism $i: V \otimes P \to W \otimes A$ in $\mathcal{C}_A$.
  Letting $X = {}^*V \otimes W$, we have a monomorphism
  \begin{equation*}
    P \xrightarrow{\quad \coev \otimes \id \quad}
    {}^*V \otimes V \otimes P
    \xrightarrow{\quad \id \otimes i \quad} {}^*V \otimes W \otimes A = X \otimes A
  \end{equation*}
  in $\mathcal{C}_A$. The proof of (a) is done. Part (b) is proved in a similar way.
\end{proof}

In view of the definition of a quasi-Frobenius algebra in $\mathcal{C}$, it would be important to know when a $\mathcal{C}$-projective object is $\mathcal{C}$-injective.
Here we provide the following criteria:

\begin{lemma}
  \label{lem:C-proj-and-free-3}
  Let $A$ be an algebra in $\mathcal{C}$.
  \begin{enumerate}
  \item [(a)] Let $E$ be a $\mathcal{C}$-injective object of $\mathcal{C}_A$.
    Then $E$ is $\mathcal{C}$-projective if and only if there is a monomorphism $E \to X \otimes A$ in $\mathcal{C}_A$ for some $X \in \mathcal{C}$.
  \item [(b)] Let $P$ be a $\mathcal{C}$-projective object of $\mathcal{C}_A$.
    Then $P$ is $\mathcal{C}$-injective if and only if there is an epimorphism $X \otimes A^* \to P$ in $\mathcal{C}_A$ for some $X \in \mathcal{C}$.
  \end{enumerate}
\end{lemma}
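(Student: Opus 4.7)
The "only if" direction of (a) is immediate from Lemma~\ref{lem:C-proj-and-free-2}(a), and dually for (b). So the plan focuses on the "if" directions, which I would treat in parallel. The key idea is to tensor the given morphism with a nonzero projective object $P\in\mathcal{C}$; in a finite tensor category such a $P$ is simultaneously projective and injective (invoked in the discussion preceding Definition~\ref{def:Radford-isomorphism}), and this lets us convert a $\mathcal{C}$-property on one side into an ordinary projectivity/injectivity statement on the other, so that an abstract splitting argument becomes available.

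For part (a), suppose $E$ is $\mathcal{C}$-injective and $i\colon E \hookrightarrow X\otimes A$ is a monomorphism in $\mathcal{C}_A$. Fix a nonzero projective $P\in\mathcal{C}$; then $P$ is injective, and tensoring $i$ on the left yields a monomorphism $P\otimes E \hookrightarrow P\otimes X\otimes A$ in $\mathcal{C}_A$. By Lemma~\ref{lem:C-inj}, $P\otimes E$ is injective in $\mathcal{C}_A$ (since $E$ is $\mathcal{C}$-injective and $P$ is injective in $\mathcal{C}$). On the other hand, $A\in\mathcal{C}_A$ is $\mathcal{C}$-projective, and $P\otimes X\in\mathcal{C}$ is projective, so Lemma~\ref{lem:C-proj} gives that $P\otimes X\otimes A = (P\otimes X)\catactl A$ is projective in $\mathcal{C}_A$. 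Since the source of the monomorphism is injective, it splits; hence $P\otimes E$ is a direct summand of a projective object and is therefore projective, in particular $\mathcal{C}$-projective by Lemma~\ref{lem:relative-proj-1}. Applying the implication (3)$\Rightarrow$(1) of Lemma~\ref{lem:C-proj} (with the nonzero $P\in\mathcal{C}$) then gives that $E$ is $\mathcal{C}$-projective.

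For part (b), suppose $P\in\mathcal{C}_A$ is $\mathcal{C}$-projective and $e\colon X\otimes A^*\twoheadrightarrow P$ is an epimorphism in $\mathcal{C}_A$. Fix a nonzero projective (hence injective) object $Q\in\mathcal{C}$ and tensor to obtain an epimorphism $Q\otimes X\otimes A^*\twoheadrightarrow Q\otimes P$ in $\mathcal{C}_A$. Now $A^*\in\mathcal{C}_A$ is $\mathcal{C}$-injective (by the remark after Lemma~\ref{lem:iHom-dual}) and $Q\otimes X$ is injective in $\mathcal{C}$, so by Lemma~\ref{lem:C-inj} the source $Q\otimes X\otimes A^*$ is injective in $\mathcal{C}_A$. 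Since $Q$ is projective in $\mathcal{C}$ and $P$ is $\mathcal{C}$-projective, the target $Q\otimes P$ is projective in $\mathcal{C}_A$ by Lemma~\ref{lem:C-proj}. Therefore the epimorphism splits, so $Q\otimes P$ is a direct summand of an injective, hence injective, and in particular $\mathcal{C}$-injective by Lemma~\ref{lem:relative-proj-1}. The implication (3)$\Rightarrow$(1) of Lemma~\ref{lem:C-inj} concludes that $P$ is $\mathcal{C}$-injective.

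The only point that requires any care is the verification that $P\otimes X\otimes A$ is projective and $Q\otimes X\otimes A^*$ is injective in $\mathcal{C}_A$; both reduce to Lemmas~\ref{lem:C-proj} and~\ref{lem:C-inj} once one recalls that projective and injective objects coincide in $\mathcal{C}$. Beyond that, the argument is a clean two-step combination of "tensor with a projective-injective" and "an epi onto a projective splits / a mono from an injective splits".
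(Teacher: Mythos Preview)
Your proof is correct and follows essentially the same approach as the paper's: tensor the given mono/epi with a nonzero projective (equivalently, injective) object of $\mathcal{C}$, use Lemma~\ref{lem:C-inj} (resp.\ Lemma~\ref{lem:C-proj}) to make the source injective (resp.\ the target projective), split, and then descend via the implication (3)$\Rightarrow$(1) of Lemma~\ref{lem:C-proj} (resp.\ Lemma~\ref{lem:C-inj}). The paper's write-up is slightly terser—it only tracks $\mathcal{C}$-projectivity of $J\otimes X\otimes A$ via Lemmas~\ref{lem:C-proj} and~\ref{lem:C-proj-direct-summands} rather than ordinary projectivity—but the argument is the same.
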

\begin{proof}
  We prove (a).
  Let $E$ be a $\mathcal{C}$-injective object of $\mathcal{C}_A$.
  The `only if' part of (a) follows from Lemma~\ref{lem:C-proj-and-free-2} (without the assumption that $E$ is $\mathcal{C}$-injective).
  To prove the `if' part, we assume that there are an object $X \in \mathcal{C}$ and a monomorphism $i: E \to X \otimes A$ in $\mathcal{C}_A$.
  We take a non-zero injective object $J \in \mathcal{C}$.
  Since $J \otimes E$ is injective in $\mathcal{C}_A$ by Lemma~\ref{lem:C-inj}, the monomorphism $\id_J \otimes i : J \otimes E \to J \otimes X \otimes A$ splits in $\mathcal{C}_A$.
  Hence, by Lemmas~\ref{lem:C-proj} and \ref{lem:C-proj-direct-summands}, $E$ is $\mathcal{C}$-projective.
  The proof of (a) is done. Part (b) is proved in a similar way.
\end{proof}

\subsection{Relative Serre functor of $\mathcal{C}_A$}
\label{subsec:rel-Serre-A-mod}

Let $A$ be an algebra in $\mathcal{C}$.
In Lemma \ref{lem:rel-Serre-mod-A}, we have given a formula of a relative Serre functor of $\mathcal{C}_A$.
In this subsection, we discuss a relative Serre functor of $\mathcal{C}_A$ and related morphisms in detail.

We define the functor $\Ser_A : \mathcal{C}_A \to \mathcal{C}_A$ by \eqref{eq:rel-Serre-mod-A}.
For $X \in \mathcal{C}$ and $M \in \mathcal{C}_A$, there is a natural isomorphism
\begin{equation*}
  \mathfrak{s}^{(A)}_{X,M} := (\mathfrak{b}_{M,A,X}^{-1})^*:
  X^{**} \otimes \Ser_A(M) \to \Ser_A(X \otimes M).
\end{equation*}
For $P \in \relprj{\mathcal{C}}{\mathcal{C}_A}$, we define
\begin{equation*}
  \itrace^{(A)}_P := \overline{\eval}_{\iHom_A(P, A)} \circ \Theta_{\Ser_A(P), P}^{-1} : \iHom_A(P, \Ser_A(P)) \to \unitobj,
\end{equation*}
where $\overline{\eval}_L : L^* \otimes_A L \to \unitobj$ for $L \in \mathcal{C}_A$ is the morphism in $\mathcal{C}$ induced by the evaluation morphism $\eval_L : L^* \otimes L \to \unitobj$ and $\Theta_{M,P}$ for $M, P \in \mathcal{C}_A$ is the natural transformation \eqref{eq:C-proj-isomorphism}. We also define
\begin{equation*}
  \phi^{(A)}_{M,P} : \iHom_A(M, \Ser_A(P)) \to \iHom_A(P, M)^*
  \quad (P \in \relprj{\mathcal{C}}{\mathcal{C}_A}, M \in \mathcal{M})
\end{equation*}
to be the morphism in $\mathcal{C}$ induced by the pairing
\begin{equation*}
  \itrace^{(A)}_P \circ \icomp_{P,M,\Ser_A(P)}: \iHom_A(M, \Ser_A(P)) \otimes \iHom_A(P, M) \to \unitobj.
\end{equation*}

\begin{theorem}
  \label{thm:rel-Serre-A-mod-structured}
  The triple $(\Ser_A, \mathfrak{s}^{(A)}, \phi^{(A)})$ constructed in the above is a structured relative Serre functor of $\mathcal{C}_A$ in the sense of Definition \ref{def:rel-Serre-structured}.
  The trace associated to this triple is $\itrace^{(A)}$.
\end{theorem}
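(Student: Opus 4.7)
The plan is to verify the three ingredients required by Definition~\ref{def:rel-Serre-structured} directly for the triple $(\Ser_A, \mathfrak{s}^{(A)}, \phi^{(A)})$, and then to read off the trace. Right exactness of $\Ser_A = \iHom_A(-,A)^*$ is immediate from left exactness of $\iHom_A(-,A)$ proved in Lemma~\ref{lem:rel-Serre-mod-A}. That $\mathfrak{s}^{(A)}_{X,M} = (\mathfrak{b}_{M,A,X}^{-1})^*$ endows $\Ser_A$ with a twisted left $\mathcal{C}$-module structure (landing in ${}_{(2)}\mathcal{C}_A$) will follow from the coherence property of the canonical isomorphism \eqref{eq:FG-r-adj}: since $\mathfrak{b}_{M,A,-}$ realises the right $\mathcal{C}$-module structure coming from iterated right adjoints, dualising and applying the contravariant left duality produces a left $\mathcal{C}$-module structure whose pentagon and unit axioms are inherited from those of $\mathfrak{b}$.

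For the central piece, namely that $\phi^{(A)}_{M,P}$ is a natural isomorphism of $\mathcal{C}$-bimodule functors, the plan is to recognise it as the concrete realisation of the abstract map $\phi_{M,P}$ produced in Lemma~\ref{lem:rel-Serre-phi-M-P}. For $P \in \relprj{\mathcal{C}}{\mathcal{C}_A}$, the natural transformation $\Theta_{M,P}$ of \eqref{eq:C-proj-isomorphism} is invertible, so Lemma~\ref{lem:iHom-dual} gives a composite natural isomorphism
\begin{equation*}
  \iHom_A(M, \Ser_A(P))
  \;\cong\; (M \otimes_A \iHom_A(P, A))^*
  \;\xrightarrow{(\Theta_{M,P}^{-1})^*}\; \iHom_A(P, M)^*,
\end{equation*}
which by construction is built out of $\overline{\eval}_{\iHom_A(P,A)}$ and $\Theta_{M,P}^{-1}$. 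A short computation unwinding the definitions of $\itrace^{(A)}_P$, $\icomp$ and the adjunction defining $\phi^{(A)}_{M,P}$ shows that $\phi^{(A)}_{M,P}$ is precisely this composite. Naturality in $M$ and $P$ is then automatic, as is the left $\mathcal{C}$-module compatibility in $M$ (i.e., compatibility with $\mathfrak{b}$ on both sides) since both constituent isomorphisms are.

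Compatibility with the right $\mathcal{C}$-module structure on the second variable (using the twisted action on ${}_{(-2)}\relprj{\mathcal{C}}{\mathcal{C}_A}$) is the more delicate point: one must check that $\phi^{(A)}$ intertwines $\mathfrak{s}^{(A)}$ with the natural action of $\mathcal{C}$ on $\iHom_A(P,M)^*$. The cleanest route is to transport everything through the isomorphism $\Ser_A(P) \cong \T_P^{\rradj}(\unitobj)$ derived in the proof of Lemma~\ref{lem:rel-Serre-mod-A} and verify that this isomorphism identifies $\mathfrak{s}^{(A)}$ with the abstract twisted structure $\mathfrak{s}$ from Subsection~\ref{subsec:dual-internal-hom}; then Lemma~\ref{lem:rel-Serre-phi-M-P} delivers the bimodule compatibility of $\phi^{(A)}$ on the subcategory $\relprj{\mathcal{C}}{\mathcal{C}_A}$, which is all that is required.

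Finally, the claim that the trace associated to the triple is $\itrace^{(A)}$ is almost tautological: by Definition~\ref{def:rel-Serre-structured} the associated trace equals $(\icoev_{P, \unitobj})^* \circ \phi^{(A)}_{P,P}$, and by the construction of $\phi^{(A)}_{P,P}$ from the pairing $\itrace^{(A)}_P \circ \icomp_{P,P,\Ser_A(P)}$, this unwinds via the unital property of $\icoev_{P,\unitobj}$ for the internal composition law to $\itrace^{(A)}_P$ itself. The main obstacle, as flagged above, is matching the explicit dual-of-$\mathfrak{b}$ module structure with the abstract adjoint-theoretic one, which amounts to a careful diagram chase reconciling \eqref{eq:FG-r-adj}, $\mathfrak{a}$, $\mathfrak{b}$, and the Eilenberg-Watts description $\T_P^{\rradj}(X) \cong X \otimes \iHom_A(P,A)^*$; everything else is a direct application of Lemmas~\ref{lem:iHom-dual} and~\ref{lem:rel-Serre-phi-M-P}.
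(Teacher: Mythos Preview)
Your proposal is correct and follows essentially the same route as the paper: identify the concrete data $(\Ser_A,\mathfrak{s}^{(A)},\itrace^{(A)})$ with the abstract data $(S,\mathfrak{s},\mathfrak{t})$ of Subsection~\ref{subsec:dual-internal-hom} and then invoke Lemmas~\ref{lem:rel-Serre-trace-1} and~\ref{lem:rel-Serre-phi-M-P}. The paper is slightly more direct than your outline: it writes down the adjunction $\T_P^{\radj}\dashv (-)\otimes\Ser_A(P)$ explicitly, observes that the counit at $\unitobj$ is literally $\itrace^{(A)}_P$, and uses strictness (so that the left $\mathcal{C}$-module structure $\omega^{(P)}$ of $\T_P^{\rradj}$ is the identity) to reduce the abstract $\mathfrak{s}_{X,P}$ to the instance of \eqref{eq:FG-r-adj}, which is $(\mathfrak{b}_{M,A,X}^{-1})^*=\mathfrak{s}^{(A)}_{X,P}$; your detour through Lemma~\ref{lem:iHom-dual} and $(\Theta_{M,P}^{-1})^*$ is then unnecessary, since \eqref{eq:rel-Serre-trace-1} already says $\phi_{M,P}$ is the morphism induced by $\mathfrak{t}_P\circ\icomp$, i.e.\ $\phi^{(A)}_{M,P}$.
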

\begin{proof}
  We fix an object $P \in \relprj{\mathcal{C}}{\mathcal{C}_A}$.
  The functor $\T_P^{\radj}$ has a right adjoint
  \begin{equation*}
    \T_P^{\rradj} = (-) \otimes \iHom_A(P, A)^* = (-) \otimes \Ser_A(P).
  \end{equation*}
  Indeed, in a similar way as the proof of Lemma~\ref{lem:rel-Serre-mod-A}, we have
  \begin{align*}
    \mathfrak{H} := \Hom_{\mathcal{C}_A}(M, X \otimes \iHom_A(P, A)^*)
    & = \Hom_{\mathcal{C}_A}(M, \iHom_{\mathcal{C}}(\iHom_A(P, A), X)) \\
    & \cong \Hom_{\mathcal{C}}(M \otimes_A \iHom_A(P, A), X) \\
    & \cong \Hom_{\mathcal{C}}(\iHom_A(P, M), X)
  \end{align*}
  for $M \in \mathcal{C}_A$ and $X \in \mathcal{C}$.
  We note that an element $f \in \mathfrak{H}$ is mapped to
  \begin{equation*}
    (\id_X \otimes \overline{\eval}_{\iHom_A(P,A)}) \circ (f \otimes_A \id) \circ \Theta_{M,P}^{-1} : \iHom_A(P, M) \to X
  \end{equation*}
  by the above natural isomorphisms. This implies that the morphism $\itrace^{(A)}_P$ is equal to the $\unitobj$-component of the counit of the adjunction $\T_P^{\radj} \dashv \T_P^{\rradj}$.

  Now we equip the restriction of $\Ser_A$ to $\relprj{\mathcal{C}}{\mathcal{C}_A}$ with a twisted left $\mathcal{C}$-module structure in the way as we did in Subsection \ref{subsec:dual-internal-hom}, and denote by
  \begin{equation*}
    \mathfrak{s}_{X,P} : X^{**} \otimes \Ser_A(P) \to \Ser_A(X \otimes P)
    \quad (X \in \mathcal{C}, P \in \relprj{\mathcal{C}}{\mathcal{C}_A})
  \end{equation*}
  the structure morphism.
  Since the left $\mathcal{C}$-module structure of $\T_P^{\rradj}$ is the identity (by our assumption that every monoidal category is strict), $\mathfrak{s}_{X,P}$ is equal to
  \begin{equation*}
    X^{**} \otimes \Ser_A(P) = (\T_X^{\rradj} \circ \T_P^{\rradj})(\unitobj)
    \xrightarrow{\quad \eqref{eq:FG-r-adj} \quad}
    (\T_X \circ \T_P)^{\rradj}(\unitobj) = \Ser_A(X \catactl P).
  \end{equation*}
  Hence, by the construction of $\mathfrak{b}$, the equation $\mathfrak{s}_{X,P} = \mathfrak{s}^{(A)}_{X,P}$ holds for all objects $X \in \mathcal{C}$ and $P \in \relprj{\mathcal{C}}{\mathcal{C}_A}$.
  The proof is completed by Lemmas \ref{lem:rel-Serre-phi-M-P} and \ref{lem:rel-Serre-trace-1}.
\end{proof}

The following description of $\Ser_A$ is convenient:

\begin{corollary}
  \label{cor:rel-Serre-A-mod-tensor}
  There are isomorphisms
  \begin{equation*}
    \Ser_A(M) \cong (M \otimes_A {}^* \! A)^{**}
    \cong M^{**} \otimes_{A^{**}} A^*
    \quad (M \in \mathcal{C}_A)
  \end{equation*}
  of twisted left $\mathcal{C}$-module functors.
\end{corollary}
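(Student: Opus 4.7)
The plan is to derive both isomorphisms from results already established in the paper and then check that they respect the twisted left $\mathcal{C}$-module structures.

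For the first isomorphism, I would apply Lemma~\ref{lem:iHom-dual} to the $A$-$A$-bimodule $A$ itself, obtaining a natural isomorphism $\iHom_A(M, A) \cong (M \otimes_A {}^*A)^*$ of left $A$-modules for $M \in \mathcal{C}_A$. Applying $(-)^*$ to both sides and using the definition $\Ser_A(M) = \iHom_A(M, A)^*$ yields $\Ser_A(M) \cong (M \otimes_A {}^*A)^{**}$ immediately.

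For the second isomorphism, I would either reuse the Eilenberg-Watts argument already given inside the proof of Lemma~\ref{lem:rel-Serre-mod-A-ra}, where the left $\mathcal{C}$-module functor $F = \Ser_A \circ {}^{**}(-) : \mathcal{C}_{A^{**}} \to \mathcal{C}_A$ is shown to satisfy $F(N) \cong N \otimes_{A^{**}} A^*$ and hence $\Ser_A(M) = F(M^{**}) \cong M^{**} \otimes_{A^{**}} A^*$; or deduce it from the first isomorphism via the natural iso $(N \otimes_A K)^{**} \cong N^{**} \otimes_{A^{**}} K^{**}$ induced by monoidality of the double dual, combined with the identification $({}^*A)^{**} \cong A^*$ (which holds because $(-)^*$ and ${}^*(-)$ are strict monoidal mutual inverses, so $({}^*A)^{**} = (({}^*A)^*)^* = A^*$).

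The remaining, more delicate task is to verify that both isomorphisms are isomorphisms of twisted left $\mathcal{C}$-module functors. On $\Ser_A$, the structure morphism $\mathfrak{s}^{(A)}_{X,M} = (\mathfrak{b}^{-1}_{M,A,X})^*$ descends from the natural iso $\iHom_A(X \otimes M, A) \cong \iHom_A(M, A) \otimes X^*$; on the two alternative expressions, the twisted structures come from the interchange $X^{**} \otimes N^{**} \cong (X \otimes N)^{**}$ of the monoidal equivalence $(-)^{**} : \mathcal{C} \to \mathcal{C}$. The compatibility should then reduce to the assertion of Lemma~\ref{lem:iHom-dual} that the isomorphism there is one of left $\mathcal{C}$-module functors, together with the strict monoidal coherence of the duality. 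I expect the bookkeeping of the $A$-actions through the various double duals and through $\mathfrak{b}$ to be the principal obstacle; a clean way to sidestep a direct diagrammatic computation would be to invoke Proposition~\ref{prop:rel-Serre-uniq-structured}, which would let one conclude once compatible structure data $(\mathfrak{s}, \phi)$ have been exhibited on each alternative expression and matched with $(\mathfrak{s}^{(A)}, \phi^{(A)})$.
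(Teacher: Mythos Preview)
Your outline is correct, and for the second isomorphism one of your options (reusing the Eilenberg-Watts argument from the proof of Lemma~\ref{lem:rel-Serre-mod-A-ra}) is exactly what the paper does. For the first isomorphism the paper takes a slightly more economical route that dissolves the twisted-module verification you flag as delicate: rather than first invoking Lemma~\ref{lem:iHom-dual} and afterward checking compatibility with $\mathfrak{s}^{(A)}$, the paper observes that the composite $F(M) = {}^{**}(\Ser_A(M))$ is an honest (untwisted) left $\mathcal{C}$-module functor $\mathcal{C}_A \to \mathcal{C}_{{}^{**}\!A}$, because the twist of $\Ser_A$ is undone by ${}^{**}(-)$; it then applies the Eilenberg-Watts equivalence directly to $F$, obtaining $F \cong (-)\otimes_A {}^{*}\!A$ as left $\mathcal{C}$-module functors since $F(A) \cong {}^{*}\!A$, and recovers the first isomorphism of twisted module functors by applying $(-)^{**}$. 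With this packaging the twisted compatibility comes for free from the Eilenberg-Watts statement, so neither a diagrammatic check nor Proposition~\ref{prop:rel-Serre-uniq-structured} is needed; the latter suggestion in particular would require you to produce a compatible $\phi$ on each alternative expression, which is more work than it saves. Your path through Lemma~\ref{lem:iHom-dual} is essentially the same computation (that lemma is itself an Eilenberg-Watts consequence), just with the module-functor bookkeeping deferred rather than absorbed.
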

\begin{proof}
  We define the functor $F: \mathcal{C}_A \to \mathcal{C}_{\,{}^{**}\!A}$ by $F(M) = {}^{**}(\Ser_A(M))$ for $M \in \mathcal{C}_A$. Then we have $F(A) \cong {}^{**}(A^*) \cong {}^{*} \! A$ as $A$-${}^{**}\!A$-bimodules in $\mathcal{C}$.
  The Eilenberg-Watts equivalence shows that $F$ is isomorphic to $(-) \otimes_A {}^{*}\!A$ as left $\mathcal{C}$-module functors.
  Hence we obtain the first isomorphism.
  The second isomorphism is obtained by considering the functor $M \mapsto \Ser_A({}^{**}\!M)$ instead of $F$ ({\it cf}. the proof Lemma \ref{lem:rel-Serre-mod-A-ra}).
\end{proof}

\subsection{Frobenius algebras}

Let $A$ be an algebra in $\mathcal{C}$ with multiplication $m_A$ and unit $u_A$.
A {\em Frobenius form} on $A$ is a morphism $\lambda : A \to \unitobj$ in $\mathcal{C}$ such that
\begin{equation}
  \label{eq:Fb-alg-phi}
  (\lambda m_A \otimes \id_{A^*}) \circ (\id_A \otimes \coev_A) : A \to A^*
\end{equation}
is an isomorphism in $\mathcal{C}$. A {\em Frobenius algebra} in $\mathcal{C}$ is an algebra in $\mathcal{C}$ equipped with a Frobenius form.
In this subsection, we give a formula of a relative Serre functor for a Frobenius algebra and note its consequences.

Let $A$ be a Frobenius algebra with Frobenius form $\lambda_A$.
We define the {\em Nakayama isomorphism} $\nu_A$ of $A$ with respect to $\lambda_A$ by
\begin{equation}
  \label{eq:Nakayama-iso-def}
  \nu_A := \phi^{-1} \circ \phi^{*} : A^{**} \to A,
\end{equation}
where $\phi : A \to A^*$ is the isomorphism \eqref{eq:Fb-alg-phi} with $\lambda = \lambda_A$.
This term is justified as follows:
For $V, W \in \mathcal{C}$, there is a linear isomorphism
\begin{equation}
  \label{eq:Nakayama-iso-D}
  \begin{gathered}
    D_{V,W} : \Hom_{\mathcal{C}}(V \otimes W, \unitobj) \to \Hom_{\mathcal{C}}(W^{**} \otimes V, \unitobj), \\
    b \mapsto \eval_{W^*} \circ (\id_{W^{**}} \otimes b \otimes \id_{V^*}) \circ (\id_{W^{**}} \otimes \id_V \otimes \coev_W).
  \end{gathered}
\end{equation}
By the definition of $\phi$, we have
\begin{equation}
  \label{eq:Nakayama-iso-def-2}
  \begin{gathered}
    D_{A,A}(\lambda_A \circ m_A)
    = \eval_{A^*} \circ (\id_{A^{**}} \otimes \phi)
    = \eval_{A} \circ (\phi^* \otimes \id_{A}) \\
    = \eval_{A} \circ ((\phi \circ \nu_A) \otimes \id_{A})
    = \lambda_A \circ m_A \circ (\nu_A \otimes \id_{A}).
  \end{gathered}
\end{equation}
This means that, if $\mathcal{C}$ has a pivotal structure $\mathfrak{p}_X : X \to X^{**}$ ($X \in \mathcal{C}$), then $\nu_A \circ \mathfrak{p}_A$ is equal to the Nakayama automorphism of $A$ introduced in \cite{MR2500035}.

By the same way as \cite{MR2500035}, one can show that $\nu_A$ is an isomorphism of algebras in $\mathcal{C}$ even in the absence of a pivotal structure.
Given a morphism $f: R \to S$ of algebras in $\mathcal{C}$, we denote by $(-)_{(f)} : {}_S\mathcal{C} \to {}_R\mathcal{C}$ and ${}_{(f)}(-) : \mathcal{C}_S \to \mathcal{C}_R$ the functors induced by $f$. The relative Serre functor of the category of modules over a Frobenius algebra in $\mathcal{C}$ is given as follows:

\begin{lemma}
  \label{lem:Fb-alg-rel-Serre}
  Let $A$ be a Frobenius algebra in $\mathcal{C}$ with Nakayama isomorphism $\nu$, and let $\Ser$ be a relative Serre functor of $\mathcal{C}_A$. Then there is an isomorphism
  \begin{equation*}
    \Ser(M) \cong (M^{**})_{(\nu^{-1})} \quad (M \in \mathcal{C}_A)
  \end{equation*}
  of twisted left $\mathcal{C}$-module functors. In particular, $\Ser$ is an autoequivalence.
\end{lemma}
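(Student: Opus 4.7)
The plan is to combine the tensor-product description of $\Ser_A$ from Corollary~\ref{cor:rel-Serre-A-mod-tensor} with a bimodule identification supplied by the Frobenius structure.

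By that corollary, there is an isomorphism $\Ser_A(M) \cong M^{**} \otimes_{A^{**}} A^*$ of twisted left $\mathcal{C}$-module functors, where $A^*$ carries its canonical $A^{**}$-$A$-bimodule structure obtained from $A \in {}_A\mathcal{C}_A$ via the left duality functor (cf.\ the discussion preceding Lemma~\ref{lem:iHom-dual}). The remaining task is to rewrite this bimodule in terms of $A$ twisted by the Nakayama isomorphism.

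Let ${}_{(\nu)}A$ denote the object $A$ regarded as an $A^{**}$-$A$-bimodule with the standard right $A$-action and with left $A^{**}$-action given by $\nu_A$ followed by left multiplication in $A$. The key claim is that $\phi : A \to A^*$ from \eqref{eq:Fb-alg-phi} defines an isomorphism ${}_{(\nu)}A \cong A^*$ of $A^{**}$-$A$-bimodules. Right $A$-linearity of $\phi$ is immediate from the associativity of $m_A$, since $\phi$ corresponds under the defining adjunction to the visibly right-$A$-balanced pairing $\lambda_A \circ m_A : A \otimes A \to \unitobj$. Left $A^{**}$-linearity with respect to $\nu$ is the essential point and the main obstacle of the proof. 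The left $A^{**}$-action on $A^*$ is, by the construction recalled before Lemma~\ref{lem:iHom-dual}, characterized by an adjunction which reduces the required intertwining to an equation of morphisms $A^{**} \otimes A \to \unitobj$; unfolding, that equation becomes $\lambda_A \circ m_A \circ (\nu_A \otimes \id_A) = D_{A,A}(\lambda_A \circ m_A)$, which is precisely the chain of equalities \eqref{eq:Nakayama-iso-def-2} defining $\nu_A$.

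Granted the bimodule isomorphism $A^* \cong {}_{(\nu)}A$, we obtain
\begin{equation*}
  \Ser_A(M) \cong M^{**} \otimes_{A^{**}} A^* \cong M^{**} \otimes_{A^{**}} {}_{(\nu)}A \cong (M^{**})_{(\nu^{-1})}.
\end{equation*}
The last step uses that, under the algebra isomorphism $\nu^{-1}: A \to A^{**}$, the bimodule ${}_{(\nu)}A$ is transported to the regular bimodule $A^{**}$ with right $A^{**}$-action restricted along $\nu^{-1}$, so $M^{**} \otimes_{A^{**}} A^{**}$ collapses to $M^{**}$ and the resulting right $A$-action goes through $\nu^{-1}$. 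Naturality in $M$ and compatibility with the twisted left $\mathcal{C}$-module structure are automatic, since the twist lives on the $M^{**}$ factor and all bimodule manipulations are right-linear. The ``in particular'' clause follows because $(-)^{**}: \mathcal{C}_A \to \mathcal{C}_{A^{**}}$ is an equivalence and restriction along the algebra isomorphism $\nu^{-1}$ is also an equivalence.
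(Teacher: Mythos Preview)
Your proof is correct and follows essentially the same route as the paper: both show that the Frobenius isomorphism $\phi$ of \eqref{eq:Fb-alg-phi} realizes $A^*$ as the $A^{**}$-$A$-bimodule ${}_{(\nu)}A$ (equivalently $(A^{**})_{(\nu^{-1})}$), with the left $A^{**}$-linearity reducing to \eqref{eq:Nakayama-iso-def-2}, and then invoke Corollary~\ref{cor:rel-Serre-A-mod-tensor}. The only cosmetic difference is that the paper packages the bimodule identification via $\phi^* = \phi \circ \nu : (A^{**})_{(\nu^{-1})} \to A^*$ directly, whereas you first use $\phi : {}_{(\nu)}A \to A^*$ and then transport along the algebra isomorphism $\nu$.
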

\begin{proof}
  Let $m : A \otimes A \to A$ be the multiplication of $A$, and let $\lambda : A \to \unitobj$ be the Frobenius form of $A$. We define $\phi : A \to A^*$ by \eqref{eq:Fb-alg-phi}.
  It is easy to see that $\phi$ is a morphism of right $A$-modules in $\mathcal{C}$. We also have
  \begin{align*}
    \eval_{A^*} \circ (m^{**} \otimes \phi)
    & = \lambda \circ m \circ (\nu \otimes \id) \circ (m^{**} \otimes \id_A) \\
    & = \lambda \circ m \circ (m \otimes \id) \circ (\nu \otimes \nu \otimes \id_A) \\
    & = \lambda \circ m \circ (\id \otimes m) \circ (\nu \otimes \nu \otimes \id_A) \\
    & = \eval_{A^*} \circ (\id_{A^{**}} \otimes (\phi \circ m \circ (\nu \otimes \id_A))),
  \end{align*}
  where~\eqref{eq:Nakayama-iso-def-2} is used at the first and the last equality.
  This means that $\phi$ is an isomorphism ${}_{(\nu)}A \to A^*$ of left $A^{**}$-modules in $\mathcal{C}$.

  By the above discussion, we find that $\phi^* : (A^{**})_{(\nu^{-1})} \to A^*$ is an isomorphism of $A^{**}$-$A$-bimodules in $\mathcal{C}$. By Corollary \ref{cor:rel-Serre-A-mod-tensor}, we have isomorphisms
  \begin{equation*}
    \Ser(M) \cong M^{**} \otimes_{A^{**}} A^*
    \cong M^{**} \otimes_{A^{**}}(A^{**})_{(\nu^{-1})}
    \cong (M^{**})_{(\nu^{-1})}
    \quad (M \in \mathcal{C}_A)
  \end{equation*}
  of twisted left $\mathcal{C}$-module functors. The proof is done.
\end{proof}

The relative Serre functor is useful for constructing Frobenius algebras:

\begin{lemma}
  \label{lem:Fb-alg-construction}
  Let $\mathcal{M}$ be a finite left $\mathcal{C}$-module category with structured relative Serre functor $(\Ser, \mathfrak{s}, \phi)$, and let $P$ be a $\mathcal{C}$-projective object of $\mathcal{M}$.
  If $q: P \to \Ser(P)$ is an isomorphism in $\mathcal{M}$, then the morphism
  \begin{equation*}
    \lambda := \itrace_{P} \circ \iHom(P, q) : \iEnd(P) \to \unitobj
  \end{equation*}
  is a Frobenius form of $\iEnd(P)$, where $\itrace$ is the trace associated to $(\Ser, \mathfrak{s}, \phi)$.
\end{lemma}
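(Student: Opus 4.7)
The task is to verify that the morphism
\[
\psi \;:=\; ((\lambda \circ m) \otimes \id_{\iEnd(P)^*}) \circ (\id_{\iEnd(P)} \otimes \coev_{\iEnd(P)}) : \iEnd(P) \to \iEnd(P)^*,
\]
where $m = \icomp_{P,P,P}$ is the multiplication of $\iEnd(P)$, is an isomorphism. My plan is to show directly that $\psi$ coincides with $\phi_{P,P} \circ \iHom(P,q)$. Since $\phi_{P,P}$ is invertible (being a component of the natural isomorphism $\phi$ accompanying the structured relative Serre functor, which is defined at $(P,P)$ because $P$ is $\mathcal{C}$-projective) and $\iHom(P,q)$ is invertible (because $q$ is), this immediately finishes the proof.

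The key ingredient is equation \eqref{eq:itrace-1} applied with $M = P$, namely
\[
\itrace_P \circ \icomp_{P, P, \Ser(P)} \;=\; \eval_{\iEnd(P)} \circ (\phi_{P,P} \otimes \id_{\iEnd(P)}),
\]
combined with naturality of $\icomp_{P, P, -}$ in its third argument applied to $q : P \to \Ser(P)$, which gives
\[
\iHom(P, q) \circ \icomp_{P,P,P} \;=\; \icomp_{P,P,\Ser(P)} \circ (\iHom(P, q) \otimes \id_{\iEnd(P)}).
\]
Substituting $\lambda = \itrace_P \circ \iHom(P, q)$ and composing the two identities above, I would obtain
\[
\lambda \circ m \;=\; \eval_{\iEnd(P)} \circ (g \otimes \id_{\iEnd(P)}),
\qquad g \,:=\, \phi_{P,P} \circ \iHom(P, q).
\]

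Plugging this expression into the definition of $\psi$, sliding $g$ past the tensor factors, and then applying the standard zig-zag identity
\[
(\eval_{\iEnd(P)} \otimes \id_{\iEnd(P)^*}) \circ (\id_{\iEnd(P)^*} \otimes \coev_{\iEnd(P)}) \;=\; \id_{\iEnd(P)^*}
\]
will collapse the resulting composition to $\psi = g$, as desired. There is no substantial obstacle here: the content of the lemma is essentially the observation that the pairing $\iEnd(P) \otimes \iEnd(P) \to \unitobj$ induced by $\lambda$ and the multiplication is, under the duality adjunction, precisely the composite isomorphism $\phi_{P,P} \circ \iHom(P,q)$. The only care needed is to correctly invoke equation \eqref{eq:itrace-1} and to track the naturality and the zig-zag manipulation.
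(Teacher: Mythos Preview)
Your proposal is correct and follows essentially the same approach as the paper: the paper's proof is a one-line invocation of \eqref{eq:itrace-1} to obtain $(\lambda m \otimes \id_{A^*}) \circ (\id_A \otimes \coev_A) = \phi_{P,P} \circ \iHom(P, q)$, and you have simply spelled out the naturality and zig-zag manipulations that justify this equality.
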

\begin{proof}
  We write $A = \iEnd(P)$ and $m = \icomp_{P,P,P}$. By \eqref{eq:itrace-1}, we have
  \begin{equation*}
    (\lambda m \otimes \id_{A^*}) \circ (\id_A \otimes \coev_A)
    = \phi_{P,P} \circ \iHom(P, q).
  \end{equation*}
  The right-hand side is an isomorphism in $\mathcal{C}$. The proof is done.
\end{proof}

By the construction of a Frobenius algebra given in Lemma~\ref{lem:Fb-alg-construction}, we obtain the following characterization of a finite left $\mathcal{C}$-module category that is equivalent to the category of modules over a Frobenius algebra.

\begin{lemma}
  \label{lem:Fb-alg-modules}
  For a finite left $\mathcal{C}$-module category $\mathcal{M}$, the following are equivalent:
  \begin{enumerate}
  \item The relative Serre functor $\Ser$ of $\mathcal{M}$ is an equivalence.
  \item There exists a $\mathcal{C}$-progenerator $G \in \mathcal{M}$ such that $\Ser(G) \cong G$.
  \item There exists a Frobenius algebra $A$ in $\mathcal{C}$ such that $\mathcal{M}$ is equivalent to $\mathcal{C}_A$ as a left $\mathcal{C}$-module category.
  \end{enumerate}
\end{lemma}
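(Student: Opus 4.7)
The plan is to establish the cycle $(3) \Rightarrow (1) \Rightarrow (2) \Rightarrow (3)$.

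For $(3) \Rightarrow (1)$, I would note that being a relative Serre functor and being an equivalence are both invariants of the left $\mathcal{C}$-module category $\mathcal{M}$, so we may assume $\mathcal{M} = \mathcal{C}_A$ for some Frobenius algebra $A$. Lemma~\ref{lem:Fb-alg-rel-Serre} then presents $\Ser$ as the composition $M \mapsto (M^{**})_{(\nu^{-1})}$, which is built from the double-dual autoequivalence of $\mathcal{C}$ together with restriction along the algebra isomorphism $\nu^{-1}$; both of these operations are equivalences.

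For $(2) \Rightarrow (3)$, given a $\mathcal{C}$-progenerator $G$ together with any chosen isomorphism $q \colon G \to \Ser(G)$, Lemma~\ref{lem:Fb-alg-construction} furnishes a Frobenius form $\lambda := \itrace_G \circ \iHom(G, q)$ on $A := \iEnd(G)$, so that $A$ becomes a Frobenius algebra in $\mathcal{C}$. Because $G$ is a $\mathcal{C}$-progenerator, the internal Hom functor $\iHom(G, -) \colon \mathcal{M} \to \mathcal{C}_A$ is an equivalence of left $\mathcal{C}$-module categories, giving the desired description of $\mathcal{M}$.

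The substantive direction is $(1) \Rightarrow (2)$. Assuming $\Ser$ is an equivalence, its quasi-inverse is (canonically isomorphic to) its right adjoint $\overline{\Ser}$, and both $\Ser$ and $\overline{\Ser}$ are exact. Since a functor preserves projective objects whenever its right adjoint is exact, $\Ser$ sends projective objects of $\mathcal{M}$ to projective objects; as an equivalence, it also preserves indecomposability. Hence $\Ser$ permutes the finitely many isomorphism classes of indecomposable projective objects of the finite abelian category $\mathcal{M}$. Taking $G$ to be the direct sum of one representative from each class yields a projective generator of $\mathcal{M}$---in particular a $\mathcal{C}$-progenerator, since any projective generator is a $\mathcal{C}$-generator and is $\mathcal{C}$-projective by Lemma~\ref{lem:relative-proj-1}---with $\Ser(G) \cong G$ by the permutation property.

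The step I expect to demand the most care is $(1) \Rightarrow (2)$: from $\Ser$ being an abstract equivalence it is not immediately clear that there must exist a $G$ with $\Ser(G) \cong G$, and the argument depends essentially on exactness of the quasi-inverse combined with the finiteness of $\mathcal{M}$. The remaining implications are direct applications of the lemmas already in place.
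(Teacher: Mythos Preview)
Your proof is correct and follows essentially the same cycle $(1)\Rightarrow(2)\Rightarrow(3)\Rightarrow(1)$ as the paper, invoking Lemma~\ref{lem:Fb-alg-rel-Serre} for $(3)\Rightarrow(1)$, Lemma~\ref{lem:Fb-alg-construction} for $(2)\Rightarrow(3)$, and the permutation of indecomposable projectives for $(1)\Rightarrow(2)$. The only minor difference is that the paper dispatches $(1)\Rightarrow(2)$ in one line by noting that any autoequivalence permutes isomorphism classes of indecomposable projectives, whereas you justify preservation of projectives via exactness of the right adjoint; both are fine, and your version is simply more explicit.
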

\begin{proof}
  Suppose that (1) holds.
  Let $\{ P_1, \cdots, P_n \}$ be the complete representatives of the isomorphism classes of indecomposable projective objects of $\mathcal{M}$. Since $\Ser$ is assumed to be an autoequivalence of $\mathcal{M}$, it permutes the set $\{ P_1, \cdots, P_n \}$ up to isomorphism. Thus $G = P_1 \oplus \dotsb \oplus P_n$ is a $\mathcal{C}$-generator such that $\Ser(G) \cong G$. We have proved that (1) implies (2).

  If there is an object $G \in \mathcal{M}$ as in (2), then $\mathcal{M} \approx \mathcal{C}_A$ as left $\mathcal{C}$-module categories, where $A = \iEnd(G)$. Since $A$ is Frobenius by Lemma \ref{lem:Fb-alg-construction}, (3) holds. Finally, if (3) holds, then $\Ser$ is an equivalence by Lemma~\ref{lem:Fb-alg-rel-Serre}. The proof is done.
\end{proof}

\subsection{Quasi-Frobenius algebras}

We now give equivalent conditions for an algebra in $\mathcal{C}$ to be quasi-Frobenius as follows:

\begin{theorem}
  \label{thm:QF-algebras}
  Let $A$ be an algebra in $\mathcal{C}$, and let $\Ser$ be the relative Serre functor of $\mathcal{C}_A$. Then the following are equivalent:
  \begin{enumerate}
  \item The right $A$-module $A \in \mathcal{C}_A$ is $\mathcal{C}$-injective,
    that is, $A$ is quasi-Frobenius.
  \item The right $A$-module $A^* \in \mathcal{C}_A$ is $\mathcal{C}$-projective.
  \item All $\mathcal{C}$-projective objects of $\mathcal{C}_A$ are $\mathcal{C}$-injective.
  \item All $\mathcal{C}$-injective objects of $\mathcal{C}_A$ are $\mathcal{C}$-projective.
  \item The functor $\Ser$ is an equivalence.
  \item The functor $\Ser$ is exact.
  \item A right adjoint of the functor $\Ser$ is exact.
  \item The Nakayama functor of $\mathcal{C}_A$ is an equivalence.
  \item The algebra $A$ is Morita equivalent to a Frobenius algebra in $\mathcal{C}$.
  \end{enumerate}
\end{theorem}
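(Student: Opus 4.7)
My plan is to treat condition~(5) (that $\Ser_A$ is an equivalence) as a hub. The strategy is to attach each of the other eight conditions to (5) by a short argument, then close the loop with one nontrivial implication.

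For the bookkeeping bi-implications: $(1)\Leftrightarrow(6)$ follows because $\Ser_A = \iHom_A(-,A)^*$ by Lemma~\ref{lem:rel-Serre-mod-A} and $(-)^*$ is an exact autoequivalence of $\mathcal{C}$, so $\Ser_A$ is exact iff $\iHom_A(-,A)$ is. The dual argument applied to $\overline{\Ser}_A = {}^{**}\iHom_A(A^*,-)$ from Lemma~\ref{lem:rel-Serre-mod-A-ra} yields $(2)\Leftrightarrow(7)$. Theorem~\ref{thm:rel-Serre-summary}(\ref{item:rel-Serre-main-2}) gives an isomorphism $\Nak_{\mathcal{C}_A} \cong \Nak_{\mathcal{C}}(\unitobj) \catactl \Ser_A$, and since $\Nak_{\mathcal{C}}(\unitobj)$ is invertible (as discussed after Definition~\ref{def:Radford-isomorphism}) the operator $\Nak_{\mathcal{C}}(\unitobj)\catactl(-)$ is an autoequivalence of $\mathcal{C}_A$, giving $(5)\Leftrightarrow(8)$. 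Finally, $(5)\Leftrightarrow(9)$ is exactly Lemma~\ref{lem:Fb-alg-modules} applied to $\mathcal{M}=\mathcal{C}_A$, whose condition (3) reads precisely as Morita equivalence of $A$ to a Frobenius algebra.

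For $(5)\Rightarrow(3),(4)$: the equivalence $\Ser_A$ is a twisted left $\mathcal{C}$-module functor by Theorem~\ref{thm:rel-Serre-summary}(\ref{item:rel-Serre-main-1}), and a short direct check shows that any twisted left $\mathcal{C}$-module equivalence $F$ satisfies $\iHom(F(M),F(N)) \cong \phi(\iHom(M,N))$ for a suitable monoidal (hence exact) autoequivalence $\phi$ of $\mathcal{C}$; consequently $F$ preserves and reflects $\mathcal{C}$-projectivity and $\mathcal{C}$-injectivity. Combining this with the equivalence $\Ser_A\colon\relprj{\mathcal{C}}{\mathcal{C}_A} \to \relinj{\mathcal{C}}{\mathcal{C}_A}$ of Theorem~\ref{thm:rel-Serre-summary}(\ref{item:rel-Serre-main-3}): for any $P\in\relprj{\mathcal{C}}{\mathcal{C}_A}$ essential surjectivity of $\Ser_A$ yields $P\cong\Ser_A(Q)$, reflection of $\mathcal{C}$-projectivity forces $Q\in\relprj{\mathcal{C}}{\mathcal{C}_A}$, and then $P\cong\Ser_A(Q)\in\relinj{\mathcal{C}}{\mathcal{C}_A}$ by Theorem~\ref{thm:rel-Serre-summary}(\ref{item:rel-Serre-main-3}); this is (3), and (4) follows symmetrically. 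The implications $(3)\Rightarrow(1)$ and $(4)\Rightarrow(2)$ are immediate from $A\in\relprj{\mathcal{C}}{\mathcal{C}_A}$ and $A^*\in\relinj{\mathcal{C}}{\mathcal{C}_A}$ (noted after Lemma~\ref{lem:iHom-dual}).

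The loop-closing step is $(6)\Rightarrow(5)$, or equivalently $(6)\Rightarrow(8)$ using the hub. Under (6), the Nakayama identification shows $\Nak_{\mathcal{C}_A}$ is exact. Viewing $\mathcal{C}_A$ merely as a finite abelian category and fixing any equivalence $\mathcal{C}_A \approx \rmod{R}$ for a finite-dimensional $\bfk$-algebra $R$, the Nakayama functor becomes $(-)\otimes_R R^*$, which is exact iff $R^*$ is flat (equivalently projective) as a left $R$-module iff $R$ is quasi-Frobenius iff $R$ is Morita equivalent to a Frobenius $\bfk$-algebra iff $(-)\otimes_R R^*$ is an equivalence, by the classical theory in \cite[Chapter~6]{MR1653294} together with the characterization of the Nakayama functor recalled in Subsection~\ref{subsec:nakayama}. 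Hence $\Nak_{\mathcal{C}_A}$ is an equivalence, giving (8).

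The main obstacle is this last step: transferring the classical theorem ``quasi-Frobenius = Morita equivalent to Frobenius'' into the categorical setting. It is clean because $(5),(6),(8)$ are all properties of $\Ser_A$ as a plain additive endofunctor of the abelian category $\mathcal{C}_A$, so the $\mathcal{C}$-module structure is irrelevant at that step; the $\mathcal{C}$-module structure re-enters only through Lemma~\ref{lem:Fb-alg-modules} when one needs to produce the Frobenius algebra \emph{in $\mathcal{C}$} required by~(9).
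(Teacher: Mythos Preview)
Your implication graph does not close on the $(2),(4),(7)$ side. From your list you have $(5)\Rightarrow(4)\Rightarrow(2)\Leftrightarrow(7)$ and trivially $(5)\Rightarrow(7)$, but you never produce an arrow from any of $(2),(4),(7)$ back into the strongly connected component $\{(1),(3),(5),(6),(8),(9)\}$. Your ``loop-closing step'' only handles $(6)\Rightarrow(8)$; you need the dual $(7)\Rightarrow(8)$ as well. The fix is immediate and parallel to what you wrote: under $(7)$, the right adjoint $\overline{\Ser}_A$ is exact, hence so is the right adjoint $\overline{\Nak}_{\mathcal{C}_A}$ of the Nakayama functor (again via Theorem~\ref{thm:rel-Serre-summary}(\ref{item:rel-Serre-main-2}) and invertibility of $\Nak_{\mathcal{C}}(\unitobj)$); on $\rmod{R}$ this is $\Hom_R(R^*,-)$, whose exactness forces $R^*$ projective as a right $R$-module, i.e.\ $R$ left self-injective, hence quasi-Frobenius, hence $(8)$. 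You should state this explicitly rather than leave it implicit.

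Modulo that omission, your argument is correct but organized differently from the paper. The paper runs two parallel chains $(6)\Rightarrow(1)\Rightarrow(3)\Rightarrow(8)$ and $(7)\Rightarrow(2)\Rightarrow(4)\Rightarrow(8)$, where $(1)\Rightarrow(3)$ and $(2)\Rightarrow(4)$ use the structural lemmas on $\mathcal{C}$-projective/injective objects (Lemmas~\ref{lem:C-proj}--\ref{lem:C-proj-and-free}), and $(3)\Rightarrow(8)$, $(4)\Rightarrow(8)$ use Lemma~\ref{lem:proj-C-inj-is-inj} to pass from $\mathcal{C}$-injectivity/projectivity to ordinary injectivity/projectivity. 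You instead obtain $(5)\Rightarrow(3),(4)$ directly from the fact that a twisted $\mathcal{C}$-module autoequivalence preserves $\relprj{\mathcal{C}}{\mathcal{C}_A}$ and $\relinj{\mathcal{C}}{\mathcal{C}_A}$, combined with Theorem~\ref{thm:rel-Serre-summary}(\ref{item:rel-Serre-main-3}); and you close the loop by reducing the exactness of $\Nak_{\mathcal{C}_A}$ to classical quasi-Frobenius theory for the underlying $\bfk$-algebra. Your route bypasses the Subsection~\ref{subsec:C-proj-C-inj} lemmas entirely, at the cost of invoking a slightly stronger classical input (that exactness of the ordinary Nakayama functor already forces it to be an equivalence).
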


We have mainly considered finite left module categories.
One can define the internal Hom functor, $\mathcal{C}$-projective objects, $\mathcal{C}$-injective objects and the relative Serre functor of a finite right $\mathcal{C}$-module category and formulate the left module versions of the conditions (1)--(8).
Since the condition (9) is left-right symmetric, the left module versions of (1)--(8) are also equivalent to that $A$ is quasi-Frobenius.

Although we will not state them one by one, characterizations of $\mathcal{C}$-projective objects and $\mathcal{C}$-injective objects given in Subsection~\ref{subsec:C-proj-C-inj} yield more conditions for an algebra in $\mathcal{C}$ to be quasi-Frobenius.
We also note that some conditions equivalent to (8) are found in Subsection \ref{subsec:nakayama} and references therein.

Theorem \ref{thm:QF-algebras} is well known for $\mathcal{C} = \Vect$.
Our argument gives no new proof for the known characterization of quasi-Frobenius algebras in $\Vect$. The proof presented below uses a property of the Nakayama functor, which essentially depends on known results on (quasi-)Frobenius algebras in $\Vect$.

To prove Theorem~\ref{thm:QF-algebras}, in addition to lemmas on $\mathcal{C}$-projective and $\mathcal{C}$-injective objects given in Subsection~\ref{subsec:C-proj-C-inj}, we use the following technical lemma:

\begin{lemma}
  \label{lem:proj-C-inj-is-inj}
  Let $M$ be an object of a finite left $\mathcal{C}$-module category $\mathcal{M}$. Then the following hold:
  \begin{enumerate}
  \item[(a)] If $M$ is projective and $\mathcal{C}$-injective, then $M$ is injective.
  \item[(b)] If $M$ is injective and $\mathcal{C}$-projective, then $M$ is projective.
  \end{enumerate}
\end{lemma}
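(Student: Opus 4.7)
The plan is to prove part~(a); part~(b) will then follow by applying~(a) to the $\mathcal{C}^{\op}$-module category $\mathcal{M}^{\op}$, invoking the identifications $\relprj{\mathcal{C}^{\op}}{\mathcal{M}^{\op}}=\relinj{\mathcal{C}}{\mathcal{M}}$ and $\relinj{\mathcal{C}^{\op}}{\mathcal{M}^{\op}}=\relprj{\mathcal{C}}{\mathcal{M}}$ from \eqref{eq:relative-proj-opposite}, together with the fact that passage to the opposite category exchanges projective and injective objects. The core strategy for~(a) is to combine the Nakayama--Serre identity of Theorem~\ref{thm:rel-Serre-summary}(\ref{item:rel-Serre-main-2}) with the equivalence $\Ser\colon\relprj{\mathcal{C}}{\mathcal{M}}\to\relinj{\mathcal{C}}{\mathcal{M}}$ of Theorem~\ref{thm:rel-Serre-summary}(\ref{item:rel-Serre-main-3}).

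Since $M$ is projective, the universal natural isomorphism \eqref{eq:Nakayama-Hom-dual} will show that $\Nak_{\mathcal{M}}(M)$ is injective; combined with $\Nak_{\mathcal{M}}(M)\cong\Nak_{\mathcal{C}}(\unitobj)\catactl\Ser(M)$ and the invertibility of $\Nak_{\mathcal{C}}(\unitobj)\in\mathcal{C}$ recalled in Subsection~\ref{subsec:nakayama} (so that $\Nak_{\mathcal{C}}(\unitobj)\catactl(-)$ is an autoequivalence of $\mathcal{M}$ preserving injective objects), this yields that $\Ser(M)$ is injective. Because $M$ is projective and hence $\mathcal{C}$-projective by Lemma~\ref{lem:relative-proj-1}, and $\mathcal{C}$-injective by hypothesis, Theorem~\ref{thm:rel-Serre-summary}(\ref{item:rel-Serre-main-3}) will then supply both $\Ser(M)\in\relinj{\mathcal{C}}{\mathcal{M}}$ and an isomorphism $M\cong\overline{\Ser}(\Ser(M))$.

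The main obstacle will be transferring the injectivity of $\Ser(M)$ back to $M$ itself, since $\overline{\Ser}$ is only known to send $\mathcal{C}$-injective objects to $\mathcal{C}$-projective ones and does not a priori preserve absolute injectivity. My plan for this final step is to reduce to $\mathcal{M}=\mathcal{C}_A$ and apply Lemma~\ref{lem:C-proj-and-free-2}(b) to realize $M$ as a quotient of $X\otimes A^*$ for some $X\in\mathcal{C}$; this quotient will split by the projectivity of $M$, exhibiting $M$ as a direct summand of $X\otimes A^*$. The goal is then to upgrade $X$ to an injective object $I\in\mathcal{C}$ while preserving $M$ as a summand, because Lemma~\ref{lem:iHom-dual} gives the identification $\Hom_A(-,I\otimes A^*)\cong\Hom_{\mathcal{C}}(-,I)$, which shows $I\otimes A^*$ (and therefore its summand $M$) is injective in $\mathcal{C}_A$ as soon as $I$ is injective in $\mathcal{C}$. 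To effect this upgrade I expect to exploit the injectivity of $\Ser_A(M)\cong M^{**}\otimes_{A^{**}}A^*$ from Corollary~\ref{cor:rel-Serre-A-mod-tensor}, combined with the injective envelope of $X$ in $\mathcal{C}$ and a minimality analysis for the choice of $X$; this delicate maneuver, interlocking the duality structure of $\mathcal{C}$ with the tensor-product presentation of $\mathcal{C}$-injective objects, is where the technical content of the lemma resides.
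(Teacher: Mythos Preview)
Your reduction of (b) to (a) via $\mathcal{M}^{\op}$ is fine, but your plan for (a) has a genuine gap precisely at the point you flag as ``delicate.'' You correctly obtain $M$ as a direct summand of $X\otimes A^*$ for some $X\in\mathcal{C}$ (using Lemma~\ref{lem:C-proj-and-free-2}(b) and projectivity of $M$), but the proposed upgrade of $X$ to an injective $I$ is not carried out. Embedding $X$ into its injective envelope $I$ only produces a monomorphism $M\hookrightarrow I\otimes A^*$, and splitting that monomorphism would require $M$ to be injective---which is exactly what you are trying to prove. The injectivity of $\Ser(M)$ that you establish via $\Nak_{\mathcal{M}}$ is correct but never re-enters the argument: $\overline{\Ser}$ is only left exact, so it does not transport injectivity back to $M\cong\overline{\Ser}\,\Ser(M)$, and no concrete ``minimality analysis'' is supplied. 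As written, the proposal does not close.

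The paper's proof avoids the relative Serre and Nakayama functors entirely and is a two-line argument. The idea you are missing is that in a finite tensor category one can exhibit an \emph{injective} object $J\in\mathcal{C}$ together with an epimorphism $J\to\unitobj$: take any nonzero injective $E\in\mathcal{C}$ and set $J=E^*\otimes E$; then $J$ is injective (projectives coincide with injectives in $\mathcal{C}$, and tensoring preserves them) and $\eval_E:J\to\unitobj$ is an epimorphism because $\unitobj$ is simple. Since $M$ is $\mathcal{C}$-injective, Lemma~\ref{lem:C-inj} gives that $J\catactl M$ is injective in $\mathcal{M}$, and the epimorphism $\eval_E\catactl\id_M:J\catactl M\to M$ splits because $M$ is projective. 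Hence $M$ is a direct summand of an injective object and is therefore injective. All of your Serre-functor machinery is unnecessary for this lemma.
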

\begin{proof}
  We only prove (a), since (b) is proved in a similar way.
  We suppose that $M$ is projective and $\mathcal{C}$-injective.
  Let $E$ be a non-zero injective object of $\mathcal{C}$.
  Since $E^* \otimes E \in \mathcal{C}$ is injective, the object $J := (E^* \otimes E) \catactl M \in \mathcal{M}$ is injective.
  The evaluation morphism induces an epimorphism $J \to M$ in $\mathcal{M}$, which splits since $M$ is projective. Thus $M$ is injective as a direct summand of $J$.
\end{proof}

\begin{proof}[Proof of Theorem~\ref{thm:QF-algebras}]
  We define endofunctors $\Ser_A$ and $\overline{\Ser}_A$ on $\mathcal{C}_A$ by
  \begin{equation*}
    \Ser_A(M) = \iHom_A(M,A)^*
    \quad \text{and} \quad
    \overline{\Ser}_A(M) = {}^{**}\iHom_A(A^*, M)
    \quad (M \in \mathcal{C}_A),
  \end{equation*}
  respectively. By Lemmas~\ref{lem:rel-Serre-mod-A} and \ref{lem:rel-Serre-mod-A-ra}, the former is a relative Serre functor of $\mathcal{C}_A$ and the latter one is right adjoint to $\Ser_A$.
  By Theorem \ref{thm:rel-Serre-summary} (\ref{item:rel-Serre-main-2}) and the invertibility of $\Nak_{\mathcal{C}}(\unitobj)$, conditions (5) and (8) are equivalent.
  Applying Lemma \ref{lem:Fb-alg-modules} to $\mathcal{M} = \mathcal{C}_A$, we see that (5) and (9) are equivalent.
  It is obvious that (5) implies (6) and (7).
  The remaining part of this theorem is proved via the following route:
  \begin{equation*}
    (6) \Rightarrow (1) \Rightarrow (3) \Rightarrow (8), \quad
    (7) \Rightarrow (2) \Rightarrow (4) \Rightarrow (8).
  \end{equation*}

  (6) $\Rightarrow$ (1). This follows from the defining formula of $\Ser_A$.

  \smallskip
  (1) $\Rightarrow$ (3).
  Suppose that (1) holds. Let $M$ be a $\mathcal{C}$-projective object of $\mathcal{C}_A$.
  Then, by Lemma~\ref{lem:C-proj}, there are non-zero objects $X$ and $Y$ of $\mathcal{C}$ such that $X \otimes M$ is a direct summand of $Y \otimes A$.
  Since $A$ is $\mathcal{C}$-injective by the assumption, $X \otimes M$ is also $\mathcal{C}$-injective by Lemma \ref{lem:C-inj-direct-summands}. Thus, by Lemma \ref{lem:C-inj}, $M$ is $\mathcal{C}$-injective.

  \smallskip
  (3) $\Rightarrow$ (8). Suppose that (3) holds.
  Let $P$ be a projective object of $\mathcal{C}_A$. Then $P$ is also $\mathcal{C}$-injective by Lemma~\ref{lem:relative-proj-1} and the assumption.
  Thus, by Lemma~\ref{lem:proj-C-inj-is-inj}, $P$ is injective. In summary, we have proved that all projective objects of $\mathcal{C}_A$ are injective. By a property of the Nakayama functor recalled in Subsection \ref{subsec:nakayama}, we conclude that $\Nak_{\mathcal{C}_A}$ is an equivalence.

  \smallskip
  $(7)$ $\Rightarrow$ (2). This follows from the defining formula of $\overline{\Ser}_A$.

  \smallskip
  It remains to show (2) $\Rightarrow$ (4) and (4) $\Rightarrow$ $(8)$.
  These are proved in a similar way as (1) $\Rightarrow$ (3) and (3) $\Rightarrow$ (8), respectively, with the help of lemmas on $\mathcal{C}$-projective and $\mathcal{C}$-injective objects exhibited in Subsection~\ref{subsec:C-proj-C-inj}. The proof is done.
\end{proof}

We give some corollaries of this theorem. As one might expect,

\begin{corollary}
  All Frobenius algebras in $\mathcal{C}$ are quasi-Frobenius.
\end{corollary}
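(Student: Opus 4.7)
The plan is to deduce this directly from Theorem~\ref{thm:QF-algebras} with essentially no extra work, so I would present two equivalent one-line arguments and let the reader pick.

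First, via condition~(9): a Frobenius algebra $A$ in $\mathcal{C}$ is tautologically Morita equivalent to itself, which is a Frobenius algebra. Hence condition~(9) of Theorem~\ref{thm:QF-algebras} is satisfied, and we conclude that $A$ is quasi-Frobenius.

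Second, and perhaps more informatively, I would give a direct verification of condition~(1). The Frobenius form $\lambda : A \to \unitobj$ yields the morphism $\phi$ of~\eqref{eq:Fb-alg-phi}, which is invertible and is a morphism of right $A$-modules (this is immediate from the definition of the right $A$-action on $A^*$, and was already used in the proof of Lemma~\ref{lem:Fb-alg-rel-Serre}). Thus $A \cong A^*$ as objects of $\mathcal{C}_A$. On the other hand, the remark immediately after Lemma~\ref{lem:iHom-dual} shows that the right $A$-module $A^*$ is always $\mathcal{C}$-injective, since $\iHom_A(-, A^*) \cong (-)^*$ as functors on $\mathcal{C}_A$ and the latter is exact. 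Transporting $\mathcal{C}$-injectivity along the isomorphism $A \cong A^*$ gives that $A \in \mathcal{C}_A$ is $\mathcal{C}$-injective, which is Definition~\ref{def:QF-algebras}.

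There is no real obstacle here: each argument is a single observation, and both use only results established earlier in the paper. I would simply present the second argument as the proof, since it avoids invoking the full strength of Theorem~\ref{thm:QF-algebras} and makes the role of the Frobenius form transparent.
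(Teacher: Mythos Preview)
Your proposal is correct. The paper's own proof is literally one sentence: ``This is a trivial consequence of Theorem~\ref{thm:QF-algebras},'' which is precisely your first argument via condition~(9). Your second argument is a genuinely more elementary alternative: it verifies Definition~\ref{def:QF-algebras} directly from the isomorphism $A \cong A^*$ in $\mathcal{C}_A$ furnished by the Frobenius form, together with the observation (already made after Lemma~\ref{lem:iHom-dual}) that $A^* \in \mathcal{C}_A$ is $\mathcal{C}$-injective. This route avoids the full machinery of Theorem~\ref{thm:QF-algebras} and, as you note, makes the role of the Frobenius form explicit; the paper's route is shorter but less informative.
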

\begin{proof}
  This is a trivial consequence of Theorem \ref{thm:QF-algebras}.
\end{proof}

Let $F: \mathcal{C} \to \mathcal{D}$ be a tensor functor \cite[Definition 4.2.5]{MR3242743} from $\mathcal{C}$ to another finite tensor category $\mathcal{D}$, and let $A$ be a quasi-Frobenius algebra in $\mathcal{C}$.
Then the object $B := F(A)$ has a natural structure of an algebra in $\mathcal{D}$.
By Lemma~\ref{lem:C-proj-and-free} and Theorem \ref{thm:QF-algebras}, there are non-zero objects $X$ and $Y$ such that $X \otimes A$ is a direct summand of $Y \otimes A^*$ in $\mathcal{C}_A$. By applying the functor $F$, we see that $F(X) \otimes B$ is a direct summand of $F(Y) \otimes B^*$ in $\mathcal{D}_B$. Thus $B$ is quasi-Frobenius. We have proved:

\begin{corollary}
  Quasi-Frobenius algebras are preserved by tensor functors between finite tensor categories.
\end{corollary}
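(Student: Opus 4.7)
The plan is to verify condition (1) of Theorem \ref{thm:QF-algebras} for the algebra $B := F(A)$ in $\mathcal{D}$, namely that $B$ is $\mathcal{D}$-injective as an object of $\mathcal{D}_B$. The strategy is to characterize $\mathcal{C}$-injectivity of $A$ via the direct-summand criterion of Lemma \ref{lem:C-proj-and-free}(b), transport this datum across $F$, and reapply the same lemma now in $\mathcal{D}_B$.

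First I would record the standard properties of $F$: as a tensor functor between finite tensor categories, $F$ is $\bfk$-linear, exact, faithful, strong monoidal, and preserves left duals up to canonical isomorphism. Consequently, $B$ inherits an algebra structure in $\mathcal{D}$, and sending a right $A$-module $M$ to $F(M)$ equipped with the $B$-action $F(\rho_M)$ (post-composed with the monoidal structure isomorphism of $F$) defines a $\bfk$-linear exact functor $F_A : \mathcal{C}_A \to \mathcal{D}_B$ that is compatible with left tensoring by objects of $\mathcal{C}$ (via $F$). Moreover, the canonical isomorphism $F(A^*) \cong F(A)^* = B^*$ is one of right $B$-modules in $\mathcal{D}$.

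Since $A$ is quasi-Frobenius in $\mathcal{C}$, Theorem \ref{thm:QF-algebras}(1) says that $A \in \mathcal{C}_A$ is $\mathcal{C}$-injective, so Lemma \ref{lem:C-proj-and-free}(b) yields non-zero objects $X, Y \in \mathcal{C}$ together with a split monomorphism $X \otimes A \hookrightarrow Y \otimes A^*$ in $\mathcal{C}_A$. Applying $F_A$ and invoking the compatibility isomorphisms above produces a split monomorphism $F(X) \otimes B \hookrightarrow F(Y) \otimes B^*$ in $\mathcal{D}_B$; here $F(X)$ and $F(Y)$ remain non-zero by faithfulness of $F$. Lemma \ref{lem:C-proj-and-free}(b) applied in $\mathcal{D}$ with $M = B$ then concludes that $B \in \mathcal{D}_B$ is $\mathcal{D}$-injective, so $B$ is quasi-Frobenius in $\mathcal{D}$.

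There is no substantive obstacle here; the argument is essentially bookkeeping. The only care required is to check that the monoidal structure isomorphisms of $F$ intertwine the right $A$-module and right $B$-module structures on the relevant objects, and to recall that tensor functors between finite tensor categories are faithful, which is part of the definition in \cite[Definition 4.2.5]{MR3242743}.
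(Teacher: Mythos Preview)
Your proposal is correct and follows essentially the same route as the paper: use Lemma~\ref{lem:C-proj-and-free}(b) to turn the $\mathcal{C}$-injectivity of $A$ into a split-summand statement $X \otimes A \mid Y \otimes A^*$ in $\mathcal{C}_A$, push this through $F$ (using that tensor functors are faithful and preserve duals), and apply Lemma~\ref{lem:C-proj-and-free}(b) again in $\mathcal{D}_B$. The paper's proof is identical in content, just slightly terser about the bookkeeping you spell out.
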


It would be convenient if we rewrite our results on quasi-Frobenius algebras as a theorem on finite left $\mathcal{C}$-module categories. The following corollary, which upgrades Lemma~\ref{lem:Fb-alg-modules}, is obtained just by packaging our results:

\begin{corollary}
  \label{cor:modules-over-QF-alg}
  For a finite left $\mathcal{C}$-module category $\mathcal{M}$, the following are equivalent:
  \begin{enumerate}
  \item All $\mathcal{C}$-projective objects of $\mathcal{M}$ are $\mathcal{C}$-injective.
  \item All $\mathcal{C}$-injective objects of $\mathcal{M}$ are $\mathcal{C}$-projective.
  \item The relative Serre functor of $\mathcal{M}$ is an equivalence.
  \item The Nakayama functor of $\mathcal{M}$ is an equivalence.
  \item There exists a quasi-Frobenius algebra $A$ in $\mathcal{C}$ such that $\mathcal{M} \approx \mathcal{C}_A$.
  \item There exists a Frobenius algebra $A$ in $\mathcal{C}$ such that $\mathcal{M} \approx \mathcal{C}_A$.
  \end{enumerate}
\end{corollary}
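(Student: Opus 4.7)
The strategy is to reduce immediately to the algebra case and invoke Theorem~\ref{thm:QF-algebras}. By the Morita-theoretic results reviewed in Section~\ref{sec:ftc-and-modules}, every finite left $\mathcal{C}$-module category $\mathcal{M}$ admits a projective generator $G$ (which is automatically a $\mathcal{C}$-progenerator), and then $A := \iEnd(G)$ is an algebra in $\mathcal{C}$ such that $F_G = \iHom(G, -) : \mathcal{M} \to \mathcal{C}_A$ is an equivalence of left $\mathcal{C}$-module categories. Fix such an $A$ for the duration of the argument.

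Under this equivalence, conditions (1) and (2) of the corollary translate verbatim into conditions (3) and (4) of Theorem~\ref{thm:QF-algebras} for $A$. Condition (3) of the corollary matches condition (5) of that theorem, since the relative Serre functor of $\mathcal{M}$ is intrinsic to $\mathcal{M}$ up to isomorphism by the existence/uniqueness part of Theorem~\ref{thm:rel-Serre-summary}, so under the equivalence $\mathcal{M} \approx \mathcal{C}_A$ the relative Serre functor of $\mathcal{M}$ corresponds to $\Ser_A$. Condition (4) of the corollary matches condition (8) of Theorem~\ref{thm:QF-algebras} since $\Nak_{\mathcal{M}}$ is likewise transported along the equivalence to $\Nak_{\mathcal{C}_A}$. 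Applying Theorem~\ref{thm:QF-algebras} then gives the chain of equivalences (1) $\Leftrightarrow$ (2) $\Leftrightarrow$ (3) $\Leftrightarrow$ (4), and each is equivalent to the assertion that $A$ itself is quasi-Frobenius; this immediately yields (5) with this particular $A$.

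Conversely, if (5) holds, so that $\mathcal{M} \approx \mathcal{C}_B$ as left $\mathcal{C}$-module categories for some quasi-Frobenius algebra $B$, then conditions (1)--(4) are invariants of the module category $\mathcal{M}$ and so can be checked on $\mathcal{C}_B$; by Theorem~\ref{thm:QF-algebras} applied to $B$, they all hold. Finally, (5) $\Leftrightarrow$ (6) is condition (9) of Theorem~\ref{thm:QF-algebras} rephrased in module-categorical language: an algebra in $\mathcal{C}$ is quasi-Frobenius if and only if it is Morita equivalent to a Frobenius algebra, which is to say that there is a Frobenius algebra $A'$ in $\mathcal{C}$ with $\mathcal{C}_A \approx \mathcal{C}_{A'} \approx \mathcal{M}$.

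There is no substantial obstacle; the corollary is essentially a packaging of Theorem~\ref{thm:QF-algebras} in a form that refers only to $\mathcal{M}$. The one point that requires mild care is the implicit claim that conditions (1)--(4) are genuinely independent of the choice of algebra $A$ representing $\mathcal{M}$. This is immediate for (1), (2), (4) from the intrinsic definitions, and for (3) from the uniqueness (up to isomorphism as a functor) established in Theorem~\ref{thm:rel-Serre-summary}.
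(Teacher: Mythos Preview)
Your proposal is correct and matches the paper's approach: the paper gives no explicit proof, stating only that the corollary ``is obtained just by packaging our results,'' and your argument is precisely that packaging---transport everything to $\mathcal{C}_A$ via a $\mathcal{C}$-progenerator and invoke Theorem~\ref{thm:QF-algebras}. Your explicit verification that conditions (1)--(4) are intrinsic to $\mathcal{M}$ and your handling of (5)~$\Leftrightarrow$~(6) via condition~(9) of Theorem~\ref{thm:QF-algebras} are exactly the details one would fill in.
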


Lemma~\ref{lem:C-proj} implies that an exact left $\mathcal{C}$-module category is nothing but a finite left $\mathcal{C}$-module category where every object is $\mathcal{C}$-projective. In view of the importance of exact module categories, the following corollary could be noteworthy:

\begin{corollary}
  For an exact left $\mathcal{C}$-module category $\mathcal{M}$, there exists a Frobenius algebra $A$ in $\mathcal{C}$ such that $\mathcal{M} \approx \mathcal{C}_A$.
\end{corollary}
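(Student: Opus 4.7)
The plan is to deduce this corollary directly from Corollary~\ref{cor:modules-over-QF-alg} by showing that every object of an exact module category $\mathcal{M}$ is simultaneously $\mathcal{C}$-projective and $\mathcal{C}$-injective, so that conditions (1) and (2) of that corollary hold trivially.

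First, I would use Lemma~\ref{lem:C-proj} in the direction (2) $\Rightarrow$ (1): by the very definition of an exact module category, $P \catactl M$ is projective for every object $M \in \mathcal{M}$ and every projective $P \in \mathcal{C}$, so Lemma~\ref{lem:C-proj} immediately yields that every $M \in \mathcal{M}$ is $\mathcal{C}$-projective. Next, I would invoke the key fact from Subsection~\ref{subsec:nakayama} that in a finite tensor category $\mathcal{C}$ the classes of projective and injective objects coincide (this is the reason $\Nak_{\mathcal{C}}$ is always an equivalence). Hence for every injective object $E \in \mathcal{C}$ and every $M \in \mathcal{M}$, the object $E \catactl M$ is projective in $\mathcal{M}$ by exactness of $\mathcal{M}$, and then injective in $\mathcal{M}$ by Lemma~\ref{lem:relative-proj-1}. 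Applying Lemma~\ref{lem:C-inj} now shows that every $M \in \mathcal{M}$ is $\mathcal{C}$-injective as well.

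With both of these facts in hand, condition (1) (or equivalently (2)) of Corollary~\ref{cor:modules-over-QF-alg} is satisfied vacuously, and therefore condition (6) of that corollary produces a Frobenius algebra $A$ in $\mathcal{C}$ with $\mathcal{M} \approx \mathcal{C}_A$ as left $\mathcal{C}$-module categories.

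There is no real obstacle here beyond the identification projective $=$ injective in $\mathcal{C}$; everything else is bookkeeping across the already-proved lemmas. The substantive content of the statement sits in Corollary~\ref{cor:modules-over-QF-alg} and, through it, in Theorem~\ref{thm:QF-algebras} and the construction of Frobenius algebras from fixed points of the relative Serre functor in Lemma~\ref{lem:Fb-alg-modules}.
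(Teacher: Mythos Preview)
Your core argument matches the paper's implicit one-line proof: by Lemma~\ref{lem:C-proj}, exactness of $\mathcal{M}$ means every object is $\mathcal{C}$-projective, so condition~(2) of Corollary~\ref{cor:modules-over-QF-alg} (``all $\mathcal{C}$-injective objects are $\mathcal{C}$-projective'') holds trivially, and condition~(6) gives the desired Frobenius algebra. This part is correct and is all you need.

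However, your detour through $\mathcal{C}$-injectivity contains a genuine misstep. You write that $E \catactl M$ is projective in $\mathcal{M}$ and ``then injective in $\mathcal{M}$ by Lemma~\ref{lem:relative-proj-1}.'' That lemma only says $\relprj{}{\mathcal{M}} \subset \relprj{\mathcal{C}}{\mathcal{M}}$ and $\relinj{}{\mathcal{M}} \subset \relinj{\mathcal{C}}{\mathcal{M}}$; it does \emph{not} assert that projective objects of $\mathcal{M}$ are injective. That statement is exactly one of the equivalent conditions in Corollary~\ref{cor:modules-over-QF-alg} (via the Nakayama functor), so invoking it here would be circular. Fortunately the detour is unnecessary: once every object is $\mathcal{C}$-projective, condition~(2) is immediate and you never need to know anything about $\mathcal{C}$-injectivity. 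Simply delete the second paragraph's injectivity argument and appeal to condition~(2) rather than~(1).
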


\subsection{Hopf algebras}

It is well-known that a finite-dimensional Hopf algebra is a Frobenius algebra.
We suppose that $\mathcal{C}$ has a braiding so that the notion of a Hopf algebra in $\mathcal{C}$ makes sense. As we will see in Example~\ref{ex:non-Frobenius-Hopf}, a Hopf algebra in $\mathcal{C}$ is not necessarily a Frobenius algebra in $\mathcal{C}$. Nevertheless, the following theorem holds:

\begin{theorem}
  Every Hopf algebra in $\mathcal{C}$ is quasi-Frobenius.
\end{theorem}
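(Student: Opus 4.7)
The plan is to verify condition~(2) of Theorem~\ref{thm:QF-algebras}, that is, that $H^{*} \in \mathcal{C}_{H}$ is $\mathcal{C}$-projective. The route to this is the braided fundamental theorem of Hopf modules combined with the non-vanishing of integrals for Hopf algebras in a braided finite tensor category.

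First, I would endow $H^{*}$ with its canonical structure of a right $(H,H)$-Hopf module in $\mathcal{C}$: the right $H$-action is obtained by dualizing left multiplication composed with the antipode, and the right $H$-coaction by dualizing the multiplication of $H$, both formulated in $\mathcal{C}$ by means of the braiding and the duality. Compatibility of the two structures in the braided setting makes $H^{*}$ a right-right Hopf module over $H$ in $\mathcal{C}$.

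Next, I would invoke the fundamental theorem of Hopf modules in a braided finite tensor category (due to Bespalov, Lyubashenko, and collaborators): every right-right $H$-Hopf module $M$ in $\mathcal{C}$ is isomorphic, as a Hopf module, to $M^{\mathrm{co}H} \otimes H$, where $M^{\mathrm{co}H} \in \mathcal{C}$ denotes the object of coinvariants and $H$ carries its tautological coaction. Applied to $M = H^{*}$, the object $\Lambda := (H^{*})^{\mathrm{co}H}$ is the object of left integrals on $H$, and one obtains an isomorphism
\begin{equation*}
  H^{*} \cong \Lambda \otimes H
\end{equation*}
of right $H$-modules in $\mathcal{C}$.

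Finally, I would appeal to the existence theorem for braided integrals, which guarantees that $\Lambda$ is a non-zero (in fact invertible) object of $\mathcal{C}$. Granting this, since $H \in \mathcal{C}_{H}$ is projective as the free rank-one right $H$-module, it is $\mathcal{C}$-projective by Lemma~\ref{lem:relative-proj-1}, and therefore $\Lambda \otimes H \cong H^{*}$ is $\mathcal{C}$-projective by Lemma~\ref{lem:relative-proj-2}. Theorem~\ref{thm:QF-algebras}(2) then yields that $H$ is quasi-Frobenius. The main obstacle is the non-vanishing of $\Lambda$: this is precisely the content of the integral theorem for Hopf algebras in braided finite tensor categories, and is where the finiteness of $\mathcal{C}$ and the Hopf (rather than merely bialgebra) structure of $H$ are essentially used; once this input is cited, the rest is a direct application of the machinery developed in the preceding sections.
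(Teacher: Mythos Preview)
Your approach is essentially the dual of the paper's: the paper cites the integral theory to obtain an isomorphism $H \cong I \otimes H^*$ of right $H$-modules (with $I$ the object of integrals), concluding directly that $H$ is $\mathcal{C}$-injective (condition~(1) of Theorem~\ref{thm:QF-algebras}), whereas you use the Hopf-module formulation of the same theory to get $H^* \cong \Lambda \otimes H$ and verify condition~(2). Both arguments rest on the same nontrivial input---non-vanishing (in fact invertibility) of the object of integrals in a braided finite tensor category.

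There is, however, one slip in your justification. The claim that ``$H \in \mathcal{C}_H$ is projective as the free rank-one right $H$-module'' is false in general: since $\Hom_{\mathcal{C}_H}(H, M) \cong \Hom_{\mathcal{C}}(\unitobj, M)$, the right $H$-module $H$ is projective in $\mathcal{C}_H$ only when $\unitobj$ is projective in $\mathcal{C}$, i.e., when $\mathcal{C}$ is semisimple. You therefore cannot invoke Lemma~\ref{lem:relative-proj-1}. Fortunately you do not need it: for any algebra $A$ in $\mathcal{C}$, the right $A$-module $A$ is $\mathcal{C}$-projective for the direct reason that $\iHom_A(A, -)$ is (isomorphic to) the forgetful functor to $\mathcal{C}$, which is exact. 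With this correction your argument goes through, and Lemma~\ref{lem:relative-proj-2} then gives the $\mathcal{C}$-projectivity of $\Lambda \otimes H \cong H^*$ as you intended.
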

\begin{proof}
  The integral theory for Hopf algebras \cite{MR1685417,MR1759389} is essential for proving this theorem.
  Let $H$ be a Hopf algebra in $\mathcal{C}$, and let $\lambda: H \to I$ be the universal right integral on $H$, where $I$ is the object of integrals of $H$ in the sense of \cite{MR1759389}.
  It is known that $\lambda$ induces an isomorphism $H \cong I \otimes H^*$ in $\mathcal{C}_H$ (see \cite[Subsection 4.2]{MR1759389}). Hence the right $H$-module $H$ is $\mathcal{C}$-injective. Thus $H$ is quasi-Frobenius.
\end{proof}

\begin{example}
  \label{ex:non-Frobenius-Hopf}
  Let $V$ be a finite-dimensional vector space of dimension $m$.
  The exterior algebra $H := \bigoplus_{j = 0}^m \bigwedge^j V$ has a natural structure of a Hopf algebra in the category $\mathbf{SVec}$ of finite-dimensional supervector spaces with supersymmetry.
  The universal right integral on $H$ is the projection $\pi : H \to I$, where $I := \bigwedge^m V$.

  The algebra $H$ is a Frobenius algebra in $\mathbf{SVec}$ if and only if $m$ is even. To see this, we note that $I$ is isomorphic to the unit object of $\mathbf{SVec}$ if and only if $m$ is even. Thus, when $m$ is even, $\pi$ can be viewed as a Frobenius form on $H$. On the other hand, when $m$ is odd, every morphism $H \to \unitobj$ in $\mathbf{SVec}$ vanishes on the ideal $I$ of $H$. This implies that $H$ has no Frobenius form.
\end{example}

\subsection{A conjecture on simple algebras}

We say that an algebra $A$ in $\mathcal{C}$ is {\em simple} if it is a simple object in ${}_A\mathcal{C}_A$.
It is well-known that a simple algebra in $\Vect$ is isomorphic to the matrix algebra over a finite-dimensional division algebra over $\bfk$ and, in particular, is a Frobenius algebra. In general, a simple algebra in $\mathcal{C}$ is not Frobenius as the following example shows:

\begin{example}
  \label{ex:non-Frobenius-exact}
  Suppose that the base field $\bfk$ is algebraically closed. Let $\mathcal{D}$ be a finite tensor category, and let $\mathcal{C} = \mathcal{Z}(\mathcal{D})$ be the Drinfeld center of $\mathcal{D}$ \cite[Section 7.13]{MR3242743}. The forgetful functor $U: \mathcal{C} \to \mathcal{D}$ has a right adjoint,
  the object $A := U^{\radj}(\unitobj)$ is a commutative algebra in $\mathcal{C}$ in a natural way,
  and there is an equivalence $K: \mathcal{D} \to \mathcal{C}_A$ induced by $U^{\radj}$.
  Since $A = K(\unitobj)$, the right $A$-module $A$ is simple.
  Hence, in particular, $A$ is a simple algebra in $\mathcal{C}$.
  According to \cite[Section 6]{MR3632104}, the algebra $A$ admits a Frobenius form if and only if $\mathcal{D}$ is unimodular in the sense of \cite[Section 6.5]{MR3242743}. Thus, if $\mathcal{D}$ is not unimodular, $A$ is a simple commutative algebra in $\mathcal{C}$ which is not Frobenius.
\end{example}

We go back to the general setting and raise the following:

\begin{conjecture}
  \label{conj:simple->QF}
  A simple algebra in $\mathcal{C}$ is quasi-Frobenius.
\end{conjecture}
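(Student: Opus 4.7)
The plan is to verify condition~(2) of Theorem~\ref{thm:QF-algebras}: that the right $A$-module $A^*$ is $\mathcal{C}$-projective. Since $A^*$ is always $\mathcal{C}$-injective (a remark made right after Lemma~\ref{lem:iHom-dual}), Lemma~\ref{lem:C-proj-and-free-3}(a) reduces the task to exhibiting an object $X \in \mathcal{C}$ together with a monomorphism $A^{*} \hookrightarrow X \otimes A$ in $\mathcal{C}_A$. Using Lemma~\ref{lem:iHom-dual} to rewrite $\iHom_A(A^{*}, A) \cong (A^{*} \otimes_A {}^{*}\!A)^{*}$ and applying the defining adjunction of the internal Hom, such a morphism corresponds in turn to a morphism $\unitobj \to \iHom_A(A^{*}, X \otimes A)$ in $\mathcal{C}$ which remains monic after evaluation.

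The strategy for exploiting simplicity is as follows. The object $A^{*} \otimes_A {}^{*}\!A$, which up to duality controls the desired internal Hom, is naturally an $A^{**}$-$A^{**}$-bimodule, hence an $A$-bimodule after transporting along the canonical algebra isomorphism $A \cong A^{**}$ up to the double-dual twist. By simplicity, any non-zero morphism in ${}_A\mathcal{C}_A$ whose source or target is $A$ has image a non-zero bi-ideal, which must be $A$ itself; in particular, any such morphism is automatically an isomorphism. I would try to construct a non-zero bimodule morphism between $A$ and a suitable twist of $A^{*} \otimes_A {}^{*}\!A$ using the structure morphisms of the left and right dualities of $\mathcal{C}$ together with the unit and multiplication of $A$, apply simplicity to promote it to an isomorphism, and then extract the desired $X$ and embedding $A^{*} \hookrightarrow X \otimes A$ by unwinding adjunctions.

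The main obstacle, and the reason the statement is offered only as a conjecture, is that simplicity alone does not produce the required non-zero bimodule morphism: a morphism of the expected form would essentially amount to a Frobenius form on $A$ (or on some Morita-equivalent algebra), so the argument as described above is circular. A potentially more promising route would be to work with the Nakayama functor and condition~(8) of Theorem~\ref{thm:QF-algebras}: via Theorem~\ref{thm:rel-Serre-summary}(\ref{item:rel-Serre-main-2}), the invertibility of $\Nak_{\mathcal{C}_A}$ is equivalent to that of the relative Serre functor $\Ser_A$, and one might hope to derive the latter from simplicity of $A$ by studying the iterates $\Ser_A^{\,n}$ in combination with the Radford isomorphism of Definition~\ref{def:Radford-isomorphism}, which provides a quasi-periodicity under the quadruple dual. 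Producing any of these ingredients from simplicity of $A$ alone is essentially the content of the Etingof--Ostrik conjecture, and as far as I can see a genuinely new idea is required to close the argument in the generality stated.
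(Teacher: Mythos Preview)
The statement is a \emph{conjecture} in the paper, not a theorem; the paper offers no proof and explicitly leaves it open. Your proposal correctly recognises this and does not pretend to prove it, so there is nothing to fault on that front.

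Your analysis of the obstruction is in fact very close to the paper's own discussion. The paper observes (via Lemma~\ref{lem:C-proj-and-free-3}) that $A$ being quasi-Frobenius is equivalent to the existence of a monomorphism $A^{*}\hookrightarrow X\otimes A$ in $\mathcal{C}_A$ for some $X\in\mathcal{C}$, exactly the condition you isolate. The paper then notes that for \emph{simple} $A$ this condition is equivalent to $A^{*}\otimes_A{}^{*}\!A\neq 0$, and rewrites this object as ${}^{**}(\Ser_A^2(A))$, so that the conjecture becomes the assertion $\Ser_A^2\neq 0$. This is precisely the direction you gesture at when you mention studying iterates $\Ser_A^{\,n}$. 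The paper also records, as you do, that the conjecture is essentially equivalent to the Etingof--Ostrik conjecture that simple algebras are exact. So your assessment that a genuinely new idea is needed matches the paper's position; the only refinement the paper adds beyond your discussion is the clean reformulation $\Ser_A^2\neq 0$.
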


An algebra $A$ in $\mathcal{C}$ is said to be {\em exact} if $\mathcal{C}_A$ is an exact left $\mathcal{C}$-module category.
From now on, we again assume that $\bfk$ is algebraically closed.
Etingof and Ostrik conjectured that every simple algebra in $\mathcal{C}$ is exact \cite[Conjecture B.6]{MR4237968}.
Conjecture \ref{conj:simple->QF} implies their conjecture.
Indeed, Etingof and Ostrik have also proved that a simple algebra $A$ in $\mathcal{C}$ is exact provided that
\begin{equation}
  \label{eq:assumption-in-EO}
  \text{there are an object $X \in \mathcal{C}$ and a monomorphism $A^* \to X \otimes A$ in $\mathcal{C}_A$}
\end{equation}
\cite[Theorem B.1]{MR4237968}\footnote{The proof of \cite[Theorem B.1]{MR4237968} does not work without the assumption that the base field is algebraically closed since it relies on the theory of the Frobenius-Perron dimension.}.
By Lemma~\ref{lem:C-proj-and-free-3}, the condition \eqref{eq:assumption-in-EO} is equivalent to that $A$ is quasi-Frobenius.

Now we assume that $A$ is simple.
Then, as noted after \cite[Remark B.5]{MR4237968}, the condition \eqref{eq:assumption-in-EO} is equivalent to $A^* \otimes_A {}^* \! A \ne 0$.
The object of the left-hand side is written in terms of the relative Serre functor.
Indeed, let $\Ser_A$ be the relative Serre functor of $\mathcal{C}_A$ given by Lemma~\ref{lem:rel-Serre-mod-A-ra}. Then we have
\begin{equation*}
  {}^{**}(\Ser_A^2(A)) = {}^{*}\iHom_A(\iHom_A(A, A)^*, A)
  \cong {}^*\iHom_A(A^*, A)
  \cong A^* \otimes_A {}^*\!A
\end{equation*}
by Lemma~\ref{lem:iHom-dual}. Thus, in view of the Eilenberg-Watts equivalence, we can say that \eqref{eq:assumption-in-EO} is equivalent to the condition $\Ser_A^2 \ne 0$.

\section{Symmetric Frobenius algebras}
\label{sec:symm-frobenius}

Throughout this section, $\mathcal{C}$ is a pivotal finite tensor category with pivotal structure $\mathfrak{p}_X : X \to X^{**}$ ($X \in \mathcal{C}$).
The aim of this section is to give a characterization of left $\mathcal{C}$-module category of the form $\mathcal{C}_A$ for some symmetric Frobenius algebra $A$ in $\mathcal{C}$. We first recall the definition:

\begin{definition}
  Let $A$ be a Frobenius algebra.
  We say that $A$ is {\em symmetric} \cite{MR2500035} if $\nu_A \circ \mathfrak{p}_A = \id_A$, where $\nu_A : A^{**} \to A$ is the Nakayama isomorphism \eqref{eq:Nakayama-iso-def}. 
\end{definition}

For $V, W \in \mathcal{C}$, there is an isomorphism
\begin{equation*}
  D^{(\mathfrak{p})}_{V,W} : \Hom_{\mathcal{C}}(V \otimes W, \unitobj) \to \Hom_{\mathcal{C}}(W \otimes V, \unitobj), \quad
  b \mapsto D_{V,W}(b) \circ (\mathfrak{p}_W \otimes \id_V),
\end{equation*}
where $D_{V,W}$ is the isomorphism \eqref{eq:Nakayama-iso-D}.
By \eqref{eq:Nakayama-iso-def-2}, a Frobenius algebra $A$ is symmetric if and only if $D^{(\mathfrak{p})}_{A,A}(\beta) = \beta$, where $\beta : A \otimes A \to \unitobj$ is the pairing on $A$ induced by the Frobenius form of $A$.

Let $\mathcal{M}$ be a finite left $\mathcal{C}$-module category with relative Serre functor $\Ser$. We can make $\Ser$ a `non-twisted' left $\mathcal{C}$-module endofunctor on $\mathcal{M}$ by
\begin{equation*}
  X \catactl \Ser(M)
  \xrightarrow{\quad \mathfrak{p}_X \catactl \id_{\Ser(M)} \quad}
  X^{**} \catactl \Ser(M)
  \xrightarrow{\quad \mathfrak{s}_{X,M} \quad} \Ser(X \catactl M)
  \quad (X \in \mathcal{C}, M \in \mathcal{M}), 
\end{equation*}
where $\mathfrak{s}$ is the twisted left $\mathcal{C}$-module structure of $\Ser$.
By extending the definition given in \cite{MR3435098,2019arXiv190400376S}, we introduce the following terminology:

\begin{definition}
  A {\em pivotal structure} of a finite left $\mathcal{C}$-module category $\mathcal{M}$ is an isomorphism $\id_{\mathcal{M}} \to \Ser$ of left $\mathcal{C}$-module functors, where $\Ser$ is the relative Serre functor of $\mathcal{M}$ regarded as a left $\mathcal{C}$-module endofunctor on $\mathcal{M}$ as above.
  Equivalently, it is an isomorphism $\tilde{\mathfrak{p}} : \id_{\mathcal{M}} \to \Ser$ of functors satisfying the equation
  \begin{equation}
    \label{eq:def-pivotal}
    \tilde{\mathfrak{p}}_{X \catactl M}
    = \mathfrak{s}_{X,M} \circ (\mathfrak{p}_X \catactl \tilde{\mathfrak{p}}_{M})
    \quad (X \in \mathcal{C}, M \in \mathcal{M}).
  \end{equation}
\end{definition}

Let $M \in \mathcal{M}$ be a $\mathcal{C}$-projective object.
We have proved in Lemma \ref{lem:Fb-alg-rel-Serre} that an isomorphism $p: M \to \Ser(M)$ in $\mathcal{M}$ induces a Frobenius form on $\iEnd(M)$. If the isomorphism $p$ is a component of a pivotal structure of $\mathcal{M}$, then $\iEnd(M)$ is symmetric. Namely,

\begin{lemma}
  \label{lem:symm-Frob-construction}
  Let $\mathcal{M}$ be a pivotal finite left $\mathcal{C}$-module category with structured relative Serre functor $(\Ser, \mathfrak{s}, \phi)$ and pivotal structure $\tilde{\mathfrak{p}}$. For every non-zero $\mathcal{C}$-projective object $M \in \mathcal{M}$, the algebra $A := \iEnd(M)$ is a symmetric Frobenius algebra in $\mathcal{C}$ with the Frobenius form given by
  \begin{equation}
    \label{eq:def-Frobenius-trace-piv}
    \lambda_M := \itrace_M \circ \iHom(M, \tilde{\mathfrak{p}}_M):
    A \to \unitobj,
  \end{equation}
  where $\itrace$ is the trace associated to $(\Ser, \mathfrak{s}, \phi)$.
\end{lemma}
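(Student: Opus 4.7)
The plan is to combine Lemma~\ref{lem:Fb-alg-construction} with a direct verification that the Frobenius algebra it produces is symmetric. Indeed, on taking $P = M$ and $q = \tilde{\mathfrak{p}}_M$, Lemma~\ref{lem:Fb-alg-construction} immediately yields that $\lambda_M$ is a Frobenius form of $A = \iEnd(M)$, so the only remaining content is symmetry.

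To check symmetry, I would first translate the condition into a statement about the bilinear pairing $\beta := \lambda_M \circ m_A : A \otimes A \to \unitobj$. By equation~\eqref{eq:Nakayama-iso-def-2}, $A$ is symmetric if and only if $D^{(\mathfrak{p})}_{A,A}(\beta) = \beta$. Using equation~\eqref{eq:itrace-1} at $P = M$ together with the naturality of $\icomp$ in its last argument, one rewrites $\beta$ as $\eval_A \circ (\phi^{\lambda} \otimes \id_A)$, where $\phi^{\lambda} := \phi_{M,M} \circ \iHom(M, \tilde{\mathfrak{p}}_M) : A \to A^*$ coincides with the isomorphism~\eqref{eq:Fb-alg-phi} associated to $\lambda_M$. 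A zig-zag computation then reduces the symmetry condition to the single identity $(\phi^{\lambda})^* \circ \mathfrak{p}_A = \phi^{\lambda}$ of morphisms $A \to A^*$.

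The heart of the argument is the verification of this identity, for which I would use two ingredients: (a) the property that $\phi$ is a morphism of $\mathcal{C}$-bimodule functors (Definition~\ref{def:rel-Serre-structured}), which produces an identity relating $\phi_{M,M}$ to its transposed version through the twisted module structures $\mathfrak{s}, \mathfrak{a}, \mathfrak{b}$; and (b) the pivotal compatibility~\eqref{eq:def-pivotal}, which rewrites $\tilde{\mathfrak{p}}_{X \catactl M}$ in terms of $\tilde{\mathfrak{p}}_M$, $\mathfrak{p}_X$, and $\mathfrak{s}_{X,M}$. Specialising to $X = A$ and composing suitably with $\icoev_{M, \unitobj}$, the $(-)^{**}$-twists produced by (a) should be cancelled precisely by the pivotal compatibility from (b), yielding the desired equation. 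This parallels the analogous argument for exact module categories in~\cite{MR3435098,2019arXiv190400376S}; in the present setting, the $\mathcal{C}$-projectivity of $M$ is exactly what we need, since this is the range of objects on which $\itrace$ and $\phi$ are defined. The main obstacle is notational bookkeeping: the calculation simultaneously juggles $\mathfrak{a}, \mathfrak{b}, \mathfrak{c}, \mathfrak{s}$, both pivotal isomorphisms $\mathfrak{p}$ and $\tilde{\mathfrak{p}}$, and several evaluation and coevaluation morphisms, so a large commutative diagram (or a string-diagram calculation) will be essentially indispensable for clarity. Conceptually, however, there is no surprise: the pivotal axiom~\eqref{eq:def-pivotal} was formulated precisely so that the equation $(\phi^{\lambda})^* \circ \mathfrak{p}_A = \phi^{\lambda}$ holds.
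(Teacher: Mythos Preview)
Your proposal is correct and runs along the same conceptual track as the paper: both arguments invoke Lemma~\ref{lem:Fb-alg-construction} for the Frobenius property, translate symmetry into the condition $D^{(\mathfrak{p})}_{A,A}(\lambda_M \circ m_A) = \lambda_M \circ m_A$, and then discharge it using the pivotal compatibility~\eqref{eq:def-pivotal} together with the structural identities attached to the structured relative Serre functor.

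The organizational difference is worth recording. You reduce symmetry to the single identity $(\phi^{\lambda})^* \circ \mathfrak{p}_A = \phi^{\lambda}$ and plan to verify it from the $\mathcal{C}$-bimodule property of $\phi$. The paper instead works one level up with the trace: it first uses~\eqref{eq:itrace-2} together with~\eqref{eq:def-pivotal} to prove the functorial relation
\[
  \lambda_{X \catactl M} \;=\; \eval_{X^*} \circ (\mathfrak{p}_X \otimes \lambda_M \otimes \id_{X^*}) \circ \mathfrak{c}_{X^*,M,M,X}^{-1}
\]
for all $X \in \mathcal{C}$, then specializes to $X = A$ and unwinds $D^{(\mathfrak{p})}_{A,A}(\lambda_M \circ m_A)$ by a concrete chain of equalities using the internal Hom identities \eqref{eq:iHom-composition-a-1}, \eqref{eq:iHom-composition-a-2}, \eqref{eq:iHom-composition-b-1}, \eqref{eq:iHom-composition-b-2}. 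Since~\eqref{eq:itrace-2} is itself a packaged consequence of the bimodule compatibility of $\phi$ (via Lemmas~\ref{lem:rel-Serre-trace-1} and~\ref{lem:rel-Serre-trace-2}), the two routes have the same content; the paper's version has the advantage of being a fully explicit line-by-line computation, whereas yours is more direct in formulation but still owes the reader the bookkeeping you flag as the main obstacle.
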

\begin{proof}
  This lemma has been proved in \cite{2019arXiv190400376S} under the assumption that $\mathcal{M}$ is exact. The same proof applies for the non-exact case. For reader's convenience, we present an improved proof. For $X \in \mathcal{C}$, we have
  \begin{align*}
    \lambda_{X \catactl M}
    & = \itrace_{X \catactl M} \circ \iHom(X \catactl M, \tilde{\mathfrak{p}}_{X \catactl M}) \\
    & = \itrace_{X \catactl M} \circ \iHom(X \catactl M, \mathfrak{s}_{X, M})
      \circ \iHom(M, \mathfrak{p}_X \catactl \tilde{\mathfrak{p}}_M) \\
    & = \eval_{X^*} \circ (\id_{X^{**}} \otimes \itrace_M \otimes \id_{X^*})
      \circ \mathfrak{c}_{X^*,M,M,X}^{-1}
      \circ \iHom(M, \mathfrak{p}_X \catactl \tilde{\mathfrak{p}}_M) \\
    & = \eval_{X^*} \circ (\mathfrak{p}_X \otimes \lambda_M \otimes \id_{X^*})
      \circ \mathfrak{c}_{X^*,M,M,X}^{-1},
  \end{align*}
  where the first and the last equality follows from~\eqref{eq:def-Frobenius-trace-piv}, the second from \eqref{eq:def-pivotal} and the third from \eqref{eq:itrace-2}. Hence,
  \begin{equation}
    \label{eq:lem-symm-Frob-construction-proof-1}
    \eval_{A^*} \circ (\mathfrak{p}_A \otimes \lambda_M \otimes \id_{A^*})
    = \lambda_{A \catactl M} \circ \mathfrak{c}_{A^*,M,M,A}.
  \end{equation}
  Now we compute $D^{(\mathfrak{p})}_{A,A}(\lambda_M \circ m)$ as follows:
  \begin{align*}
    & D^{(\mathfrak{p})}_{A,A}(\lambda_M \circ m)
    = \eval_{A^*} \circ (\id_{A^{**}} \otimes \lambda_M \otimes \id_{A^*}) \circ (\mathfrak{p}_A \otimes \icomp_{M,M,M} \otimes \coev_A) \\
    & = \lambda_{A \catactl M} \circ \mathfrak{c}_{A^*,M,M,A} \circ (\id_{A} \otimes \icomp_{M,M,M} \otimes \coev_A) \\
    & = \lambda_{A \catactl M} \circ \icomp_{A \catactl M,M, A \catactl M}
      \circ (\mathfrak{a}_{A,M,M} \otimes \mathfrak{b}_{M,M,A^*})
      \circ (\id_A \otimes \id_A \otimes \coev_A) \\
    & = \lambda_{A \catactl M} \circ \icomp_{A \catactl M,M, A \catactl M}
      \circ (\mathfrak{a}_{A,M,M} \otimes (\iHom(\ieval_{M,M},M) \circ \icoev_{M,\unitobj})) \\
    & = \lambda_{M} \circ \icomp_{M, M, M}
      \circ ((\iHom(\ieval_{M,M}, M) \circ \mathfrak{a}_{A,M,M}) \otimes \icoev_{M,\unitobj})) \\
    & = \lambda_{M} \circ \icomp_{M, M, M}
      \circ (\icomp_{M,M,M} \otimes \icoev_{M,\unitobj})
      = \lambda_{M} \circ m,
  \end{align*}
  where the second equality follows from \eqref{eq:lem-symm-Frob-construction-proof-1}, the third from \eqref{eq:iHom-composition-a-2} and~\eqref{eq:iHom-composition-b-2}, the fourth from \eqref{eq:iHom-composition-b-1}, the fifth from the (di-)naturality of $\icomp$ and $\itrace$, the sixth from \eqref{eq:iHom-composition-a-1}, and the last from that $\icoev_{M,\unitobj}$ is the unit of the algebra $A = \iEnd(M)$. The proof is done.
\end{proof}

\begin{theorem}
  \label{thm:symm-Frob-1}
  For a finite left $\mathcal{C}$-module category $\mathcal{M}$, the following are equivalent:
  \begin{enumerate}
  \item $\mathcal{M}$ admits a pivotal structure.
  \item There exists a symmetric Frobenius algebra $A$ in $\mathcal{C}$ such that $\mathcal{M} \approx \mathcal{C}_A$ as left $\mathcal{C}$-module categories.
  \end{enumerate}
\end{theorem}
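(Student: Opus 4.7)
The plan is to establish the two implications separately, with $(1) \Rightarrow (2)$ following quickly from Lemma~\ref{lem:symm-Frob-construction} together with the existence of $\mathcal{C}$-progenerators, and $(2) \Rightarrow (1)$ requiring an explicit construction on $\mathcal{C}_A$.

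For $(1) \Rightarrow (2)$, suppose $\mathcal{M}$ admits a pivotal structure $\tilde{\mathfrak{p}} : \id_{\mathcal{M}} \to \Ser$. In particular $\Ser$ is an equivalence, so by Lemma~\ref{lem:Fb-alg-modules} (or Corollary~\ref{cor:modules-over-QF-alg}) we already know that $\mathcal{M} \approx \mathcal{C}_A$ for some Frobenius algebra; but we need symmetry. Pick any projective generator $G$ of $\mathcal{M}$; it is automatically a $\mathcal{C}$-progenerator, so $\mathcal{M} \approx \mathcal{C}_A$ as left $\mathcal{C}$-module categories for $A := \iEnd(G)$. Since $G$ is $\mathcal{C}$-projective and non-zero, Lemma~\ref{lem:symm-Frob-construction} applied to the pivotal structure $\tilde{\mathfrak{p}}$ at the object $G$ directly produces a symmetric Frobenius structure on $A$.

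For $(2) \Rightarrow (1)$, assume $A$ is a symmetric Frobenius algebra in $\mathcal{C}$. It suffices to construct a pivotal structure on $\mathcal{C}_A$. Let $\nu = \nu_A$ be the Nakayama isomorphism; by symmetry $\nu^{-1} = \mathfrak{p}_A$. By Lemma~\ref{lem:Fb-alg-rel-Serre} we have an isomorphism $\Psi_M : (M^{**})_{(\nu^{-1})} \xrightarrow{\sim} \Ser(M)$ of twisted left $\mathcal{C}$-module functors. I will define
\begin{equation*}
  \tilde{\mathfrak{p}}_M := \Psi_M \circ \mathfrak{p}_M : M \to \Ser(M) \quad (M \in \mathcal{C}_A),
\end{equation*}
where $\mathfrak{p}_M$ is first verified to be a morphism $M \to (M^{**})_{(\nu^{-1})}$ in $\mathcal{C}_A$. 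This latter verification is the key step: by naturality of $\mathfrak{p}$ applied to $\act_M : M \otimes A \to M$, together with the monoidal identity $\mathfrak{p}_{M \otimes A} = \mathfrak{p}_M \otimes \mathfrak{p}_A$, one obtains $\mathfrak{p}_M \circ \act_M = \act_M^{**} \circ (\mathfrak{p}_M \otimes \mathfrak{p}_A)$; replacing $\mathfrak{p}_A$ by $\nu^{-1}$ using symmetry gives exactly the definition of the action on $(M^{**})_{(\nu^{-1})}$, so $\mathfrak{p}_M$ is $A$-linear.

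It then remains to check that $\tilde{\mathfrak{p}}$ satisfies the pivotal condition~\eqref{eq:def-pivotal}, namely $\tilde{\mathfrak{p}}_{X \catactl M} = \mathfrak{s}^{(A)}_{X,M} \circ (\mathfrak{p}_X \catactl \tilde{\mathfrak{p}}_M)$ for all $X \in \mathcal{C}$, $M \in \mathcal{C}_A$. This is the main technical obstacle. My approach is to reduce it to a property of $\mathfrak{p}$ in $\mathcal{C}$: unwinding the structure morphism $\mathfrak{s}^{(A)}_{X,M} = (\mathfrak{b}_{M,A,X}^{-1})^*$ and the isomorphism $\Psi$ via the Eilenberg-Watts description of $\Ser_A$, both sides of \eqref{eq:def-pivotal} reduce, under the identification $\Ser(M) \cong (M^{**})_{(\nu^{-1})}$, to the equality $\mathfrak{p}_{X \otimes M} = \mathfrak{p}_X \otimes \mathfrak{p}_M$, which is the monoidality of the pivotal structure of $\mathcal{C}$. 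Alternatively, one may invoke the uniqueness Proposition~\ref{prop:rel-Serre-uniq-structured} to canonically identify $\Ser$ with a structured Serre functor whose $\mathfrak{s}$ is manifestly induced by $(-)^{**}$, which makes the verification transparent. Once \eqref{eq:def-pivotal} is established, $\tilde{\mathfrak{p}}$ is the desired pivotal structure on $\mathcal{C}_A$, completing the proof.
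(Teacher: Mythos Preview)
Your proposal is correct and follows essentially the same route as the paper. For $(1)\Rightarrow(2)$ both arguments pick a $\mathcal{C}$-progenerator $G$ and apply Lemma~\ref{lem:symm-Frob-construction} to $A=\iEnd(G)$; for $(2)\Rightarrow(1)$ both use Lemma~\ref{lem:Fb-alg-rel-Serre} (with $\nu^{-1}=\mathfrak{p}_A$ by symmetry) to identify $\Ser(M)\cong(M^{**})_{(\mathfrak{p}_A)}$ and then take $\tilde{\mathfrak{p}}$ to be the pivotal structure of $\mathcal{C}$ transported through this identification---the paper simply records this last step in one sentence (``$\mathcal{C}_A$ has a pivotal structure induced by that of $\mathcal{C}$''), while you spell out the $A$-linearity of $\mathfrak{p}_M$ and the reduction of~\eqref{eq:def-pivotal} to $\mathfrak{p}_{X\otimes M}=\mathfrak{p}_X\otimes\mathfrak{p}_M$.
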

\begin{proof}
  Suppose that (1) holds.
  We choose a $\mathcal{C}$-progenerator $G \in \mathcal{M}$ and consider the algebra $A = \iEnd(G)$. Then $\mathcal{M} \approx \mathcal{C}_A$ as left $\mathcal{C}$-module categories. Lemma \ref{lem:symm-Frob-construction} implies that $A$ is symmetric Frobenius. Thus (2) holds.

  To prove the converse, it suffices to show that $\mathcal{C}_A$ admits a pivotal structure if $A$ is a symmetric Frobenius algebra in $\mathcal{C}$. For such an algebra $A$, by Lemma \ref{lem:Fb-alg-rel-Serre}, the relative Serre functor $\Ser$ of $\mathcal{C}_A$ is given by $\Ser(M) = (M^{**})_{(\mathfrak{p}_A)}$ for $M \in \mathcal{C}_A$. Hence $\mathcal{C}_A$ has a pivotal structure induced by that of $\mathcal{C}$. The proof is done.
\end{proof}

It is well-known that the class of symmetric Frobenius algebras in $\Vect$ is closed under Morita equivalence. Theorem \ref{thm:symm-Frob-1} implies that the same holds for symmetric Frobenius algebras in a pivotal finite tensor category:

\begin{theorem}
  \label{thm:symm-Frob-2}
  An algebra in $\mathcal{C}$ that is Morita equivalent to a symmetric Frobenius algebra in $\mathcal{C}$ is also symmetric Frobenius.
\end{theorem}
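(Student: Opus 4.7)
The plan is to reduce the statement to Lemma~\ref{lem:symm-Frob-construction} by transporting a pivotal structure across the Morita equivalence and then choosing the tautological $\mathcal{C}$-progenerator. Suppose that $B$ is Morita equivalent to a symmetric Frobenius algebra $A$ in $\mathcal{C}$, so that we have an equivalence $F : \mathcal{C}_A \to \mathcal{C}_B$ of left $\mathcal{C}$-module categories. By Theorem~\ref{thm:symm-Frob-1}, the left $\mathcal{C}$-module category $\mathcal{C}_A$ admits a pivotal structure. I would first argue that the existence of a pivotal structure is an invariant of a finite left $\mathcal{C}$-module category: if $\Ser_A$ and $\Ser_B$ are the relative Serre functors of $\mathcal{C}_A$ and $\mathcal{C}_B$, then $F \circ \Ser_A \circ F^{-1}$ is a structured relative Serre functor of $\mathcal{C}_B$, and the uniqueness result of Proposition~\ref{prop:rel-Serre-uniq-structured} yields a canonical isomorphism between this functor and $\Ser_B$ as twisted left $\mathcal{C}$-module functors. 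Conjugating a pivotal structure $\tilde{\mathfrak{p}} : \id_{\mathcal{C}_A} \to \Ser_A$ by $F$ and composing with this canonical isomorphism then produces a pivotal structure on $\mathcal{C}_B$.

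With $\mathcal{C}_B$ pivotal, the key observation is that $B$, viewed as an object of $\mathcal{C}_B$, is a projective generator and hence a $\mathcal{C}$-progenerator by Lemma~\ref{lem:relative-proj-1}, and moreover there is a canonical algebra isomorphism $\iEnd_B(B) \cong B$ in $\mathcal{C}$ (because $\iHom_B(B, -)$ is isomorphic to the forgetful functor $\mathcal{C}_B \to \mathcal{C}$, and the composition law reproduces the multiplication of $B$). Applying Lemma~\ref{lem:symm-Frob-construction} to the pivotal left $\mathcal{C}$-module category $\mathcal{C}_B$ and to the $\mathcal{C}$-progenerator $M := B$, we obtain a symmetric Frobenius form on $\iEnd_B(B)$, which transfers along the algebra isomorphism $\iEnd_B(B) \cong B$ to equip $B$ itself with the structure of a symmetric Frobenius algebra.

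The only genuinely non-formal step is the transport of the pivotal structure in the first paragraph. There are two things to verify carefully: that the conjugate $F \circ \Ser_A \circ F^{-1}$ is indeed a structured relative Serre functor (which follows because $F$ is a $\mathcal{C}$-module equivalence, so it preserves the internal Hom functor, $\mathcal{C}$-projectives, and duality up to canonical isomorphism), and that the resulting isomorphism from $\id_{\mathcal{C}_B}$ to $\Ser_B$ satisfies the pivotal compatibility~\eqref{eq:def-pivotal}. The latter follows from the fact that the isomorphism supplied by Proposition~\ref{prop:rel-Serre-uniq-structured} is a morphism of twisted left $\mathcal{C}$-module functors, so it intertwines the structure morphisms $\mathfrak{s}$ on both sides, and the pivotal structure of $\mathcal{C}$ is preserved throughout. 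Once this point is settled, the conclusion follows immediately.
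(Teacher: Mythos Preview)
Your proposal is correct, but it takes a slightly more circuitous route than the paper. The paper stays entirely inside $\mathcal{C}_A$: by the characterization of Morita equivalence recalled in Section~\ref{sec:ftc-and-modules}, there is a $\mathcal{C}$-progenerator $P \in \mathcal{C}_A$ with $B \cong \iEnd(P)$ as algebras in $\mathcal{C}$; since $\mathcal{C}_A$ carries a pivotal structure by Theorem~\ref{thm:symm-Frob-1}, Lemma~\ref{lem:symm-Frob-construction} applied to $P$ in $\mathcal{C}_A$ immediately gives a symmetric Frobenius form on $B$. No transport of pivotal structure across the equivalence is needed.

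Your approach instead pushes the pivotal structure forward along $F$ to $\mathcal{C}_B$ and then applies Lemma~\ref{lem:symm-Frob-construction} to the tautological progenerator $B \in \mathcal{C}_B$. This is perfectly valid, and your sketch of why the transported structure satisfies \eqref{eq:def-pivotal} (via Proposition~\ref{prop:rel-Serre-uniq-structured} and the compatibility of $F$ with internal Hom and duality) is sound. What the paper's route buys is brevity: it sidesteps the verification that $F \Ser_A F^{-1}$ is a structured relative Serre functor and that the conjugated natural isomorphism remains a morphism of left $\mathcal{C}$-module functors. What your route buys is a slightly more conceptual statement along the way, namely that admitting a pivotal structure is a Morita invariant of finite left $\mathcal{C}$-module categories --- a fact implicit in Theorem~\ref{thm:symm-Frob-1} but not singled out there.
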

\begin{proof}
  Let $A$ be a symmetric Frobenius algebra in $\mathcal{C}$, and let $B$ be an algebra in $\mathcal{C}$ that is Morita equivalent to $A$.
  Then there exists a $\mathcal{C}$-progenerator $P \in \mathcal{C}_A$ such that $B \cong \iEnd(P)$ as algebras in $\mathcal{C}$. By Theorem~\ref{thm:symm-Frob-1}, $\mathcal{C}_A$ has a pivotal structure. Thus, by Lemma~\ref{lem:symm-Frob-construction}, $B$ is symmetric Frobenius. The proof is done.
\end{proof}


\begin{thebibliography}{EGNO15}

\bibitem[BD13]{MR3134025}
Mitya Boyarchenko and Vladimir Drinfeld.
\newblock A duality formalism in the spirit of {G}rothendieck and {V}erdier.
\newblock {\em Quantum Topol.}, 4(4):447--489, 2013.

\bibitem[BKLT00]{MR1759389}
Yuri Bespalov, Thomas Kerler, Volodymyr Lyubashenko, and Vladimir Turaev.
\newblock Integrals for braided {H}opf algebras.
\newblock {\em J. Pure Appl. Algebra}, 148(2):113--164, 2000.

\bibitem[BM21]{MR4226562}
Noelia Bortolussi and Mart\'{\i}n Mombelli.
\newblock ({C}o)ends for representations of tensor categories.
\newblock {\em Theory Appl. Categ.}, 37:Paper No. 6, 144--188, 2021.

\bibitem[DSPS19]{MR3934626}
Christopher~L. Douglas, Christopher Schommer-Pries, and Noah Snyder.
\newblock The balanced tensor product of module categories.
\newblock {\em Kyoto J. Math.}, 59(1):167--179, 2019.

\bibitem[DSPS20]{MR4254952}
Christopher~L. Douglas, Christopher Schommer-Pries, and Noah Snyder.
\newblock Dualizable tensor categories.
\newblock {\em Mem. Amer. Math. Soc.}, 268(1308):vii+88, 2020.

\bibitem[DSS14]{2014arXiv1406.4204D}
  Christopher~L. Douglas, Christopher Schommer-Pries, and Noah Snyder.
  \newblock {The balanced tensor product of module categories}.
  \newblock {\tt arXiv:1406.4204}, 2014.

\bibitem[EGNO15]{MR3242743}
Pavel Etingof, Shlomo Gelaki, Dmitri Nikshych, and Victor Ostrik.
\newblock {\em Tensor categories}, volume 205 of {\em Mathematical Surveys and
  Monographs}.
\newblock American Mathematical Society, Providence, RI, 2015.

\bibitem[ENO05]{MR2183279}
Pavel Etingof, Dmitri Nikshych, and Viktor Ostrik.
\newblock On fusion categories.
\newblock {\em Ann. of Math. (2)}, 162(2):581--642, 2005.

\bibitem[EO04]{MR2119143}
Pavel Etingof and Viktor Ostrik.
\newblock Finite tensor categories.
\newblock {\em Mosc. Math. J.}, 4(3):627--654, 782--783, 2004.

\bibitem[EO21]{MR4237968}
Pavel Etingof and Victor Ostrik.
\newblock On the {F}robenius functor for symmetric tensor categories in
  positive characteristic.
\newblock {\em J. Reine Angew. Math.}, 773:165--198, 2021.

\bibitem[FRS02]{MR1940282}
J\"{u}rgen Fuchs, Ingo Runkel, and Christoph Schweigert.
\newblock T{FT} construction of {RCFT} correlators. {I}. {P}artition functions.
\newblock {\em Nuclear Phys. B}, 646(3):353--497, 2002.

\bibitem[FRS07]{MR2342830}
J\"{u}rgen Fuchs, Ingo Runkel, and Christoph Schweigert.
\newblock Ribbon categories and (unoriented) {CFT}: {F}robenius algebras,
  automorphisms, reversions.
\newblock In {\em Categories in algebra, geometry and mathematical physics},
  volume 431 of {\em Contemp. Math.}, pages 203--224. Amer. Math. Soc.,
  Providence, RI, 2007.

\bibitem[FS08]{MR2500035}
J{\"u}rgen Fuchs and Carl Stigner.
\newblock On {F}robenius algebras in rigid monoidal categories.
\newblock {\em Arab. J. Sci. Eng. Sect. C Theme Issues}, 33(2):175--191, 2008.

\bibitem[FS23]{MR4586249}
J\"{u}rgen Fuchs and Christoph Schweigert.
\newblock Internal natural transformations and {F}robenius algebras in the
  {D}rinfeld center.
\newblock {\em Transform. Groups}, 28(2):733--768, 2023.

\bibitem[FSS20]{MR4042867}
J\"{u}rgen Fuchs, Gregor Schaumann, and Christoph Schweigert.
\newblock Eilenberg-{W}atts calculus for finite categories and a bimodule
  {R}adford {$S^4$} theorem.
\newblock {\em Trans. Amer. Math. Soc.}, 373(1):1--40, 2020.

\bibitem[FSSW23]{2023arXiv230617668F}
J{\"u}rgen {Fuchs}, Gregor {Schaumann}, Christoph {Schweigert}, and Simon
  {Wood}.
\newblock {Grothendieck-Verdier duality in categories of bimodules and weak
  module functors}.
\newblock {\em arXiv e-prints}, June 2023.

\bibitem[KO02]{MR1936496}
Alexander Kirillov, Jr. and Viktor Ostrik.
\newblock On a {$q$}-analogue of the {M}c{K}ay correspondence and the {ADE}
  classification of {$\germ {sl}_2$} conformal field theories.
\newblock {\em Adv. Math.}, 171(2):183--227, 2002.

\bibitem[Lam99]{MR1653294}
T.~Y. Lam.
\newblock {\em Lectures on modules and rings}, volume 189 of {\em Graduate
  Texts in Mathematics}.
\newblock Springer-Verlag, New York, 1999.

\bibitem[LW22]{2022arXiv220208644L}
Robert {Laugwitz} and Chelsea {Walton}.
\newblock {Constructing non-semisimple modular categories with local modules}.
\newblock {\em arXiv e-prints}, page arXiv:2202.08644, February 2022.

\bibitem[ML98]{MR1712872}
Saunders Mac~Lane.
\newblock {\em Categories for the working mathematician}, volume~5 of {\em
  Graduate Texts in Mathematics}.
\newblock Springer-Verlag, New York, second edition, 1998.

\bibitem[M{\"u}g03a]{MR1966524}
Michael M{\"u}ger.
\newblock From subfactors to categories and topology. {I}. {F}robenius algebras
  in and {M}orita equivalence of tensor categories.
\newblock {\em J. Pure Appl. Algebra}, 180(1-2):81--157, 2003.

\bibitem[M{\"u}g03b]{MR1966525}
Michael M{\"u}ger.
\newblock From subfactors to categories and topology. {II}. {T}he quantum
  double of tensor categories and subfactors.
\newblock {\em J. Pure Appl. Algebra}, 180(1-2):159--219, 2003.

\bibitem[RFFS07]{MR2342831}
Ingo Runkel, Jens Fjelstad, J\"{u}rgen Fuchs, and Christoph Schweigert.
\newblock Topological and conformal field theory as {F}robenius algebras.
\newblock In {\em Categories in algebra, geometry and mathematical physics},
  volume 431 of {\em Contemp. Math.}, pages 225--247. Amer. Math. Soc.,
  Providence, RI, 2007.

\bibitem[Scha15]{MR3435098}
Gregor Schaumann.
\newblock Pivotal tricategories and a categorification of inner-product
  modules.
  \newblock {\em Algebr. Represent. Theory}, 18(6):1407--1479, 2015.

\bibitem[Schw23]{Schweigert2023Banff}
  Cristoph Schweigert. Duality beyond rigidity.
  {\em Subfactors and Fusion (2-)Categories},
  Banff International Research Station, 2023 December 4,
  \url{https://www.birs.ca/events/2023/5-day-workshops/23w5091/videos/watch/202312040945-Schweigert.html}
\bibitem[Shi17]{MR3632104}
Kenichi Shimizu.
\newblock On unimodular finite tensor categories.
\newblock {\em Int. Math. Res. Not. IMRN}, (1):277--322, 2017.

\bibitem[{Shi}23]{2019arXiv190400376S}
Kenichi {Shimizu}.
\newblock {Relative Serre functor for comodule algebras}.
{\em J. Algebra}, 634, 237--305, 2023.

\bibitem[SS23]{2021arXiv210313702S}
Taiki {Shibata} and Kenichi {Shimizu}.
\newblock {Modified traces and the Nakayama functor}.
{\em Algebr. Represent. Theory}, 26 (2), 513--551, 2023.

\bibitem[Str04]{MR2095680}
Ross Street.
\newblock Frobenius monads and pseudomonoids.
\newblock {\em J. Math. Phys.}, 45(10):3930--3948, 2004.

\bibitem[Tak99]{MR1685417}
Mitsuhiro Takeuchi.
\newblock Finite {H}opf algebras in braided tensor categories.
\newblock {\em J. Pure Appl. Algebra}, 138(1):59--82, 1999.

\bibitem[WY23]{MR4681293}
Chelsea Walton and Harshit Yadav.
\newblock Filtered {F}robenius {A}lgebras in {M}onoidal {C}ategories.
\newblock {\em Int. Math. Res. Not. IMRN}, (24):21494--21535, 2023.

\bibitem[Yad24]{MR4683832}
Harshit Yadav.
\newblock Frobenius monoidal functors from (co){H}opf adjunctions.
\newblock {\em Proc. Amer. Math. Soc.}, 152(2):471--487, 2024.
\end{thebibliography}

\def\cprime{$'$}
\newcommand{\germ}{\mathfrak}

\end{document}